\newcommand{\suchthat}{\;\ifnum\currentgrouptype=16 \middle\fi|\;}
\def\tr{\operatorname{Tr}}
\def\eR{\mathbb{R}}
\def\eN{\mathbb{N}}
\newcommand{\ds}{\mathrm{d}s}
\newcommand{\dt}{\mathrm{d}t}
\newcommand{\dvr}{\operatorname{div}}
\newcommand{\supp}{\operatorname{supp}}
\newcommand{\dist}{\operatorname{dist}}
\newcommand{\tder}{\partial_t}
\newcommand{\essinf}{\operatorname{ess\,inf}}
\newcommand{\esssup}{\operatorname{ess\,sup}}
\newcommand{\sgnp}{\operatorname{sgn}^+}
\numberwithin{equation}{section}
\newtheorem{theorem}{Theorem}
\numberwithin{theorem}{section}
\newaliascnt{lemma}{theorem}
\newtheorem{lemma}[lemma]{Lemma}
\newaliascnt{proposition}{theorem}
\newaliascnt{corollary}{theorem}
\newaliascnt{definition}{theorem}
\newtheorem{mydef}[definition]{Definition}
\newaliascnt{remark}{theorem}
\newtheorem{remark}[remark]{Remark}
\newcommand{\boundellipse}[3]% center, xdim, ydim
{(#1) ellipse (#2 and #3)
}
\title{On existence of weak solutions to a Baer--Nunziato type system}
\author{Martin Kalousek}
\author{\v S\' arka Ne\v casov\'a}
\address{Institute of Mathematics, Czech Academy of Sciences, \v{Z}itn\'a 25, 11567 Prague, Czech Republic}
\email{kalousek@math.cas.cz; matus@math.cas.cz}
\begin{document}
	
	\begin{abstract}
		In this paper, we consider
  a compressible one velocity Baer--Nunziato type system with dissipation  describing the evolution of a mixture of two compressible heat conducting fluids. The complete existence proof for weak solutions to this system was addressed as an open problem in \cite[Section 5]{KNAC}. The purpose of this paper is to prove the global in time existence of weak solutions to the one velocity Baer--Nunziato type system for arbitrary large initial data. The goal is achieved in three steps. Firstly, the given system is transformed into a new one which possesses the "Navier-Stokes-Fourier" structure. Secondly, the new system is solved by an adaptation of the Feireisl--Lions approach for solving the compressible full system applying also the almost compactness property introduced by Vasseur et al. \cite{VaWeYu19}. Finally, the existence of a weak solution to the original one velocity Baer--Nunziato system is shown using the almost uniqueness property of renormalized solutions to pure transport equations.
	\end{abstract}
	
	\maketitle
\noindent Keywords: Baer--Nunziato type system, weak solutions, transport equation, continuity equation, momentum equation, entropy balance\\
AMS Subject Classification: 76N10, 35Q35
\section{Introduction}\label{Sec:Intr}
	Based on the continuum theory of mixtures, the authors in \cite{BaNu} established a model describing combustion processes associated with so called deflagration--to--detonation transition. 
	A dissipative version of the above mentioned model is the Baer--Nunziato model for two compressible heat conducting fluids, which reads
	\begin{align}
		\tder\alpha_{\pm}+v_I\cdot\nabla\alpha_{\pm}&=0,\label{AlphaTE}\\
		\tder (\alpha_{\pm}\rho_{\pm})+\dvr(\alpha_{\pm}\rho_{\pm}u_{\pm})&=0,\label{CnEq}\\
		\tder(\alpha_{\pm}\rho_{\pm}u_{\pm})+\dvr(\alpha_{\pm}\rho_{\pm}u_{\pm}\otimes u_{\pm})+\nabla\left(\alpha_{\pm}\mathfrak P_{\pm}(\rho_{\pm},\vartheta)\right)&\nonumber \\-P_I\nabla\alpha_{\pm}-\dvr\left(\alpha_{\pm}\mathbb S(\vartheta,\nabla u_{\pm})\right)&=0,\label{MERPM}\\
		\tder(\alpha_{\pm}\rho_{\pm}\mathfrak e_{\pm}(\rho_{\pm},\vartheta))+\dvr(\alpha_{\pm}\rho_{\pm}\mathfrak e_{\pm}(\rho_{\pm},\vartheta)u_{\pm})+\dvr(\alpha_{\pm}q_{\pm}(\vartheta,\nabla\vartheta))&\nonumber\\+\alpha_{\pm}\mathfrak P_{\pm}(\rho_{\pm},\vartheta)\dvr u_{\pm}-\alpha_{\pm}\mathbb S(\vartheta,\nabla u_{\pm})\cdot\nabla u_{\pm}-P_I(v_I-u_{\pm})\cdot\nabla\alpha_{\pm}&=0,\label{ENeBal}\\
		\alpha_++\alpha_-&=1.\label{AlphaSum}
	\end{align}
	The subscript $_{\pm}$ above is used for the determination which species the quantity belongs  to. Considering $T>0$ and a bounded domain $\Omega\subset\eR^d$ we use the notation $Q_T$ for the time space cylinder $(0,T)\times\Omega$. With this notation $\alpha_{\pm}:Q_T\to[0,1]$, $\rho_\pm: Q_T\to[0,\infty)$, $u_{\pm}:Q_T\to\eR^d$ stand for the unkonwn volumic fraction, density and velocity of the $\pm$ spieces. Next, $\vartheta:Q_T\to (0,\infty)$ is the unknown velocity of the mixture. Given functions $\mathfrak P_{\pm}, \mathfrak e_{\pm}:[0,\infty)\times(0,\infty)\to [0,\infty)$, $q_{\pm}:(0,\infty)\times \eR^d$, $\mathbb S_{\pm}:(0,\infty)\times\eR^{d\times d}\to\eR^{d\times d}$ represent the pressure, internal energy, heat flux and viscous stress tensor of the $\pm$ species. The conveniently chosen quantities $P_I:Q_T\to[0,\infty]$ and $v_I:Q_T\to\eR^d$ stand for the pressure and velocity at the interface. In the modeling of multi fluid flows, there are many possibilities concerning the choice of the functions $P_I$ and $v_I$ with no predominant choice. For the Fourier law and Newtonian fluids the heat flux  and the viscous stress tensor read
	\begin{equation}\label{FS}
		\begin{split}
			q_\pm(\vartheta,\nabla\vartheta)=&-\kappa_\pm(\vartheta)\nabla\vartheta,\\
			\mathbb S_\pm(\vartheta,\nabla u_\pm)=&\mu_\pm(\vartheta)\left(\nabla u_\pm +(\nabla u_\pm)^\top-\frac{2}{d}\dvr u_\pm\mathbb I\right)+\eta_{\pm}(\vartheta)\dvr u\mathbb I,
		\end{split}
	\end{equation}
	where the given functions $\kappa_{\pm},\mu_{\pm}:(0,\infty)\to(0,\infty)$ and $\eta_\pm:(0,\infty)\to[0,\infty)$ denote the heat conductivity and the shear and bulk viscosities of the $\pm$ spieces. Dividing equation for internal energy \eqref{ENeBal} by $\vartheta$ one formally obtains the entropy balance equations for the $\pm$ species
	\begin{equation}\label{ENTBal}
		\begin{split}			\tder(\alpha_{\pm}\rho_{\pm}\mathfrak s_{\pm}(\rho_{\pm},\vartheta))+\dvr(\alpha_{\pm}\rho_{\pm}\mathfrak s_{\pm}(\rho_{\pm},\vartheta)u_{\pm})+\dvr\left(\frac{\alpha_{\pm}}{\vartheta}q_{\pm}(\vartheta,\nabla\vartheta)\right)&\\-\frac{\alpha_{\pm}}{\vartheta}\mathbb S_{\pm}(\vartheta,\nabla u_{\pm})\cdot\nabla u_{\pm}-\frac{\alpha_{\pm}}{\vartheta^2}q_{\pm}\cdot\nabla\vartheta-\frac{1}{\vartheta}P_I(v_I-u_{\pm})\cdot\nabla\alpha_{\pm}&=0.
		\end{split}
	\end{equation}
  In fact, the latter equality is equivalent to \eqref{ENeBal} for sufficiently smooth $\alpha_\pm, \rho_\pm, u_\pm$ and $\vartheta$. The specific entropies $\mathfrak s_\pm$ are defined via
	\begin{equation}\label{SpEntrDef}
		\mathfrak s_\pm(r,\vartheta)=\frac{4b}{3r}\vartheta^3+s_\pm(r,\vartheta)
	\end{equation}
	and $s_\pm$ satisfy
	\begin{equation}\label{SDef}
		\partial_\vartheta s_\pm=\frac{1}{\vartheta}\partial_\vartheta e_\pm,\ \partial_r s_\pm=-\frac{1}{r^2}\partial_\vartheta P_\pm.
	\end{equation}
	We suppose that the pressures $\mathfrak P_\pm$ take the form 
	\begin{equation*}
		\mathfrak P_\pm(r,\vartheta)=\frac{b}{3}\vartheta^4+ P_\pm(r,\vartheta),
	\end{equation*}
	the internal energies $\mathfrak e_\pm$ are expressed as
	\begin{equation*}
		\mathfrak e_\pm(r,\vartheta)=\frac{b}{r}\vartheta^4+e_\pm(r,\vartheta),
	\end{equation*}
	where $b$ is a positive number known as the Stefan-Boltzman constant in physics. The quantities $P_\pm$ and $e_\pm$ are linked via the Maxwell relation
	\begin{equation}\label{MaxwellE}
		P_\pm(r,\vartheta)=r^2\partial_r e_\pm(r,\vartheta)+\vartheta\partial_\vartheta P_\pm(r,\vartheta).
	\end{equation}
	The relations \eqref{SDef} and \eqref{MaxwellE} are equivalent to the Gibbs equations
	\begin{equation}\label{GibbsO}
		\vartheta \partial_r s_\pm=\partial_r e_\pm -\frac{P_\pm}{r^2},\ \vartheta \partial_\vartheta s_\pm=\partial_\vartheta e_\pm. 
	\end{equation}
    We now make several simplifying assumptions:
	\begin{equation*}
		\kappa_{\pm}=\kappa,\ \mu_\pm=\mu,\ \eta_{\pm}=\eta,\ v_I=u_\pm=u
	\end{equation*}
    and restrict ourselves to the case $d=3$.
	Accordingly, we assume that 
	\begin{equation}\label{STensDef}
		\mathbb S(\vartheta,\nabla u)=\mu(\vartheta)\left(\nabla u+(\nabla u)^\top-\frac{2}{3}\dvr u\mathbb I\right)+\eta(\vartheta)\dvr u\mathbb I.
	\end{equation}
	We follow \cite{KNAC} and introduce new pressure functions $\mathsf P_\pm$ via
	\begin{equation}\label{MPDef}
		\mathsf P_\pm(f_\pm(\alpha_\pm)\alpha_\pm r,\vartheta)=\alpha_\pm P_\pm(r,\vartheta)
	\end{equation}
	with some functions $f_\pm$ whose properties will be specified later.
	
	Taking into consideration the above described simplifications and also the notation
	\begin{equation*}
		\alpha=\alpha_+,\ \rho=\alpha\rho_+,\ z=(1-\alpha)\rho_-
	\end{equation*}
	system  \eqref{AlphaTE}--\eqref{MERPM}, \eqref{AlphaSum} and \eqref{ENTBal} is transformed into
	\begin{equation}\label{BNS}
		\begin{split}
			\tder\alpha+u\cdot\nabla \alpha&=0,\\
			\tder\rho+\dvr(\rho u)&=0, \\
			\tder z+\dvr(z u)&=0, \\
			\tder\left((\rho+z)u\right)+\dvr\left((\rho+z)u\otimes u\right)&		+\nabla\left(\frac{b}{3}\vartheta^4+\mathsf P_+(f_+(\alpha)\rho,\vartheta)+\mathsf P_-(f_-(1-\alpha)z,\vartheta)\right)\\&=\dvr(\mathbb S(\vartheta,\nabla u)),\\
			\tder\left(\rho \mathfrak s_+\left(\frac{\rho}{\alpha},\vartheta\right)+z \mathfrak s_-\left(\frac{z}{1-\alpha},\vartheta\right)\right)&+\dvr\left(\left(\rho \mathfrak s_+\left(\frac{\rho}{\alpha},\vartheta\right)+z \mathfrak s_-\left(\frac{z}{1-\alpha},\vartheta\right)\right)u\right)\\-\dvr\left(\frac{\kappa(\vartheta)}{\vartheta}\nabla\vartheta\right)&=\frac{1}{\vartheta}\left(\mathbb S(\vartheta,\nabla u)\cdot\nabla u+\frac{\kappa(\vartheta)}{\vartheta}|\nabla\vartheta|^2\right)
		\end{split}
	\end{equation}
	in $Q_T$. The system is endowed with the initial conditions
	\begin{equation}\label{BNSID}
		\alpha(0,\cdot)=\alpha_0,\ \rho(0,\cdot)=\rho_0,\ z(0,\cdot)=z_0,\ (\rho+z)u(0,\cdot)=(\rho_0+z_0)u_0,\ \vartheta(0,\cdot)=\vartheta_0
	\end{equation}
	and with the boundary conditions
	\begin{equation}\label{TempBC}
		\nabla\vartheta\cdot n=0\text{ on }(0,T)\times\partial\Omega
	\end{equation}
	and either
	\begin{equation}\label{VelBC}
		u\cdot n=0\text{ and }\mathbb S(\vartheta,\nabla u)n\times n=0\text{ or }u=0\text{ on }(0,T)\times\partial\Omega
	\end{equation}
	for the case of the complete slip, the no slip respectively.

    Next, we explain the way how we transform system~\eqref{BNS} to a system that is better suited for the direct existence analysis. We define the following new quantitities 
	\begin{equation}\label{NQ}
		\mathsf{e}_{\pm}(f_\pm(\alpha_\pm)\alpha_\pm r,\vartheta)=\frac{1}{f_\pm(\alpha_{\pm})} e_{\pm}(r,\vartheta),\ \mathsf{s}_{\pm}(f_\pm(\alpha_\pm)\alpha_\pm r,\vartheta)=\frac{1}{f_\pm(\alpha_{\pm})} s_{\pm}(r,\vartheta).
	\end{equation}
	The properties of functions $f_\pm$, $\mathsf e_\pm$, $\mathsf s_\pm$ and $\mathsf P_\pm$ will be specified later. Introducing the changes of variables $\mathfrak r=f_+(\alpha)\rho=f_+(\alpha)\alpha\rho_+$, $\mathfrak z=f_-(1-\alpha)z=f_-(1-\alpha)(1-\alpha)\rho_-$ we get
	\begin{equation}\label{COV}
		\begin{split}
			\mathsf{e}_{+}(\mathfrak r,\vartheta)=&\frac{1}{f_+(\alpha)} e_{+}\left(\frac{\mathfrak r}{f_{+}(\alpha)\alpha},\vartheta\right),\\ 
			\mathsf{s}_{+}(\mathfrak r,\vartheta)=&\frac{1}{f_+(\alpha)} s_{+}\left(\frac{\mathfrak r}{f_+(\alpha)\alpha},\vartheta\right),\\ 	
			\mathsf{P}_{+}(\mathfrak r,\vartheta)=&\alpha_+ P_+\left(\frac{\mathfrak r}{f_+(\alpha)\alpha},\vartheta\right),\\
			\mathsf{e}_{-}(\mathfrak z,\vartheta)=&\frac{1}{f_-(1-\alpha)} e_{-}\left(\frac{\mathfrak z}{f_{-}(1-\alpha)(1-\alpha)},\vartheta\right),\\	
			\mathsf{s}_{-}(\mathfrak z,\vartheta)=&\frac{1}{f_-(1-\alpha)} s_{-}\left(\frac{\mathfrak z}{f_-(1-\alpha)(1-\alpha)},\vartheta\right),\\	
			\mathsf{P}_{-}(\mathfrak z,\vartheta)=&(1-\alpha) P_-\left(\frac{\mathfrak z}{f_-(1-\alpha)(1-\alpha)},\vartheta\right).
		\end{split}
	\end{equation}
	The specific internal energies $\mathfrak e_\pm$ and the entropies $\mathfrak s_\pm$ take the form
	\begin{equation}\label{TEneEnt}
		\begin{split}
			\mathfrak e_+(\mathfrak r, \alpha,\vartheta)=& \frac{b}{\mathfrak r}\vartheta^4\alpha+\mathsf e_+(\mathfrak r,\vartheta),\\
			\mathfrak e_-(\mathfrak z,\alpha,\vartheta)=& \frac{b}{\mathfrak z}\vartheta^4(1-\alpha)+\mathsf e_-(\mathfrak z,\vartheta),\\
			\mathfrak s_+(\mathfrak r, \alpha,\vartheta)=& \frac{4b}{3\mathfrak r}\vartheta^3\alpha+\mathsf s_+(\mathfrak r,\vartheta),\\
			\mathfrak s_-(\mathfrak z, \alpha,\vartheta)=& \frac{4b}{3\mathfrak z}\vartheta^3(1-\alpha)+\mathsf s_-(\mathfrak z,\vartheta).
		\end{split}
	\end{equation}
	
	We notice that the interface pressure $P_I$ disappears from the new system because $\alpha_++\alpha_-=1$. We note that \eqref{GibbsO} and \eqref{COV} imply that the Gibbs relations for quantities $\mathsf s_\pm$, $\mathsf e_\pm$ and $\mathsf P_\pm$ have the form
	\begin{equation}\label{GibbsT}
		\begin{split}
			\partial_{\mathfrak r} \mathsf e_+=\vartheta\partial_{\mathfrak r}\mathsf s_++\frac{\mathsf P_+}{\mathfrak r^2}&,\ \partial_\vartheta\mathsf e_+=\vartheta\partial_\vartheta\mathsf s_+\\
			\partial_{\mathfrak z}\mathsf e_-=\vartheta\partial_{\mathfrak z}\mathsf s_-+\frac{\mathsf P_-}{\mathfrak z^2}&,\ \partial_\vartheta\mathsf e_-=\vartheta\partial_\vartheta\mathsf s_-.
		\end{split}
	\end{equation}
	Introducing $\Sigma=\rho+z$ and $\mathcal P(\mathfrak r,\mathfrak z,\vartheta)=\frac{b}{3}\vartheta^4+\mathsf P_+(\mathfrak r,\vartheta)+\mathsf P_-(\mathfrak z,\vartheta)$ system \eqref{BNS} is transformed into 
	\begin{equation}\label{ABS}
		\begin{split}
			\tder \xi+\dvr(\xi u)&=0,\\
			\tder \mathfrak r+\dvr(\mathfrak ru)&=0,\\
			\tder \mathfrak z+\dvr(\mathfrak zu)&=0,\\
			\tder \Sigma+\dvr(\Sigma u)&=0,\\
			\tder \left(\Sigma u\right)+\dvr\left(\Sigma u\otimes u\right)+\nabla \mathcal P(\mathfrak r,\mathfrak z,\vartheta)=\dvr \mathbb S(\vartheta,\nabla u)&=0,\\
			\tder \left(\left(\mathfrak r\mathfrak s_+\left(\mathfrak r,\vartheta\right)+\mathfrak z\mathfrak s_-\left(\mathfrak z,\vartheta\right)\right)u\right)+\dvr\left(\left(\mathfrak r\mathfrak s_+\left(\mathfrak r,\vartheta\right)+\mathfrak z\mathfrak s_-\left(\mathfrak z,\vartheta\right)\right)u\right)&\\+\dvr\left(\frac{\kappa(\vartheta)}{\vartheta}\nabla\vartheta\right)-\frac{1}{\vartheta}\left(\mathbb S(\vartheta,\nabla u)\cdot\nabla u+\frac{\kappa(\vartheta)}{\vartheta}|\nabla\vartheta|^2\right)&=0
		\end{split}
	\end{equation}
	in $Q_T$ with initial conditions
	\begin{equation}\label{InCond}
		\begin{split}
			\xi(0,\cdot)=&\alpha_0\rho_+(0,\cdot),\\
			\mathfrak r(0,\cdot)=&f_+(\alpha_0)\alpha_0\rho_+(0,\cdot),\\
			\mathfrak z(0,\cdot)=&f_-(1-\alpha_0)(1-\alpha_0)\rho_-(0,\cdot),\\
			\Sigma u(0,\cdot)=&(\alpha\rho_++(1-\alpha)\rho_-)u(0,\cdot),\\
			\left(\mathfrak r\mathfrak s_+(\mathfrak r,\vartheta)+	\mathfrak z\mathfrak s_-(\mathfrak z,\vartheta)\right)(0,\cdot)=&\frac{4b}{3}\vartheta_0^3+\alpha_0\rho_{+,0}\alpha_0s_+(\rho_{+,0},\vartheta_0)+(1-\alpha_0)\rho_{-,0}\alpha_0s_-(\rho_{-,0},\vartheta_0)
		\end{split}
	\end{equation}and boundary conditions
	\begin{equation*}
		\nabla\vartheta\cdot n=0,\ u=0\text{ or }u\cdot n=0\text{ and } (\mathbb Sn)\times n=0\text{ on }(0,T)\times\partial\Omega.		
	\end{equation*}
	We point out that considering the continuity equations for $\xi$ and $\mathfrak r$ with \eqref{InCond}$_{1,2}$ we can recover the original variable $\alpha$. Indeed, the quotient $\frac{\xi}{\mathfrak r}$ satisfies at least formally the transport equation. Hence $\alpha=f_+^{-1}\left(\frac{\xi}{\mathfrak r}\right)$ for $f_+$ sufficiently regular satisfies at least formally the transport equation as well. The reason why the explicit dependence on $f_+^{-1}\left(\frac{\xi}{\mathfrak r}\right)$ is not highlighted in \eqref{ABS}$_6$ and \eqref{InCond}$_5$ as well is that the sums $\mathfrak r\mathfrak e_++\mathfrak z\mathfrak e_-$ or $\mathfrak r\mathfrak s_++\mathfrak z\mathfrak s_-$ are independent of $f_+^{-1}\left(\frac{\xi}{\mathfrak r}\right)$ due to the assumed structure of $\mathfrak s_\pm$, $\mathfrak e_\pm$ specified in \eqref{TEneEnt}. 
	Taking this fact into consideration we define 
	\begin{equation}\label{CrlESDef}
		\begin{split}
			\mathcal E(\mathfrak r,\mathfrak z,\vartheta)=&\frac{b}{3}\vartheta^4+\mathfrak r\mathsf e_+(\mathfrak r,\vartheta)+\mathfrak z\mathsf e_-(\mathfrak z,\vartheta),\\
			\mathcal S(\mathfrak r,\mathfrak z,\vartheta)=&\frac{4b}{3}\vartheta^3+\mathfrak r\mathsf s_+(\mathfrak r,\vartheta)+\mathfrak z\mathsf s_-(\mathfrak z,\vartheta).
		\end{split}
	\end{equation}

\begin{remark}
    For purely mathematical reasons we are not able to show existence of a solution satisfying \eqref{BNS}$_6$ with the equality sign but only with "$\geq$". In order to circumvent this issue, we include a nonnegative term in definition of a solution that in a certain sense dominates the term $\frac{1}{\vartheta}\left(\mathbb S(\vartheta,\nabla u)\cdot\nabla u+\frac{\kappa(\vartheta)}{\vartheta}|\nabla\vartheta|^2\right)$, which is nonnegative itself, cf. \eqref{STensDef} and \eqref{KGrowth}.
\end{remark}

Let us add some bibliographical remarks. In the last few years the study of compressible bi-fluid models has drawn immense interest. For the existence of global weak solutions to one-dimensional compressible bi-fluid models with large initial data we refer the readers to \cite{EvKar, EvWeZhu}. In the articles \cite{Evkar2, yaoZhu, yaoZhu2} the authors investigate one dimensional bi-fluid models with a singular pressure law. Moreover, we quote the article \cite{LiZat}, where the authors prove existence, uniqueness and stability of global weak solutions to a one dimensional bi-fluid model with a common velocity field and algebraic pressure closure.
 
Global existence results on compressible bi-fluid models in multi-dimension are limited and the theory has been quite recently developed.
In the fundamental work \cite{VaWeYu19}, the authors establish global existence of weak solutions for a bi-fluid model with a pressure law of the form $P(\rho,Z)=\rho^{\gamma}+Z^{\beta},$ where $\gamma>\frac{9}{5},$ $\beta\ge 1$ and the densities are comparable. The proof in \cite{VaWeYu19} relies on {\it a new compactness} of the quantity $\frac{Z}{\rho}$ which further allows for a variable reduction in the pressure law.  Improvements on the result of \cite{VaWeYu19} is obtained in \cite{NovPok20} where the authors are able to incorporate more intricate non-monotone pressure functions.
 In both the articles \cite{NovPok20, VaWeYu19} the densities are comparable. { Concerning more general solutions so-called dissipative solution or general boundary conditions we refer to \cite{BNN,KKNN}.

 Related to this discussion we quote here a very recent article \cite{Wen}, where the author considers a bi-fluid system in a time-independent domain of class $C^{2+s},$ $s>0$ with a pressure law of the form $P(\rho,Z)\sim\rho^{\gamma}+Z^{\beta},$ $\gamma,\beta\geq\frac{9}{5},$ and without any domination/ comparison of the densities involved. The result of \cite{Wen} extends the one proved in \cite{VaWeYu19} by allowing transition to each single phase flow, meaning that one of the phases can vanish in a point while the other can persist. 
 Let us mention also another recent article \cite{BrZat} in which the authors prove the
global existence theory of weak solutions for a bi-fluid Stokes equations on the d-dimensional torus for $d=2,3.$ The proof of \cite{BrZat} relies on the Bresch-Jabin’s new compactness tools for compressible Navier-Stokes equations. }

	Prior to finishing this section, we describe the strategy of the proof of the main result  that facilitates reader's orientation in this paper. As was already mentioned above, the first step in the existence proof for system \eqref{BNS}, i.e. the proof of Theorem~\ref{Thm:BNSEx}, is the transformation of \eqref{BNS} into \eqref{ABS}. The obvious advantage of this approach is the Navier--Stokes--Fourier like structure of the transformed system. Moreover, the fact that one substitutes the transport equation for $\alpha$ by the continuity equation for $\xi$ allows for the application of the available theory for continuity equations for passages from one approximation level to another. 
	
	At this moment, the natural approach is to deal with the existence result for \eqref{ABS}, which is of independent interest. The global in time existence of weak solutions to \eqref{ABS} is announced in Theorem~\ref{Thm:ABSEx} under hypotheses identified in Section~\ref{Sec:Hyp}. The majority of the paper content is devoted to the proof of Theorem~\ref{Thm:ABSEx}. In the part of the proof involving the construction of a solution to approximate system we adapt the corresponding construction in the  monofluid case from \cite[Section 3.3]{FeNo09}. In the proof of the almost compactness result for the ratio of density approximations that are pointwisely comparable and in the proof of a compactness result for approximations of the temperature we adapt the approach from \cite{KNAC}.
	
	The proof of Theorem~\ref{Thm:ABSEx} consists of the following steps:
	\begin{itemize}
		\item Solving the regularized form of \eqref{ABS} with two small parameters $\varepsilon>0$ characterizing the dissipation in all continuity equations and $\delta>0$ characterizing the artificial pressure added in \eqref{ABS}$_5$ and considering the balance of internal energy instead of the entropy production equation \eqref{ABS}$_6$. The existence of a solution to the approximate problem is shown via the Faedo--Galerkin scheme in the spirit of monofluid situation treated in \cite[Section 3.3]{FeNo09}. First, a solution to the Faedo--Galerkin approximation of the regularized momentum equation is shown to exist locally in time by the Schauder fixed point theorem. Next, deriving suitable uniform estimates of the solution allows for extending the solution to the whole given time interval. Then the balance of internal energy is converted to the entropy production equation provided that the approximate temperature is positive. Eventually, the limit passage to infinity with the Faedo--Galerkin parameter is performed.
		
		\item Performing the passage $\varepsilon\to 0_+$, i.e. letting the artificial viscosity in continuity equations tend to zero, relies on improved uniform integrability of sequences of  densities approximations, the compensated compactness arguments for showing the compactness of temperature approximations, the almost compactness of the ratio of densities approximations and a variant of the effective viscous flux identity.
		\item Performing the passage $\delta\to 0_+$, i.e. letting the artificial pressure term in the momentum equation tend to zero, follows the same procedure as in the previous step with minor differences caused by a lower exponent characterizing the uniform integrability of density approximations.
	\end{itemize}
	
	Having Theorem~\ref{Thm:ABSEx} proved we use it for showing the global in time existence of a weak solution to a system \eqref{ABS} created by a transformation of Baer--Nunziato one velocity system \eqref{BNS} with initial conditions involving the initial conditions for \eqref{BNS} in a suitable way. Then applying the almost uniqueness result for renormalized bounded solutions to the transport equations established in \cite{Nov20} we can conclude the proof of Theorem~\ref{Thm:BNSEx}.

	This introductory section is finished with some notations. The standard notation is used for the Lebesgue and Sobolev spaces equipped by the standard norms $\|\cdot\|_{L^p(\Omega)}$, $\|\cdot\|_{W^{k,p}(\Omega)}$ respectively. Furthermore, we use function spaces $W^{1,2}_0(\Omega)$ and $W^{1,2}_n(\Omega)$ defined on a bounded domain $\Omega\subset\eR^d$ with the Lipschitz boundary as
    \begin{equation}\label{SobDef}
        \begin{split}
            W^{1,2}_0(\Omega)=&\overline{C^\infty_c(\Omega)}^{\|\cdot\|_{W^{1,2}}},\\
            W^{1,2}_n(\Omega)=&\{u\in W^{1,2}(\Omega):\ u\cdot n=0\text{ on }\partial\Omega\}.
        \end{split}    
    \end{equation}
    We notice that $C^\infty_c(\Omega)$ stands for a space of smooth functions with compact support in $\Omega$ and the function $u$ in the expression $u\cdot n$ on $\partial\Omega$ is understood in the sense of traces.
    By $L^p(0,T;X)$ we denote the Bochner space of $p$--integrable functions on $(0,T)$ with values in $X$ and $C([0,T];X)$ stands for the space of continuous functions on $[0,T]$ with values in $X$ equipped by norms $\|\cdot\|_{L^p(0,T;X)}$, $\|\cdot\|_{C([0,T];X)}$ respectively. The frequently used vector space $C_w([0,T];X)$ consists of functions $f\in L^\infty(0,T;X)$ for which $t\mapsto\langle \eta, f(t)\rangle$ is for any $\eta\in X^*$ continuous on $[0,T]$. The universal generic constants are denoted by $c$, $\overline c$, $\underline c$ and can take different values even in the same formula.
 
	\Needspace{11\baselineskip}
	\section{Hypotheses}\label{Sec:Hyp}
	We gather here all the assumptions on data and constitutive functions.
	\begin{enumerate}
		\item Regularity of initial data
		\begin{align*}
			\mathfrak r_0\in L^{\gamma_+}(\Omega), \gamma_+\geq\frac{9}{5},\ \int_\Omega\mathfrak r_0>0,\ \mathfrak z_0\in L^{\gamma_-}(\Omega)\text{ if }\gamma_->\gamma_+,\ \int_\Omega \mathfrak z_0>0,\\
			(\mathfrak r_0,\mathfrak z_0)(x)\in\overline{\mathcal O_{\underline a,\overline a}}, (\mathfrak r_0+\mathfrak z_0,\Sigma_0)(x)\in \overline{\mathcal O_{\underline b,\overline b}},(\mathfrak r_0,\xi_0)(x)\in\overline{\mathcal O_{\underline d,\overline d}}\text{ for a.a. }x\in\Omega,\\
			(\Sigma u)_0\in L^1(\Omega),\frac{|(\Sigma u)_0|^2}{\Sigma_0}\in L^1(\Omega), \vartheta_0\in L^4(\Omega),\log\vartheta_0\in L^2(\Omega),\\
			\mathfrak r_0 \mathfrak s_+\left(\mathfrak r_0,\vartheta_0\right)+\mathfrak z_0\mathfrak s_-\left(\mathfrak z_0,\vartheta_0\right)\in L^1(\Omega),\\
			\mathfrak r_0 \mathfrak e_+\left(\mathfrak r_0,\vartheta_0\right)+\mathfrak z_0\mathfrak e_-\left(\mathfrak z_0,\vartheta_0\right)\in L^1(\Omega),
		\end{align*}
		where
		\begin{equation}\label{MODef}
			\mathcal O_{\underline a,\overline a}=\{(r,z)\in\eR^2:r\in(0,\infty), \underline a r< z<\overline a r\}
		\end{equation} 
		and
		\begin{equation*}
			0<\underline a<\overline a<\infty, 0<\underline b<\overline b<\infty, 0<\underline d<\overline d<\infty.
		\end{equation*}
		\item Transport coefficients $\mu,\eta, \kappa\in C^1([0,\infty))$ satisfy
		\begin{alignat}{2}
			\overline c^{-1}(1+\vartheta)&\leq \mu(\vartheta)&&\leq \overline c(1+\vartheta),\label{MuGr}\\
			0&\leq \eta(\vartheta)&&\leq \overline c(1+\vartheta),\label{EGr}\\
			\overline c^{-1}(1+\vartheta)^\beta&\leq \kappa(\vartheta)&&\leq \overline c(1+\vartheta)^\beta,\ \beta>\frac{4}{3}\label{KGrowth}.
		\end{alignat}
		\item Structure and regularity of pressure, internal energy and specific entropy\\
		%The constitutive functions $\mathfrak P_{\pm}$, $\mathfrak e_{\pm}$, $\mathfrak s_{\pm}$ and $P_{\pm}$, $e_{\pm}$, $s_{\pm}$ are linked via
		%\begin{align}
		%	\mathfrak P_{\pm}(r,\vartheta)=&\frac {b}{3}\vartheta^4+P_{\pm}(r,\vartheta),\\
		%	\mathfrak e_{\pm}(r,\vartheta)=&\frac {b}{r}\vartheta^4+e_{\pm}(r,\vartheta),\\
		%	\mathfrak s_{\pm}(r,\vartheta)=&\frac{4}{3}\frac{b}{r}\vartheta^3+s_{\pm}(r,\vartheta)
		%\end{align}
		%where $b$ is a positive number called Stefan-Boltzman constant in physics.
		%Moreover, the Gibs relation for $\vartheta$, $\mathfrak s_{\pm}$, $\mathfrak e_{\pm}$, $\mathfrak P_{\pm}$ reads
		%\begin{equation}\label{GibbsRls}
		%\vartheta \mathrm d \mathfrak s_{\pm}=\mathrm d \mathfrak e_{\pm}-\frac{\mathfrak P_{\pm}}{r^2}\mathrm dr.
		%\end{equation}
		We collect the assumptions on the quantities $\mathsf P_{\pm}$, $\mathsf e_{\pm}$, $\mathsf s_{\pm}$ appearing in Section~\ref{Sec:Intr}, cf. \eqref{MPDef} and \eqref{COV}.
		\begin{itemize}
			\item Regularity
			\begin{equation*}
				\mathsf P_{\pm},\mathsf e_{\pm}\in C([0,\infty)^2)\cap C^2((0,\infty)^2),\ \mathsf P_{\pm}(0,0)=0,\ \mathsf e_{\pm}(0,0)=0;
			\end{equation*}
			\item Thermodynamic stability conditions
			\begin{equation}\label{ThStab}
				\partial_r \mathsf P_{\pm}(r,\vartheta)>0,\ \partial_\vartheta \mathsf e_{\pm}(r,\vartheta)>0\text{ for all }(r,\vartheta)\in (0,\infty)^2;
			\end{equation}
			\item Structure of specific entropy
			\begin{equation}\label{SpEntStr}
				\mathsf s_{\pm}(r,\vartheta)=\bar s_{\pm}\log\vartheta-\tilde s_{\pm}\log r+\mathscr{s}_{\pm}(r,\vartheta)\text{ for some constants }\bar s_{\pm}, \tilde s_{\pm}\geq 0;
			\end{equation}
			\item Structure of the pressure, the precise meaning of \eqref{MPDef}\\
			There are $f_{\pm}\in C^1([0,1];[0,\infty))$ and $\mathsf P_{\pm}:[0,\infty)^2\to[0,\infty)$ such that 
			\begin{equation*}
				\alpha_{\pm}P_\pm(r,\vartheta)=\mathsf P_\pm(f_\pm(\alpha_\pm)\alpha_\pm r,\vartheta)\text{ for all }\alpha_\pm\in [0,1], r\in [0,\infty), \vartheta>0.
			\end{equation*}
			Moreover, the function $p:[0,\infty)\times[\underline a,\overline a]\times (0,\infty)\to \eR$ defined as 
			\begin{equation*}
				p(r,\zeta,\vartheta)=\mathsf P_+(r,\vartheta)+\mathsf P_-(r\zeta,\vartheta)
			\end{equation*}
			is for any $(\zeta,\vartheta)\in [\underline a,\overline a]\times(0,\infty)$ a nondecreasing function of $r$ on $[0,\infty)$.
		\end{itemize}
		\item Growth conditions for the pressure functions, the internal energies and the specific entropies
		\begin{itemize}
			\item Growth conditions for the internal energies $\mathsf e_{\pm}$
			\begin{equation}\label{IntEnGrowth}
				\overline c^{-1}(r^{\gamma_\pm-1}-1)\leq \mathsf e_{\pm}(r,\vartheta)\leq \overline c(r^{\gamma_\pm-1}+d^e_\pm \vartheta^{\omega^e_\pm})\text{ for all }(r,\vartheta)\in(0,\infty)^2,
			\end{equation}
			where $\gamma_\pm>0$ and $\omega^e_\pm>0$ satisfy
			\begin{equation*}
				\frac{1}{\gamma-1}+\frac{\omega^e_\pm}{p_\beta}<1.
			\end{equation*}
			We denote $\gamma=\max\{\gamma_+,\gamma_-\}\geq \overline \gamma_\beta$ where $\overline\gamma_\beta$ solves
			\begin{equation}\label{OGBPBDef}
				\overline\gamma_\beta+\min\left\{\frac{2\overline\gamma_\beta}{3}-1,\overline\gamma_\beta\left(\frac{1}{2}-\frac{1}{p_\beta}\right)\right\}=2\text{ with }p_\beta=\beta+\frac{8}{3}.
			\end{equation}
			Let us notice that $\overline\gamma_\beta\in\left(\frac{9}{5},2\right)$.
			\item Growth conditions for the specific entropies $\mathscr s_\pm$
			\begin{equation}\label{EntGr}
				|\mathscr s_{\pm}|\leq \overline c(r^{\gamma^s_\pm-1}+d^s_\pm \vartheta^{ \omega^s_\pm})\text{ for all }(r,\vartheta)\in(0,\infty)^2,
			\end{equation}
			where $\gamma^s_\pm,\omega^s_\pm>0$ and $\gamma=\max\{\gamma^s_+,\gamma^s_-\}$, $\omega^s=\max\{d^s_+\omega^s_+,d^s_-\omega^s_-\}$ obey
			\begin{equation*}
				\frac{\omega^s}{4}+\frac{\gamma^s}{\gamma-1}<\frac{5}{6}.
			\end{equation*} 
			\item Growth conditions for the pressures $\mathsf P_
			\pm$
			\begin{equation}\label{PressGrowth}
				\overline c^{-1}(r^{\gamma_\pm}-1)\leq \mathsf P_\pm(r,\vartheta)\leq \overline c(r^{\gamma_\pm}+d^P_\pm r^{\gamma^P_\pm}\vartheta^{\omega^P_\pm})\text{ for all }(r,\vartheta)\in(0,\infty)^2,
			\end{equation}
			where $\gamma^P_\pm,\omega_\pm^P>0$ and $\gamma^P=\max\{d^P_+\gamma^P_+,d^P_-\gamma^P_-\}$, $\omega^P=\max\{d^P_+\omega^P_+,d^P_-\omega^P_-\}$ obey
			\begin{equation*}
				\frac{\gamma^P}{\gamma}+\frac{\omega^P}{p_\beta}<1.
			\end{equation*}
			If $\gamma=\overline{\gamma}_\beta$ we assume that
			\begin{equation}\label{PressDecomp}
				p(\rho,\zeta,\vartheta)=\pi(\zeta,\vartheta)\rho^\gamma+\mathscr p(\rho,\zeta,\vartheta)
			\end{equation} 
			for some $\pi\in L^\infty((\underline a,\overline a)\times(0,\infty))$ such that
			\begin{equation*}
				\essinf_{(\underline a,\overline a)\times(0,\infty)}\pi\geq \underline\pi>0
			\end{equation*}
			and some $\mathscr p\in C([0,\infty)\times[\underline a,\overline a]\times [0,\infty))$ such that
			the mapping $\rho\mapsto \mathscr p(\rho,\zeta,\vartheta)$ is for all $(\zeta,\vartheta)$ continuous and nondecreasing on $[0,\infty)$.
			\item Growth conditions for the entropy and the internal energy variations
			\begin{align}
				r|\partial_r\mathscr s_\pm|(r,\vartheta)\leq& \overline c(r^{\underline \Gamma^s-1}+r^{\overline\Gamma^s-1})(1+\overline {d}^s_\pm\vartheta)^{\overline\omega^s},\label{EntRVar}\\
				r|\partial_\vartheta\mathscr s_\pm|(r,\vartheta)\leq &\overline c(1+r^{\tilde\Gamma^s})(1+\tilde {d}^s_\pm\vartheta)^{\tilde\omega^s},\label{EntTVar}\\
				r|\partial_r \mathsf e_\pm|(r,\vartheta)\leq& \overline c(r^{\underline \Gamma^e-1}+r^{\overline\Gamma^e-1})(1+\overline {d}^e_\pm\vartheta)^{\overline\omega^e}\label{EneRVar},\\
				r|\partial_\vartheta \mathsf e_\pm|(r,\vartheta)\leq &\overline c(1+r^{\tilde\Gamma^e})(1+\tilde {d}^e_\pm\vartheta)^{\tilde\omega^e}\label{EneTVar}
			\end{align}
			for all $(r,\vartheta)\in(0,\infty)^2$. In the above inequalities we consider $\underline\Gamma^a\in[0,1)$, $\overline\Gamma^a\in (0,\overline\gamma)$, $\tilde\Gamma^a\in(0,\overline\gamma)$ such that
			\begin{equation*}
				\frac{\max\{\underline\Gamma^a,\overline\Gamma^a\}}{\overline\gamma}+\frac{\overline\omega^a}{p_\beta}<1,\ \frac{\tilde\Gamma^a}{\overline\gamma}+\frac{\tilde\omega^a+1}{p_\beta}<1,
			\end{equation*}
			where the supersript $a$ stand for $e$ and $s$, $\overline\omega^a=\max\{d^a_+\overline\omega^a_+,d^a_-\overline\omega^a_-\}$, 
			\begin{equation}\label{OVGDef}
				\overline{\gamma}=\gamma+\gamma_{BOG},\ \gamma_{BOG}=\min\left\{\frac{2}{3}\gamma-1,\left(\frac{1}{2}-\frac{1}{p_\beta}\right)\gamma\right\}.
			\end{equation}

			\item Growth conditions for the pressure variations
			\begin{equation}\label{PressVarGr}
				\begin{split}
					|\partial_r\mathsf P_\pm|(r,\vartheta)\leq &\overline c(r^{\underline \Gamma^P-1}+r^{\overline \Gamma^P-1})(1+\overline d^P_\pm\vartheta)^{\overline\omega^P},\\
					|\partial_\vartheta\mathsf P_\pm|(r,\vartheta)\leq &\overline c(1+r^{\tilde \Gamma^P})(1+\vartheta)^{\tilde\omega^P_\pm}
				\end{split}
			\end{equation}
			for all $(r,\vartheta)\in (0,\infty)^2$ with some $\underline\Gamma^P\in[0,1)$, $\overline\Gamma^P\in (0,\gamma)$, $\tilde\Gamma^P\in(0,\gamma)$ such that
			\begin{equation*}
				\frac{\max\{\underline\Gamma^P,\overline\Gamma^P\}}{\overline\gamma}+\frac{\overline\omega^P}{p_\beta}<1,\ \frac{\tilde\Gamma^P}{\gamma}+\frac{\tilde\omega^P+1}{p_\beta}<1,
			\end{equation*}
			where $\overline\omega^P=\max\{d^P_+\overline\omega^P_+,d^P_-\overline\omega^P_-\}$, $\tilde\omega^P=\max\{d^P_+\tilde\omega^P_+,d^P_-\tilde\omega^P_-\}$ and $\overline\gamma$ is given by \eqref{OVGDef}.
		\end{itemize}
	\end{enumerate} 
 
	\begin{remark}
    The exponents $\gamma_\pm$, $\omega^e_\pm$, $\overline\gamma_\beta$, $\gamma^s_\pm$, $\omega^s_\pm, \gamma^P_\pm$, $\omega^P_\pm$, $\underline\Gamma^s$, $\overline\Gamma^s$, $\overline\omega^s$, $\tilde\Gamma^s$, $\overline\Gamma^s$, $\tilde\omega^s$, $\underline\Gamma^e$, $\overline\Gamma^e$, $\overline\omega^e$, $\tilde\omega^e$, $\underline\Gamma^P$, $\overline\Gamma^P$, $\overline\omega^P$, $\tilde\omega^P$ and the inequalities relating them are used for obtaining bound on approximating sequences independent of any regularizing parameters in Section~\ref{SubSec:DEst}.
    \end{remark}
    \begin{remark}
	    The number $p_\beta$ is introduced in \eqref{OGBPBDef} for a specification of an integrability exponent of $\vartheta$ that is later used for the interpolation with $L^\infty(0,T;L^4(\Omega))$ regularity of $\vartheta$ obtained from the radiative part of the internal energies, cf. \eqref{TEneEnt}.
        We obviously have
        \begin{equation}\label{PBLBound}
            p_\beta>4
        \end{equation}
        due to \eqref{KGrowth}.
	\end{remark}
 
	\begin{remark}
		For a couple $(g,h)\in [0,\infty]^2$ and all $\mathfrak a\in\eR$ we define
		\begin{equation}\label{DivConv}
			g/_{\mathfrak a}h
			=\begin{cases}
				g/h&\text{ if }h>0,\\
				\mathfrak a&\text{ if }h=0.
			\end{cases}
		\end{equation}		
	\end{remark}

	\section{Preliminaries}
	The following two lemmas will be useful for estimates of the full norm in Sobolev spaces provided only the estimate of the norm of gradient, its symmetric traceless part respectively. For the proof of the Poincar\'e type inequality in the following lemma we refer to \cite[Lemma 3.2]{Feireisl04}. 
	\begin{lemma}\label{Lem:Poincare}
		Let $\Omega\subset\eR^3$ be a bounded Lipschitz domain, $v\in W^{1,2}(\Omega)$ and nonnegative function $\rho$ fulfill
		\begin{equation*}
			\int_\Omega\rho^\gamma\leq K,\ \int_\Omega\rho\geq M>0
		\end{equation*}
		with some $\gamma>1$. Then there is $c=c(K,M,\gamma)>0$ such that
		\begin{equation*}
			\|v\|_{W^{1,2}(\Omega)}\leq c\left(\|\nabla v\|_{L^2(\Omega)}+\int_\Omega\rho|v|\right).
		\end{equation*}
	\end{lemma}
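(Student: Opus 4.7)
The plan is to argue by contradiction using the Rellich--Kondrachov compact embedding. Suppose the conclusion fails; then there exist sequences $\{v_n\}\subset W^{1,2}(\Omega)$ and $\{\rho_n\}$ of nonnegative functions satisfying the hypotheses such that
\[
\|v_n\|_{W^{1,2}(\Omega)}>n\left(\|\nabla v_n\|_{L^2(\Omega)}+\int_\Omega\rho_n|v_n|\right).
\]
After renormalizing so that $\|v_n\|_{W^{1,2}(\Omega)}=1$, we obtain both $\|\nabla v_n\|_{L^2(\Omega)}\to 0$ and $\int_\Omega\rho_n|v_n|\to 0$. By Banach--Alaoglu and the compact embedding of $W^{1,2}(\Omega)$ into $L^q(\Omega)$ for every $q<6$ in $\eR^3$, a subsequence (not relabelled) satisfies $v_n\rightharpoonup v$ weakly in $W^{1,2}(\Omega)$, $v_n\to v$ strongly in $L^q(\Omega)$ for every $q<6$, and, since $\gamma>1$, $\rho_n\rightharpoonup\rho$ weakly in $L^\gamma(\Omega)$. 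Weak lower semicontinuity delivers $\rho\geq 0$ a.e.\ and $\int_\Omega\rho\geq M$.

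The vanishing of $\|\nabla v_n\|_{L^2}$ forces $\nabla v=0$, so $v$ is constant on each connected component of $\Omega$; working componentwise I may treat $v\equiv C$ as a single constant. The crux is to evaluate $\lim_n\int_\Omega\rho_n|v_n|$. Splitting
\[
\int_\Omega\rho_n|v_n|=\int_\Omega\rho_n\bigl(|v_n|-|C|\bigr)+|C|\int_\Omega\rho_n,
\]
the second term tends to $|C|\int_\Omega\rho\geq|C|M$ by weak convergence, while the first is estimated by $\|\rho_n\|_{L^\gamma(\Omega)}\bigl\||v_n|-|C|\bigr\|_{L^{\gamma'}(\Omega)}$. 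The strong convergence $v_n\to C$ in $L^{\gamma'}(\Omega)$ follows by interpolating the strong $L^2$ convergence against the uniform $L^6$ bound supplied by the Sobolev embedding $W^{1,2}(\Omega)\hookrightarrow L^6(\Omega)$ whenever $\gamma'<6$, that is whenever $\gamma>6/5$; in the borderline regime $1<\gamma\leq 6/5$ one instead applies Vitali's theorem, exploiting the equi-integrability of $\{\rho_n\}$ in $L^1(\Omega)$ that the bound $\int_\Omega\rho_n^\gamma\leq K$ with $\gamma>1$ guarantees, together with the convergence $|v_n|\to|C|$ in measure.

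Combining the two pieces gives $0=\lim_n\int_\Omega\rho_n|v_n|\geq|C|M$, hence $C=0$. Together with $\|\nabla v_n\|_{L^2}\to 0$ this forces $\|v_n\|_{W^{1,2}(\Omega)}\to 0$, contradicting the normalization. The only delicate point is the last product passage, where one must pair the Hölder conjugate $\gamma'$ with the Sobolev exponent $6$; the regime $\gamma\geq 9/5$ in which this lemma is ultimately invoked throughout the paper keeps us comfortably below the critical value.
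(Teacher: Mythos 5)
The paper does not prove this lemma at all: it is quoted from \cite[Lemma 3.2]{Feireisl04}, and the argument given there is the same contradiction--compactness scheme you use, so your route matches the cited proof in spirit. Your proposal is essentially correct: the normalization, the weak/strong limits, the identification of the limit as a constant $C$, and, for $\gamma>6/5$, the splitting $\int_\Omega\rho_n|v_n|=\int_\Omega\rho_n(|v_n|-|C|)+|C|\int_\Omega\rho_n$ with the first term killed by $L^\gamma$--$L^{\gamma'}$ duality and strong $L^{\gamma'}$ convergence are all sound.

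Two points deserve tightening. First, the borderline case $1<\gamma\le 6/5$ as written is loose: Vitali's theorem applies to a sequence that converges in measure and is uniformly integrable, but the integrand $\rho_n(|v_n|-|C|)$ does neither obviously --- $\rho_n$ only converges weakly, and the uniform integrability of the product is exactly what is in question. The ingredients you name do suffice, but you must rearrange: since $|v_n|\ge 0$, the negative part of $|v_n|-|C|$ is bounded by $|C|$, so with $E_n^\varepsilon=\{\,\bigl||v_n|-|C|\bigr|>\varepsilon\}$ one has $\int_\Omega\rho_n(|v_n|-|C|)^-\le |C|\int_{E_n^\varepsilon}\rho_n+\varepsilon\int_\Omega\rho_n$, and $\int_{E_n^\varepsilon}\rho_n\le K^{1/\gamma}|E_n^\varepsilon|^{1/\gamma'}\to 0$ because $|E_n^\varepsilon|\to 0$ and $\gamma>1$. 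Hence $\liminf_n\int_\Omega\rho_n|v_n|\ge |C|M$, which forces $C=0$; equivalently, bound below directly by $\int_{\{|v_n|\ge |C|/2\}}\rho_n|v_n|\ge\tfrac{|C|}{2}\bigl(M-\int_{\{|v_n|<|C|/2\}}\rho_n\bigr)$. This version works for every $\gamma>1$ and never touches the Sobolev exponent, so the case distinction at $\gamma=6/5$ is unnecessary. Second, your aside about treating the connected components separately is not innocent: the hypothesis $\int_\Omega\rho\ge M$ is global, and the statement is false on a disconnected open set (put all the mass of $\rho$ in one component and let $v$ be a large constant on another); the argument stands only because a Lipschitz domain is by convention connected, so the limit is a single constant from the start.
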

	The ensuing lemma that deals with the generalized Korn--Poincar\'e inequality is taken from \cite[Theorem 10.17]{FeNo09}.
	\begin{lemma}\label{Lem:KornPoinc}
		Let $\Omega\subset\eR^3$ be a bounded Lipschitz domain, $v\in W^{1,2}(\Omega)$ and nonnegative function $\rho$ fulfill
		\begin{equation*}
			\int_\Omega\rho^\gamma\leq K,\ \int_\Omega\rho\geq M>0
		\end{equation*}
		with some $\gamma>1$. Then there is $c=c(K,M,\gamma)>0$ such that
		\begin{equation*}
			\|v\|_{W^{1,2}(\Omega)}\leq c\left(\left\|\nabla v+(\nabla v)^\top-\frac{2}{3}\dvr v\mathbb I\right\|_{L^2(\Omega)}+\int_\Omega\rho|v|\right).
		\end{equation*}
	\end{lemma}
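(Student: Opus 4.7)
My plan is to argue by compactness and contradiction, in the spirit of standard proofs of generalized Korn inequalities. Suppose the conclusion fails. Then there exist sequences $\{\rho_n\}\subset L^\gamma(\Omega)$ fulfilling the stated integral bounds with the same constants $K,M,\gamma$ and $\{v_n\}\subset W^{1,2}(\Omega;\eR^3)$ such that
\begin{equation*}
\|v_n\|_{W^{1,2}(\Omega)}=1\quad\text{while}\quad\Bigl\|\nabla v_n+(\nabla v_n)^\top-\tfrac{2}{3}\dvr v_n\mathbb I\Bigr\|_{L^2(\Omega)}+\int_\Omega\rho_n|v_n|\longrightarrow 0.
\end{equation*}
Passing to subsequences, $v_n\rightharpoonup v$ weakly in $W^{1,2}(\Omega)$ and strongly in $L^p(\Omega)$ for every $p\in[1,6)$ by Rellich--Kondrachov, while $\rho_n\rightharpoonup \rho$ weakly in $L^\gamma(\Omega)$; in particular $\rho\geq 0$ a.e.\ and $\int_\Omega \rho=\lim_n\int_\Omega\rho_n\geq M$.

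Weak $L^2$-convergence of the trace-free symmetric gradient identifies the limit as $\nabla v+(\nabla v)^\top-\tfrac{2}{3}\dvr v\,\mathbb I\equiv 0$, so $v$ is a conformal Killing vector field on the three-dimensional open set $\Omega$. By the classical characterization such a field is a polynomial in $x$ of degree at most two and belongs to a fixed finite-dimensional subspace $\mathcal K\subset C^\infty(\overline\Omega)$. Next I pass to the limit in $\int_\Omega\rho_n|v_n|\to 0$: when $\gamma>6/5$, choosing $p\in[\gamma/(\gamma-1),6)$ yields $v_n\to v$ in $L^p(\Omega)$ while $\rho_n\rightharpoonup\rho$ in $L^{p'}(\Omega)$, so directly $\int_\Omega\rho|v|=0$; in the borderline range $\gamma\in(1,6/5]$, an approximation of $v$ by functions from $C^\infty(\overline\Omega)$ (admissible because $v\in\mathcal K$ is smooth) combined with the $L^1$-boundedness of $\rho_n$ and $v_n\to v$ in $L^{p}$ for some $p<6$ yields the same identity.

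Since $\rho\geq 0$ and $\int_\Omega\rho\geq M>0$, the set $\{\rho>0\}$ has positive Lebesgue measure; together with $\int_\Omega\rho|v|=0$ this forces $v=0$ on a set of positive measure, and as $v$ is a polynomial it must vanish identically on $\Omega$. Consequently $v_n\to 0$ strongly in $L^2(\Omega)$, and the classical trace-free Korn inequality on bounded Lipschitz domains,
\begin{equation*}
\|\nabla w\|_{L^2(\Omega)}\leq c\Bigl(\Bigl\|\nabla w+(\nabla w)^\top-\tfrac{2}{3}\dvr w\,\mathbb I\Bigr\|_{L^2(\Omega)}+\|w\|_{L^2(\Omega)}\Bigr),
\end{equation*}
applied to $w=v_n$ delivers $\|\nabla v_n\|_{L^2(\Omega)}\to 0$. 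Hence $\|v_n\|_{W^{1,2}(\Omega)}\to 0$, contradicting $\|v_n\|_{W^{1,2}(\Omega)}=1$.

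The step I expect to be the main obstacle is the identification $v\equiv 0$: it rests on the nontrivial characterization of the kernel of the trace-free symmetric gradient in three space dimensions as a finite-dimensional subspace of polynomials, and on the careful limit passage in $\int_\Omega\rho_n|v_n|$ available only because the Sobolev exponent in dimension three is $6$ and a small additional argument is required in the range $\gamma\in(1,6/5]$. The remaining ingredients---weak compactness, Rellich--Kondrachov, and the classical trace-free Korn inequality---are standard on bounded Lipschitz domains.
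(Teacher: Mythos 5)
The paper does not prove this lemma at all: it is quoted verbatim from Feireisl--Novotn\'y \cite[Theorem 10.17]{FeNo09}, and your contradiction/compactness scheme is in essence the standard proof of that result, so the overall route (normalize $\|v_n\|_{W^{1,2}}=1$, extract weak limits, identify the limit as a conformal Killing field, kill it using the weighted term, then conclude with the classical trace-free Korn inequality plus Rellich) is the right one. The kernel characterization in dimension three, the fact that a nonzero polynomial cannot vanish on a set of positive measure, the lower bound $\int_\Omega\rho\geq M$ for the weak limit, and the final step via the deviatoric Korn inequality on Lipschitz domains are all correctly used, and for $\gamma>6/5$ your weak--strong pairing argument for $\int_\Omega\rho_n|v_n|\to\int_\Omega\rho|v|$ is complete.

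The one step that does not hold together as written is the borderline range $\gamma\in(1,6/5]$. There the problematic term is the pairing $\int_\Omega\rho_n\bigl(|v_n|-|v|\bigr)$, and the ingredients you invoke --- $L^1$-boundedness of $\rho_n$ together with $v_n\to v$ in $L^p$ for some $p<6$ --- do not control it: you would need either $v_n\to v$ in $L^{\gamma'}$ (unavailable, since $\gamma'\geq 6$) or $v_n-v$ bounded in $L^\infty$ (also unavailable); approximating $v$ by smooth functions is beside the point, since $v$ is already a polynomial and the difficulty sits in $v_n-v$. The gap is easily repaired, and in a way that removes the case distinction altogether: since $\min(|v_n|,k)\to\min(|v|,k)$ in every $L^q(\Omega)$, $q<\infty$, the weak convergence $\rho_n\rightharpoonup\rho$ in $L^\gamma$ gives
\begin{equation*}
0=\lim_{n\to\infty}\int_\Omega\rho_n|v_n|\geq\lim_{n\to\infty}\int_\Omega\rho_n\min(|v_n|,k)=\int_\Omega\rho\min(|v|,k)\quad\text{for every }k,
\end{equation*}
and letting $k\to\infty$ (monotone convergence) yields $\int_\Omega\rho|v|=0$ for every $\gamma>1$. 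Alternatively one can argue via equi-integrability of $\{\rho_n\}$ in $L^1$, which the uniform $L^\gamma$ bound with $\gamma>1$ provides, showing directly that $\liminf_n\int_\Omega\rho_n|v_n|>0$ whenever the limit Killing field $v$ is nonzero. With this correction your proof is complete.
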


	Next, we state the lemma connecting families of parametrized measures and the weak convergence. For its proof see \cite[Theorem 6.2]{Ped97}.  
	\begin{lemma}\label{Lem:YMeas}
		Let $Q$ be a domain in $\eR^N$, $\{v_n\}$, $v_n:Q\to\eR^M$ be a weakly convergent sequence of functions in $L^1(Q)$. Then there is a nonrelabeled subsequence $\{v_n\}$ and a parametrized family $\{\nu_y\}_{y\in Q}$ of probability measures on $\eR^M$ depending measurably on $y\in Q$, meaning that 
		$\nu_y$ is a probability measure for a.a. $y\in Q$ and
		\begin{equation*}
			y\mapsto \int_{\eR^M}\phi(\lambda)\mathrm d\nu_y(\lambda)=\langle\nu_y,\phi\rangle\text{ is measurable on }Q,
		\end{equation*}
		possessing the property:
		
		For any Caratheodory function $\Phi:Q\times \eR^M\to\eR$, meaning that
		$\lambda\mapsto\Phi(x,\lambda)$ is continuous on $\eR^M$ for a.a. $x\in Q$ and $x\mapsto\Phi(x,\lambda)$ is measurable on $Q$ for all $\lambda\in\eR^M$, such that
		\begin{equation*}
			\Phi(\cdot,v_n)\rightharpoonup \bar\Phi\text{ in }L^1(Q)
		\end{equation*} 
		it follows that 
		\begin{equation}\label{WLId}
			\bar\Phi(y)=\int_{\eR^M}\Phi(y,z)\mathrm d\nu_y(z)\text{ for a.a. }y\in Q. 
		\end{equation}
		The family $\{\nu_y\}$ associated to a weakly convergent sequence $\{v_n\}$ in $L^1(Q)$ is termed Young measure. 
	\end{lemma}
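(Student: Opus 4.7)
The plan is to realize the Young measure as the weak$^*$ limit in $L^\infty_w(Q;\mathcal M(\eR^M))$ of the trivial Dirac measures $\delta_{v_n(x)}$, then use the equi-integrability supplied by weak $L^1$ convergence to upgrade the limit from sub-probability to probability measures, and finally identify the action of $\nu_y$ on arbitrary Carath\'eodory functions by a density and approximation argument.

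First I would regard each $v_n$ as generating a weakly$^*$ measurable map $x\mapsto\delta_{v_n(x)}\in \mathcal M(\eR^M)$, which is an element of $L^\infty_w(Q;\mathcal M(\eR^M))$ of unit norm. By the Dunford--Pettis representation this space is isometrically the dual of $L^1(Q;C_0(\eR^M))$, so the Banach--Alaoglu theorem furnishes a subsequence and a weakly$^*$ limit $\nu\in L^\infty_w(Q;\mathcal M(\eR^M))$ with $\nu_y\geq 0$ for a.a.\ $y\in Q$. By definition of this convergence,
\begin{equation*}
\int_Q\psi(y)\int_{\eR^M}\phi(\lambda)\,\mathrm d\delta_{v_n(y)}(\lambda)\,\mathrm dy\longrightarrow\int_Q\psi(y)\int_{\eR^M}\phi(\lambda)\,\mathrm d\nu_y(\lambda)\,\mathrm dy
\end{equation*}
for every $\psi\in L^1(Q)$ and $\phi\in C_0(\eR^M)$. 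The left-hand side equals $\int_Q\psi(y)\phi(v_n(y))\,\mathrm dy$, so for such product test functions $\Phi(y,\lambda)=\psi(y)\phi(\lambda)$ the representation \eqref{WLId} already holds.

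Next I would show that $\nu_y$ is a probability measure for a.a.\ $y$. The weak convergence of $\{v_n\}$ in $L^1(Q)$ yields, via the Dunford--Pettis theorem, equi-integrability of the sequence; on a bounded portion of $Q$ this gives tightness, namely for every $\varepsilon>0$ there is $R>0$ so that $|\{|v_n|>R\}|<\varepsilon$ uniformly in $n$. Testing the weak$^*$ convergence with a cut-off $\chi_R\in C_c(\eR^M)$ with $0\leq \chi_R\leq 1$ and $\chi_R\equiv 1$ on $B_R$, one obtains $\int_{\eR^M}\chi_R\,\mathrm d\nu_y\to 1$ in $L^1_{\mathrm{loc}}(Q)$ as $R\to\infty$, which combined with the monotone convergence theorem forces $\nu_y(\eR^M)=1$ a.e. Measurability of $y\mapsto\langle\nu_y,\phi\rangle$ for every $\phi\in C_0(\eR^M)$ is built into the weak$^*$ membership in $L^\infty_w(Q;\mathcal M(\eR^M))$.

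Finally I would extend the identification to general Carath\'eodory $\Phi$. For $\Phi$ continuous in $\lambda$ and merely measurable in $y$, one can use a Scorza--Dragoni argument to restrict $\Phi$ to a subset of $Q$ of arbitrarily small complement where $\Phi$ is jointly continuous, and then approximate uniformly by products of the form $\sum_k\psi_k(y)\phi_k(\lambda)$ with $\phi_k\in C_0(\eR^M)$; the identity \eqref{WLId} for such products already established passes to the limit. The main technical obstacle is that a Carath\'eodory function $\Phi$ with $\Phi(\cdot,v_n)\rightharpoonup\bar\Phi$ in $L^1(Q)$ need not be bounded in $\lambda$, so one must ensure that the contributions from $\{|v_n|>R\}$ as well as from the ``tails'' of $\nu_y$ outside $B_R$ can be made simultaneously small; this is exactly where the equi-integrability of $\{\Phi(\cdot,v_n)\}$ (the very hypothesis ensuring that the weak $L^1$ limit $\bar\Phi$ exists) combined with the tightness of $\nu_y$ is decisive, allowing one to truncate at a radius $R$, use the already-established identity for the truncated $\Phi\chi_R$, and let $R\to\infty$ to recover \eqref{WLId}.
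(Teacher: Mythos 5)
The paper does not prove this lemma at all: it is quoted from the literature, with the proof deferred to \cite[Theorem 6.2]{Ped97}. Your plan is essentially that standard proof of the fundamental theorem on Young measures — realize $\delta_{v_n(\cdot)}$ in $L^\infty_w(Q;\mathcal M(\eR^M))=\bigl(L^1(Q;C_0(\eR^M))\bigr)^*$, extract a weak$^*$ limit by Banach--Alaoglu (legitimate sequentially because the predual is separable), upgrade to probability measures using the uniform $L^1$ bound on $\{v_n\}$, and then pass from products $\psi(y)\phi(\lambda)$ to general Carath\'eodory integrands by Scorza--Dragoni, truncation and the Dunford--Pettis equi-integrability of $\{\Phi(\cdot,v_n)\}$ — so the approach matches the cited source and is correct in outline. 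The only point worth sharpening is the last step: on the $\nu_y$ side it is not tightness of the probability measures that controls the tail $\int_{\{|\lambda|>R\}}|\Phi(y,\lambda)|\,\mathrm d\nu_y(\lambda)$ (tightness says nothing about unbounded integrands), but a Fatou/monotone-convergence argument applied to the truncated identities, which yields $\langle\nu_y,|\Phi(y,\cdot)|\rangle\in L^1_{\mathrm{loc}}(Q)$ from the uniform $L^1$ bound of $\{\Phi(\cdot,v_n)\}$ and lets you send $R\to\infty$ in \eqref{WLId}; the equi-integrability is then what prevents mass from escaping on the $v_n$ side.
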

	We use several times the following consequence of the latter assertion.
	\begin{lemma}\label{Lem:LimProd}
		Let $Q$ be a domain in $\eR^N$, $\{Z_n\}$, $Z_n:Q\to\eR^M$, $\{\theta_n\}$, $\theta_n:Q\to\eR$ and $\{(Z_n,\theta_n)\}$ be weakly convergent sequences in $L^1(Q)$. Moreover, let
		\begin{equation}\label{ProdLimId}
			\overline{b(Z)g(\theta)}=\overline{b(Z)}\ \overline{g(\theta)}
		\end{equation}  
		hold for any $b\in C^1(\eR^M)$ with $\nabla b\in C_c(\eR^M)$ and any $g\in C^1_b(\eR)$, where $\overline{b(Z)g(\theta)}$ denotes the weak limit of the composed sequence $\{b(Z_n)g(\theta_n)\}$ in $L^1(Q)$ and $\overline{b(Z)}$, $\overline{g(\theta)}$ stand for the weak limits of $\{b(Z_n)\}$, $\{g(\theta_n)\}$ respectively, in $L^1(Q)$. Then
		\begin{equation}\label{LimPr}
			\overline{\Phi(\cdot,Z)G(\theta)}=\overline{\Phi(\cdot,Z)}\ \overline{G(\theta)}
		\end{equation}
		holds for any Caratheodory function $\Phi:Q\times\eR^M\to\eR$ and any $G\in C(\eR)$ such that the sequences of compositions $\{\Phi(\cdot,Z_n)\}$, $\{G(\theta_n)\}$ and $\{\Phi(\cdot,Z_n)G(\theta_n)\}$ converge weakly in $L^1(Q)$.
		\begin{proof}
			Lemma~\ref{Lem:YMeas} guarantees the existence of parametrized Young measures $\{\nu^{(Z,\theta)}_y\}$, $\{\nu^{Z}_y\}$ and $\{\nu^{\theta}_y\}$ associated to the sequences $\{(Z_n,\theta_n)\}$, $\{Z_n\}$, $\{\theta_n\}$. Employing Lemma~\ref{Lem:YMeas} in combination with \eqref{ProdLimId} we get
			\begin{equation*}
				\int_{\eR^{M+1}}b(\lambda)g(\eta)\mathrm d\nu^{(Z,\theta)}_{y}(\lambda,\eta)=\overline{b(Z)g(\theta)}=\overline{b(Z)}\ \overline{g(\theta)}=\int_{\eR^{M}}b(\lambda)\mathrm d\nu^{Z}_{y}(\lambda)\int_\eR g(\eta)\mathrm d\nu^{\theta}_{y}(\eta)
			\end{equation*}
			implying the decomposition
			\begin{equation*}
				\nu^{(Z,\theta)}_y(A\times B)=\nu_y^{Z}(A)\nu_{y}^{\theta}(B)\text{ for any pair of open sets }A\subset\eR^M,\ B\subset\eR.
			\end{equation*}
			Consequently, fixing $\Phi$ and $G$ from the assumptions of the lemma we have
			\begin{equation*}
				\begin{split}
					\overline{\Phi(\cdot,Z)G(\theta)}(y)&=\int_{\eR^{M+1}}\Phi(y,\lambda)G(\eta)\mathrm d\nu^{(Z,\theta)}_y(\lambda,\eta)=\int_{\eR^{M+1}}\Phi(y,\lambda)G(\eta)\mathrm d\nu^{Z}_y(\lambda)\mathrm d\nu^{\theta}_y(\eta)\\&=\int_{\eR^{M}}\Phi(y,\lambda)\mathrm d\nu^{Z}_y(\lambda)\int_{\eR}G(\eta)\mathrm d\nu^{\theta}_y(\eta)=\overline{\Phi(\cdot,Z)}(y)\overline{G(\theta)}(y),
				\end{split}
			\end{equation*}
			where we also used the Fubini theorem, i.e. we have shown \eqref{LimPr}.
		\end{proof}
	\end{lemma}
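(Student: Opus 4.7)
The plan is to pass to the language of Young measures (Lemma~\ref{Lem:YMeas}), upgrade the hypothesis \eqref{ProdLimId} from an identity about weak limits of composed sequences into a pointwise product structure of the \emph{joint} Young measure, and then read \eqref{LimPr} off via Fubini's theorem. Concretely, after extracting (a non-relabeled subsequence and) parametrized Young measures $\{\nu^{(Z,\theta)}_y\}$, $\{\nu^Z_y\}$, $\{\nu^\theta_y\}$ associated to $\{(Z_n,\theta_n)\}$, $\{Z_n\}$, $\{\theta_n\}$ respectively, the formula \eqref{WLId} applied to the Caratheodory functions $(\lambda,\eta)\mapsto b(\lambda)g(\eta)$, $\lambda\mapsto b(\lambda)$, $\eta\mapsto g(\eta)$ rewrites the assumption \eqref{ProdLimId} as
\begin{equation*}
\int_{\eR^{M+1}}\!\! b(\lambda)g(\eta)\,\mathrm d\nu^{(Z,\theta)}_y(\lambda,\eta)=\Bigl(\int_{\eR^{M}}\!\! b(\lambda)\,\mathrm d\nu^{Z}_y(\lambda)\Bigr)\Bigl(\int_{\eR}\!\! g(\eta)\,\mathrm d\nu^{\theta}_y(\eta)\Bigr)
\end{equation*}
for a.a.\ $y\in Q$, valid for every $b\in C^1(\eR^M)$ with $\nabla b\in C_c(\eR^M)$ and every $g\in C^1_b(\eR)$.

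Next I would extend this identity to a class of tensor products $b\otimes g$ wide enough to characterize a Borel probability measure on $\eR^{M+1}$. The class of admissible $b$ (including, in particular, all bounded $C^1$ functions with compactly supported gradient) and the class $C^1_b(\eR)$ for $g$ are both dense in $C_0(\eR^M)$ and $C_0(\eR)$ respectively in the sup norm, so bounded-convergence / monotone-class arguments upgrade the tensor-product identity to characteristic functions of open boxes $A\times B$, yielding
\begin{equation*}
\nu^{(Z,\theta)}_y(A\times B)=\nu^Z_y(A)\,\nu^{\theta}_y(B)
\end{equation*}
for a.a.\ $y\in Q$. By the uniqueness part of the product measure construction, this forces $\nu^{(Z,\theta)}_y=\nu^Z_y\otimes\nu^{\theta}_y$ a.e.

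Finally, given $\Phi$ Caratheodory and $G\in C(\eR)$ such that the three mentioned sequences converge weakly in $L^1(Q)$, Lemma~\ref{Lem:YMeas} and Fubini's theorem give
\begin{equation*}
\overline{\Phi(\cdot,Z)G(\theta)}(y)=\int_{\eR^{M+1}}\!\!\Phi(y,\lambda)G(\eta)\,\mathrm d(\nu^Z_y\otimes\nu^{\theta}_y)=\overline{\Phi(\cdot,Z)}(y)\,\overline{G(\theta)}(y)
\end{equation*}
for a.a.\ $y\in Q$, which is \eqref{LimPr}.

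The main obstacle I expect is the density/approximation step that produces the factorization $\nu^{(Z,\theta)}_y=\nu^Z_y\otimes\nu^{\theta}_y$. The assumed class of test pairs $(b,g)$ is not quite $C_0(\eR^M)\times C_0(\eR)$, so one must carefully truncate and mollify to promote the identity from $C^1$-with-compactly-supported-gradient $\times\, C^1_b$ to a family separating Borel product measures, while simultaneously ensuring that the corresponding composed sequences remain uniformly integrable so that \eqref{WLId} keeps applying. Once the product structure of the joint Young measure is in hand, the rest is bookkeeping.
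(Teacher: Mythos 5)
Your proposal follows essentially the same route as the paper's proof: both pass to the Young measures of Lemma~\ref{Lem:YMeas}, use \eqref{ProdLimId} to factor the joint measure as $\nu^{(Z,\theta)}_y=\nu^Z_y\otimes\nu^\theta_y$, and conclude \eqref{LimPr} via Fubini. Your extra care about the density/truncation step needed to pass from the test class $(b,g)$ to product sets is a reasonable elaboration of what the paper leaves implicit, but it is not a different argument.
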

	The following lemma concerns the weak limits of products of monotone operator. The assertion follows immediately from \cite[Theorem 10.19]{FeNo09}
	\begin{lemma}\label{Lem:MonWConv}
		Let $Q$ be a domain in $\eR^N$, $P,G:Q\times[0,\infty)\to\eR$ be functions such that $s\mapsto P(y,s)$, $s\mapsto G(y,s)$ are nondecreasing and continuous on $[0,\infty)$ for a.a. $y\in Q$. Assume that $\{s_n\}\subset L^1(Q)$ is a sequence such that
		\begin{equation*}
			\begin{split}
				P(\cdot,s_n)&\rightharpoonup\overline{P(\cdot,s)},\\
				G(\cdot,s_n)&\rightharpoonup\overline{G(\cdot,s)},\\
				P(\cdot,s_n)G(\cdot,s_n)&\rightharpoonup\overline{P(\cdot,s)G(\cdot,s)}
			\end{split}
		\end{equation*}
		in $L^1(Q)$.
		\begin{enumerate}
			\item Then
			\begin{equation*}
				\overline{P(\cdot,s)}\ \overline{G(\cdot,s)}\leq \overline{P(\cdot,s)G(\cdot,s)}.
			\end{equation*}	
			\item If $P:Q\times[0,\infty)\to\eR$, $G(s)=s$ and $\overline{P(s)s}=\overline{P(s)}s$ then $\overline{P(s)}=P(s)$
		\end{enumerate}
	\end{lemma}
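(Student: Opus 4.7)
The plan is to reduce both assertions to a Young--measure computation based on Lemma~\ref{Lem:YMeas}. Passing to a non--relabelled subsequence, we associate to $\{s_n\}$ a parametrized family of probability measures $\{\nu_y\}_{y\in Q}$ on $[0,\infty)$. Since the maps $(y,\lambda)\mapsto P(y,\lambda)$, $(y,\lambda)\mapsto G(y,\lambda)$ and $(y,\lambda)\mapsto P(y,\lambda)G(y,\lambda)$ are Carath\'eodory and the three composed sequences are assumed to converge weakly in $L^1(Q)$, Lemma~\ref{Lem:YMeas} delivers the representations
\begin{equation*}
	\overline{P(\cdot,s)}(y)=\int P(y,\lambda)\,\mathrm d\nu_y(\lambda),\ \overline{G(\cdot,s)}(y)=\int G(y,\lambda)\,\mathrm d\nu_y(\lambda),\ \overline{P(\cdot,s)G(\cdot,s)}(y)=\int P(y,\lambda)G(y,\lambda)\,\mathrm d\nu_y(\lambda)
\end{equation*}
for a.a.\ $y\in Q$.

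For (i) I would use the pointwise inequality
\begin{equation*}
	\bigl(P(y,\lambda_1)-P(y,\lambda_2)\bigr)\bigl(G(y,\lambda_1)-G(y,\lambda_2)\bigr)\geq 0\quad\text{for all }\lambda_1,\lambda_2\in[0,\infty),
\end{equation*}
which is immediate from the monotonicity of $P(y,\cdot)$ and $G(y,\cdot)$. Integrating against the product measure $\mathrm d\nu_y(\lambda_1)\otimes\mathrm d\nu_y(\lambda_2)$, applying Fubini, and using that $\nu_y$ is a probability measure, the four cross terms combine into $2\,\overline{P(\cdot,s)G(\cdot,s)}(y)-2\,\overline{P(\cdot,s)}(y)\,\overline{G(\cdot,s)}(y)\geq 0$, which is precisely~(i).

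For (ii), with $G(s)=s$ the weak $L^1$--limit of $\{s_n\}$ is $s(y)=\int\lambda\,\mathrm d\nu_y(\lambda)$, and the hypothesis $\overline{P(s)s}=\overline{P(s)}\,s$ converts the inequality from (i) into an equality. Hence
\begin{equation*}
	\int\!\!\int\bigl(P(\lambda_1)-P(\lambda_2)\bigr)(\lambda_1-\lambda_2)\,\mathrm d\nu_y(\lambda_1)\,\mathrm d\nu_y(\lambda_2)=0
\end{equation*}
for a.a.\ $y\in Q$. The integrand is nonnegative, so it vanishes $\nu_y\otimes\nu_y$--a.e.; since $P$ is continuous nondecreasing, this forces $\supp\nu_y$ to lie in a single interval level set $L_y$ of $P$. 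Consequently $s(y)=\int\lambda\,\mathrm d\nu_y(\lambda)\in L_y$, and therefore $\overline{P(s)}(y)=\int P(\lambda)\,\mathrm d\nu_y(\lambda)=P(s(y))$, which proves~(ii).

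The only delicate point, and the step I expect to be the main obstacle, is the justification of the double integration above: Lemma~\ref{Lem:YMeas} identifies the weak $L^1(Q)$--limits with integrals against $\nu_y$ one factor at a time, whereas the argument tests $P$ and $G$ simultaneously against $\nu_y\otimes\nu_y$ through an unbounded integrand. The cleanest remedy is to work first with truncations $P_k=(-k)\vee(P\wedge k)$ and $G_k=(-k)\vee(G\wedge k)$, which are bounded, continuous and still monotone in $s$, run the computation for them, and then pass $k\to\infty$ using monotone and dominated convergence combined with the uniform integrability encoded in the hypothesised weak--$L^1$ limits of $P(\cdot,s_n)$, $G(\cdot,s_n)$ and $P(\cdot,s_n)G(\cdot,s_n)$.
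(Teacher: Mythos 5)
Your route is correct in substance, but it is not the paper's: the paper does not prove Lemma~\ref{Lem:MonWConv} at all, it simply invokes \cite[Theorem 10.19]{FeNo09}, where this classical statement is established by elementary truncation--monotonicity arguments; your Young-measure argument is a self-contained alternative. Its core is fine: integrating the pointwise inequality $(P(y,\lambda_1)-P(y,\lambda_2))(G(y,\lambda_1)-G(y,\lambda_2))\ge 0$ against $\nu_y\otimes\nu_y$ gives the covariance inequality of item (1), and in item (2) the equality case forces $P$ to take a single value on the interval spanned by $\supp\nu_y$, which contains the barycenter $s(y)$, whence $\overline{P(s)}=P(s)$. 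Your truncation remedy ($P_k$, $G_k$, then $k\to\infty$ using the equi-integrability encoded in the assumed weak $L^1$ convergences, with a Fatou argument in the equality case) is exactly what is needed to legitimize splitting the double integrals, so that caveat is well handled.

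The one step that is genuinely unjustified as written is the very first one in item (1): Lemma~\ref{Lem:YMeas}, as stated in the paper, attaches the family $\{\nu_y\}$ with the representation property \eqref{WLId} to a sequence that converges weakly in $L^1(Q)$, whereas the hypotheses of Lemma~\ref{Lem:MonWConv} give weak $L^1$ convergence only of the compositions $P(\cdot,s_n)$, $G(\cdot,s_n)$, $P(\cdot,s_n)G(\cdot,s_n)$; the base sequence $\{s_n\}$ is neither assumed weakly convergent nor even bounded in $L^1(Q)$. Without some tightness, mass can escape to infinity and no probability Young measure with property \eqref{WLId} exists for $\{s_n\}$ (take $s_n\equiv n$ and $P$ bounded), so ``the Young measure associated to $\{s_n\}$'' is not available in the stated generality. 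In item (2) this is harmless, since there $s_n=G(\cdot,s_n)\rightharpoonup s$ by hypothesis, and it is also harmless in every application made in the paper, where the relevant sequences are bounded in $L^1$. To close item (1) properly you should either add that boundedness assumption, or sidestep the issue by generating the Young measure of the weakly convergent pair $(P(\cdot,s_n),G(\cdot,s_n))$, whose values lie in a comonotone subset of $\eR^2$, and running the same product-measure computation there; the elementary proof behind \cite[Theorem 10.19]{FeNo09} avoids any reference to a Young measure for $\{s_n\}$ altogether.
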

	We will use several times the following variant of celebrated Div--Curl lemma, see \cite[Section 10.13]{FeNo09} for the proof.
	\begin{lemma}\label{Lem:DivCurl}
		Let $Q\subset\eR^N$ be an open set. Assume that
		\begin{equation*}
			\begin{alignedat}{2}
				U_n\rightharpoonup U\text{ in }L^p(Q;\eR^N),\\
				V_n\rightharpoonup V\text{ in }L^q(Q;\eR^N),
			\end{alignedat}
		\end{equation*}
		where $\frac{1}{p}+\frac{1}{q}=\frac{1}{r}<1$.
		In addition, let the sequnces $\{\mathrm{Div}\ U_n\}$ and $\{\mathrm{Curl}\  V_n\}$ are precompact in $W^{-1,s}(Q)$ for a certain $s>1$, where $\mathrm{Div}\  U=\sum^N_{j=1}\partial_{j}U_j$ and $\mathrm {Curl}\ V=(\partial_iV_j-\partial_jV_i)_{i,j=1}^N$.
		Then 
		\begin{equation*}
			U_n\cdot V_n\rightharpoonup U\cdot V\text{ in }L^r(Q;\eR^N).
		\end{equation*}
	\end{lemma}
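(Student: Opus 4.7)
My plan is to follow the classical Murat--Tartar approach via a Helmholtz decomposition. Since weak convergence in $L^r(Q)$ amounts to testing against $\psi\in C_c^\infty(Q)$ together with a uniform $L^r$ bound (the latter immediate from H\"older), fix such a $\psi$ and a cutoff $\chi\in C_c^\infty(Q)$ with $\chi\equiv 1$ on $\supp\psi$. Replacing $U_n,V_n$ by $\chi U_n,\chi V_n$ preserves the weak limits in $L^p,L^q$, while altering $\mathrm{Div}\,U_n,\mathrm{Curl}\,V_n$ only by $L^p,L^q$-bounded terms involving $\nabla\chi$; these terms are compactly supported, so by the compact embedding of $L^p,L^q$ into $W^{-1,s'}$ on bounded domains they remain precompact in $W^{-1,s'}$ for $s'\leq\min(s,p,q)$. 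Thus we may assume that $U_n,V_n$ are supported in a fixed compact set of $\eR^N$ and work there.

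Next, decompose $V_n=\nabla\phi_n+W_n$, where $\phi_n:=(-\Delta)^{-1}(\mathrm{Div}\,V_n)$ is defined via the Newtonian potential on $\eR^N$, so that $\mathrm{Div}\,W_n=0$ and $\mathrm{Curl}\,W_n=\mathrm{Curl}\,V_n$. The Calder\'on--Zygmund theorem (Riesz transforms being bounded on $L^q$ for $1<q<\infty$) yields $\nabla\phi_n$ bounded in $L^q(\eR^N)$ and $\phi_n$ bounded in $W^{1,q}_{\mathrm{loc}}$. From the identity
\begin{equation*}
-\Delta W_n^{(j)}=-\sum_i\partial_i(\mathrm{Curl}\,V_n)_{ij},
\end{equation*}
whose right-hand side is precompact in $W^{-2,s}_{\mathrm{loc}}$ by hypothesis, elliptic regularity combined with Rellich's theorem gives, along a subsequence, $W_n\to W:=V-\nabla\phi$ strongly in $L^\sigma_{\mathrm{loc}}(\eR^N)$ for every $\sigma<q$. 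The strict inequality $1/p+1/q=1/r<1$ forces $p'<q$ (with $p'$ the H\"older conjugate of $p$), so we may take $\sigma=p'$.

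Writing $U_n\cdot V_n=U_n\cdot W_n+U_n\cdot\nabla\phi_n$, the first summand tested against $\psi$ converges by the standard weak-strong product rule ($U_n\rightharpoonup U$ in $L^p$, $W_n\to W$ strongly in $L^{p'}_{\mathrm{loc}}$). For the second summand we integrate by parts:
\begin{equation*}
\int_{\eR^N}\psi\,U_n\cdot\nabla\phi_n=-\int_{\eR^N}\psi\phi_n\,\mathrm{Div}\,U_n-\int_{\eR^N}\phi_n\,U_n\cdot\nabla\psi.
\end{equation*}
The first right-hand term is a $W^{-1,s}\times W^{1,s'}_0$ duality pairing: $\mathrm{Div}\,U_n$ converges strongly in $W^{-1,s}$ along the chosen subsequence, while $\psi\phi_n\rightharpoonup\psi\phi$ weakly in $W^{1,s'}_0$ (using the continuous embedding $W^{1,q}\hookrightarrow W^{1,s'}$ on a compact set, valid after further adjustment of $s$), so the pairing passes to the limit. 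The second right-hand term converges because $\phi_n\to\phi$ strongly in $L^{p'}_{\mathrm{loc}}$ by Rellich while $U_n\rightharpoonup U$ in $L^p$. Reassembling shows that the distributional limit of $U_n\cdot V_n$ equals $U\cdot V$, and the uniform bound $\|U_n\cdot V_n\|_{L^r}\leq\|U_n\|_{L^p}\|V_n\|_{L^q}$ upgrades this to weak convergence in $L^r(Q)$. The main obstacle is the simultaneous bookkeeping of exponents---ensuring that Calder\'on--Zygmund applies at $q$, that the elliptic regularity for $W_n$ produces strong precompactness in $L^{p'}_{\mathrm{loc}}$, and that the pairing $\langle\mathrm{Div}\,U_n,\psi\phi_n\rangle$ makes sense in $W^{-1,s}\times W^{1,s'}_0$---but all three are accommodated by the strict inequality $1/p+1/q<1$ together with the freedom to shrink $s$, which only weakens the precompactness hypothesis.
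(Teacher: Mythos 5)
Your overall route (localization, Helmholtz splitting $V_n=\nabla\phi_n+W_n$ via the Newtonian potential, weak--strong pairing for the solenoidal part, integration by parts against $\mathrm{Div}\,U_n$ for the gradient part) is the classical Murat--Tartar argument and is essentially the proof the paper points to in \cite[Section 10.13]{FeNo09}. However, there is one step that fails as written: the treatment of $\int\psi\,\phi_n\,\mathrm{Div}\,U_n$ as a $W^{-1,s}\times W^{1,s'}_0$ duality. You need $\psi\phi_n$ to converge weakly in $W^{1,s'}_0$, and since $\psi\phi_n$ is only bounded in $W^{1,q}$ (compact support), this requires $q\geq s'$, i.e.\ $s\geq q'$. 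The hypothesis only provides \emph{some} $s>1$, possibly very close to $1$, in which case $s'$ is huge and $W^{1,q}\not\hookrightarrow W^{1,s'}$. Your proposed remedy --- ``further adjustment of $s$'' and the closing remark that ``the freedom to shrink $s$ only weakens the precompactness hypothesis'' --- goes in the wrong direction: shrinking $s$ enlarges $s'$ and destroys the embedding you need, while enlarging $s$ is not permitted by the hypothesis. So for, say, $s=1+\epsilon$ and moderate $q$ the pairing argument collapses.

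The gap is fixable by the standard interpolation upgrade, which is exactly the ingredient your write-up omits: $\mathrm{Div}\,U_n$ is automatically bounded in $W^{-1,p}$ (because $U_n$ is bounded in $L^p$), and boundedness in $W^{-1,p}$ together with precompactness in $W^{-1,s}$ yields, by interpolation of the negative-order Sobolev scale on the (bounded, localized) domain, precompactness in $W^{-1,\sigma}$ for every $\sigma<p$. Since $\frac1p+\frac1q<1$ gives $q'<p$, you may choose $\sigma\in(q',p)$, so that $\sigma'<q$, $\psi\phi_n\rightharpoonup\psi\phi$ in $W^{1,\sigma'}_0$, and the pairing passes to the limit. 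Note that your curl half does not suffer from this issue precisely because there you interpolate the \emph{functions} $W_n$ between $L^s_{\mathrm{loc}}$ (compactness from elliptic regularity) and $L^q$ (boundedness) to reach $L^{p'}_{\mathrm{loc}}$; the analogous interpolation must also be performed on the distributions $\mathrm{Div}\,U_n$, and with that correction your argument is complete and coincides with the cited proof.
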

	During the analysis we will work with the pseudodifferential operators $\mathfrak R=\nabla\otimes\nabla\Delta^{-1}$  and $\mathfrak U=\nabla\Delta^{-1}$ being defined in terms of the standard Fourier transform $\mathfrak F$ in the following way
	\begin{equation}\label{RUOpDef}
		\mathfrak U_j(v)=-\mathfrak F^{-1}\left(\frac{i\xi_j}{|\xi|^2}\mathfrak F(v)(\xi)\right),\ \mathfrak R_{ij}(v)=\mathfrak F^{-1}\left(\frac{\xi_i\xi_j}{|\xi|^2}\mathfrak F(v)(\xi)\right).
	\end{equation}
	The next lemma may be seen as a consequence of the Div--Curl lemma involving a commutator of the Riesz operator $\mathfrak R$, see \cite[Theorem 10.27]{FeNo09}.
	\begin{lemma}\label{DCRiesz}
		Let $u_n, v_n:\eR^N\to\eR^N$ be such that
		\begin{equation*}
			u_n\rightharpoonup u\text{ in }L^p(\eR^N),\ v_n\rightharpoonup v\text{ in }L^q(\eR^N),
		\end{equation*}
		where $\frac{1}{p}+\frac{1}{q}=\frac{1}{s}<1$. Then
		\begin{equation*}
			u_n\cdot\mathfrak R_{ij}(v_n)-\mathfrak R_{ij}(u_n)\cdot v_n\rightharpoonup u\cdot\mathfrak R_{ij}(v)-\mathfrak R_{ij}(u)\cdot v\text{ in }L^s(\eR^N).
		\end{equation*}
		
	\end{lemma}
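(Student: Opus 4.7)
The plan is to realize the commutator $u_n\cdot\mathfrak R_{ij}(v_n)-\mathfrak R_{ij}(u_n)\cdot v_n$ as a difference of two dot products to which Lemma~\ref{Lem:DivCurl} applies directly. Linearity of $\mathfrak R_{ij}$ in each component and of the Euclidean inner product reduces matters to scalar sequences $u_n\rightharpoonup u$ in $L^p(\mathbb R^N)$ and $v_n\rightharpoonup v$ in $L^q(\mathbb R^N)$; the vector-valued statement then follows by summation over the components.

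For fixed $i,j$ I would introduce
\begin{equation*}
U_n:=u_n\mathbf e_i-\nabla\mathfrak U_i(u_n),\qquad V_n:=\nabla\mathfrak U_j(v_n),
\end{equation*}
whose components are $U_n^{(k)}=u_n\delta_{ki}-\mathfrak R_{ki}(u_n)$ and $V_n^{(k)}=\mathfrak R_{kj}(v_n)$. Calder\'on--Zygmund boundedness of each $\mathfrak R_{k\ell}$ on $L^r$ for $1<r<\infty$ gives $U_n\rightharpoonup U=u\mathbf e_i-\nabla\mathfrak U_i(u)$ in $L^p$ and $V_n\rightharpoonup V=\nabla\mathfrak U_j(v)$ in $L^q$. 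The identity $\sum_k\partial_k\mathfrak R_{ki}=\Delta\mathfrak U_i=\partial_i$ forces $\mathrm{Div}\,U_n\equiv 0$, while $V_n$ is a gradient, so $\mathrm{Curl}\,V_n\equiv 0$; the precompactness hypotheses in Lemma~\ref{Lem:DivCurl} are thereby trivially verified, and that lemma yields
\begin{equation*}
u_n\mathfrak R_{ij}(v_n)-\sum_k\mathfrak R_{ki}(u_n)\mathfrak R_{kj}(v_n)=U_n\cdot V_n\ \rightharpoonup\ U\cdot V\quad\text{in }L^s(\mathbb R^N).
\end{equation*}
Running the same construction after swapping the roles of $(u_n,i)$ and $(v_n,j)$ and using the symmetry $\mathfrak R_{ij}=\mathfrak R_{ji}$ produces a twin identity whose left-hand side carries the \emph{same} quadratic Riesz-transform sum $\sum_k\mathfrak R_{ki}(u_n)\mathfrak R_{kj}(v_n)$. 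Subtracting the two weak limits cancels that sum and leaves precisely
\begin{equation*}
u_n\mathfrak R_{ij}(v_n)-\mathfrak R_{ij}(u_n)v_n\ \rightharpoonup\ u\mathfrak R_{ij}(v)-\mathfrak R_{ij}(u)v\quad\text{in }L^s(\mathbb R^N).
\end{equation*}

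The essential creative step---and really the only substantive obstacle---is recognising the divergence-free completion $U_n=u_n\mathbf e_i-\nabla\mathfrak U_i(u_n)$. Pairing the ``bare'' choice $u_n\mathbf e_i$ with $V_n$ would already reproduce $u_n\mathfrak R_{ij}(v_n)$ as the dot product, but $\mathrm{Div}(u_n\mathbf e_i)=\partial_iu_n$ is only bounded, never precompact, in $W^{-1,p}_{loc}(\mathbb R^N)$ (weak $L^p$ convergence does not yield strong $L^p$ convergence), so Lemma~\ref{Lem:DivCurl} cannot be invoked directly; indeed one checks that $u_n\mathfrak R_{ij}(v_n)$ need not converge weakly to $u\mathfrak R_{ij}(v)$ in general. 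Subtracting the curl-free correction $\nabla\mathfrak U_i(u_n)$, which is one derivative smoother than $u_n$ by Calder\'on--Zygmund, kills the divergence at the cost of a symmetric quadratic Riesz-transform tail, and this tail is eliminated by the twin application of the lemma described above.
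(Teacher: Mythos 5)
Your proof is correct, and it is essentially the argument the paper points to: the lemma is quoted from \cite[Theorem 10.27]{FeNo09} with the remark that it is a consequence of the Div--Curl lemma, and your splitting into the solenoidal completion $u_n\mathbf e_i-\nabla\mathfrak U_i(u_n)$ paired with the gradient field $\nabla\mathfrak U_j(v_n)$, applied twice so that the quadratic Riesz terms cancel, is exactly that standard derivation (implicitly using $1<p,q<\infty$, as the cited result does).
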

	The next lemma states a commutator type estimate for operators $\mathfrak R_{ij}$ in the spirit of Coifman, Meyer \cite{CoMe75}, see \cite[Theorem 10.28]{FeNo09} for the detailed proof.
	\begin{lemma}\label{Lem:ComEst}
		Let $w\in W^{1,p}(\eR^N)$ and $V\in L^q(\eR^N;\eR^N)$ be given with
		\begin{equation*}
			1<r<N,\ \frac{1}{r}-\frac{1}{N}+\frac{1}{p}<1.
		\end{equation*}
		Then there exists $\beta>0$ and $s=s(p,q)>1$ such that
		\begin{equation*}
			\|\mathfrak R_{ij}(w V_j)-w\mathfrak R_{ij}(V_j)\|_{W^{\beta,s}(\eR^N)}\leq c(p,q)\|w\|_{W^{1,r}(\Omega)}\|V\|_{L^p(\eR^N;\eR^N)}.
		\end{equation*}
	\end{lemma}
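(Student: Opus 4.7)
The plan is to establish this Coifman--Meyer type commutator estimate by exploiting the Calder\'on--Zygmund structure of $\mathfrak R_{ij}=\partial_i\partial_j\Delta^{-1}$ together with a Sobolev gain coming from the regularity of $w$. Writing $\mathfrak R_{ij}$ as convolution with a principal-value kernel $K_{ij}$ of Calder\'on--Zygmund type, one starts from the pointwise representation
\begin{equation*}
\mathfrak R_{ij}(wV_j)(x)-w(x)\mathfrak R_{ij}(V_j)(x)=\mathrm{p.v.}\int_{\eR^N}K_{ij}(x-y)\bigl(w(y)-w(x)\bigr)V_j(y)\,\dy.
\end{equation*}

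First I would establish the baseline $L^s$ estimate. The Sobolev embedding $W^{1,r}(\eR^N)\hookrightarrow L^{r^{\ast}}(\eR^N)$ with $\tfrac{1}{r^{\ast}}=\tfrac{1}{r}-\tfrac{1}{N}$, combined with H\"older's inequality, places the product $wV_j$ in $L^s(\eR^N)$ for $\tfrac{1}{s}=\tfrac{1}{r^{\ast}}+\tfrac{1}{p}$; the hypothesis $\tfrac{1}{r}-\tfrac{1}{N}+\tfrac{1}{p}<1$ forces $s>1$. Since $\mathfrak R_{ij}$ is bounded on $L^s$ by classical Calder\'on--Zygmund theory and $w\,\mathfrak R_{ij}(V_j)\in L^s$ likewise by H\"older (and $L^p\to L^p$ continuity of $\mathfrak R_{ij}$), one obtains the desired $L^s$ bound on the commutator with the stated constant.

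Next I would extract the fractional smoothness gain $\beta>0$. Using the identity
\begin{equation*}
w(y)-w(x)=(y-x)\cdot\int_0^1\nabla w\bigl(x+t(y-x)\bigr)\,\dt,
\end{equation*}
the effective kernel becomes $K_{ij}(x-y)(y-x)$, which is homogeneous of degree $-(N-1)$ rather than $-N$; morally, the commutator acts as a Riesz potential of order one applied to a product built from $\nabla w\in L^r(\eR^N)$ and $V\in L^p(\eR^N)$. Because the hypothesis $\tfrac{1}{r}-\tfrac{1}{N}+\tfrac{1}{p}<1$ is strict, a positive margin remains, which can be converted into a fractional Sobolev gain either via a Littlewood--Paley paraproduct decomposition in the spirit of Coifman--Meyer or, equivalently, by real interpolation between the $L^s$ bound above and a Riesz-type estimate on the smoothed kernel.

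The main obstacle is precisely this last step: one cannot differentiate the commutator directly, as $V$ carries no a priori regularity. The entire gain has to be extracted from the extra Lipschitz factor $(y-x)$ produced by the commutator structure, and it is the rigorous passage from this cancellation to a genuine $W^{\beta,s}$ estimate that forms the analytic core of the result and must invoke the Coifman--Meyer commutator machinery. Once carried out, this yields some $\beta>0$ (depending only on the margin in the Sobolev exponent inequality) together with the factorized constant $c(p,q)\|w\|_{W^{1,r}(\eR^N)}\|V\|_{L^p(\eR^N;\eR^N)}$ announced in the statement.
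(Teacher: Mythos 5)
The first thing to say is that the paper does not prove this lemma at all: it is quoted verbatim (typos included) from the literature, with the proof attributed to \cite[Theorem 10.28]{FeNo09}, itself in the spirit of Coifman--Meyer \cite{CoMe75}. So the comparison is really about whether your sketch would stand on its own. Its elementary half does: reading the hypotheses as $\nabla w\in L^r(\eR^N)$, $V\in L^p(\eR^N;\eR^N)$ (which is the correct reading of the garbled statement), the Sobolev embedding $W^{1,r}\hookrightarrow L^{r^*}$, H\"older, and the $L^s$-boundedness of $\mathfrak R_{ij}$ give the commutator in $L^s$ with $\frac{1}{s}=\frac{1}{r}-\frac{1}{N}+\frac{1}{p}<1$, with the stated product of norms. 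That part is fine.

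The genuine gap is that the entire content of the lemma is the gain of $\beta>0$ derivatives, and that is exactly the step you do not carry out: you explicitly defer it to ``the Coifman--Meyer commutator machinery,'' and the heuristics you offer in its place do not close it. The mean-value identity produces $\nabla w\bigl(x+t(y-x)\bigr)$ averaged in $t$, which is not a convolution; since $\nabla w$ is only in $L^r$ (not $L^\infty$) you cannot pull it out, so the commutator is not literally a Riesz potential of order one acting on a product of $\nabla w$ and $V$ (pointwise bounds through maximal functions recover only another $L^s$ estimate, with no smoothness). Likewise, ``real interpolation between the $L^s$ bound and a Riesz-type estimate on the smoothed kernel'' has no second endpoint: to interpolate into $W^{\beta,s}$ you must first prove an estimate of the commutator in a space carrying a full derivative. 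One workable route, and essentially the one behind \cite[Theorem 10.28]{FeNo09}, is to differentiate the commutator, note that the terms involving $\partial_k V_j$ recombine into the commutator of multiplication by $w$ with the order-one operator $\mathfrak R_{ij}\partial_k$ (a Calder\'on-type commutator), bound that bilinear singular integral by the Coifman--Meyer theorem with $\nabla w\in L^r$, $V\in L^p$, bound the remaining term $\mathfrak R_{ij}(\partial_k w\,V_j)-\partial_k w\,\mathfrak R_{ij}(V_j)$ by H\"older and Calder\'on--Zygmund theory, and only then interpolate this $W^{1,q_1}$-type endpoint against your $L^{s}$ bound to obtain $W^{\beta,s}$ for some $\beta>0$. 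None of this appears in the proposal, so as a standalone argument it is an outline with the decisive estimate missing; if, as in the paper, the lemma is simply cited, the sketch is harmless but redundant.
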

	We end this preliminary section with a nowadays basic compactness result for families of continuity equations, which originates in \cite{VaWeYu19}. The presented version comes from \cite[Proposition 7]{NovPok20}. By $X^{1,2}$ used in the rest of this section we mean either the space $W^{1,2}_0(\Omega)$ or $W^{1,2}_n(\Omega)$ defined in \eqref{SobDef}.
	\begin{lemma}\label{Lem:AlmComp}
		\
		\begin{enumerate}
			\item Let $u_n\in L^2(0,T;X^{1,2})$, $(r_n,z_n)\in \overline{\mathcal O_{\underline a,\overline a}}\cap\left(\left(C([0,T];L^1(\Omega))\right)^2\cap \left(L^2(Q_T)\right)\right)^2$
			for $\mathcal O_{\underline a,\overline a}$ defined in \eqref{MODef}. Suppose that
			\begin{equation*}
				\sup_{n\in\eN}\left(\|r_n\|_{L^\infty(0,T;L^\gamma(\Omega))}+\|r_n\|_{L^2(Q_T)}+\|u_n\|_{L^2(0,T;W^{1,2}(\Omega))}\right)<\infty
			\end{equation*}
			with $\gamma>\frac{6}{5}$ and both couples $(r_n,u_n)$, $(z_n,u_n)$ fulfill the continuity equation
			\begin{equation*}
				\tder \rho+\dvr(\rho u)=0\text{ in }\mathcal D'(Q_T).
			\end{equation*}	
			Then there exists a nonrelabeled subsequence such that
			\begin{equation*}
				\begin{alignedat}{2}
					(r_n,z_n)&\to (r,z)&&\text{ in }(C_w([0,T];L^\gamma(\Omega)))^2,\\
					u_n&\rightharpoonup u&&\text{ in }L^2(0,T;W^{1,2}(\Omega)),
				\end{alignedat}
			\end{equation*}
			where 
			\begin{equation*}
				(r,z)\in \overline{\mathcal O_{\underline a,\overline a}}\cap \left(L^2(0,T;L^2(\Omega))\right)^2\cap \left(L^\infty(0,T;L^\gamma(\Omega))\right)^2\cap \left(C([0,T];L^1(\Omega))\right)^2
			\end{equation*}
			and $(r,z)$ fulfills the time integrated continuity equation up to the boundary in the renormalized sense, i.e. 
			\begin{equation*}
				\int_\Omega b(\rho)\phi(t)-\int_\Omega b(\rho_0)\phi(0)=\int_0^t\int_\Omega b(\rho)\left(\tder\phi+u\cdot\nabla \phi\right)+\left( b(\rho)-\rho b'(\rho) \right)\dvr u\phi
			\end{equation*}
			for all $t\in [0,T]$, all $\phi\in C^1([0,T]\times\overline\Omega)$ and all $b\in C^1([0,\infty))$ with $b'\in L^\infty((0,\infty))$.
			\item Suppose in addition to assumptions of the first item that 
			\begin{equation*}
				\lim_{n\to\infty}\int_\Omega r_n(0,\cdot)\zeta_n^2(0,\cdot)=\int_\Omega r(0,\cdot)\zeta^2(0,\cdot).
			\end{equation*}
			We define in agreement with \eqref{DivConv} for all $t\in[0,T]$ and $\mathfrak a\in [\underline a,\overline a]$
			\begin{equation*}
				\zeta_n(t,x)=z_n(t,x)/_{\mathfrak a} r_n(t,x),\ \zeta(t,x)=z(t,x)/_{\mathfrak a} r(t,x).
			\end{equation*}
			Then $\zeta_n,\zeta\in C([0,T];L^q(\Omega))$, $1\leq q<\infty$ and for all $t\in[0,T]$ $\underline a\leq \zeta_n\leq\overline a$, $\underline a\leq \zeta\leq\overline a$ for a.a. $x\in\Omega$. Moreover, both couples fulfill the time integrated pure transport equation up to the boundary, i.e.
			\begin{equation*}
				\int_\Omega \rho\phi(t)-\int_\Omega \rho_0\phi(0)=\int_0^t\int_\Omega\rho\left(\tder\phi+u\cdot\nabla\phi+\dvr u\phi\right)
			\end{equation*}
			for all $t\in[0,T]$ and all $\phi\in C^1([0,T]\times\overline\Omega)$.
			
			Finally, we get
			\begin{equation}\label{AlmConv}
				\lim_{n\to\infty}\int_\Omega(r_n|\zeta_n-\zeta|^2)(t,\cdot)=0
			\end{equation}
			for all $t\in[0,T]$.
		\end{enumerate}
	\end{lemma}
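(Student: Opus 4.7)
The plan is to deduce everything from the DiPerna--Lions renormalization theory for continuity equations together with the pointwise bound $\underline a\le z_n/r_n\le\overline a$ encoded in $(r_n,z_n)\in\overline{\mathcal O_{\underline a,\overline a}}$; most of the heavy compactness work is already contained in \cite[Proposition 7]{NovPok20}, so my role is mainly to orient the reader through the structure and carefully derive the almost-compactness identity \eqref{AlmConv}.

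\emph{Weak compactness and renormalization for the limits.} From the continuity equations I extract a uniform bound on $\tder r_n$ and $\tder z_n$ in a suitable negative Sobolev space. Combined with the $L^\infty(L^\gamma)\cap L^2(Q_T)$ bound and an Aubin--Lions type argument, a subsequence satisfies $(r_n,z_n)\to (r,z)$ in $(C_w([0,T];L^\gamma(\Omega)))^2$ and $u_n\rightharpoonup u$ in $L^2(0,T;W^{1,2}(\Omega))$. The convex constraint $\overline{\mathcal O_{\underline a,\overline a}}$ passes to the weak limit. Since $r,z\in L^2(Q_T)$ and $u\in L^2(W^{1,2})$, the DiPerna--Lions theory applies and both $(r,u)$ and $(z,u)$ satisfy the continuity equation in the renormalized sense stated in the conclusion.

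\emph{Transport equation for $\zeta_n$ and the key renormalization.} The inclusion $(r_n,z_n)\in\overline{\mathcal O_{\underline a,\overline a}}$ forces $\underline a\le\zeta_n\le\overline a$ almost everywhere. The equation $\tder\zeta_n+u_n\cdot\nabla\zeta_n=0$ is the formal consequence of dividing the equations for $z_n$ and $r_n$; it is made rigorous by the key renormalization identity
$$\tder(r_n b(\zeta_n))+\dvr(r_n b(\zeta_n)u_n)=0\qquad\text{in }\mathcal{D}'(Q_T)$$
valid for every $b\in C^1([\underline a,\overline a])$, which is exactly the content of \cite[Proposition 7]{NovPok20}. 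The continuity $\zeta_n,\zeta\in C([0,T];L^q(\Omega))$ for $1\le q<\infty$ follows from the uniform bounds on $\zeta_n$ together with the $C_w$--continuity of $r_n\zeta_n=z_n$, and the fact that $\zeta$ itself is a transport solution is obtained via the almost-uniqueness result \cite{Nov20}.

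\emph{The almost-compactness identity.} Choosing $b(s)=s^2$ in the previous renormalization shows that both $r_n\zeta_n^2$ and $r\zeta^2$ satisfy the continuity equation with velocities $u_n$ and $u$, respectively; testing with $\phi\equiv 1$ and using the boundary conditions yields
$$\int_\Omega r_n\zeta_n^2(t,\cdot)=\int_\Omega r_n(0,\cdot)\zeta_n^2(0,\cdot),\qquad\int_\Omega r\zeta^2(t,\cdot)=\int_\Omega r(0,\cdot)\zeta^2(0,\cdot).$$
Expanding the square and using the identities $r_n\zeta_n=z_n$, $r\zeta=z$,
$$\int_\Omega r_n|\zeta_n-\zeta|^2(t,\cdot)=\int_\Omega r_n\zeta_n^2(t,\cdot)-2\int_\Omega z_n(t,\cdot)\zeta(t,\cdot)+\int_\Omega r_n(t,\cdot)\zeta^2(t,\cdot).$$
The first summand tends to $\int_\Omega r(0,\cdot)\zeta^2(0,\cdot)=\int_\Omega r\zeta^2(t,\cdot)$ by the assumed convergence of initial data combined with the conservation identity for $(r,\zeta)$; the remaining two summands converge to $-2\int_\Omega z\zeta(t,\cdot)=-2\int_\Omega r\zeta^2(t,\cdot)$ and $\int_\Omega r\zeta^2(t,\cdot)$, respectively, because $z_n(t,\cdot),r_n(t,\cdot)\rightharpoonup z(t,\cdot),r(t,\cdot)$ in $L^\gamma(\Omega)$ paired against $\zeta(t,\cdot),\zeta^2(t,\cdot)\in L^\infty(\Omega)$. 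The three limits cancel and \eqref{AlmConv} follows. The main obstacle is the renormalization identity of the second step, since $\zeta_n$ is only defined via the convention \eqref{DivConv} on the possibly nonempty vanishing set of $r_n$; this is precisely where \cite[Proposition 7]{NovPok20} is pivotal, and I would invoke that proof rather than reconstruct it.
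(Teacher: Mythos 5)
The paper does not prove Lemma~\ref{Lem:AlmComp}; it is imported verbatim from \cite[Proposition~7]{NovPok20}, so there is no in-text argument to compare your proposal against. On its own merits the sketch is sound: deducing \eqref{AlmConv} from conservation of $\int_\Omega r_n\zeta_n^2$ and of $\int_\Omega r\zeta^2$, the expansion of the square, and the $C_w([0,T];L^\gamma(\Omega))$ convergences of $r_n,z_n$ paired against the fixed bounded functions $\zeta(t,\cdot),\zeta^2(t,\cdot)$ is precisely the standard argument in the spirit of Vasseur--Wen--Yu, and the cancellation you exhibit is correct because $r_n\zeta_n=z_n$ and $r\zeta=z$ hold pointwise everywhere on $Q_T$ thanks to the inclusion in $\overline{\mathcal O_{\underline a,\overline a}}$.

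Two attributions should be fixed, because following them literally would leave you stuck. First, you write that $\zeta$ solving the transport equation ``is obtained via the almost-uniqueness result \cite{Nov20}.'' The almost-uniqueness statement (\cite[Proposition~3.4]{Nov20}, Lemma~\ref{Lem:AlmUniq} here) only identifies two \emph{given} transport solutions on the positivity set of an auxiliary mass variable; it does not produce a transport solution, and there is no way to pass to the limit directly in $\tder\zeta_n+u_n\cdot\nabla\zeta_n=0$ since both factors converge only weakly. The correct tool is the passage from two continuity equations to the transport equation for their ratio (\cite[Proposition~3.3]{Nov20}, Lemma~\ref{lem:ConToTr}, item~(1)), applied to the limit pair $(r,z)$ which already satisfies the continuity equation with $u$ by item~(1) of the lemma you are proving. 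Second, $\zeta_n\in C([0,T];L^q(\Omega))$ does not follow from the uniform $L^\infty$ bound on $\zeta_n$ together with the weak-in-time continuity of $z_n=r_n\zeta_n$; weak-in-time continuity of a product does not yield strong-in-time continuity of the ratio. It is a consequence of the DiPerna--Lions theory for bounded renormalized solutions of $\tder\zeta_n+u_n\cdot\nabla\zeta_n=0$ with $u_n\in L^2(0,T;W^{1,2}(\Omega))$, and in any case it is already part of the conclusion of \cite[Proposition~7]{NovPok20} that you are invoking. Neither slip affects the validity of the central computation leading to \eqref{AlmConv}.
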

	The following lemma deals with passages between two continuity equations and a transport equation, for a more general version and its proof see \cite[Proposition 3.3]{Nov20}.
	
	\begin{lemma}\label{lem:ConToTr}
		\
		\begin{enumerate}
			\item Let $\rho,z\in L^2(Q_T)$, $(\rho,z)\in\overline{O_{\underline a,\overline a}}$, $u\in L^2(0,T;X^{1,2})$ be such that $(\rho,u)$ and $(z,u)$ satisfy the continuity equation
			\begin{equation}\label{ContEqDist}
				\int_0^T\int_\Omega r(\tder\phi+u\cdot\nabla\phi)=0\text{ for any }\phi\in C^\infty_c(Q_T).
			\end{equation}
			Let us define 
			\begin{equation*}
				s_a(t,x)=z(t,x)/_a\rho(t,x)\text{ for all }t\in[0,T]\text{ and a.a. }x\in\Omega
			\end{equation*}
			in accordance with the convention from \eqref{DivConv}. Then for any $a\geq 0$ $s_a$ fulfills the time integrated transport equation up to the boundary
			\begin{equation*}
				\int_\Omega s_a\phi(t,\cdot)-\int_\Omega s_a\phi(0,\cdot)=\int_0^t\int_\Omega s_a(\tder\phi+u\cdot\nabla \phi+\dvr u\phi)=0
			\end{equation*}
			for all $t\in[0,T]$ any $\phi\in C^1([0,T]\times\overline\Omega)$.
			\item Let $s^1,s^2\in L^\infty(Q_T)$, $s^1,s^2\geq 0$ in $Q_T$ and $u\in L^2(0,T;W^{1,2}_n(\Omega))$ be such that $(s^1,u)$ and $(s^2,u)$ fulfill the transport equation in the form 
			\begin{equation}\label{TrEqDist}
				\int_0^T\int_\Omega s(\tder\phi+u\cdot\nabla\phi+\dvr u \phi)=0
			\end{equation}
			for any $\phi\in C^1_c(Q_T)$. Let $\rho\in L^2(Q_T)\cap L^\infty(0,T;L^\gamma(\Omega))$ with some $\gamma>1$, $\rho\geq 0$ in $Q_T$ be such that $(\rho,u)$ fulfills continuity equation \eqref{ContEqDist}. Then $\rho s^1,\rho s^2\in C([0,T];L^1(\Omega))$ and $(\rho s^i,u)$ fulfills the time integrated continuity equation up to the boundary
			\begin{equation*}
				\int_\Omega \rho s^i\phi(t,\cdot)-\int_\Omega \rho s^i\phi(0,\cdot)=\int_0^t\int_\Omega\rho s^i(\tder\phi+u\cdot\nabla\phi),\ i=1,2
			\end{equation*}
			for any $t\in[0,T]$ and any $\phi\in C^1_c([0,T]\times\overline\Omega)$.
		\end{enumerate}
	\end{lemma}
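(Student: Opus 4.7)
Both claims are instances of the DiPerna--Lions theory of renormalized solutions to transport/continuity equations with velocity fields $u \in L^2(0,T; W^{1,2}(\Omega))$. The main engine is spatial mollification combined with the classical commutator estimate: if $r \in L^2(Q_T)$ satisfies $\tder r + \dvr(r u) = 0$ in $\mathcal D'(Q_T)$, then the mollified density $r_\varepsilon = \omega_\varepsilon *_x r$ satisfies $\tder r_\varepsilon + \dvr(r_\varepsilon u) = R_\varepsilon$ with $R_\varepsilon \to 0$ in $L^1_{\mathrm{loc}}(Q_T)$; an analogous statement holds for the transport equation when $s \in L^\infty$, with error vanishing in $L^2_{\mathrm{loc}}(Q_T)$. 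I would apply this tool in each part.

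\emph{Part (1).} Joint mollification of the continuity equations for $\rho$ and $z$ yields, for any $B \in C^1(\eR^2)$ with $\nabla B \in C_c(\eR^2)$,
\begin{equation*}
\tder B(\rho_\varepsilon, z_\varepsilon) + \dvr(B(\rho_\varepsilon, z_\varepsilon)\, u) + (\partial_1 B\,\rho_\varepsilon + \partial_2 B\,z_\varepsilon - B)\dvr u = \mathcal E_\varepsilon,
\end{equation*}
with $\mathcal E_\varepsilon \to 0$ in $L^1_{\mathrm{loc}}(Q_T)$. Sending $\varepsilon \to 0_+$ produces the joint renormalized continuity equation for $(\rho, z)$. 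I would then pick a family $B_\delta \in C^1(\eR^2)$ with uniformly bounded gradient, equal to a fixed constant near the origin, and converging pointwise on the closed cone $\overline{\mathcal O_{\underline a, \overline a}}$ to $B(\rho,z) = z/\rho$. Because this $B$ is homogeneous of degree zero, Euler's identity gives $\partial_1 B\,\rho + \partial_2 B\,z = 0$ on the cone, so that in the limit $\delta \to 0$ the renormalization coefficient reduces to $-s_a$. The resulting equation is
\begin{equation*}
\tder s_a + \dvr(s_a u) - s_a \dvr u = 0,
\end{equation*}
i.e.\ $\tder s_a + u \cdot \nabla s_a = 0$. Grouping the integrand as $s_a(\tder\phi + u\cdot\nabla\phi + \phi\,\dvr u) = s_a \tder\phi + s_a \dvr(u\phi)$ and integrating in time yields the stated time-integrated transport equation.

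\emph{Part (2).} Fix $s \in \{s^1, s^2\}$ and mollify in space, $s_\varepsilon = \omega_\varepsilon *_x s$. The transport-equation version of the commutator estimate delivers
\begin{equation*}
\tder s_\varepsilon + u \cdot \nabla s_\varepsilon = \widetilde R_\varepsilon, \qquad \widetilde R_\varepsilon \to 0 \text{ in } L^2_{\mathrm{loc}}(Q_T),
\end{equation*}
where the $L^2$-control is afforded by $s \in L^\infty$ and $u \in L^2 W^{1,2}$. Since $s_\varepsilon$ is smooth in $x$, it is an admissible multiplier in the continuity equation for $\rho$, while $\rho$ is likewise an admissible multiplier in the regularized transport equation; summing the two resulting identities gives
\begin{equation*}
\tder(\rho s_\varepsilon) + \dvr(\rho s_\varepsilon u) = \rho\, \widetilde R_\varepsilon.
\end{equation*}
Because $\rho \in L^2(Q_T)$, the right-hand side vanishes in $L^1_{\mathrm{loc}}(Q_T)$ as $\varepsilon \to 0_+$, while $s_\varepsilon \to s$ strongly in $L^p_{\mathrm{loc}}$ for every $p < \infty$, yielding the continuity equation for $\rho s$ in $\mathcal D'(Q_T)$. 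Extension of the weak form to test functions $\phi \in C^1([0,T]\times\overline\Omega)$ relies on $u\cdot n = 0$, which annihilates the boundary flux produced by spatial integration by parts. Finally, $\rho s \in C([0,T]; L^1(\Omega))$ follows from the integrated equation by specializing to test functions depending only on $t$ and exploiting the bound $\|\rho s\|_{L^\infty(0,T; L^1(\Omega))} \leq \|s\|_{L^\infty}\|\rho\|_{L^\infty(0,T;L^1(\Omega))}$.

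\emph{Main obstacle.} The delicate step is commutator control in the critical regime $u \in L^2 W^{1,2}$, $\rho, z \in L^2$, where the continuity-equation error converges only in $L^1_{\mathrm{loc}}$ and therefore must be paired against bounded quantities to yield a useful limit. In Part (1), the map $B(\rho,z) = z/\rho$ is singular at the apex of the cone, and keeping uniform control of $|\nabla B_\delta|\,(\rho + z)$ through the $\delta$-passage is where the invariance of $\overline{\mathcal O_{\underline a,\overline a}}$ is used essentially. In Part (2), the nontrivial step is propagating the interior distributional equation to the parabolic boundary, which requires the no-penetration condition $u\cdot n = 0$ together with an a posteriori argument for the time-continuity of $\rho s^i$.
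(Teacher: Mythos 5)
The paper itself contains no proof of this lemma: it is quoted verbatim from \cite{Nov20} (Proposition 3.3), so there is no in-house argument to compare yours with. Your strategy — Friedrichs-type commutator lemmas, joint renormalization of the two continuity equations with a zero-homogeneous renormalizer truncated near the apex of the cone, and the multiplier argument pairing the mollified transport equation with the continuity equation — is the standard DiPerna--Lions route and is in the spirit of the cited proof. The core computation in Part (1) is sound: the Euler identity kills the coefficient $\partial_1B\,\rho+\partial_2B\,z$, the cutoff constant near the origin produces exactly the convention value $a$ on the vacuum set, and the error term $\chi_\delta'(\rho)\rho\,(z/\rho-a)\dvr u$ stays dominated because $(\rho,z)$ lives in the convex cone $\overline{\mathcal O_{\underline a,\overline a}}$ (convexity also keeps the mollified pair in the cone, a point worth stating).

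There are, however, genuine gaps in the write-up. First, in Part (1) you jump from the interior distributional identity (tested against $C^\infty_c(Q_T)$) to the time-integrated identity ``up to the boundary'' for $\phi\in C^1([0,T]\times\overline\Omega)$ and every $t\in[0,T]$ by ``grouping the integrand and integrating in time''. Two steps are missing: the spatial extension requires a boundary-layer cutoff $\eta_k$ and a Hardy-inequality argument showing $\int s_a\,u\cdot\nabla\eta_k\,\phi\to 0$, which uses the normal-trace condition on $u$ — precisely the point you invoke only in Part (2), and which the application supplies even though Part (1) does not state it; and evaluating the identity at every $t$ requires constructing a time trace (weak continuity of $t\mapsto\int_\Omega s_a\phi(t)$), which does not follow from $\rho,z\in L^2(Q_T)$ alone. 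Second, in Part (2) your justification of $\rho s^i\in C([0,T];L^1(\Omega))$ is not correct: testing the derived continuity equation with functions of $t$ only gives continuity of the scalar $t\mapsto\int_\Omega\rho s^i$, not continuity with values in $L^1(\Omega)$. Strong $L^1$-continuity needs an extra argument, e.g. using that $\rho$, $\rho s^i$ and $\rho (s^i)^2$ all solve continuity equations (hence are weakly continuous in time) and expanding $\int_\Omega\rho(t)|s^i(t)-s^i(t_0)|^2$ as $t\to t_0$, in the spirit of \eqref{AlmConv}. A minor further point: $s_\varepsilon$ is smooth only in $x$ and merely of Sobolev regularity in $t$, so using $s_\varepsilon\phi$ as a test function in the continuity equation for $\rho$ (and $\rho\phi$ in the mollified transport equation) needs an additional regularization in time before the two identities can be summed.
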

	The following lemma is devoted to almost uniqueness property of solutions to renormalized transport equations, see \cite[Proposition 3.4]{Nov20} for more details and the proof.
	\begin{lemma}\label{Lem:AlmUniq}
		Let $u\in L^2(0,T;X^{1,2})$. Let $s^i\in L^\infty(Q_T)$, $s^i\geq 0$ $i=1,2$ be two solutions of transport equation \eqref{TrEqDist} such that $s^i\in C([0,T];L^1(\Omega))$. If moreover $s^1(0,\cdot)=s^2(0,\cdot)$ then 
		\begin{equation*}
			s^1(\tau,x)=s^2(\tau,x)\text{ for all }\tau\in[0,T] \text{ and a.a. }x\in\{\rho(\tau,\cdot)>0\},
		\end{equation*}
		where $\rho\geq 0$ is any weak solution to the continuity equation \eqref{ContEqDist} in the class $C^1([0,T];L^1(\Omega))\cap L^2(Q_T)\cap L^\infty(0,T;L^p(\Omega))$ for some $p>1$.
	\end{lemma}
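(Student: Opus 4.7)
The strategy is to form the difference $w := s^1 - s^2$, which, by linearity of \eqref{TrEqDist}, is again a bounded solution of the pure transport equation with $w(0,\cdot) = 0$. The desired conclusion is equivalent to $\rho\, b(w) \equiv 0$ on $Q_T$ for a suitable nonnegative convex $b$ with $b(0)=0$.

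My first step is to invoke the DiPerna--Lions renormalization property for the transport equation: since $u \in L^2(0,T;W^{1,2}(\Omega))$ and $w \in L^\infty(Q_T)$, a standard mollification/commutator argument yields that for every $b \in C^1(\mathbb R)$ with bounded derivative the composition $b(w)$ is again a solution of \eqref{TrEqDist}. I pick a smooth, convex, nonnegative $b$ with $b(0) = 0$ and $b(y) > 0$ for $y \neq 0$ on the essential range of $w$ (e.g., a truncation of $y \mapsto y^2$).

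Second, I feed $b(w)$ into the second assertion of Lemma~\ref{lem:ConToTr}: the pair $(b(w),u)$ fulfills \eqref{TrEqDist}, $\rho \in L^2(Q_T)\cap L^\infty(0,T;L^p(\Omega))$ fulfills \eqref{ContEqDist}, and $u\cdot n = 0$ on $(0,T)\times\partial\Omega$ (inherited from \eqref{VelBC}). Hence $\rho\, b(w) \in C([0,T];L^1(\Omega))$ and satisfies the continuity equation up to the boundary
\begin{equation*}
\int_\Omega \rho\, b(w)(\tau,\cdot)\,\phi(\tau,\cdot) - \int_\Omega \rho(0,\cdot)\, b(w(0,\cdot))\,\phi(0,\cdot) = \int_0^\tau\!\int_\Omega \rho\, b(w)\bigl(\tder\phi + u\cdot\nabla\phi\bigr)
\end{equation*}
for every $\tau \in [0,T]$ and every $\phi \in C^1_c([0,T]\times\overline\Omega)$. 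Testing with $\phi \equiv 1$ kills the right-hand side, and since $w(0,\cdot)=0$ together with $b(0)=0$ annihilates the initial term, I deduce $\int_\Omega \rho\, b(w)(\tau,\cdot) = 0$ for every $\tau \in [0,T]$.

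Finally, the nonnegativity $\rho\geq 0$ and $b(w)\geq 0$ forces $\rho\, b(w)(\tau,\cdot) = 0$ a.e.\ in $\Omega$ for each $\tau$, and by the choice of $b$ this entails $w(\tau,x) = 0$, i.e.\ $s^1(\tau,x) = s^2(\tau,x)$, for a.a.\ $x \in \{\rho(\tau,\cdot)>0\}$. The main obstacle is the renormalization step itself: the transport field $u$ is only in $L^2(W^{1,2})$, with no $L^\infty$ control on $\dvr u$, so the identity $\tder b(w) + u\cdot\nabla b(w) = 0$ must be secured through a DiPerna--Lions commutator estimate between spatial mollification and the convective term; once this is in hand the rest is essentially bookkeeping against the continuity equation for $\rho$ together with nonnegativity.
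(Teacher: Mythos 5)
The paper itself offers no proof of this lemma: it is quoted verbatim from the literature, with the proof delegated to \cite{Nov20} (Proposition 3.4). Your argument is exactly the standard proof of that result, and it is correct: by linearity $w=s^1-s^2$ is a bounded solution of \eqref{TrEqDist}; the DiPerna--Lions commutator argument applies since $u\in L^2(0,T;W^{1,2}(\Omega))$ (so $u,\dvr u\in L^1_{loc}$) and $w\in L^\infty(Q_T)$, giving that $b(w)$ solves \eqref{TrEqDist} for $b$ smooth with bounded derivative (a truncated square suffices, as $w$ is bounded); Lemma~\ref{lem:ConToTr}~(2) then upgrades $\rho\,b(w)$ to a solution of the continuity equation up to the boundary with $\rho\,b(w)\in C([0,T];L^1(\Omega))$, and testing with $\phi\equiv 1$ together with $b(w)(0,\cdot)=0$ and the sign conditions yields $\rho\,b(w)(\tau,\cdot)=0$ for every $\tau$, i.e.\ the claim on $\{\rho(\tau,\cdot)>0\}$. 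The one point to be explicit about is the boundary condition $u\cdot n=0$ on $(0,T)\times\partial\Omega$, which you import "from \eqref{VelBC}": it is not stated in the lemma as reproduced in the paper, but it is genuinely needed (without it the statement is false -- take $\Omega=(0,1)$, $u\equiv 1$, $\rho\equiv 1$, $s^1\equiv 0$, $s^2=\chi_{\{x<t\}}$, which share zero initial data yet differ where $\rho>0$), it is part of the hypotheses of the cited proposition, and it holds in every application in the paper (both slip and no-slip give $u\cdot n=0$), so your reading is the right one rather than a gap.
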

	
	\section{Existence of a weak solution}
 Let us begin with the precise definition of a solution to system \eqref{BNS}, \eqref{ABS} respectively. We recall that $X^{1,2}$ stands either for the space $W^{1,2}_0(\Omega)$ or $W^{1,2}_n(\Omega)$ defined in \eqref{SobDef}.
	\begin{mydef}\label{DefWSBNS}
		The quintet $(\alpha,\rho, z, u,\vartheta)$ is a bounded energy weak solution to \eqref{BNS} if
		\begin{enumerate}
			\item $(\alpha,\rho, z, u,\vartheta)$ possesses the following regularity
			\begin{gather*}
					(\rho,z,\alpha)\in C_w([0,T];L^\gamma(\Omega)),\ \rho,z\geq 0\text{ a.e. in }Q_T,\\
					(\rho,z)(t,x)\in \overline{\mathcal O_{\underline a,\overline a}},\ \underline\alpha\leq \alpha(t,x)\leq \overline\alpha\text{ for all }t\in[0,T]\text{ and a.a. }x\in\Omega,\\
					u\in L^2(0,T;X^{1,2}), (\rho+z)|u|^2\in L^\infty(0,T;L^1(\Omega)), (\rho+z)u\in C_w([0,T];L^q(\Omega)),\\
					\mathsf P_{+}(f_+(\alpha)\rho,\vartheta),\mathsf P_{-}(f_-(1-\alpha)z,\vartheta)\in L^1(Q_T),\\
					\vartheta\in L^\infty(0,T;L^4(\Omega))\cap L^2(0,T;W^{1,2}(\Omega)),\log\vartheta\in L^2(0,T;W^{1,2}(\Omega))
			\end{gather*}
			with some $\gamma,q>1$.
			\item the continuity equation
			\begin{equation}\label{ConEqO}
				\int_\Omega r\psi(t)-\int_\Omega r_0\psi(0)=\int_0^t\int_\Omega r(\tder\psi+u\cdot\nabla\psi)
			\end{equation} 
			is fulfilled for any $t\in[0,T]$, $\psi\in C^1([0,T]\times\overline\Omega)$ with $r$ standing for $\rho$ and $z$.
			\item the transport equation
			\begin{equation}\label{TrEqO}
				\int_\Omega \alpha\psi(t)-\int_\Omega \alpha_0\psi(0)=\int_0^t\int_\Omega\left( \alpha(\tder\psi+u\cdot\nabla\psi)-\alpha\dvr u\psi\right)
			\end{equation} 
			is fulfilled for any $t\in[0,T]$, $\psi\in C^1([0,T]\times\overline\Omega)$.
			\item the momentum equation
			\begin{equation}\label{MomEqO}
				\begin{split}
					\int_\Omega (\rho+z)u\varphi(t)-&\int_\Omega(\rho_0+ z_0) u_0\varphi(0)=\int_0^t\int_\Omega\Biggl(\rho+z) u\cdot\tder\varphi+(\rho+z) u\otimes u\cdot\nabla\varphi\\
                    &+\left(\frac{b}{3}\vartheta^4+\mathsf P_+(f_+(\alpha)\rho,\vartheta)+\mathsf P_-(f_-(1-\alpha)z,\vartheta)\right)\dvr\varphi-\mathbb S(\vartheta,\nabla u)\cdot\nabla \varphi\Biggr)
				\end{split}
			\end{equation}
			is fulfilled for any $t\in[0,T]$ and $\varphi\in C^1([0,T]\times\overline{\Omega})$, where either $\varphi=0$ or $\varphi\cdot n=0$ on $(0,T)\times\partial\Omega$;
			\item the energy equality 
			\begin{equation}\label{EnerEqO}
				\begin{split}
					&\int_{\Omega}\left(\frac{1}{2}(\rho+z)|u|^2+\rho\mathfrak{  e}_+\left(\frac{\rho}{\alpha},\vartheta\right)+z \mathfrak{e}_-\left(\frac{z}{1-\alpha},\vartheta\right)\right)(t)\\
					&=\int_\Omega \left(\frac{1}{2}(\rho_0+z_0)|u_0|^2+\rho_0 \mathfrak{e}_+\left(\frac{\rho_0}{\alpha_0},\vartheta_0\right)+z_0 \mathfrak{e}_-\left(\frac{z_0}{1-\alpha_0},\vartheta_0\right)\right)
				\end{split}
			\end{equation}
			holds for a.a $t\in(0,T)$;
			\item the balance of entropy holds in the following sense: There is $\sigma\in (C([0,T]\times\overline\Omega))^*$ such that
			\begin{equation}\label{SigDomO}
				\langle\sigma,\phi\rangle\geq \int_0^T\int_\Omega\left(\frac{1}{\vartheta}\mathbb S(\vartheta,\nabla u)\cdot\nabla u+\frac{\kappa(\vartheta)}{\vartheta^2}|\nabla\vartheta|^2\right)\phi
			\end{equation}
			for any $\phi\in C([0,T]\times\overline\Omega)$, $\phi\geq 0$ and
			\begin{equation}\label{EntBalO}
				\begin{split}
					&\int_\Omega \left(\rho \mathfrak {s}_+\left(\frac{\rho}{\alpha},\vartheta\right)+z \mathfrak{s}_-\left(\frac{z}{1-\alpha},\vartheta\right)\right)\phi(t)-\int_\Omega\left(\rho_0 \mathfrak{s}_+\left(\frac{\rho_0}{\alpha_0},\vartheta_0\right)+z_0 \mathfrak{s}_-\left(\frac{z_0}{1-\alpha_0},\vartheta_0\right)\right)\phi(0)\\
					&=\int_0^t\int_\Omega\left(\left(\rho \mathfrak s_+\left(\frac{\rho}{\alpha},\vartheta\right)+z \mathfrak s_-\left(\frac{z}{1-\alpha},\vartheta\right)\right)\left(\tder\phi+u\cdot\nabla\phi\right)-\frac{\kappa(\vartheta)}{\vartheta}\nabla\vartheta\cdot\nabla\phi\right)+\langle\sigma_t,\phi\rangle,
				\end{split}
			\end{equation}
			where we denoted
			\begin{equation*}
				\langle\sigma_t,\phi\rangle=\int_{[0,t]\times\overline\Omega}\phi(\tau,x)\mathrm d\mu_\sigma(\tau,x)
			\end{equation*}
			and $\mu_\sigma$ is the unique Borel measure associated to $\sigma$ by the Riesz representation theorem.
			
		\end{enumerate}
	\end{mydef}
	\begin{mydef}\label{DefWSAbs} 
		The sixtet $(\xi,\mathfrak r,\mathfrak z,\Sigma, u,\vartheta)$ is called a bounded energy weak solution to \eqref{ABS} if
		\begin{enumerate}
			\item \begin{gather*}
					\xi, \mathfrak r, \mathfrak z, \Sigma\geq 0\text{ a.e. in }Q_T,\ \mathfrak r, \mathfrak z, \Sigma\in C_w([0,T];L^\gamma(\Omega))\text{ for some }\gamma>1,\\
					(\mathfrak r,\mathfrak z)(t,x)\in \overline{\mathcal O_{\underline a,\overline a}}, (\mathfrak r+\mathfrak z,\Sigma)(t,x)\in \overline{\mathcal O_{\underline b,\overline b}}, (\mathfrak r,\xi)(t,x)\in \overline{\mathcal O_{\underline d,\overline d}}\text{ for all }t\in[0,T]\text{ and a.a. }x\in\Omega, \\
					u\in L^2(0,T;X^{1,2}), \mathcal P(\mathfrak r,\mathfrak z,\vartheta)\in L^1(Q_T), \\
					\Sigma|u|^2\in L^\infty(0,T;L^1(\Omega)), \Sigma u\in C_w([0,T];L^q(\Omega))\text{ for some }q>1,\\
					\vartheta\in L^\infty(0,T;L^4(\Omega)), \vartheta,\log\vartheta\in L^2(0,T;W^{1,2}(\Omega));
				\end{gather*}
			\item the continuity equation
			\begin{equation}\label{ConEq}
				\int_\Omega r\psi(t)-\int_\Omega r_0\psi(0)=\int_0^t\int_\Omega r(\tder\psi+u\cdot\nabla\psi)
			\end{equation} 
			is fulfilled for any $t\in[0,T]$, $\psi\in C^1([0,T]\times\overline\Omega)$ with $r$ standing for $\xi,\mathfrak r,\mathfrak z$ and $\Sigma$;
			\item the momentum equation
			\begin{equation}\label{MomEqSig}
				\int_\Omega \Sigma u\varphi(t)-\int_\Omega(\Sigma u)_0\varphi(0)=\int_0^t\int_\Omega\left(\Sigma u\cdot\tder\varphi+\Sigma u\otimes u\cdot\nabla\varphi+\mathcal P(\mathfrak r,\mathfrak z,\vartheta)\dvr\varphi-\mathbb S(\vartheta,\nabla u)\cdot\nabla \varphi\right)
			\end{equation}
			is fulfilled for any $t\in[0,T]$ and $\varphi\in C^1([0,T]\times\overline{\Omega})$, where either $\varphi=0$ or $\varphi\cdot n=0$ on $(0,T)\times\partial\Omega$;
			\item the energy equality 
			\begin{equation}\label{EnerEq}
				\int_{\Omega}\left(\frac{1}{2}\Sigma|u|^2+\mathcal E(\mathfrak r,\mathfrak z,\vartheta)\right)(t)=\int_\Omega \left(\frac{|(\Sigma u)_0|^2}{2\Sigma_0}+\mathcal E(\mathfrak r_0,\mathfrak z_0,\vartheta_0)\right)
			\end{equation}
			holds for a.a $t\in(0,T)$;
			\item the balance of entropy holds in the following sense: There is $\sigma\in (C([0,T]\times\overline\Omega))^*$ such that
			\begin{equation}\label{SigDom}
				\langle\sigma,\phi\rangle\geq \int_0^T\int_\Omega\left(\frac{1}{\vartheta}\mathbb S(\vartheta,\nabla u)\cdot\nabla u+\frac{\kappa(\vartheta)}{\vartheta^2}|\nabla\vartheta|^2\right)\phi
			\end{equation}
			for any $\phi\in C([0,T]\times\overline\Omega)$, $\phi\geq 0$ and
			\begin{equation}\label{EntBal}
				\int_\Omega\mathcal S(\mathfrak r,\mathfrak z,\vartheta)\phi(t)-\int_\Omega\mathcal S(\mathfrak r_0,\mathfrak z_0,\vartheta_0)\phi(0)=\int_0^t\int_\Omega\left( \mathcal S(\mathfrak r,\mathfrak z,\vartheta)\left(\tder\phi+u\cdot\nabla\phi\right)-\frac{\kappa(\vartheta)}{\vartheta}\nabla\vartheta\cdot\nabla\phi\right)+\langle\sigma_t,\phi\rangle,
			\end{equation}
			where we denoted
			\begin{equation*}
				\langle\sigma_t,\phi\rangle=\int_{[0,t]\times\overline\Omega}\phi(\tau,x)\mathrm d\mu_\sigma(\tau,x)
			\end{equation*}
			and $\mu_\sigma$ is the unique Borel measure associated to $\sigma$ by the Riesz representation theorem.
		\end{enumerate}	
	\end{mydef}
	Having the bounded energy weak solution to problem \eqref{BNS} and \eqref{ABS} introduced we can state the main existence results.
	\begin{theorem}\label{Thm:ABSEx}
		Let $T>0$, $\Omega\subset\eR^3$ be a bounded domain of class $C^2$ and the hypotheses in Section~\ref{Sec:Hyp} be satisfied.  If $\gamma=\overline{\gamma_\beta}$ we assume $\frac{1}{\gamma+1}+\frac{1}{p_\beta}\leq \frac{1}{2}$ additionaly, where $\overline{\gamma_\beta}$ and $p_\beta$ are defined in \eqref{OGBPBDef}. Then problem \eqref{ABS} admits at least one bounded energy weak solution in the sense of Definition~\ref{DefWSAbs} 
	\end{theorem}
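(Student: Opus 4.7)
The plan is to carry out the three-level approximation scheme already sketched in the introduction after the statement of Theorem~\ref{Thm:ABSEx}: I would first solve a Faedo--Galerkin discretization of a regularized system with two small parameters $\varepsilon,\delta>0$, and then perform successively the Galerkin limit, the limit $\varepsilon\to 0_+$, and the limit $\delta\to 0_+$. The regularization adds artificial viscosity $\varepsilon\Delta$ to each of the four continuity equations \eqref{ABS}$_{1\text{--}4}$ (with homogeneous Neumann flux compatible with $u\cdot n=0$), augments the pressure in \eqref{ABS}$_5$ by a $\delta$-pressure of the form $\delta(\mathfrak r^\Gamma+\mathfrak z^\Gamma)$ for $\Gamma$ sufficiently large, and replaces the entropy production equation \eqref{ABS}$_6$ by the regularized internal-energy balance, which is better behaved at the approximate level; one converts the latter back to an entropy inequality at the end by dividing by the strictly positive temperature. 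The essential novelty relative to the monofluid scheme in \cite[Section~3.3]{FeNo09} is that four continuity-type equations must be treated simultaneously while preserving the pointwise ratio constraints $(\mathfrak r,\mathfrak z)\in\overline{\mathcal O_{\underline a,\overline a}}$, $(\mathfrak r+\mathfrak z,\Sigma)\in\overline{\mathcal O_{\underline b,\overline b}}$ and $(\mathfrak r,\xi)\in\overline{\mathcal O_{\underline d,\overline d}}$ throughout the whole approximation chain.

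At the Galerkin level I would fix a finite-dimensional subspace for the velocity, solve the four regularized parabolic continuity equations by linear theory (solutions remain strictly positive and satisfy the two-sided ratio bounds by a maximum-principle argument applied to the quotients), solve the regularized internal-energy equation producing a strictly positive $\vartheta$, and close the coupled system by Schauder's fixed point on a short interval. Standard energy estimates combining kinetic, internal and radiative contributions extend the solution to $[0,T]$. Passing to the Galerkin limit is straightforward at this level because the $\varepsilon\Delta$ terms put $\mathfrak r,\mathfrak z,\Sigma,\xi$ into $L^2(0,T;W^{1,2}(\Omega))$; Lemmas~\ref{Lem:Poincare}, \ref{Lem:KornPoinc} and~\ref{Lem:AlmComp} handle the velocity and the ratios, producing a weak solution of the $(\varepsilon,\delta)$-regularized system for which, since $\vartheta>0$, one may divide the internal-energy balance by $\vartheta$ to obtain \eqref{EntBal}--\eqref{SigDom}.

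The hard part is the passage $\varepsilon\to 0_+$, where the parabolic smoothing of the densities is lost and pointwise compactness of $\mathfrak r_\varepsilon,\mathfrak z_\varepsilon,\Sigma_\varepsilon,\xi_\varepsilon$ must be recovered from the structure of the momentum equation. I would first apply Lemma~\ref{Lem:AlmComp} to the ratios $\zeta_\varepsilon=\mathfrak z_\varepsilon/\mathfrak r_\varepsilon$, $\eta_\varepsilon=\Sigma_\varepsilon/(\mathfrak r_\varepsilon+\mathfrak z_\varepsilon)$ and $\lambda_\varepsilon=\xi_\varepsilon/\mathfrak r_\varepsilon$; the almost-compactness \eqref{AlmConv} gives enough strong convergence to collapse the coupled pressure $\mathcal P(\mathfrak r_\varepsilon,\mathfrak z_\varepsilon,\vartheta_\varepsilon)$ into $\tfrac{b}{3}\vartheta_\varepsilon^4+p(\mathfrak r_\varepsilon,\zeta_\varepsilon,\vartheta_\varepsilon)$, which by hypothesis is nondecreasing in $\mathfrak r$. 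Compactness of the temperature is obtained via the Div--Curl Lemma~\ref{Lem:DivCurl} applied as in \cite{FeNo09} to the entropy flux, using the uniform $W^{1,2}$ bounds on $\vartheta$ and $\log\vartheta$ produced by entropy dissipation, in combination with Lemma~\ref{Lem:LimProd} to decouple densities from temperature in the weak limits. Improved uniform integrability of $\mathfrak r_\varepsilon$ up to the exponent $\overline\gamma=\gamma+\gamma_{BOG}$ of \eqref{OVGDef} is obtained by testing the momentum equation with a Bogovski\u\i{} corrector built on $\mathfrak r_\varepsilon^{\gamma_{BOG}}$; this is exactly the integrability threshold calibrated by the growth hypotheses of Section~\ref{Sec:Hyp}. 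The effective viscous flux identity is then derived by inserting the pseudodifferential operator $\mathfrak U$ from \eqref{RUOpDef} into the momentum equation and using Lemmas~\ref{DCRiesz} and~\ref{Lem:ComEst} to commute the Riesz operators; combined with Lemma~\ref{Lem:MonWConv} and the monotonicity of $p(\cdot,\zeta,\vartheta)$, it yields the a.e.\ convergence of $\mathfrak r_\varepsilon$, and hence of $\mathfrak z_\varepsilon$, $\Sigma_\varepsilon$, $\xi_\varepsilon$ via the strong convergence of the ratios. Entropy production in the limit is captured by a nonnegative defect measure $\sigma$ produced by lower semicontinuity of the entropy source, yielding \eqref{SigDom}--\eqref{EntBal}.

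The concluding passage $\delta\to 0_+$ repeats the same template, but with the uniform integrability of $\mathfrak r_\delta$ pushed only to the exponent $\overline\gamma=\gamma+\gamma_{BOG}$ instead of $\gamma+\Gamma$. In the critical case $\gamma=\overline\gamma_\beta$ the auxiliary assumption $\tfrac{1}{\gamma+1}+\tfrac{1}{p_\beta}\leq\tfrac12$ secures integrability of all products entering the effective viscous flux identity, while the decomposition \eqref{PressDecomp} isolates a strictly monotone part $\pi(\zeta,\vartheta)\rho^\gamma$ from a continuous nondecreasing remainder $\mathscr p$, so that part (ii) of Lemma~\ref{Lem:MonWConv} still delivers pointwise convergence of $\mathfrak r_\delta$. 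The a.e.\ convergence of the other densities again follows from Lemma~\ref{Lem:AlmComp}, and the verification of the items of Definition~\ref{DefWSAbs} is completed by a final lower semicontinuity argument for the entropy defect measure.
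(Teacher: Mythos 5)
Your plan follows the paper's own strategy essentially step by step: the same $(\varepsilon,\delta)$-regularization with Faedo--Galerkin discretization and internal-energy-to-entropy conversion, the same use of Lemmas~\ref{Lem:AlmComp}, \ref{Lem:LimProd}, \ref{Lem:DivCurl}, \ref{DCRiesz}, \ref{Lem:ComEst} and \ref{Lem:MonWConv} for temperature and density compactness via the effective viscous flux, Bogovski\u{\i} pressure estimates, and the decomposition \eqref{PressDecomp} together with the extra assumption in the critical case $\gamma=\overline\gamma_\beta$. Apart from minor presentational differences (the paper invokes a Leray--Schauder fixed point and places the $\gamma_{BOG}$-power Bogovski\u{\i} test function only at the $\delta$-level, handling the critical case through truncations $T_k$ and an oscillation defect measure bound), this is the same proof.
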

	\begin{theorem}\label{Thm:BNSEx}
		Let $T>0$, $\Omega\subset\eR^3$ be a bounded domain of class $C^2$. Let the hypothesis from Section~\ref{Sec:Hyp} be satisfied. Suppose that $f_+, f_-$ are strictly monotone and non-vanishing on $[0,1]$. Let $\gamma_+\geq\frac{9}{5}$, $\gamma_->0$
		\begin{equation}\label{CondCompO}
			\begin{split}
				0<\rho_0(x),\ 0<\underline\alpha\leq\alpha_0(x)\leq\overline\alpha<1,\ (f(\alpha_0)\rho_{0},f_-(1-\alpha_0)z_0)(x)\in \overline{\mathcal O_{\underline a,\overline a}}\text{ for a.a. }x\in\Omega,\\ \alpha_0\in L^\infty(\Omega),\ \rho_0 \in L^{\gamma_+}(\Omega),\ \int_\Omega\rho_0>0,\ z_0\in L^{\gamma_-}(\Omega)\text{ if }\gamma_->\gamma_+, \int_\Omega z_0>0,\\
				(\rho_0+z_0)|u_0|^2\in L^1(\Omega), \vartheta_0\in L^4(\Omega), \log\vartheta_0\in L^2(\Omega),\\
				\rho_0 s_+\left(\frac{\rho_0}{\alpha_0},\vartheta_0\right)+z_0s_-\left(\frac{z_0}{1-\alpha_0},\vartheta_0\right)\in L^1(\Omega),\\
				\rho_0 e_+\left(\frac{\rho_0}{\alpha_0},\vartheta_0\right)+z_0e_-\left(\frac{z_0}{1-\alpha_0},\vartheta_0\right)\in L^1(\Omega)
			\end{split}
		\end{equation}
		be given. If $\gamma=\overline{\gamma_\beta}$ we assume $\frac{1}{\gamma+1}+\frac{1}{p_\beta}\leq \frac{1}{2}$ additionaly, where $\overline{\gamma_\beta}$ and $p_\beta$ are defined in \eqref{OGBPBDef}. Then problem \eqref{BNS} admits at least one bounded energy weak solution in the sense of Definition~\ref{DefWSBNS}.
	\end{theorem}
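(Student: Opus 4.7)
The plan is to reduce Theorem~\ref{Thm:BNSEx} to Theorem~\ref{Thm:ABSEx} via the change of variables detailed in Section~\ref{Sec:Intr}, then recover the volume fraction $\alpha$ from the continuity equations for $\xi$ and $\mathfrak r$, and finally translate each equation of \eqref{ABS} back into an equation for \eqref{BNS} in the original variables. First, given initial data $(\alpha_0,\rho_0,z_0,u_0,\vartheta_0)$ satisfying \eqref{CondCompO}, I would set
\begin{equation*}
\xi_0:=\rho_0,\quad \mathfrak r_0:=f_+(\alpha_0)\rho_0,\quad \mathfrak z_0:=f_-(1-\alpha_0)z_0,\quad \Sigma_0:=\rho_0+z_0,\quad (\Sigma u)_0:=(\rho_0+z_0)u_0
\end{equation*}
in agreement with \eqref{InCond}. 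The assumed strict monotonicity, $C^1$ regularity, and non-vanishing of $f_\pm$ on the compact intervals $[\underline\alpha,\overline\alpha]$ and $[1-\overline\alpha,1-\underline\alpha]$, together with the pointwise bounds $(f_+(\alpha_0)\rho_0, f_-(1-\alpha_0)z_0)(x)\in\overline{\mathcal O_{\underline a,\overline a}}$ in \eqref{CondCompO}, yield the confinement conditions on $(\mathfrak r_0,\mathfrak z_0)$, $(\mathfrak r_0+\mathfrak z_0,\Sigma_0)$ and $(\mathfrak r_0,\xi_0)$ required in Section~\ref{Sec:Hyp}, for suitable constants $\underline b,\overline b,\underline d,\overline d$. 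The integrability conditions on the initial kinetic, internal energy, and entropy densities transfer from \eqref{CondCompO} via the identities \eqref{COV}--\eqref{TEneEnt}. Applying Theorem~\ref{Thm:ABSEx} then produces a bounded energy weak solution $(\xi,\mathfrak r,\mathfrak z,\Sigma,u,\vartheta)$ of \eqref{ABS}.

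Next, I would recover the original volume fraction. Since $(\xi,u)$ and $(\mathfrak r,u)$ both satisfy the continuity equation and $(\mathfrak r,\xi)(t,\cdot)\in\overline{\mathcal O_{\underline d,\overline d}}$, Lemma~\ref{lem:ConToTr}(1) guarantees that $g:=\mathfrak r/_{\mathfrak a}\xi\in L^\infty(Q_T)$ satisfies the time-integrated transport equation with $g(0,\cdot)=f_+(\alpha_0)$. Because $f_+:[\underline\alpha,\overline\alpha]\to f_+([\underline\alpha,\overline\alpha])$ is a $C^1$-diffeomorphism, define $\alpha:=f_+^{-1}(g)$, which takes values in $[\underline\alpha,\overline\alpha]$, lies in $C([0,T];L^q(\Omega))$ for every $1\le q<\infty$, and satisfies $\alpha(0,\cdot)=\alpha_0$. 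By renormalizing the transport equation for $g$ with the bounded smooth function $f_+^{-1}$, one checks that $\alpha$ solves \eqref{TrEqO}. Setting $\rho:=\xi$ and $z:=\mathfrak z/f_-(1-\alpha)$ one has $\rho+z=\Sigma$, and the continuity equations \eqref{ConEqO} for $\rho,z$ follow directly from those for $\xi$ (respectively $\mathfrak z$ combined with the transport equation for $f_-(1-\alpha)$, as $f_-$ is bounded away from zero).

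It then remains to rewrite the remaining equations of \eqref{ABS} in terms of $(\alpha,\rho,z,u,\vartheta)$. The momentum equation \eqref{MomEqO} is obtained from \eqref{MomEqSig} by substituting $\mathcal P(\mathfrak r,\mathfrak z,\vartheta)=\tfrac{b}{3}\vartheta^4+\mathsf P_+(f_+(\alpha)\rho,\vartheta)+\mathsf P_-(f_-(1-\alpha)z,\vartheta)$, while the energy equality \eqref{EnerEqO} and the entropy inequality \eqref{SigDomO}--\eqref{EntBalO} follow from \eqref{EnerEq} and \eqref{SigDom}--\eqref{EntBal} through \eqref{CrlESDef} together with the identities in \eqref{TEneEnt} that express $(\mathcal E,\mathcal S)$ as $(\rho\mathfrak e_++z\mathfrak e_-,\rho\mathfrak s_++z\mathfrak s_-)$ evaluated at the original thermodynamic arguments $(\rho/\alpha,\vartheta)$ and $(z/(1-\alpha),\vartheta)$. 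The principal obstacle is the passage from the renormalized transport equation for $\mathfrak r/\xi$ to the transport equation for $\alpha=f_+^{-1}(\mathfrak r/\xi)$: to avoid any ambiguity on the set where $\xi$ may vanish, I would invoke Lemma~\ref{Lem:AlmUniq} to identify the composition $f_+^{-1}\circ(\mathfrak r/_{\mathfrak a}\xi)$ with the unique bounded renormalized solution of \eqref{TrEqO} starting from $\alpha_0$, on the support of the transported density, thereby pinning down $\alpha$ and closing the reduction.
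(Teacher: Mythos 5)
Your reduction follows the same route as the paper: transform the initial data, apply Theorem~\ref{Thm:ABSEx}, recover $\alpha$ from the transported ratio of $\xi$ and $\mathfrak r$ by composing with the inverse of $f_+$ (the paper works with $F=1/f_+$ and $\alpha=F^{-1}(\xi/_{\mathfrak a}\mathfrak r)$, which is the same device), and then rewrite the remaining equations in the original variables. The genuine gap is the sentence ``Setting $\rho:=\xi$ and $z:=\mathfrak z/f_-(1-\alpha)$ one has $\rho+z=\Sigma$.'' In the weak framework of Definition~\ref{DefWSAbs}, $\Sigma$ is an independent unknown, linked to $\xi,\mathfrak r,\mathfrak z$ only through the initial datum $\Sigma_0=\rho_0+z_0$, the common velocity $u$ in the continuity equations, and the comparability $\underline b\Sigma\le\mathfrak r+\mathfrak z\le\overline b\Sigma$; nothing in Theorem~\ref{Thm:ABSEx} provides the pointwise identity $\Sigma=\xi+\mathfrak z/f_-(1-\alpha)$. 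Nor can you obtain it from uniqueness of the continuity equation: with $u\in L^2(0,T;W^{1,2}(\Omega))$ and densities only in $L^\infty(0,T;L^\gamma(\Omega))$ with $\gamma$ possibly below $2$, DiPerna--Lions uniqueness is not available, and $z=\mathfrak z/f_-(1-\alpha)$ is not a priori in a uniqueness class. The identity is indispensable, however: the momentum equation \eqref{MomEqO} and the energy equality \eqref{EnerEqO} involve $\rho+z$, while \eqref{MomEqSig} and \eqref{EnerEq} involve $\Sigma$, so without $\Sigma=\rho+z$ the translation of precisely these two equations breaks down.

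The paper closes exactly this step with the almost-uniqueness machinery: both $T=\Sigma/_{\mathfrak a}\mathfrak r$ and $S=\rho/_{\mathfrak a}\mathfrak r+z/_{\mathfrak a}\mathfrak r$ are bounded solutions of the pure transport equation (Lemma~\ref{lem:ConToTr}(1) together with the renormalization giving the transport equation for $G(\alpha)=1/f_-(1-\alpha)$) with the same initial datum $F(\alpha_0)\bigl(1+z_0/\rho_0\bigr)$, so Lemma~\ref{Lem:AlmUniq} yields $T=S$ on $\{\mathfrak r>0\}$, hence $\mathfrak r T=\mathfrak r S$, i.e. \eqref{SigmIdent}; the set $\{\mathfrak r=0\}$ is harmless because the confinement conditions force $\mathfrak z$, $\xi$ and $\Sigma$ to vanish there. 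You do invoke Lemma~\ref{Lem:AlmUniq}, but only to pin down $\alpha$ on the vanishing set of the density, which is the easier issue (there the convention \eqref{DivConv} and renormalization of the bounded transport solution already suffice); redirect that lemma to the identification $\Sigma=\rho+z$ and your argument coincides with the paper's proof.
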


	\section{Proof of Theorem~\ref{Thm:ABSEx}}
	\subsection{Initial data regularization}
	We begin with the description of the necessary regularization of initial data $\mathfrak r_{0}$, $\mathfrak z_0$, $\Sigma_0$, $\xi_0$ and $\vartheta_0$. Considering that $r_0$ stands for $\mathfrak r_{0}$, $\mathfrak z_0$, $\Sigma_0$, $\xi_0$ we suppose that the approximation $r_{0,\delta}$ of $r_0$ fulfills
	\begin{equation}\label{InDataReg}
		r_{0,\delta}\in C^{2,\nu}(\overline\Omega), \nu>0, \inf_{\Omega} r_{0,\delta}>0,\ \nabla r_{0,\delta}\cdot n=0\text{ on }(0,T)\times\partial\Omega,
	\end{equation}
	the approximate initial momenta are defined via
	\begin{equation}\label{SUZDDef} 
		(\Sigma u)_{0,\delta}(x)=\begin{cases}(\Sigma u)_0(x)& \text{ if }\Sigma_0(x)\leq\Sigma_{0,\delta}(x)\\
			0&\text{ else }\end{cases}
	\end{equation}
	and the approximate initial temperature obeys
	\begin{equation}\label{InTempReg}
		\vartheta_{0,\delta}\in W^{1,2}(\Omega)\cap L^\infty(\Omega), \essinf_\Omega\vartheta_{0,\delta}>0,\ \nabla\vartheta_{0,\delta}\cdot n=0\text{ on }(0,T)\times\partial\Omega.
	\end{equation}
	Next, the approximations of initial data are supposed to satisfy
	\begin{equation}\label{InDConv}
		(\mathfrak r_{0,\delta},\mathfrak z_{0,\delta},\Sigma_{0,\delta},\xi_{0,\delta})\to (\mathfrak r_{0},\mathfrak z_{0},\Sigma_{0},\xi_0) \text{ in }L^\gamma(\Omega)^4
	\end{equation}
	and $|\{\Sigma_{0,\delta}<\Sigma_0\}|\to 0$ as $\delta\to 0_+$.
	Moreover, we suppose that
	\begin{equation}\label{InDatConv}
		\begin{alignedat}{2}
			\int_\Omega\frac{|(\Sigma u)_{0,\delta}|^2}{\Sigma_{0,\delta}}&\to \int_\Omega\frac{|(\Sigma u)_0|^2}{\Sigma_0},\\
			\delta\int_\Omega\left(\mathfrak r^\Gamma_{0,\delta}+\mathfrak z^\Gamma_{0,\delta}\right)&\to 0,\\
			\delta\int_\Omega\left(\mathfrak r_{0,\delta}+\mathfrak z_{0,\delta}\right)\vartheta_{0,\delta}&\to 0,\\
			\delta\int_\Omega\left(\mathfrak r_{0,\delta}+\mathfrak z_{0,\delta}\right)\log\vartheta_{0,\delta}&\to 0,\\
			\mathcal S(\mathfrak r_{0,\delta},\mathfrak z_{0,\delta},\vartheta_{0,\delta})&\to\mathcal S(\mathfrak r_{0},\mathfrak z_{0},\vartheta_{0})&&\text{ in }L^1(Q_T),\\
			\mathcal E(\mathfrak r_{0,\delta},\mathfrak z_{0,\delta},\vartheta_{0,\delta})&\to\mathcal E(\mathfrak r_{0},\mathfrak z_{0},\vartheta_{0})&&\text{ in }L^1(Q_T).
		\end{alignedat}
	\end{equation}
	The above specified properties of approximations are satisfied by 
	\begin{equation*}
		\begin{split}
			r_{0,\delta}=&\omega_\delta*(r_0 \tau_{2\delta})+\delta,\\
			\vartheta_{0,\delta}=&T_\frac{1}{\delta}(\vartheta_0),
		\end{split}
	\end{equation*}
	where $\omega_\delta$ is the standard mollifier with radius $\delta>0$, $0\leq \tau_{2\delta}\leq 1$, $\tau_{2\delta}\in C^\infty_c(\Omega)$ is defined via
	\begin{equation*}
		\tau_{2\delta}(x)=\begin{cases}
			1&\text{ if }\dist(x,\partial\Omega)\geq 2\delta,\\
			0&\text{ if }\dist(x,\partial\Omega)\leq \delta
		\end{cases}
	\end{equation*}
	and $T_{\frac{1}{\delta}}$ is the cut-off function defined in \eqref{TKDef}.
	
	\subsection{Regularized system of equations}
	We consider the following system of equations. The functions $\xi$, $\mathfrak r$, $\mathfrak z$, $\Sigma$ satisfy the following regularized continuity equation
	\begin{equation}\label{RReg}
		\tder r+\dvr(r u)=\varepsilon\Delta r\text{ in }Q_T,
	\end{equation}
	equipped with the homgeneous Neumann boundary condition 
	\begin{equation}\label{RNeumann}
		\nabla r\cdot n=0\text{ on }(0,T)\times\partial\Omega
	\end{equation} 
	and initial conditions 
	\begin{equation}\label{RZInit}
		r(0,\cdot)=r_{0,\delta}(\cdot)\text{ in }\Omega.
	\end{equation} 
	
	The regularized momentum equation in $Q_T$ reads
	\begin{equation*}
		\tder \left(\Sigma u\right)+\dvr\left(\Sigma u\otimes u\right)+\nabla \mathcal P_\delta(\mathfrak r,\mathfrak z,\vartheta)=\dvr \mathbb S(\vartheta,\nabla u)-\varepsilon\nabla\Sigma\nabla u,
	\end{equation*}
	with $u=0$ or $u\cdot n=0$ and $\mathbb S(\vartheta,\nabla u)n\times n=0$ on $(0,T)\times\partial\Omega$ and the initial condition $u(0,\cdot)=u_0$ in $\Omega$. Following the strategy from \cite[Section 3]{FeNo09} we consider the regularized internal energy equation
	\begin{equation}\label{IntEnEq}
		\begin{split}
			&\tder \mathcal E_\delta\left(\mathfrak r,\mathfrak z,\vartheta\right)+\dvr\left(\mathcal E_\delta\left(\mathfrak r,\mathfrak z,\vartheta\right) u\right)+\mathcal P\left(\mathfrak r,\mathfrak z,\vartheta\right)\dvr u\\
			&=\mathbb
			S_\delta(\vartheta,\nabla u)\cdot\nabla u+\varepsilon\left(\partial^2_{rr}h_\delta(\mathfrak r,\mathfrak z)|\nabla\mathfrak r|^2+2\partial^2_{rz}h_\delta(\mathfrak r,\mathfrak z)\nabla\mathfrak r\cdot\nabla\mathfrak z+\partial^2_{zz}h_\delta(\mathfrak r,\mathfrak z)|\nabla\mathfrak z|^2\right)+\Delta \mathcal K_\delta(\vartheta)+\frac{\delta}{\vartheta^2}-\varepsilon\vartheta^{\Lambda+1},
		\end{split}
	\end{equation}
	with the homogeneous Neumann boundary condition
	\begin{equation}\label{IENeum} 
		\nabla \vartheta\cdot n=0\text{ on }\partial\Omega
	\end{equation}
	and the initial condition $\vartheta(0,\cdot)=\vartheta_{0,\delta}(\cdot)$ satisfying \eqref{InTempReg}.
	The following notation is used in the equations above
	\begin{align} 
		\mathcal E_\delta(\mathfrak r,\mathfrak z,\vartheta)=&\mathcal E(\mathfrak r,\mathfrak z,\vartheta)+\delta(\mathfrak r+\mathfrak z)\vartheta,\label{MEDDef}\\
        \Lambda\geq&\max\{4,\omega^P_{\pm},\overline{\omega}^e\},\label{LambdaDef}\\
		\Gamma>&\max\{6,\Lambda+1,2\gamma_\pm,2(\gamma^P_\pm+\omega^P_\pm),2\gamma^s_\pm,2(1+\omega^s_\pm),6(\gamma_\pm-1), 2\omega^e_\pm-1,2\overline\omega^e,2(\gamma^P_\pm-1),\\&2(\omega^P_\pm-1),\overline\Gamma^e-1\},\\
		\mathcal K_\delta(\vartheta)=&\int_0^\vartheta \kappa_\delta(s)\ds,\ \kappa_\delta(s)=\kappa(s)+\delta(s^\Gamma+\frac{1}{s}),\label{KappaDDef}\\
		\mathcal P_\delta(r,z,\vartheta)=&\mathcal P(r,z,\vartheta)+\delta\left(r^\Gamma+z^\Gamma+r^2+z^2\right),\label{PDDef}\\
		h_\delta(r,z)=&\frac{\delta}{\Gamma-1}\left(r^\Gamma+ z^\Gamma\right)+\delta(r^2+z^2).\label{HDDef}
	\end{align}
	
	\subsection{Existence of $\xi$, $\mathfrak r$, $\mathfrak z$ and $\vartheta$ for a fixed finite dimensional $u$ and fixed $\varepsilon,\delta>0$}\label{Sec:Appr}
	Considering a smooth velocity field, namely $u\in C([0,T];C^{2,\nu}(\overline\Omega))$, we deal with the existence of regular solutions to \eqref{RReg} and \eqref{IntEnEq}. The existence of unique solutions to regularized continuity equations and their properties needed for the analysis here are taken from \cite[Lemma 3.1]{FeNo09}. For the precise defintion of the space $X_n$ appearing in this section see Section \ref{LocSolG}.
	\begin{lemma}\label{Lem:ContApp}
		Let $\Omega\subset\eR^d$ be a bounded domain of class $C^{2,\nu}$, $\nu\in(0,1)$ and a vector field $u\in C([0,T];X^n)$ be given. Let the regularity of $r_{0,\delta}$ be specified in \eqref{InDataReg}. Then for any $\varepsilon>0$ there is a unique $r=r_u$ satisfying \eqref{RReg}--\eqref{RZInit}
		and
		\begin{equation}\label{VDef}
			r_u\in V:=\left\{r\in C([0,T];C^{2,\nu}(\overline\Omega)),\tder r\in C([0,T];C^{0,\nu}(\overline\Omega))\right\}
		\end{equation}
		Moreover, the mapping $u\mapsto r_u$ maps bounded sets in $C([0,T];X_n)$ into bounded sets in $V$ and is continuous with values in $C^1([0,T]\times\overline\Omega)$.
		
		Finally,
		\begin{align*}
			\inf_{\Omega}r_{0}\exp\left(-\int_0^t\|\dvr u\|_{L^\infty(\Omega)}\right)\leq r(t,x)\leq \sup_{\Omega}r_{0}\exp\left(\int_0^t\|\dvr u\|_{L^\infty(\Omega)}\right)
		\end{align*}
		for all $t\in[0,T]$, $x\in\Omega$.
	\end{lemma}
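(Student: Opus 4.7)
The plan is to recognize equation \eqref{RReg} as a linear uniformly parabolic problem in the single unknown $r$ once the velocity $u\in C([0,T];X_n)$ has been fixed, and then to invoke classical Schauder theory together with the parabolic maximum principle. Since $X_n$ is finite dimensional, all spatial seminorms of $u$ are equivalent, so $u\in C([0,T];C^{k,\nu}(\overline\Omega))$ for every $k\in\eN$, with norms controlled by $\|u\|_{C([0,T];X_n)}$. Rewriting \eqref{RReg} in non-divergence form as
\begin{equation*}
\tder r-\varepsilon\Delta r+u\cdot\nabla r=-r\,\dvr u,
\end{equation*}
we see that the coefficients are smooth, the operator is uniformly parabolic with constant $\varepsilon$, the domain is of class $C^{2,\nu}$, and the initial datum $r_{0,\delta}$ from \eqref{InDataReg} is $C^{2,\nu}$ and satisfies the compatibility condition $\nabla r_{0,\delta}\cdot n=0$.

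First I would apply the Ladyzhenskaya--Solonnikov--Ural'tseva Schauder theory for linear parabolic equations with Neumann boundary conditions on $C^{2,\nu}$-domains to obtain a unique solution $r=r_u\in V$ with the regularity advertised in \eqref{VDef}; uniqueness may alternatively be deduced from an $L^2$-energy estimate for the difference of two candidate solutions combined with Gr\"onwall's lemma. Next, to derive the pointwise bounds, I would apply the parabolic weak maximum principle to the auxiliary function $\tilde r(t,x):=r(t,x)\exp\bigl(-\int_0^t\|\dvr u(s,\cdot)\|_{L^\infty(\Omega)}\ds\bigr)$, which satisfies a parabolic inequality with nonpositive zero-order term; combined with the Neumann condition (so that no boundary contribution appears at the supremum) this yields the upper bound. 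The lower bound follows by the same argument applied to $r(t,x)\exp\bigl(+\int_0^t\|\dvr u\|_{L^\infty}\ds\bigr)$, and the strict positivity of $r$ is inherited from $\inf_\Omega r_{0,\delta}>0$.

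The boundedness of $u\mapsto r_u$ into $V$ is the direct content of the Schauder a priori estimate, which gives $\|r_u\|_V\leq C(\varepsilon,\|r_{0,\delta}\|_{C^{2,\nu}},R)$ whenever $\|u\|_{C([0,T];X_n)}\leq R$. For the continuity of $u\mapsto r_u$ into $C^1([0,T]\times\overline\Omega)$, I would apply the same estimates to the difference $r_{u_1}-r_{u_2}$, which solves a linear parabolic problem with zero initial datum and forcing $-\dvr\bigl(r_{u_2}(u_1-u_2)\bigr)$; since $r_{u_2}$ is a priori bounded in $V$ and $u_1-u_2$ is smooth and small in $C([0,T];X_n)$, the parabolic estimates yield Lipschitz continuity of the map with values in $C^1$.

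There is no substantive obstacle here: every step is standard linear parabolic theory, and the statement is in fact borrowed verbatim from \cite[Lemma 3.1]{FeNo09}. The only points needing attention are (i) verifying the compatibility between $r_{0,\delta}$ and the Neumann condition, which is baked into \eqref{InDataReg}, and (ii) choosing the topology in which the continuity of $u\mapsto r_u$ is proved --- weak enough ($C^1$ rather than $V$) that the regularity gap in the difference estimate is harmless, yet strong enough to be used later in the Schauder fixed-point argument that will construct the finite-dimensional velocity at the Faedo--Galerkin level.
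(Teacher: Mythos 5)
Your proof is correct and takes essentially the same route as the paper, which in fact gives no proof of its own but cites \cite[Lemma 3.1]{FeNo09}, whose argument is precisely the one you sketch: linear parabolic (Schauder/maximal regularity) theory for the fixed, smooth-in-space velocity, an exponential-weight comparison/maximum-principle argument for the two-sided pointwise bounds, and a difference estimate for the continuity of $u\mapsto r_u$ with values in $C^1([0,T]\times\overline\Omega)$. The one detail to watch is that $u\in C([0,T];X_n)$ is merely continuous in time, so the classical Ladyzhenskaya--Solonnikov--Ural'tseva estimates (which assume H\"older continuity of the coefficients in time) must be applied with a minor adjustment; this is exactly why the class $V$ in \eqref{VDef} asks only for $r\in C([0,T];C^{2,\nu}(\overline\Omega))$ with $\tder r\in C([0,T];C^{0,\nu}(\overline\Omega))$ rather than full parabolic H\"older regularity.
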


	\begin{lemma}\label{Lem:IEE}
		Let $\Omega\subset\eR^d$, $d=2,3$ be a bounded domain of class $C^{2,\nu}$, $\nu\in(0,1)$. Let $u\in C([0,T];X_n)$ be a given vector field and $\mathfrak r=\mathfrak r_u$, $\mathfrak z=\mathfrak z_u$ be the unique solutions to the regularized continuity equations constructed in Lemma~\ref{Lem:ContApp} corresponding to initial datum $\mathfrak r_{0,\delta}$, $\mathfrak z_{0,\delta}$ respectively, %and $\alpha=\alpha_u$ be the unique solutions to the regularized transport equation constructed in Lemma~\ref{Lem:TrEApp} with the initial datum $\alpha_0$ such that $\delta\leq\alpha_0\leq 1-\delta$. 
		Further let
		\begin{enumerate}
			\item the initial datum $\vartheta_{0,\delta}\in W^{1,2}(\Omega)\cap L^\infty(\Omega)$ satisfy $\essinf_{\Omega} \vartheta_{0,\delta}>0$;
			\item the constitutive functions ${\mathsf e}_{\pm}$, $\mathsf s_{\pm}$, $\mathsf{P}_{\pm}$ and the coefficients $\mu$, $\eta$ and $\kappa$ obey the structural assumptions from Section~\eqref{Sec:Hyp}.
		\end{enumerate}
		Then the problem \eqref{IntEnEq} with fixed $u$, $\mathfrak r_u$, $\mathfrak z_u$ possesses a unique strong solution $\vartheta=\vartheta_u$ which belongs to the regularity class
		\begin{equation*}
			Y=\{\vartheta\in L^\infty(0,T;W^{1,2}(\Omega)\cap L^\infty(\Omega));\frac{1}{\vartheta}\in L^\infty(Q_T),\ \tder\vartheta\in L^2(Q_T),\ \Delta\mathcal K_\delta(\vartheta)\in L^2(Q_T)\}
		\end{equation*}
		where $\mathcal K_\delta$ is given by \eqref{KappaDDef}. Moreover, the mapping $u\mapsto\vartheta_u$ maps bounded sets in $C([0,T];X_n)$ into bounded sets in $Y$ and is continuous with values in $L^2(0,T;W^{1,2}(\Omega))$.
	\end{lemma}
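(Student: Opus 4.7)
The plan is to recast equation \eqref{IntEnEq} as a quasilinear parabolic equation for $\vartheta$ alone, whose coefficients depend on the now-known smooth data $u,\mathfrak r_u,\mathfrak z_u$, and then to invoke standard parabolic theory together with a maximum-principle argument to obtain the two-sided bounds required by the class $Y$. First I would expand $\tder\mathcal E_\delta(\mathfrak r,\mathfrak z,\vartheta)+\dvr(\mathcal E_\delta u)$ via the chain rule and eliminate $\tder\mathfrak r$ and $\tder\mathfrak z$ by means of \eqref{RReg}. Using the Gibbs relations \eqref{GibbsT} for $(\mathsf e_\pm,\mathsf P_\pm,\mathsf s_\pm)$ together with \eqref{MEDDef} and collecting the pressure--work term $\mathcal P(\mathfrak r,\mathfrak z,\vartheta)\dvr u$, the equation reduces to
\begin{equation*}
    \partial_\vartheta\mathcal E_\delta(\mathfrak r,\mathfrak z,\vartheta)\bigl(\tder\vartheta+u\cdot\nabla\vartheta\bigr)=\Delta\mathcal K_\delta(\vartheta)+F(\mathfrak r,\mathfrak z,\vartheta,\nabla\mathfrak r,\nabla\mathfrak z,\nabla u)+\tfrac{\delta}{\vartheta^2}-\varepsilon\vartheta^{\Lambda+1},
\end{equation*}
where $F$ also contains $\mathbb S(\vartheta,\nabla u)\cdot\nabla u$ and the nonnegative $\partial^2h_\delta$ diffusion terms from \eqref{IntEnEq}. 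Thermodynamic stability \eqref{ThStab} together with the extra term $\delta(\mathfrak r+\mathfrak z)\vartheta$ in \eqref{MEDDef} yields $\partial_\vartheta\mathcal E_\delta\geq\underline c(\mathfrak r+\mathfrak z)>0$, and the lower bound $\inf_\Omega\mathfrak r_{0,\delta},\inf_\Omega\mathfrak z_{0,\delta}>0$ propagates via Lemma~\ref{Lem:ContApp}, so the equation is uniformly parabolic wherever $\vartheta>0$.

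Existence I would establish by a double approximation: cut off the nonlinearities by $T_k$ so that the right-hand side becomes globally Lipschitz and the principal coefficient $\partial_\vartheta\mathcal E_\delta$ is bounded from above, then apply a Faedo--Galerkin scheme in a basis of eigenfunctions of the Neumann Laplacian, in the spirit of \cite[Section 3.4]{FeNo09}. Local-in-time solvability of the resulting ODE system follows from the Cauchy--Peano/Schauder theorem, and uniform estimates in $L^\infty(0,T;L^2(\Omega))\cap L^2(0,T;W^{1,2}(\Omega))$ obtained by testing with $\vartheta$ extend the solution to $[0,T]$. The $L^2$-estimate on $\Delta\mathcal K_\delta(\vartheta)$, and hence on $\tder\vartheta$, is recovered by testing with $\partial_t\mathcal K_\delta(\vartheta)$, where the Neumann condition \eqref{IENeum} makes the boundary integrals vanish and the $\delta(\vartheta^\Gamma+1/\vartheta)$ summand in \eqref{KappaDDef} gives coercivity.

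The hardest point, as is usual in the NSF framework, is the simultaneous pointwise lower and upper bound on $\vartheta$ needed to remove the truncation and to land in $Y$. For the upper bound I would exploit the choice $\Lambda\geq\max\{4,\omega^P_\pm,\overline\omega^e\}$ in \eqref{LambdaDef}: the sink $-\varepsilon\vartheta^{\Lambda+1}$ dominates all polynomial sources in $F$ for large $\vartheta$, so testing with $(\vartheta-M)^+$ for $M$ sufficiently large (depending on $\varepsilon,\delta,\|u\|_{C^1},\|\mathfrak r\|_{C^1},\|\mathfrak z\|_{C^1}$) gives $\vartheta\leq M$. For the lower bound the dominant term near $\vartheta=0$ is the regularizing $\delta/\vartheta^2$; testing with $-(\eta-\vartheta)^+/\vartheta^{2}$ (or comparing against a time-dependent subsolution of the form $\underline\vartheta(t)=c(1+t)^{-\alpha}$) forces $\vartheta\geq\underline\vartheta>0$. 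These two bounds activate the parabolic regularity theory and allow the removal of $T_k$.

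Once $\vartheta$ is pinched between positive constants, uniqueness follows by subtracting two solutions $\vartheta_1,\vartheta_2$ corresponding to the same data, using the monotonicity of $\zeta\mapsto\mathcal K_\delta(\zeta)$ and of $\zeta\mapsto-\varepsilon\zeta^{\Lambda+1}-\delta/\zeta^2$, together with a Gronwall inequality for $\|\vartheta_1-\vartheta_2\|_{L^2(\Omega)}^2$ where the Lipschitz constants are controlled by $\|u\|_{C([0,T];X_n)}$ and the two-sided bounds on the $\vartheta_i$. The asserted mapping properties of $u\mapsto\vartheta_u$ -- boundedness from bounded sets in $C([0,T];X_n)$ into bounded sets of $Y$, and continuity into $L^2(0,T;W^{1,2}(\Omega))$ -- follow from the same stability estimate combined with the Lipschitz dependence $u\mapsto(\mathfrak r_u,\mathfrak z_u)$ supplied by Lemma~\ref{Lem:ContApp}, since all constants in the bounds depend only on $\|u\|_{C([0,T];X_n)}$ through the a priori estimates above.
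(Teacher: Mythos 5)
Your strategy is sound and rests on exactly the same structural mechanisms as the paper, but the construction route is genuinely different. The paper does not truncate and run a Faedo--Galerkin scheme for the temperature; instead it regularizes the coefficients themselves (mollification plus the cutoff $\theta_\omega$), solves the resulting $\omega$--approximate problem \eqref{VarthetaOmPr} by quoting the classical quasilinear parabolic theory of Ladyzhenskaya--Solonnikov--Uraltseva (Lemma~\ref{Lem:VarThetaOmEx}), and obtains the two--sided bounds not by testing with $(\vartheta-M)^+$ but through a comparison principle: Lemma~\ref{Lem:SubSupIneq} is an $L^1$--type contraction proved with the $\sgnp$ test function, and Lemma~\ref{Lem:TOmBounds} exhibits constant sub- and supersolutions, with the lower barrier coming from $\delta/(\vartheta^2+\omega^2)$ and the upper one from $\varepsilon\theta_\omega^{\Lambda+1}$ and the choice of $\Lambda$ in \eqref{LambdaDef} --- precisely the terms you identified. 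The energy estimates you propose (testing with $\vartheta$ and with $\tder\mathcal K_\delta(\vartheta)$, then reading $\Delta\mathcal K_\delta(\vartheta)\in L^2$ off the equation) coincide with Lemma~\ref{Lem:OmUnifEst}. What the paper's route buys is that the maximum principle is available at the level of classical solutions of the regularized problem, so no truncation has to be removed; what your route buys is independence from the LSU machinery, at the price of having to justify the barrier arguments for weak (Galerkin-limit) solutions and to remove the cutoff $T_k$ afterwards.

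Two caveats. First, your uniqueness sketch as stated does not quite close: testing the difference equation with $\vartheta_1-\vartheta_2$ produces the cross term $\int(\kappa_\delta(\vartheta_1)\nabla\vartheta_1-\kappa_\delta(\vartheta_2)\nabla\vartheta_2)\cdot\nabla(\vartheta_1-\vartheta_2)$, which the monotonicity of $\mathcal K_\delta$ alone does not make nonnegative; you should either test with $\mathcal K_\delta(\vartheta_1)-\mathcal K_\delta(\vartheta_2)$ (legitimate once both solutions are pinched between positive constants) or, as the paper does, run the $\sgnp$ comparison argument of Lemma~\ref{Lem:SubSupIneq}, which yields uniqueness directly since each solution is simultaneously a sub- and a supersolution. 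Second, Lemma~\ref{Lem:ContApp} provides only continuity, not Lipschitz dependence, of $u\mapsto(\mathfrak r_u,\mathfrak z_u)$ in $C^1([0,T]\times\overline\Omega)$, so the continuity of $u\mapsto\vartheta_u$ into $L^2(0,T;W^{1,2}(\Omega))$ is more safely obtained the way the paper does it: uniform bounds in $Y$ for $u_k$ in a bounded set, Aubin--Lions compactness, passage to the limit in the equation, and identification of the limit by uniqueness, which upgrades subsequential convergence to convergence of the whole sequence.
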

	\subsection{Approximate internal energy equation}
	The existence of a solution $\vartheta$ to \eqref{IntEnEq} is shown via the approximating procedure which we explain in detail. 
	
	\subsubsection{The solvability of the $\omega$--approximate internal energy equation}
	
	The construction of the solution to \eqref{IntEnEq} requires some preparatory work which we now present. Let $\mathfrak r,\mathfrak z$ be a solution to regularized continuity equation constructed in Lemma~\ref{Lem:ContApp} for a given velocity $u$. We extend the functions $\mathfrak r,\mathfrak z\in C([0,T];C^{2,\nu}(\overline\Omega))\cap C^1([0,T];C^{0,\nu}(\overline\Omega))$ and $u\in C([0,T];X_n)$ continuously to $\mathfrak r,\mathfrak z\in C(\eR;C^{2,\nu}(\overline\Omega))\cap C^1(\eR;C^{0,\nu}(\overline\Omega))$ and $u\in C(\eR;X_n)$ such that $\supp \mathfrak r,\mathfrak z\subset (-2T,2T)\times\overline\Omega$, $\supp u\subset (-2T,2T)\times\overline\Omega$.
	
	Moreover, we consider a sequence $\{\vartheta_{0}^\omega\}\subset C^{2,\nu}(\overline\Omega)$ such that $\vartheta^\omega_0\to \vartheta_{0,\delta}$ in $W^{1,2}(\Omega)\cap L^\infty(\Omega)$ and $\inf_{\Omega}\vartheta^\omega_0>\underline\vartheta_0$ as $\omega\to 0_+$.
	In accordance with \eqref{CrlESDef} we introduce necessary notations
	\begin{align}
		\mathcal E_{\delta,\omega}(r,z,\vartheta)=&[\langle\mathcal E\rangle]^\omega(r, z,\theta_\omega(\vartheta))+b\theta^4_\omega(\vartheta)+\delta( r+z)\vartheta,\nonumber\\
		\{D_\vartheta \mathcal E\}_{\delta,\omega}(r, z,\vartheta)=&[\langle\partial_\vartheta \mathcal E\rangle]^\omega(r, z,\vartheta)+4b\frac{\vartheta^4}{\sqrt{\vartheta^2+\omega^2}}+\delta(r+z),\nonumber\\
		\kappa_{\delta,\omega}(\vartheta)=&[\langle\kappa\rangle]^\omega(\theta_\omega(\vartheta))+\delta\left(\theta_\omega^\Gamma(\vartheta)+\frac{1}{\sqrt{\vartheta^2+\omega^2}}\right)\label{kappaDO},\\
		\mathcal K_{\delta,\omega}(\vartheta)=&\int_1^\vartheta\kappa_{\delta,\omega}(s)\ds,\nonumber\\
		\mathcal P_\omega(r,z,\vartheta)=&[\langle \mathcal P\rangle]^\omega(r,z,\theta_\omega(\vartheta))+\frac{b}{3}\theta^4_\omega(\vartheta),\nonumber\\
		G_\delta(t,x)=&\left(\partial^2_{rr}h_\delta(\mathfrak r,\mathfrak z)|\nabla \mathfrak r|^2+2\partial^2_{rz}h_\delta(\mathfrak r,\mathfrak z)\nabla\mathfrak r\cdot\nabla\mathfrak z+\partial^2_{zz}h_\delta(\mathfrak r,\mathfrak z)|\nabla z|^2\right)(t,x),\ G_{\delta,\omega}=G_\delta^\omega,\nonumber\\
		\mathbb S_{\delta,\omega}(\vartheta,\nabla u^\omega)=&\left(\langle\mu\rangle^\omega(\theta_\omega(\vartheta))\right)\left(\nabla u^\omega+((\nabla u)^\top)^\omega-\frac{2}{3}\dvr u^\omega\mathbb I\right)+\langle\eta\rangle^\omega(\theta_\omega(\vartheta))\dvr u^\omega\mathbb I,\nonumber 
	\end{align}
	where
	\begin{align}
		\theta_\omega(\vartheta)=&\frac{\sqrt{\vartheta^2+\omega^2}}{1+\omega\sqrt{\vartheta^2+\omega^2}},\label{{DefThetaO}}\\
		\langle f\rangle(s)=&\begin{cases} f(s)&\text{ if }s\in(0,\infty)\\
			\max\{\inf_{(0,\infty)}f,0\}&\text{ else}.
		\end{cases}\nonumber
	\end{align}
	We consider the mollification operator $b\mapsto b^\omega$ for $\omega\in(0,1)$ given by the convolution with a mollifier having the support in the ball with radius $\omega$. We note that this operator is applied to all independent variables in the case of functions $\langle \mathcal E\rangle$, $\langle \partial_\vartheta\mathcal E\rangle$, $\langle \mathcal P\rangle$, $\langle\mu\rangle$, $\langle\eta\rangle$, $\langle\kappa\rangle$ and to the variable $t$ only if the functions $\mathfrak r$, $\mathfrak z$, $u$ and $G$ are concerned. By virtue of \eqref{KGrowth} we conclude
	\begin{equation}\label{AppPositive}
		\begin{split}
			&\kappa_{\delta,\omega}(\vartheta)\geq \overline c^{-1} >0,\\ &\{D_\vartheta \mathcal E\}_{\delta,\omega}(\mathfrak r,\mathfrak z,\vartheta)>\delta\inf_{Q_T}(\mathfrak r+\mathfrak z)>0
		\end{split}
	\end{equation}
	for all $\vartheta\in \eR$ as $\mathfrak r+\mathfrak z$ solves the regularized continuity equation, cf. Lemma~\ref{Lem:ContApp}. The approximate internal energy equation for $\omega>0$ reads
	\begin{equation}\label{VarthetaOmPr}
		\begin{alignedat}{2}
			\{D_\vartheta \mathcal E\}_{\delta,\omega}(\mathfrak r^\omega,\mathfrak z^\omega,\vartheta)\tder\vartheta+\dvr\left(\mathcal E_{\delta,\omega}(\mathfrak r^\omega,\mathfrak z^\omega,\vartheta)u^\omega\right)-\Delta\mathcal K_{\delta,\omega}(\vartheta)=&-\partial_{\mathfrak r}\mathcal E_{\delta,\omega}(\mathfrak r^\omega,\mathfrak z^\omega,\vartheta)\tder \mathfrak r^\omega&&\\-\partial_{\mathfrak z}\mathcal E_{\delta,\omega}(\mathfrak r^\omega,\mathfrak z^\omega,\vartheta)\tder \mathfrak z^\omega&+\mathbb S_{\delta,\omega}(\vartheta,\nabla u^\omega)\cdot\nabla u^\omega
			+\varepsilon G_{\delta,\omega}\\ -\mathcal P_\omega(\mathfrak r^\omega,\mathfrak z^\omega,\vartheta){\color{red} \dvr u^\omega}
    &-\frac{\delta}{\vartheta^2+\omega^2}-\varepsilon\theta^{\Lambda+1}_\omega(\vartheta)&&\text{ in }Q_T,\\
			\nabla\vartheta\cdot n=&0&&\text{ on }(0,T)\times\partial\Omega,\\
			\vartheta(0,\cdot)=&\vartheta_{0,\omega}(\cdot)&&\text{ in }\Omega.
		\end{alignedat}
	\end{equation}
	
	The following lemma concerns the solvability of equation \eqref{VarthetaOmPr}.
	\begin{lemma}\label{Lem:VarThetaOmEx}
		Let $\Omega\subset\eR^3$ be a bounded domain of class $C^{2,\nu}$, $\nu\in(0,1)$. Suppose that the data are regularized as mentioned at the beginning of this subsection. Moreover, let $\mathfrak r=\mathfrak r_u$, $\mathfrak z=\mathfrak z_u$ be constructed in Lemma \ref{Lem:ContApp} for a given $u\in C([0,T];X_n)$.  Then for any $\omega\in(0,1)$ there exists $\vartheta=\vartheta_\omega$ possessing the regularity
		\begin{equation}\label{VThetaReg}
			\vartheta_\omega\in C([0,T];C^{2,\nu}(\overline\Omega))\cap C^1([0,T]\times\overline\Omega),\ \tder \vartheta_\omega\in C^{0,\frac{\nu}{2}}([0,T];C(\overline\Omega))
		\end{equation}
		and satisfying \eqref{VarthetaOmPr}.
		\begin{proof}
			Thanks to \eqref{AppPositive}$_2$ we can rewrite equation \eqref{VarthetaOmPr} for the unknown $\vartheta$ in the following quasilinear parabolic form
			\begin{equation*}
				\begin{split}
					\tder\vartheta -\sum_{i,j=1}^3 a_{ij}(t,x,\vartheta)\partial_{ij}\vartheta+b(t,x,\vartheta,\nabla\vartheta)=&0\text{ in }Q_T,\\
					\sum_{i,j=1}^3a_{ij}\partial_i\vartheta n_j=&0\text{ on }(0,T)\times\partial\Omega,\\
					\vartheta(0,\cdot)=&\vartheta_{0,\omega}\text{ in }\Omega,
				\end{split}
			\end{equation*}
			where
			\begin{equation*}
				a_{ij}(t,x,\vartheta)=\frac{\kappa_{\delta,\omega}(\vartheta)}{\{D_\vartheta \mathcal E\}_{\delta,\omega}(\mathfrak r^\omega,\mathfrak z^\omega,\vartheta)}\delta_{ij},\ i,j=1,2,3
			\end{equation*}
			and 
			\begin{equation*}
				\begin{split}
					&b(t,x,\vartheta,H)\\
					&=\frac{1}{\{D_\vartheta\mathcal E\}_{\delta,\omega}(\mathfrak r^\omega,\mathfrak z^\omega,\vartheta)}\left(-\kappa'_{\delta,\omega}(\vartheta)|H|^2+\partial_{\mathfrak r}\mathcal E_{\delta,\omega}(\mathfrak r^\omega,\mathfrak z^\omega,\vartheta)\left(\tder \mathfrak r^\omega+u^\omega\cdot\nabla\mathfrak r^\omega\right)\right.\\
					&\left.\quad+\partial_{\mathfrak z}\mathcal E_{\delta,\omega}(\mathfrak r^\omega,\mathfrak z^\omega,\vartheta)(\tder \mathfrak z^\omega+u^\omega\cdot\nabla \mathfrak z^\omega)+\partial_\vartheta\mathcal E_{\delta,\omega}(\mathfrak r^\omega,\mathfrak z^\omega,\vartheta)(H\cdot u^\omega)\right.\\&\left.\quad+\mathcal E_{\delta,\omega}\mathfrak r^\omega,\mathfrak z^\omega,\vartheta)\dvr u^\omega+\mathcal P_\omega(\mathfrak r^\omega,\mathfrak z^\omega,\vartheta)\dvr u^\omega-\mathbb S_{\delta,\omega}(\vartheta,\nabla u^\omega)\cdot\nabla u^\omega-\varepsilon G_{\delta,\omega}+\frac{\delta}{\vartheta^2+\omega^2}+\varepsilon\theta^{\Lambda+1}_\omega(\vartheta)\right).
				\end{split}
			\end{equation*}
			By the well known theory for quasilinear parabolic equations, cf. \cite[Theorems 7.2, 7.3, 7.4]{LaSoUr68} or the summary in \cite[Theorem 10.24]{FeNo09}, we can conclude the existence of a unique $\vartheta$ possessing the regularity specified in \eqref{VThetaReg} and satisfying \eqref{VarthetaOmPr}.
		\end{proof}
	\end{lemma}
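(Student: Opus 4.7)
The plan is to recast \eqref{VarthetaOmPr} as a scalar quasilinear parabolic equation for the single unknown $\vartheta$, verify that it fits into the framework of Ladyzhenskaya--Solonnikov--Ural'tseva \cite[Ch.~V]{LaSoUr68}, and then invoke that classical theory to extract a unique classical solution in the regularity class \eqref{VThetaReg}. The point of the $\omega$--regularization is precisely to freeze the dependence on $(\mathfrak r,\mathfrak z,u)$ and their derivatives into smooth, bounded coefficients so that no compactness argument is required at this level.

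First I would exploit \eqref{AppPositive}$_2$, which gives a uniform strictly positive lower bound on $\{D_\vartheta\mathcal E\}_{\delta,\omega}(\mathfrak r^\omega,\mathfrak z^\omega,\vartheta)$ independent of $\vartheta$, to divide through in \eqref{VarthetaOmPr} and put the equation into the standard form
\begin{equation*}
\tder\vartheta-\sum_{i,j=1}^3 a_{ij}(t,x,\vartheta)\,\partial_{ij}\vartheta+b(t,x,\vartheta,\nabla\vartheta)=0,
\end{equation*}
with homogeneous Neumann boundary data and the smooth initial datum $\vartheta_{0,\omega}$. The diffusion matrix equals $\frac{\kappa_{\delta,\omega}(\vartheta)}{\{D_\vartheta\mathcal E\}_{\delta,\omega}(\mathfrak r^\omega,\mathfrak z^\omega,\vartheta)}\delta_{ij}$, which by \eqref{AppPositive}$_1$ and the positivity bound above is strictly elliptic uniformly in $(t,x,\vartheta)$, and the lower-order term $b$ gathers all mollified coefficients. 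Because everything has been convolved with a mollifier of radius $\omega>0$, the functions $\mathfrak r^\omega,\mathfrak z^\omega,u^\omega,G_{\delta,\omega}$ are $C^\infty$ in $t$ and valued in $C^{2,\nu}(\overline\Omega)$, and $\theta_\omega$ together with $[\langle\cdot\rangle]^\omega$ render all nonlinearities in $\vartheta$ smooth and globally bounded with bounded derivatives of all orders.

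Next I would check the structural hypotheses required by \cite[Thm.~V.7.2--7.4]{LaSoUr68}: Hölder regularity in $(t,x)$ of $a_{ij},b$; smoothness and quadratic growth in the gradient argument for $b$ (coming from the $-\kappa'_{\delta,\omega}(\vartheta)|H|^2$ term, which is the only gradient-nonlinear piece); compatibility of the boundary condition with $\vartheta_{0,\omega}\in C^{2,\nu}(\overline\Omega)$; and a priori $L^\infty$ bounds on the solution. The $L^\infty$ bound is the main place where one has to be careful: the destabilizing source terms $\mathbb S_{\delta,\omega}\cdot\nabla u^\omega$, $-\mathcal P_\omega\,\dvr u^\omega$, $\varepsilon G_{\delta,\omega}$ and $\frac{\delta}{\vartheta^2+\omega^2}$ are bounded in $(t,x)$ uniformly, while the sink $-\varepsilon\theta^{\Lambda+1}_\omega(\vartheta)$ is bounded by $\varepsilon\omega^{-(\Lambda+1)}$, so standard testing by powers of $(\vartheta-M)_+$ and Moser iteration yields an $\omega$-dependent $L^\infty$ bound. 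A lower bound $\vartheta\geq\underline\vartheta(\omega)>0$ follows from the barrier argument using the dissipative sign of $\frac{\delta}{\vartheta^2+\omega^2}$ together with $\essinf\vartheta_{0,\omega}>0$.

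The main obstacle I anticipate is precisely producing these $\omega$-dependent $L^\infty$ and positivity bounds with enough regularity to feed into the quasilinear parabolic machinery, because the sign of $b$ is not definite. Once the a priori bound is in hand, \cite[Thm.~V.7.4]{LaSoUr68} gives a solution in $C^{2+\nu,1+\nu/2}(\overline{Q_T})$, uniqueness follows from a standard Gronwall argument on the difference of two solutions after using the uniform ellipticity and the Lipschitz dependence (in $\vartheta$) of $a_{ij}$ and $b$, and this yields exactly the regularity asserted in \eqref{VThetaReg}.
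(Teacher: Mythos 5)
Your proposal takes essentially the same route as the paper: divide through by $\{D_\vartheta\mathcal E\}_{\delta,\omega}$ using its uniform positivity from \eqref{AppPositive}$_2$, rewrite \eqref{VarthetaOmPr} as a scalar quasilinear parabolic Neumann problem with uniformly elliptic $a_{ij}$ and lower-order term $b$, and invoke the Ladyzhenskaya--Solonnikov--Ural'tseva theory. The paper is terser — it does not spell out the $L^\infty$/positivity barrier bounds or the uniqueness argument you flesh out — but the decomposition, the role of the $\omega$-mollification in smoothing coefficients, and the cited classical theorems are identical.
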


	\begin{lemma}\label{Lem:SubSupIneq}
		Let $u\in C([0,T];X_n)$, $\mathfrak r,\mathfrak z\in C([0,T];C^2(\overline\Omega))$ with $\inf_{Q_T}(\mathfrak r+\mathfrak z)>0$ be given and $R^\omega,Z^\omega,u^\omega$ be defined via the convolution with a mollifier applied on the $t$ variable.
		Let $\underline\vartheta$ be a subsolution to \eqref{VarthetaOmPr}, i.e., $\underline\vartheta$ satisfies \eqref{VarthetaOmPr}$_1$ with the equality sign replaced by "$\leq$" and $\overline\vartheta$ be a supersolution to \eqref{VarthetaOmPr}, i.e., $\overline\vartheta$ satisfies \eqref{VarthetaOmPr}$_1$ with the equality sign replaced by "$\geq$". Suppose that $\underline\vartheta$ and $\overline\vartheta$ belong to the regularity class
		\begin{equation}\label{WClass}
			W=\{\vartheta\in L^2(0,T;W^{1,2}(\Omega));\tder\vartheta, \Delta\mathcal K_{\delta,\omega}(\vartheta)\in L^2(Q_T);
			0<\essinf_{Q_T}\vartheta\leq\esssup_{Q_T}\vartheta<\infty\}
		\end{equation}
		and
		\begin{equation}\label{InitIneq}
			\underline\vartheta(0,\cdot)\leq\overline\vartheta(0,\cdot)\text{ a.e. in }\Omega.
		\end{equation}
		Then
		\begin{equation}\label{SubSupIneq}
			\underline\vartheta\leq\overline\vartheta\text{ a.e. in }Q_T.
		\end{equation}
		\begin{proof}
			Taking the difference of inequalities satisfied by $\underline\vartheta$, $\overline\vartheta$ respectively, multiplied on $\sgnp(\underline\vartheta-\overline\vartheta)$
			where
			\begin{equation*}
				\sgnp(y)=\begin{cases}
					0&\text{ if }y\leq 0,\\
					1&\text{ if }y>0
				\end{cases}
			\end{equation*}
			we get after obvious manipulations
			\begin{equation}\label{SSIneq}
				\begin{split}
					&\tder \left(\mathcal E_{\delta,\omega}(\mathfrak r^\omega,\mathfrak z^\omega,\underline\vartheta)-\mathcal E_{\delta,\omega}(\mathfrak r^\omega,\mathfrak z^\omega,\overline\vartheta)\right)\sgnp(\underline\vartheta-\overline\vartheta)\\
					&\quad+\dvr((\mathcal E_{\delta,\omega}(\mathfrak r^\omega,\mathfrak z^\omega,\underline\vartheta)-\mathcal E_{\delta,\omega}(\mathfrak r^\omega,\mathfrak z^\omega,\overline\vartheta))u^\omega)\sgnp(\underline\vartheta-\overline\vartheta)-\Delta\left(\mathcal K_{\delta,\omega}(\underline\vartheta)-\mathcal K_{\delta,\omega}(\overline\vartheta)\right)\sgnp(\underline\vartheta-\overline\vartheta)\\
					&\quad+\Bigl(\left([\langle\partial_\vartheta \mathcal E\rangle]^\omega(\mathfrak r^\omega,\mathfrak z^\omega,\underline\vartheta)-\partial_\vartheta[\langle\mathcal E\rangle]^\omega(\mathfrak r^\omega,\mathfrak z^\omega,\theta_\omega(\underline\vartheta))\theta'_\omega(\underline\vartheta)\right)\tder\underline\vartheta\\
					&\quad\quad-\left([\langle\partial_\vartheta \mathcal E\rangle]^\omega(\mathfrak r^\omega,\mathfrak z^\omega,\overline\vartheta)-\partial_\vartheta[\langle\mathcal E\rangle]^\omega(\mathfrak r^\omega,\mathfrak z^\omega,\theta_\omega(\overline\vartheta))\theta'_\omega(\overline\vartheta)\right)\tder\overline\vartheta\Bigr)\sgnp(\underline\vartheta-\overline\vartheta)\\
					&\quad+4b\left(\left(\frac{\underline\vartheta^4}{\sqrt{\underline\vartheta^2+\omega^2}}-\theta^3_\omega(\underline\vartheta)\theta_\omega'(\underline\vartheta)\right)\tder\underline\vartheta-\left(\frac{\overline\vartheta^4}{\sqrt{\overline\vartheta^2+\omega^2}}-\theta_\omega^3(\overline\vartheta)\theta'_\omega(\overline\vartheta)\right)\tder\overline\vartheta\right)\sgnp(\underline\vartheta-\overline\vartheta)\\
					&\leq \left(F_\omega(t,x,\underline\vartheta)-F_\omega(t,x,\overline\vartheta)\right)\sgnp(\underline\vartheta-\overline\vartheta)
				\end{split}
			\end{equation}
			where
			\begin{align*}
				F_\omega(t,x,\vartheta)=&\mathbb S_{\delta,\omega}(\vartheta,\nabla u^\omega(t,x))\cdot\nabla u^\omega(t,x)+\varepsilon G_{\delta,\omega}(t,x)-\mathcal P_\omega(\mathfrak r^\omega(t,x),\mathfrak z^\omega(t,x),\vartheta) \dvr u^{\omega}
    +\frac{\delta}{\vartheta^2+\omega^2}-\varepsilon\theta^{\Lambda+1}_\omega(\vartheta).
			\end{align*}
    
			As the functions $\vartheta\mapsto \mathcal E_{\delta,\omega}(\mathfrak r^\omega,\mathfrak z^\omega,\vartheta)$ due to \eqref{ThStab}, $\vartheta\mapsto \theta_\omega^4(\vartheta)$, $\vartheta\mapsto \mathcal K_{\delta,\omega}(\vartheta)$ and $\vartheta\mapsto h(\vartheta), g(\vartheta), f(\vartheta)$, where $h$ stands for a primitive function to $\frac{\vartheta^4}{\sqrt{\vartheta^2+\omega^2}}$, $g$ for a primitive function to $[\langle\partial_\vartheta\mathcal E\rangle]^\omega(\mathfrak r^\omega,\mathfrak z^\omega,\vartheta)$, $f$ for a primitive function to $\partial_\vartheta[\langle\mathcal E\rangle]^\omega(\mathfrak r^\omega,\mathfrak z^\omega,\theta_\omega(\vartheta))\theta'_\omega(\vartheta)$, are nondecreasing functions due to \eqref{ThStab}, we get 
			\begin{equation}\label{SGNPEq}
				\begin{split}
					&\sgnp(\mathcal E_{\delta,\omega}(\mathfrak r^\omega,\mathfrak z^\omega,\underline\vartheta))-\mathcal E_{\delta,\omega}(\mathfrak r^\omega,\mathfrak z^\omega,\overline\vartheta))=\sgnp(\theta_\omega^4(\underline\vartheta)-\theta_\omega^4(\overline\vartheta))=\sgnp(\mathcal K_{\delta,\omega}(\underline\vartheta)-\mathcal K_{\delta,\omega}(\overline\vartheta))\\
					&=\sgnp(h(\underline\vartheta)-h(\overline\vartheta))=\sgnp(g(\underline\vartheta)-g(\overline\vartheta))=\sgnp(f(\underline\vartheta)-f(\overline\vartheta))=\sgnp(\underline\vartheta-\overline\vartheta).
				\end{split}
			\end{equation}
			Using the following observations for $w\in W^{1,2}(Q_T)$
			\begin{equation*}
				\tder|w|^+=\sgnp w\tder w,\ \nabla |w|^+=\sgnp w\nabla w\text{ a.e. in }Q_T
			\end{equation*}
			with $|z|^+=\max\{0,z\}=z\sgnp z$ standing for the positive part of $z\in\eR$ 
			we get by integrating \eqref{SSIneq} over $Q_t$ for an arbitrary $t\in(0,T)$ and taking into consideration assumption~\eqref{InitIneq} that
			\begin{equation}\label{IntIneq1}
				\begin{split}
					&\int_\Omega |\mathcal E_{\delta,\omega}(\mathfrak r^\omega,\mathfrak z^\omega,\underline\vartheta)-\mathcal E_{\delta,\omega}(\mathfrak r^\omega,\mathfrak z^\omega,\overline\vartheta)|^++\int_0^t\int_\Omega \nabla |\mathcal E_{\delta,\omega}(\mathfrak r^\omega,\mathfrak z^\omega,\underline\vartheta)-\mathcal E_{\delta,\omega}(\mathfrak r^\omega,\mathfrak z^\omega,\overline\vartheta)|^+\cdot u^\omega\\
					&\quad -\int_0^t\int_\Omega\left( \Delta\mathcal K_{\delta,\omega}(\underline\vartheta)- \Delta\mathcal K_{\delta,\omega}(\overline\vartheta) \right)\sgnp(\mathcal K_{\delta,\omega}(\underline\vartheta)-\mathcal K_{\delta,\omega}(\underline\vartheta))\\
					&\leq \int_0^t\int_\Omega \left|F_\omega(t,x,\underline\vartheta)-F_\omega(t,x,\overline\vartheta)\right|\sgnp(\underline\vartheta-\overline\vartheta)-(\mathcal E_{\delta,\omega}(\mathfrak r^\omega,\mathfrak z^\omega,\underline\vartheta)-\mathcal E_{\delta,\omega}(\mathfrak r^\omega,\mathfrak z^\omega,\overline\vartheta))\dvr u^\omega\sgnp(\underline\vartheta-\overline\vartheta).
				\end{split}
			\end{equation}
			Integrating the second term by parts and employing the boundary condition for $u$, 
			taking into consideration that
			\begin{equation*}
				\int_\Omega \left( \Delta\mathcal K_{\delta,\omega}(\underline\vartheta)- \Delta\mathcal K_{\delta,\omega}(\overline\vartheta) \right)\sgnp(\mathcal K_{\delta,\omega}(\underline\vartheta)-\mathcal K_{\delta,\omega}(\underline\vartheta))\leq 0,
			\end{equation*}
			cf.\cite[p. 155]{Feireisl04} and the fact that $F_\omega$ is Lipschitz continuous with respect to $\vartheta$, which is implied by the continuous differentiability of $\vartheta\mapsto \mathcal P_\omega(r,z,\vartheta)+\mathbb S_{\delta,\omega}(\vartheta,\nabla u)\cdot\nabla u+\varepsilon G_{\delta,\omega}+\frac{\delta}{\vartheta^2+\omega^2}-\varepsilon\theta_\omega^{\Lambda+1}(\vartheta)$ on the interval $[m,M]$, where $m=\min\{\essinf_{Q_T}\underline\vartheta,\essinf_{Q_T}\overline\vartheta\}$, $M=\max\{\esssup_{Q_T}\underline\vartheta,\esssup_{Q_T}\overline\vartheta\}$, we conclude from \eqref{IntIneq1}
			\begin{equation}\label{IntIneq2}
				\begin{split}
					\int_\Omega |\mathcal E_{\delta,\omega}(\mathfrak r^\omega,\mathfrak z^\omega,\underline\vartheta)-\mathcal E_{\delta,\omega}(\mathfrak r^\omega,\mathfrak z^\omega,\overline\vartheta)|^+\leq &\int_0^t\int_\Omega \left(|\dvr u^\omega||\mathcal E_{\delta,\omega}(\mathfrak r^\omega,\mathfrak z^\omega,\underline\vartheta)-\mathcal E_{\delta,\omega}(\mathfrak r^\omega,\mathfrak z^\omega,\overline\vartheta)|^+\right.\\&\left.+ c|\underline\vartheta-\overline\vartheta|\sgnp(\underline\vartheta-\overline\vartheta)\right)
				\end{split}
			\end{equation}
			with $c$ depending on $\|u\|_{C([0,T];X_n)}$,$\|\mathfrak r\|_{C^1([0,T]\times\overline\Omega)}$,$\|\mathfrak z\|_{C^1([0,T]\times\overline\Omega)}$, $m$ and $M$.
			The next task is to show that 
			\begin{equation}\label{PPEq}
				\underline c|\underline\vartheta-\overline\vartheta|^+\leq |\mathcal E_{\delta,\omega}(\mathfrak r,\mathfrak z,\underline\vartheta)-\mathcal E_{\delta,\omega}(\mathfrak r, \mathfrak z,\underline\vartheta)|^+\leq \overline c|\underline\vartheta-\overline\vartheta|^+.
			\end{equation}
			From the definition of $\mathcal E_{\delta,\omega}$, the assumed continuous differentiability of $\mathsf{e}_{\pm}$ and $\theta_\omega$ with respect to $\vartheta$ we get for some $\vartheta\in (m,M)$ 
			\begin{equation*}
				(\underline\vartheta-\overline\vartheta)\partial_\vartheta \mathcal E_{\delta,\omega}(\mathfrak r,\mathfrak z,\vartheta)=\mathcal E_{\delta,\omega}(\mathfrak r,\mathfrak z,\underline\vartheta)-\mathcal E_{\delta,\omega}(\mathfrak r,\mathfrak z,\overline\vartheta).
			\end{equation*}
			Denoting $Q=[\min_{[0,T]\times\overline\Omega}\mathfrak r, \max_{[0,T]\times\overline\Omega}\mathfrak r]\times[\min_{[0,T]\times\overline\Omega}\mathfrak z,\max_{[0,T]\times\overline\Omega}\mathfrak z]\times[m,M]$ it follows that
			\begin{equation*}
				\begin{split}
					\sgnp(\underline\vartheta-\overline\vartheta)(\underline\vartheta-\overline\vartheta)\min_{(r,z,\vartheta)\in Q}\partial_\vartheta \mathcal E_{\delta,\omega}(r,z,\vartheta)&=(\mathcal E_{\delta,\omega}(\mathfrak r,\mathfrak z,\underline\vartheta)-\mathcal E_{\delta,\omega}(\mathfrak r,\mathfrak z,\overline\vartheta))\sgnp(\underline\vartheta-\overline\vartheta)\\
					&\leq\sgnp(\underline\vartheta-\overline\vartheta)(\underline\vartheta-\overline\vartheta)\max_{(r,z,\vartheta)\in Q}\partial_\vartheta \mathcal E_{\delta,\omega}(r,z,\vartheta).
				\end{split}
			\end{equation*}
			As $\partial_\vartheta \mathcal E_{\delta,\omega}(\mathfrak r,\mathfrak z,\vartheta)$ is continuous on $Q$, we conclude \eqref{PPEq} by \eqref{SGNPEq}. Applying \eqref{PPEq} in \eqref{IntIneq2} yields
			\begin{equation*}
				\int_\Omega|\underline\vartheta-\overline\vartheta|^+(t,\cdot)\leq c\int_0^t\int_\Omega |\underline\vartheta-\overline\vartheta|^+
			\end{equation*}
			for any $t\in(0,T)$. Employing the Gronwall lemma we infer $|\underline\vartheta-\overline\vartheta|^+=0$ a.e. in $Q_T$, which implies \eqref{SubSupIneq}. 	
		\end{proof}
	\end{lemma}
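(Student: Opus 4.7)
The strategy is a standard comparison/maximum principle argument adapted to the structure of \eqref{VarthetaOmPr}, testing the difference of the sub/supersolution inequalities by the characteristic function $\sgnp(\underline\vartheta-\overline\vartheta)$ and exploiting monotonicity of the thermodynamic quantities in $\vartheta$ to produce a Gronwall inequality for $\int_\Omega|\underline\vartheta-\overline\vartheta|^+$. The first task is to rewrite each of the two differential inequalities so that the terms $\{D_\vartheta\mathcal E\}_{\delta,\omega}\tder\vartheta$ and $\dvr(\mathcal E_{\delta,\omega}u^\omega)$ reassemble, via the chain rule, into $\tder\mathcal E_{\delta,\omega}(\mathfrak r^\omega,\mathfrak z^\omega,\vartheta)$ plus a convective term, up to the extra contributions coming from the mismatch between $[\langle\partial_\vartheta\mathcal E\rangle]^\omega$ and $\partial_\vartheta[\langle\mathcal E\rangle]^\omega(\cdot,\theta_\omega(\vartheta))\theta'_\omega(\vartheta)$, and from the corresponding black body terms; these produce exactly the bracketed expressions displayed in \eqref{SSIneq}.

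The key point is that all the functions $\vartheta\mapsto\mathcal E_{\delta,\omega}(\mathfrak r^\omega,\mathfrak z^\omega,\vartheta)$, $\vartheta\mapsto\mathcal K_{\delta,\omega}(\vartheta)$, $\vartheta\mapsto\theta_\omega^4(\vartheta)$, as well as the primitives of $\vartheta^4/\sqrt{\vartheta^2+\omega^2}$, of $[\langle\partial_\vartheta\mathcal E\rangle]^\omega(\mathfrak r^\omega,\mathfrak z^\omega,\cdot)$ and of $\partial_\vartheta[\langle\mathcal E\rangle]^\omega(\mathfrak r^\omega,\mathfrak z^\omega,\theta_\omega(\cdot))\theta'_\omega(\cdot)$ are nondecreasing. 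Thermodynamic stability \eqref{ThStab} enters here through the monotonicity in $\vartheta$ of $\mathsf e_\pm$. Consequently each of those difference factors carries the same sign as $\underline\vartheta-\overline\vartheta$, which justifies the batch of identities \eqref{SGNPEq}. Multiplying the difference of the two inequalities by $\sgnp(\underline\vartheta-\overline\vartheta)$ and invoking $\tder|w|^+=\sgnp(w)\tder w$ and $\nabla|w|^+=\sgnp(w)\nabla w$ converts the three leading terms into total derivatives of positive parts; the extra chain-rule terms all turn out to be nonnegative multiples of $\sgnp(\underline\vartheta-\overline\vartheta)$ times a nonnegative time derivative of a nondecreasing function, so they have a definite sign and can be dropped.

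Integrating over $(0,t)\times\Omega$, the initial assumption \eqref{InitIneq} removes the initial trace, the convective term is handled by integration by parts using $u^\omega\cdot n=0$ on $\partial\Omega$, and the diffusion term is treated via the Kato-type inequality $\int_\Omega\Delta\bigl(\mathcal K_{\delta,\omega}(\underline\vartheta)-\mathcal K_{\delta,\omega}(\overline\vartheta)\bigr)\sgnp\bigl(\mathcal K_{\delta,\omega}(\underline\vartheta)-\mathcal K_{\delta,\omega}(\overline\vartheta)\bigr)\leq 0$, which is legitimate on $W$ thanks to the $L^2(Q_T)$ regularity of $\Delta\mathcal K_{\delta,\omega}(\vartheta)$ and the Neumann condition; this is the step one must carry out carefully, and it is the most delicate piece of the argument since $\mathcal K_{\delta,\omega}$ is only Lipschitz in general. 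On the right-hand side one exploits the fact that, on the compact interval $[m,M]$ containing the essential ranges of $\underline\vartheta,\overline\vartheta$, the source $F_\omega(t,x,\cdot)$ is uniformly Lipschitz, so $|F_\omega(t,x,\underline\vartheta)-F_\omega(t,x,\overline\vartheta)|\leq c|\underline\vartheta-\overline\vartheta|$ with $c$ depending on $\|u\|_{C([0,T];X_n)}$, $\|\mathfrak r\|_{C^1}$, $\|\mathfrak z\|_{C^1}$, $m$ and $M$.

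To transfer the resulting inequality back to $|\underline\vartheta-\overline\vartheta|^+$, one uses the two-sided bound \eqref{PPEq}: the mean value theorem applied to $\vartheta\mapsto\mathcal E_{\delta,\omega}(\mathfrak r^\omega,\mathfrak z^\omega,\vartheta)$ together with the uniform positive lower bound on $\partial_\vartheta\mathcal E_{\delta,\omega}$ on the compact cylinder $Q$ (guaranteed by \eqref{AppPositive} and the continuity of $\partial_\vartheta\mathcal E_{\delta,\omega}$) yields $\underline c|\underline\vartheta-\overline\vartheta|^+\leq|\mathcal E_{\delta,\omega}(\mathfrak r^\omega,\mathfrak z^\omega,\underline\vartheta)-\mathcal E_{\delta,\omega}(\mathfrak r^\omega,\mathfrak z^\omega,\overline\vartheta)|^+\leq\overline c|\underline\vartheta-\overline\vartheta|^+$. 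Combining these estimates produces
\begin{equation*}
\int_\Omega|\underline\vartheta-\overline\vartheta|^+(t,\cdot)\leq c\int_0^t\int_\Omega|\underline\vartheta-\overline\vartheta|^+,
\end{equation*}
and Gronwall's lemma forces $|\underline\vartheta-\overline\vartheta|^+\equiv 0$, which is \eqref{SubSupIneq}. The main obstacle is the rigorous justification of the Kato inequality for the mollified $\mathcal K_{\delta,\omega}$ together with the bookkeeping of the chain-rule remainder terms ensuring they have the right sign after multiplication by $\sgnp(\underline\vartheta-\overline\vartheta)$; the remaining algebra is routine.
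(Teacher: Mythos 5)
Your proposal follows essentially the same route as the paper's own proof: testing the difference of the sub/supersolution inequalities with $\sgnp(\underline\vartheta-\overline\vartheta)$, using the monotonicity of $\mathcal E_{\delta,\omega}$, $\mathcal K_{\delta,\omega}$, $\theta_\omega^4$ and the relevant primitives to fix the signs, the Kato-type inequality for the $\Delta\mathcal K_{\delta,\omega}$ term, the Lipschitz bound on $F_\omega$ over $[m,M]$, the two-sided comparison \eqref{PPEq} via the mean value theorem and the positive lower bound on $\partial_\vartheta\mathcal E_{\delta,\omega}$, and finally Gronwall. The argument and the delicate points you single out coincide with those in the paper, so nothing further is needed.
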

	
	\begin{lemma}\label{Lem:TOmBounds}
		Let $\mathfrak r^\omega, \mathfrak z^\omega,u^\omega$ be as in Lemma~\ref{Lem:SubSupIneq} and $\vartheta_\omega$ be a strong solution to \eqref{VarthetaOmPr} belonging to the class $W$ defined in \eqref{WClass} with the initial datum $\vartheta_{0,\delta}$ satisfying 
		\begin{equation*}
			0<\underline\vartheta_0=\essinf_{\Omega}\vartheta_{0,\delta}\leq\esssup_{\Omega}\vartheta_{0,\delta}=\overline\vartheta_0<\infty
		\end{equation*}	
		with some constants $\underline\vartheta_0,\overline\vartheta_0$. Then there exist constants $\underline\vartheta,\overline\vartheta$ depending on $u,\mathfrak r, \mathfrak z,\varepsilon,\delta$ satisfying 
		\begin{equation*}
			0<\underline{\vartheta}\leq\underline\vartheta_{0}\leq \overline\vartheta_{0}\leq\overline{\vartheta}
		\end{equation*}
		and 
		\begin{equation*}
			\underline{\vartheta}\leq \vartheta_\omega\leq\overline{\vartheta}\text{ a.e. in }Q_T
		\end{equation*}
		for any $\omega\in(0,\underline\vartheta)$.
		\begin{proof}
			For purposes of this proof we denote
			\begin{equation*}
				\begin{split}
					M(\mathfrak r,\mathfrak z,\vartheta, u)=&\mathcal P_\omega(\mathfrak r,\mathfrak z,\vartheta)\dvr u+\partial_{\mathfrak r}\mathcal E_{\delta,\omega}(\mathfrak r,\mathfrak z,\vartheta)\left(\tder \mathfrak r+u\cdot\nabla \mathfrak r\right)+\partial_{\mathfrak z}\mathcal E_{\delta,\omega}(\mathfrak r,\mathfrak z,\vartheta)\left(\tder \mathfrak z+u\cdot\nabla \mathfrak z\right)\\&+\mathcal E_{\delta,\omega}(\mathfrak r,\mathfrak z,\vartheta)\dvr u-\mathbb S_{\delta,\omega}(\vartheta,\nabla u)\cdot\nabla u-\varepsilon G_{\delta,\omega}.
				\end{split}
			\end{equation*}
			Let us consider a constant $\tilde\vartheta
			\geq 0$. Then denoting
			\begin{align*}
				R=&\|\mathfrak r\|_{C^1([0,T]\times\overline\Omega)},\\
				Z=&\|\mathfrak z\|_{C^1([0,T]\times\overline\Omega)},\\
				U=&\|u\|_{C([0,T];X_n)},\\
				\underline r=&\inf_{Q_T}{\mathfrak r},\\
				\underline z=&\inf_{Q_T}{\mathfrak z}
			\end{align*}
			we get using the notations from Section~\ref{Sec:Hyp}
			\begin{align*}
				&\|M(\mathfrak r,\mathfrak z,\tilde \vartheta,u)\|_{C([0,T];C^1(\overline\Omega))}\\
				&\leq c\bigl(\theta_\omega^4(\tilde\vartheta)+(1+R^{\gamma_+}+d^P_+R^{\gamma^P_+} \theta^{\omega^P_+}_\omega(\tilde\vartheta))+(1+Z^{\gamma_-}+d^P_-Z^{\gamma^P_-}\theta_\omega^{\omega^P_-}(\tilde\vartheta))\bigr)U\\
				&\quad+\left(\delta(R+Z)\tilde\vartheta\right)(R+Z)(1+U)+c (\underline r^{-1}\theta^4_\omega(\vartheta)+1+R^{\gamma^e_+-1}+d^e_+\theta^{\omega^e_+}_\omega(\tilde\vartheta))R(1+U)\\
				&\quad+c(\underline z^{-1}\theta^4_\omega(\vartheta)+1+Z^{\gamma^e_--1}+d^e_-\theta_\omega^{\omega^e_-}(\tilde\vartheta))Z(1+U)\\
				&\quad+c(R^{\underline\Gamma^e-1}+R^{\overline\Gamma^e-1})(1+\overline d^e_+\theta_\omega(\tilde\vartheta))^{\overline\omega^e}R(1+U)+c\left(Z^{\underline\Gamma^e-1}+Z^{\overline\Gamma^e-1}\right)(1+\overline d^e_-\theta_\omega(\tilde\vartheta))^{\overline\omega^e}Z(1+U)\\
				&\quad+c(1+c(1+\theta_\omega(\tilde\vartheta))U^2+\varepsilon\delta(R^\Gamma+Z^\Gamma)=\mathcal M(R,Z,\tilde\vartheta,U)
			\end{align*}
			where we used the identity
			\begin{align*}
				&\partial_{\mathfrak r} \mathcal E(\mathfrak r,\mathfrak z,\vartheta)(\tder \mathfrak r+u\cdot\nabla \mathfrak r)+\partial_{\mathfrak z} \mathcal E(\mathfrak r,\mathfrak z,\vartheta)(\tder \mathfrak z+u\cdot\nabla \mathfrak z)+\mathcal E(\mathfrak r,\mathfrak z,\vartheta)\dvr u\\
				&=\left(\tilde{\mathfrak e}_+\left(\mathfrak r,\vartheta\right)+\mathfrak r\partial_{\mathfrak r}\tilde{\mathfrak e}_+\left(\mathfrak r,\vartheta\right)\right)(\tder \mathfrak r+\dvr (\mathfrak r u))+\left(\tilde{\mathfrak e}_-\left(\mathfrak z,\vartheta\right)+\mathfrak z\partial_{\mathfrak z}\tilde{\mathfrak e}_-\left(\mathfrak z,\vartheta\right)\right)(\tder \mathfrak z+\dvr(\mathfrak zu)).
			\end{align*}
			We notice that the mapping $\vartheta\mapsto \theta_\omega(\vartheta)$ is increasing on $(0,\infty)$ with values in $\left(\frac{\omega}{1+\omega^2},\frac{1}{\omega}\right)$ for $\omega\in (0,1)$. Moreover, we have  $\theta_\omega(\vartheta)\leq \sqrt 2$ for $\vartheta\in (0,1)$ and $\omega\in (0,1)$. 
			
			Obviously, considering $\varepsilon,\delta<1$ we can find $\bar\omega\in(0,1)$ such that 
			\begin{equation*}
				\frac{\delta}{\bar\omega}\geq\varepsilon \sqrt 2^{\Lambda+1}+\mathcal M(R,Z,\tilde\vartheta,U).
			\end{equation*}
			Then we get for $\underline\vartheta=\sqrt{\frac{\bar\omega}{2}}$ and any $\omega\in(0,\underline\vartheta)$ that
			$\frac{\delta}{\underline\vartheta^2+\omega^2}\geq\varepsilon\theta_\omega(\vartheta)^{\Lambda+1}+M(\mathfrak r^\omega,\mathfrak z^\omega,\alpha^\omega,\underline\vartheta_\omega,u^\omega)$ in $Q_T$ implying 
			that 
			\begin{equation*}
				\begin{alignedat}{2}
					\{D_\vartheta \mathcal E\}_{\delta,\omega}(\mathfrak r^\omega,\mathfrak z^\omega,\underline\vartheta)\tder\underline\vartheta+\dvr\left(\mathcal E_{\delta,\omega}(\mathfrak r^\omega,\mathfrak z^\omega,\underline\vartheta)u^\omega\right)-\Delta\mathcal K_{\delta,\omega}(\underline\vartheta)\leq&-\partial_{\mathfrak r}\mathcal E_{\delta,\omega}(\mathfrak r^\omega,\mathfrak z^\omega,\alpha^\omega,\underline\vartheta)\tder \mathfrak r^\omega&&\\-\partial_{\mathfrak z}\mathcal E_{\delta,\omega}(\mathfrak r^\omega,\mathfrak z^\omega,\underline\vartheta)\tder \mathfrak z^\omega&+\mathbb S_{\delta,\omega}(\underline\vartheta,\nabla u^\omega)\cdot\nabla u^\omega\\
					-\mathcal P_\omega(\mathfrak r^\omega,\mathfrak z^\omega,\underline\vartheta)\dvr u^\omega&+\frac{\delta}{\underline\vartheta^2+\omega^2}+\varepsilon G_{\delta,\omega}-\varepsilon\theta^{\Lambda+1}_\omega(\underline\vartheta)&&\text{ in }Q_T,
				\end{alignedat}
			\end{equation*}
			i.e., $\underline\vartheta$ is a subsolution to \eqref{VarthetaOmPr} and $\vartheta_\omega\geq\underline\vartheta$. Taking into account 
			the monotonicity of $\vartheta\mapsto\theta_\omega(\vartheta)$ and the fact that the exponent $\Lambda$ is chosen as a maximum of powers of $\theta_\omega(\vartheta)$ in the expression $\mathcal M(R,Z,\tilde\vartheta,U)$ we can find a sufficiently large $\overline\vartheta>1$ such that 
			\begin{equation*}
				\frac{\delta}{\overline\vartheta^2+\omega^2}\leq \frac{\delta}{\overline\vartheta^2}\leq \varepsilon \theta^{\Lambda+1}_\omega(\overline\vartheta)-\mathcal M(R,Z,\overline\vartheta,U).
			\end{equation*}
			The latter inequality guarantees that $\frac{\delta}{\overline\vartheta^2+\omega^2}\leq \varepsilon \theta^{\Lambda+1}_\omega(\overline\vartheta)+M(\mathfrak r,\mathfrak z,\overline\vartheta,u)$ in $Q_T$ implying that 
			\begin{equation*}
				\begin{alignedat}{2}
					\{D_\vartheta \mathcal E\}_{\delta,\omega}(\mathfrak r^\omega,\mathfrak z^\omega,\overline\vartheta)\tder\overline\vartheta+\dvr\left(\mathcal E_{\delta,\omega}\mathfrak r^\omega,\mathfrak z^\omega, \overline\vartheta)u^\omega\right)-\Delta\mathcal K_{\delta,\omega}(\overline\vartheta)\geq&-\partial_{\mathfrak r}\mathcal E_{\delta,\omega}(\mathfrak r^\omega,\mathfrak z^\omega,\overline\vartheta)\tder \mathfrak r^\omega&&\\-\partial_{\mathfrak z}\mathcal E_{\delta,\omega}(\mathfrak r^\omega,\mathfrak z^\omega,\overline\vartheta)\tder \mathfrak z^\omega+\mathbb S_{\delta,\omega}(\overline\vartheta,\nabla u^\omega)\cdot\nabla u^\omega\\
					-\mathcal P_\omega(\mathfrak r^\omega,\mathfrak z^\omega,\overline\vartheta)\dvr u^\omega&+\frac{\delta}{\overline\vartheta^2+\omega^2}+\varepsilon G_{\delta,\omega}-\varepsilon\theta^{\Lambda+1}_\omega(\overline\vartheta)&&\text{ in }Q_T,
				\end{alignedat}
			\end{equation*}
			i.e., $\overline\vartheta$ is a supersolution to \eqref{VarthetaOmPr} and $\vartheta_\omega\leq\overline\vartheta$.
		\end{proof}
	\end{lemma}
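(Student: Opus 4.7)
The plan is to invoke the comparison principle of Lemma~\ref{Lem:SubSupIneq} with \emph{constants} as the sub- and supersolution. Since a constant $\tilde\vartheta$ has vanishing time derivative and vanishing Laplacian, inserting $\tilde\vartheta$ into the left-hand side of \eqref{VarthetaOmPr}$_1$ eliminates every derivative of $\vartheta$ and collapses the flux $\dvr(\mathcal E_{\delta,\omega}(\mathfrak r^\omega,\mathfrak z^\omega,\tilde\vartheta)u^\omega)$ via the chain rule into pointwise expressions in $\nabla\mathfrak r^\omega$, $\nabla\mathfrak z^\omega$, $\dvr u^\omega$. Together with the right-hand side forcing, the subsolution / supersolution inequality therefore reduces to the purely algebraic pointwise condition that
\[
\frac{\delta}{\tilde\vartheta^2+\omega^2}-\varepsilon\theta_\omega^{\Lambda+1}(\tilde\vartheta)+M(\mathfrak r^\omega,\mathfrak z^\omega,\tilde\vartheta,u^\omega)
\]
be nonnegative at $\tilde\vartheta=\underline\vartheta$ and nonpositive at $\tilde\vartheta=\overline\vartheta$, where $M$ collects every term produced by expanding the flux and the forcing in \eqref{VarthetaOmPr}.

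For the \textbf{lower bound}, I would exploit the singular term $\delta/(\tilde\vartheta^2+\omega^2)$. The growth assumptions from Section~\ref{Sec:Hyp} combined with the $C^1$-bounds on $\mathfrak r^\omega$, $\mathfrak z^\omega$, $u^\omega$ from Lemma~\ref{Lem:ContApp} yield a pointwise estimate $|M|\leq \mathcal M(R,Z,\tilde\vartheta,U)$ in which every $\tilde\vartheta$-dependent factor appears through $\theta_\omega(\tilde\vartheta)$. Observing from the formula defining $\theta_\omega$ that $\theta_\omega(\tilde\vartheta)\leq\sqrt 2$ whenever $\tilde\vartheta,\omega\in(0,1)$, I can pick $\bar\omega\in(0,1)$ so small that $\delta/\bar\omega$ exceeds $\varepsilon(\sqrt 2)^{\Lambda+1}+\mathcal M(R,Z,\tilde\vartheta,U)$ for $\tilde\vartheta<1$ fixed. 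Choosing $\underline\vartheta=\sqrt{\bar\omega/2}$ and restricting $\omega\in(0,\underline\vartheta)$ forces $\delta/(\underline\vartheta^2+\omega^2)\geq \delta/\bar\omega$, which verifies the subsolution inequality; shrinking $\underline\vartheta$ further if needed ensures also $\underline\vartheta\leq\underline\vartheta_0$ so that the initial ordering \eqref{InitIneq} holds.

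For the \textbf{upper bound}, the choice of $\Lambda$ in \eqref{LambdaDef} is exactly tailored so that $\Lambda+1$ strictly exceeds every exponent of $\theta_\omega(\tilde\vartheta)$ appearing in $\mathcal M$. Since $\theta_\omega$ is increasing, for $\overline\vartheta$ sufficiently large (and at least equal to $\overline\vartheta_0$) the term $\varepsilon\theta_\omega^{\Lambda+1}(\overline\vartheta)$ dominates both $\delta/(\overline\vartheta^2+\omega^2)$ and $\mathcal M(R,Z,\overline\vartheta,U)$, uniformly in $\omega\in(0,1)$. Hence $\overline\vartheta$ is a supersolution in the class $W$ from \eqref{WClass}, and Lemma~\ref{Lem:SubSupIneq} delivers $\vartheta_\omega\leq\overline\vartheta$ a.e.

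The main obstacle is ensuring that the bounds $\underline\vartheta,\overline\vartheta$ are \emph{genuinely independent of $\omega$}: this is what dictates the coupling $\underline\vartheta=\sqrt{\bar\omega/2}$ and the restriction $\omega<\underline\vartheta$, and it forces one to verify that the quantitative bound $\mathcal M$ can be taken $\omega$-uniform -- a check that relies on the boundedness of the mollification operators and on the explicit form of $\theta_\omega$ and $\kappa_{\delta,\omega}$ appearing in \eqref{kappaDO}. Once these calibrations are fixed, the rest is a routine application of the already established comparison principle, since the constants trivially lie in the regularity class \eqref{WClass}.
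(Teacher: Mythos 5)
Your proposal takes essentially the same route as the paper: constants as sub-/supersolutions, reduction of the comparison inequality to the algebraic condition on $\frac{\delta}{\tilde\vartheta^2+\omega^2}-\varepsilon\theta_\omega^{\Lambda+1}(\tilde\vartheta)+M$, the key observation $\theta_\omega(\tilde\vartheta)\leq\sqrt 2$ for $\tilde\vartheta,\omega\in(0,1)$ to control $\mathcal M$ uniformly in $\omega$, the coupling $\underline\vartheta=\sqrt{\bar\omega/2}$ with $\omega<\underline\vartheta$, and the choice of $\Lambda$ in \eqref{LambdaDef} to ensure $\varepsilon\theta_\omega^{\Lambda+1}$ dominates for large $\overline\vartheta$. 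The only addition is your explicit remark that $\underline\vartheta$ may need to be shrunk further to guarantee $\underline\vartheta\leq\underline\vartheta_0$, which is a worthwhile check that the paper leaves implicit.
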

	
	\begin{lemma}\label{Lem:OmUnifEst}
		Let the assumptions of Lemma~\ref{Lem:VarThetaOmEx} be fulfilled. Let $\{\vartheta_\omega\}_{\omega\leq \underline\vartheta}$, where $\underline\vartheta$ is given by Lemma~\ref{Lem:TOmBounds}, be a family of solutions to \eqref{VarthetaOmPr}. Then  
		\begin{equation}\label{OmUnifEst1}
			\|\frac{1}{\vartheta_\omega}\|_{L^\infty(Q_T)}+\|\vartheta_\omega\|_{L^\infty(Q_T)}\leq h_1(\delta,\|\mathfrak r\|_{C^1([0,T]\times\overline\Omega)},\|\mathfrak z\|_{C^1([0,T]\times\overline\Omega)},\|u\|_{C([0,T];X_n)}),
		\end{equation}
		\begin{equation}\label{OmUnifEst2}
			\begin{split}
				&\|\tder\vartheta_\omega\|^2_{L^2(0,T;L^2(\Omega))}+\esssup_{(0,T)}\|\vartheta_\omega\|^2_{W^{1,2}(\Omega)}+\|\mathcal K_{\delta,\omega}(\vartheta_\omega)\|^2_{L^2(0,T;W^{2,2}(\Omega))}\\&\leq h_2(\delta,\|\mathfrak r\|_{C^1([0,T]\times\overline\Omega)},\|\mathfrak z\|_{C^1([0,T]\times\overline\Omega)}, \|\vartheta_{0,\delta}\|_{W^{1,2}(\Omega)},\inf_{Q_T}(\mathfrak r+\mathfrak z)^{-1}).
			\end{split}
		\end{equation}
		The functions $h_1$ and $h_2$ are bounded on bounded sets.
		
		Moreover, there is sequence $\{\vartheta_{\omega_n}\}\subset\{\vartheta_\omega\}_{\omega\in(0,\underline\vartheta)}$ and $\vartheta$ in $W$, see \eqref{WClass} for the definition of $W$, such that
		\begin{equation}\label{VTOmConv}
			\begin{alignedat}{2}
				\tder \vartheta_{\omega_n}&\rightharpoonup\tder\vartheta&&\text{ in }L^2(0,T;L^2(\Omega)),\\
				\vartheta_{\omega_n}&\rightharpoonup^*\vartheta&&\text{ in }L^\infty(0,T;W^{1,2}(\Omega)),\\
				\mathcal K_{\delta,\omega_n}(\vartheta_{\omega_n})&\rightharpoonup \mathcal K_{\delta}(\vartheta)&&\text{ in }L^2(0,T;W^{2,2}(\Omega))
			\end{alignedat}
		\end{equation} 
		as $\omega_n\to 0_+$
		and $\vartheta$ is a solution to \eqref{IntEnEq}.
		
		\begin{proof}
			We observe that the uniform bound in \eqref{OmUnifEst1} follows directly from Lemma~\ref{Lem:TOmBounds}. We proceed with the proof of the bound in \eqref{OmUnifEst2}. Multiplying equation \eqref{VarthetaOmPr}$_1$ on $\vartheta_\omega$ and integrating by parts yield
			\begin{equation*}
				\begin{split}
					&\frac{1}{2}\int_\Omega \{D_\vartheta\mathcal E\}_{\delta,\omega} (\mathfrak r^\omega,\mathfrak z^\omega,\vartheta_\omega)\tder|\vartheta_\omega|^2+\int_\Omega\kappa_{\delta,\omega}(\vartheta_\omega)|\nabla \vartheta_\omega|^2=\int_{\Omega}\mathcal E_{\delta,\omega}(\mathfrak r^\omega,\mathfrak z^\omega,\alpha^\omega,\vartheta_\omega)u^\omega\cdot\nabla\vartheta_\omega\\
					&+\int_\Omega\Bigl(-\partial_{\mathfrak r}\mathcal E_{\delta,\omega}(\mathfrak r^\omega,\mathfrak z^\omega,\vartheta_\omega)\tder\mathfrak r^\omega-\partial_{\mathfrak z}\mathcal E_{\delta,\omega}(\mathfrak r^\omega,\mathfrak z^\omega,\vartheta_\omega)\tder \mathfrak z^\omega+\mathbb S_{\delta,\omega}(\vartheta_\omega,\nabla u^\omega)\cdot\nabla u^\omega\\
					&\quad-\mathcal P_\omega(\mathfrak r^\omega,\mathfrak z^\omega,\vartheta_\omega)\dvr u^\omega+\varepsilon G_{\delta,\omega}-\frac{\delta}{\vartheta_\omega^2+\omega^2}-\varepsilon\theta^{\Lambda+1}_\omega(\vartheta_\omega)\Bigr)\vartheta_\omega.
				\end{split}
			\end{equation*}
			Using \eqref{AppPositive}, the Young inequality on the first term on the right hand side and \eqref{OmUnifEst1} we conclude
			\begin{equation}\label{FiEst}
				\begin{split}
					&\|\vartheta_\omega\|^2_{L^\infty(0,T;L^2(\Omega))}+\|\vartheta_\omega\|^2_{L^2(0,T;W^{1,2}(\Omega))}\\&\leq h_1(\|\mathfrak r\|_{C^1([0,T]\times\overline\Omega)},\|\mathfrak z\|_{C^1([0,T]\times\overline\Omega)},\|u\|_{C([0,T];X_n)},(\inf_{Q_T}(\mathfrak r+\mathfrak z))^{-1}),
				\end{split}
			\end{equation}
			where $h_1$ is bounded on bounded sets.
			
			Multiplying \eqref{VarthetaOmPr}$_1$ on $\tder\mathcal K_{\delta,\omega}(\vartheta_\omega)=\kappa_{\delta,\omega}(\vartheta_\omega)\tder\vartheta_\omega$ we get
			\begin{equation*}
				\begin{split}
					&\frac{\mathrm{d}}{\dt} \int_\Omega |\nabla \mathcal K_{\delta,\omega}(\vartheta_\omega)|^2+\int_\Omega\kappa_{\delta,\omega}(\vartheta_\omega)\{D_\vartheta\mathcal E_{\delta,\omega}\}(\mathfrak r^\omega,\mathfrak z^\omega,\vartheta_\omega)|\tder\vartheta_\omega|^2=-\int_\Omega \partial_\vartheta \mathcal E_{\delta,\omega}(\mathfrak r^\omega,\mathfrak z^\omega,\vartheta_\omega)u^\omega\tder\vartheta_\omega\nabla\mathcal K_{\delta,\omega}(\vartheta_\omega)\\
					&+\int_\Omega\Bigl(-\partial_{\mathfrak r}\mathcal E_{\delta,\omega}(\mathfrak r^\omega,\mathfrak z^\omega,\vartheta_\omega)(\tder\mathfrak r^\omega+u^\omega\cdot\nabla \mathfrak r^\omega)-\partial_{\mathfrak z}\mathcal E_{\delta,\omega}(\mathfrak r^\omega,\mathfrak z^\omega,\vartheta_\omega)(\tder \mathfrak z^\omega+u^\omega\cdot\nabla \mathfrak z^\omega)-\mathcal E_{\delta,\omega}(\mathfrak r^\omega,\mathfrak z^\omega,\vartheta_\omega)\dvr u^\omega
					\\
					&\quad+\mathbb S_{\delta,\omega}(\vartheta,\nabla u^\omega)\cdot\nabla u^\omega-\mathcal P_\omega(\mathfrak r^\omega,\mathfrak z^\omega,\vartheta_\omega)\dvr u^\omega+\varepsilon G_{\delta,\omega}-\frac{\delta}{\vartheta_\omega^2+\omega^2}-\varepsilon\theta^{\Lambda+1}_\omega(\vartheta_\omega)\Bigr)\tder\vartheta_\omega\kappa_{\delta,\omega}(\vartheta_\omega).
				\end{split}
			\end{equation*}
			Applying again \eqref{AppPositive} and the Young inequality along with \eqref{OmUnifEst1} for estimates on the right hand side we conclude by the Gronwall lemma
			\begin{equation*}
				\begin{split}
					&\|\nabla\mathcal K_{\delta,\omega}(\vartheta_\omega)\|^2_{L^\infty(0,T;L^2(\Omega))}+\|\tder\vartheta_\omega\|^2_{L^2(0,T;L^2(\Omega))}\\
					&\leq \tilde h_1(\|\mathfrak r\|_{C^1([0,T]\times\overline\Omega)},\|\mathfrak z\|_{C^1([0,T]\times\overline\Omega)},\|u\|_{C([0,T];X_n)},(\inf_{Q_T}(\mathfrak r+\mathfrak z))^{-1}),
				\end{split}
			\end{equation*}
			where $\tilde h_1$ is bounded on bounded sets. We point out that the first term on the left hand side of the latter inequality bounds $\|\nabla\vartheta_\omega\|^2_{L^\infty(0,T;L^2(\Omega))}$ as $\nabla\mathcal K_{\delta,\omega}(\vartheta)=\nabla\vartheta\kappa_{\delta,\omega}(\vartheta)$ and \eqref{AppPositive} holds. Hence we get 
			\begin{equation}\label{SeEst}
				\begin{split}
					&\|\vartheta_\omega\|^2_{L^\infty(0,T;W^{1,2}(\Omega))}+\|\tder\vartheta_\omega\|^2_{L^2(0,T;L^2(\Omega))}\\
					&\leq \widetilde h_1(\|\mathfrak r\|_{C^1([0,T]\times \overline\Omega)},\|\mathfrak z\|_{C^1([0,T]\times \overline\Omega)},\|u\|_{C([0,T];X_n)},(\inf_{Q_T}(\mathfrak r+\mathfrak z))^{-1}).
				\end{split}
			\end{equation}
			Going back to \eqref{VarthetaOmPr}$_1$ we see that the terms on the right hand side are bounded in $L^\infty(Q_T)$ due to \eqref{OmUnifEst1}. Moreover, the first two terms on the left hand side are bounded in $L^2(0,T;L^2(\Omega))$ as a consequence of \eqref{FiEst} and \eqref{SeEst} implying that 
			\begin{equation*}
				\begin{split}
					&\|\Delta\mathcal K_{\delta,\omega}(\vartheta_\omega)\|_{L^2(0,T;L^2(\Omega))}\\&\leq \tilde h_2((\|\mathfrak r\|_{C^1([0,T]\times\overline\Omega)},\|\mathfrak z\|_{C^1([0,T]\times\overline\Omega)},\|u\|_{C([0,T];X_n)},(\inf_{Q_T}(\mathfrak r+\mathfrak z))^{-1})),
				\end{split}
			\end{equation*}
			where $\tilde h_2$ is bounded on bounded sets. We notice that $r+z$ solves the regularized continuity equation with an initial condition that is bounded from below by a positive constant. Hence $\inf_{\Omega}(\mathfrak r+\mathfrak z)>0$ by Lemma~\ref{Lem:ContApp}. Using the elliptic regularity theory, we conclude \eqref{OmUnifEst2}.
			
			It remains to show that there is a sequence $\{\vartheta_{\omega_n}\}$ possessing a limit $\vartheta$ as $\omega_n\to 0_+$ that solves \eqref{IntEnEq}.
			To this end we note that as a consequence of the Aubin-Lions lemma, estimates \eqref{OmUnifEst1} and \eqref{OmUnifEst2} we can consider a sequence $\{\vartheta_{\omega_n}\}$ such that
			\begin{equation}\label{OConv}
				\begin{alignedat}{2}
					\vartheta_{\omega_n} &\to \vartheta&&\text{ in }L^p(Q_T)\text{ for any }p\in[1,\infty)\text{ and a.e. in }Q_T,\\
					\vartheta_{\omega_n} &\rightharpoonup \vartheta&&\text{ in }L^2(0,T;W^{1,2}(\Omega)),\\
					\tder\vartheta_{\omega_n} &\rightharpoonup \tder\vartheta&&\text{ in }L^2(0,T;L^2(\Omega)).
				\end{alignedat}
			\end{equation}
			Obviously, we can select a nonrelabeled subsequence $\{\vartheta_{\omega_n}\}$ that satisfies \eqref{VTOmConv}. Moreover, as $\chi_{(1,\vartheta_{\omega_n})}\to \chi_{(1,\vartheta)}$ and $\kappa_{\delta,\omega_n}(\vartheta_{\omega_n})\to \kappa_{\delta}(\vartheta)$ in $L^p(Q_T)$ for any $p\in[1,\infty)$ by \eqref{OConv}$_1$, \eqref{OmUnifEst1} and the dominated convergence theorem we get
			\begin{equation*}
				\mathcal K_{\delta,\omega_n}(\vartheta_{\omega_n})=\int_{\eR}\chi_{(1,\vartheta_{\omega_n})}\kappa_{\delta,\omega_n}(s)\ds\to \int_{\eR}\chi_{(1,\vartheta)}\kappa_{\delta}(s)\ds=\mathcal K_{\delta}(\vartheta)\text{ a.e. in }Q_T.
			\end{equation*}
			Using definitions of $\mathcal K_{\delta,\omega_n}$ and $\kappa_{\delta,\omega_n}$ in \eqref{kappaDO} and the bounds in \eqref{OmUnifEst1} we get the $L^\infty$--bound on $\mathcal K_{\delta,\omega}(\vartheta_{\omega})$ that is independent of $\omega$. Hence the dominated convergence theorem yields
			\begin{equation*}
				\mathcal K_{\delta,\omega_n}(\vartheta_{\omega_n})\to \mathcal K_{\delta}(\vartheta)\text{ in }L^p(Q_T)\text{ for any }p\in[1,\infty).
			\end{equation*}
			Eventually, from the latter convergence and the estimate in \eqref{OmUnifEst2} we conclude 
			\begin{equation*}
				\mathcal K_{\delta,\omega_n}(\vartheta_{\omega_n})\rightharpoonup 	\mathcal K_{\delta}(\vartheta)\text{ in }L^2(0,T;W^{2,2}(\Omega)).
			\end{equation*}
			Considering \eqref{VarthetaOmPr} with $\vartheta=\vartheta_{\omega_n}$ we can pass to the limit $\omega_n\to 0_+$. In order to perform the passage in the first term on left hand side we observe that $\{D_\vartheta\mathcal E\}_{\delta,\omega_n}(\mathfrak r^{\omega_n},\mathfrak z^{\omega_n},\vartheta_{\omega_n})\to \partial_\vartheta\mathcal E_{\delta}(\mathfrak r,\mathfrak z,\vartheta)$ first a.e. in $Q_T$ and then in $L^p(Q_T)$ for any $p\in[1,\infty)$ by the dominated convergence theorem. Taking into account also \eqref{OConv}$_3$ we have
			\begin{equation*}
				\{D_\vartheta\mathcal E\}_{\delta,\omega_n}(\mathfrak r^{\omega_n},\mathfrak z^{\omega_n},\vartheta_{\omega_n})\tder\vartheta_{\omega_n}\rightharpoonup\partial_\vartheta\mathcal E_{\delta}(\mathfrak r,\mathfrak z,\vartheta)\tder\vartheta\text{ in }L^q(Q_T)\text{ for any }q\in[1,2).
			\end{equation*}
			Similarly, employing \eqref{OConv}$_2$ we conclude
			\begin{equation*}
				\dvr(\mathcal E_{\delta,\omega}(\mathfrak r^{\omega_n},\mathfrak z^{\omega_n},\vartheta_{\omega_n})u^{\omega_n})\rightharpoonup 	\dvr(\mathcal E_{\delta}(\mathfrak r,\mathfrak z,\vartheta)u)\text{ in }	L^q(Q_T) \text{ for any }q\in[1,2),
			\end{equation*}
			and by \eqref{OConv}$_1$
			\begin{alignat*}{2}
				\partial_b\mathcal E_{\delta,\omega}(\mathfrak r^{\omega_n},\mathfrak z^{\omega_n},\vartheta_{\omega_n})\tder b^{\omega_n}&\to \partial_b\mathcal E_{\delta,\omega}(\mathfrak r,\mathfrak z,\vartheta)\tder b&&\text{ in } L^p(Q_T),\\
				\mathbb S_{\delta,\omega_n}(\vartheta_{\omega_n},\nabla u^{\omega_n})\cdot\nabla u^{\omega_n}&\to \mathbb S_{\delta}(\vartheta,\nabla u)\cdot\nabla u&&\text{ in }L^p(Q_T),\\
				\mathcal P_{\delta,\omega_n}(\mathfrak r^{\omega_n},\mathfrak z^{\omega_n},\vartheta_{\omega_n})\dvr u^{\omega_n}&\to \mathcal P_\delta(\mathfrak r,\mathfrak z,\vartheta)\dvr u&&\text{ in }L^p(Q_T),\\
				\frac{\delta}{(\vartheta^{\omega_n})^2+\omega_n^2}+\varepsilon G_{\delta,\omega}-\varepsilon\theta^{\Lambda+1}_{\omega_n}(\vartheta_{\omega_n})&\to\frac{1}{\vartheta^2}+\varepsilon G_{\delta}-\varepsilon\vartheta^{\Lambda+1}&&\text{ in }L^p(Q_T)	
			\end{alignat*}
			for any $p\in[1,\infty)$ with $b$ standing for $\mathfrak r$ or $\mathfrak z$. Having the above convergences at hand we perform the passage to the limit $\omega_n\to 0_+$ in \eqref{VarthetaOmPr} to conclude that the limit function $\vartheta$ satisfies \eqref{IntEnEq}. 
			
			Next, we notice that the uniform estimate of $\{\mathcal K_{\delta,\omega}(\vartheta_{\omega_n})\}$ in $L^2(0,T;W^{2,2}(\Omega))$ implies that $\{\vartheta_n\}$ is bounded in $L^2(0,T;W^{2,2}(\Omega))$ as $\kappa'_{\delta,\omega_n}(\vartheta_{\omega_n})$ is bounded from above and $\kappa_{\delta,\omega_n}(\vartheta_{\omega_n})$ satisfies \eqref{AppPositive}. Combining this fact with the continuity of the trace operator $\tr:W^{k,p}(\Omega)\to W^{k-\frac{1}{p},p}(\partial\Omega)$ for $k\in (0,\infty)$, $p\in[1,\infty)$ we conclude that for a nonrelabeled subsequence $0=\tr(\nabla\vartheta_{\omega_n})n\rightharpoonup \tr(\nabla\vartheta)n=0$, i.e., the limit function $\vartheta$ satisfies the homogeneous Neumann boundary condition.
		\end{proof}
	\end{lemma}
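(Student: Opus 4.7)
The plan is to first establish the two uniform estimates \eqref{OmUnifEst1}--\eqref{OmUnifEst2} and then extract a subsequence converging to a solution of \eqref{IntEnEq}. The pointwise bounds in \eqref{OmUnifEst1} are immediate from Lemma~\ref{Lem:TOmBounds}, which supplies constant sub- and supersolutions $\underline\vartheta,\overline\vartheta$ depending only on the listed data.

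For \eqref{OmUnifEst2} I would run two successive energy-type estimates. First, test \eqref{VarthetaOmPr}$_1$ by $\vartheta_\omega$ and integrate over $\Omega$. The ellipticity of $\kappa_{\delta,\omega}$ and the lower bound on $\{D_\vartheta \mathcal E\}_{\delta,\omega}$ furnished by \eqref{AppPositive} yield coercivity on $\tfrac12\tder\|\vartheta_\omega\|_{L^2(\Omega)}^2$ and on $\|\nabla\vartheta_\omega\|_{L^2(\Omega)}^2$; the remaining forcing is pointwise controlled thanks to \eqref{OmUnifEst1} and the $C^1$ regularity of $\mathfrak r^\omega,\mathfrak z^\omega,u^\omega$, so Young's inequality and Gronwall deliver a bound in $L^\infty(0,T;L^2(\Omega))\cap L^2(0,T;W^{1,2}(\Omega))$. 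Next, test \eqref{VarthetaOmPr}$_1$ by $\tder\mathcal K_{\delta,\omega}(\vartheta_\omega)=\kappa_{\delta,\omega}(\vartheta_\omega)\tder\vartheta_\omega$. The elliptic term produces $\tfrac{\mathrm d}{\dt}\|\nabla\mathcal K_{\delta,\omega}(\vartheta_\omega)\|_{L^2(\Omega)}^2$ while the capacity term yields a positive multiple of $|\tder\vartheta_\omega|^2$ by \eqref{AppPositive}. Controlling the right-hand side by Young's inequality and using Gronwall bounds $\nabla\mathcal K_{\delta,\omega}(\vartheta_\omega)$ in $L^\infty(0,T;L^2(\Omega))$ and $\tder\vartheta_\omega$ in $L^2(0,T;L^2(\Omega))$; the former combined with \eqref{AppPositive} and $\nabla\mathcal K_{\delta,\omega}(\vartheta_\omega)=\kappa_{\delta,\omega}(\vartheta_\omega)\nabla\vartheta_\omega$ recovers the $L^\infty(0,T;W^{1,2})$ estimate for $\vartheta_\omega$. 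Finally, re-reading \eqref{VarthetaOmPr}$_1$ as an equation for $\Delta\mathcal K_{\delta,\omega}(\vartheta_\omega)$, every other term is now controlled in $L^2(Q_T)$, so elliptic regularity for the homogeneous Neumann problem yields the $L^2(0,T;W^{2,2})$ bound.

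For the passage $\omega_n\to 0_+$, the Aubin--Lions lemma applied to the bounds in \eqref{OmUnifEst1}--\eqref{OmUnifEst2} furnishes a subsequence $\{\vartheta_{\omega_n}\}$ with $\vartheta_{\omega_n}\to\vartheta$ strongly in every $L^p(Q_T)$ and a.e.\ in $Q_T$, while reflexivity supplies the weak-type convergences in \eqref{VTOmConv}. The identification $\mathcal K_{\delta,\omega_n}(\vartheta_{\omega_n})\rightharpoonup\mathcal K_\delta(\vartheta)$ uses a.e.\ convergence together with the uniform $L^\infty$ bound on $\mathcal K_{\delta,\omega_n}(\vartheta_{\omega_n})$ derived from \eqref{kappaDO} and \eqref{OmUnifEst1}: dominated convergence upgrades weak to strong $L^p$ convergence, while the $L^2(0,T;W^{2,2})$ bound provides weak compactness in that space.

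The most delicate point is the passage to the limit in \eqref{VarthetaOmPr}$_1$. Most products are of strong-times-strong or strong-times-weak type and cause no difficulty once the coefficients are shown to converge a.e.\ and to remain uniformly bounded. The critical term is $\{D_\vartheta\mathcal E\}_{\delta,\omega_n}(\mathfrak r^{\omega_n},\mathfrak z^{\omega_n},\vartheta_{\omega_n})\tder\vartheta_{\omega_n}$, where the coefficient converges only a.e.\ and $\tder\vartheta_{\omega_n}$ only weakly in $L^2(Q_T)$; combining the $L^\infty$ coefficient bound from \eqref{OmUnifEst1} with dominated convergence upgrades the coefficient convergence to strong $L^p$ for every finite $p$, which paired with the weak $L^2$ convergence of $\tder\vartheta_{\omega_n}$ suffices to identify the product in $\mathcal D'(Q_T)$. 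The Neumann condition on $\vartheta$ is recovered from the uniform $L^2(0,T;W^{2,2}(\Omega))$ bound on $\mathcal K_{\delta,\omega_n}(\vartheta_{\omega_n})$ (whence on $\vartheta_{\omega_n}$, via the $L^\infty$ bound on $\kappa'_{\delta,\omega_n}(\vartheta_{\omega_n})$ and \eqref{AppPositive}) and continuity of the normal trace of the gradient.
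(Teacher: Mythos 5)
Your proposal is correct and follows essentially the same route as the paper: the $L^\infty$ bounds from the sub/supersolution comparison, the two energy tests (by $\vartheta_\omega$ and by $\tder\mathcal K_{\delta,\omega}(\vartheta_\omega)$) followed by elliptic regularity for the Neumann problem to get \eqref{OmUnifEst2}, and then Aubin--Lions plus dominated convergence to identify all limits, including the product with $\tder\vartheta_{\omega_n}$ and the boundary condition via the trace of $\nabla\vartheta_{\omega_n}$. No substantive differences from the paper's argument.
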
 
	
	\begin{proof}[Proof of Lemma~\ref{Lem:IEE}]
		The existence of a unique solution $\vartheta$ belonging to the regularity class $Y$ follows immediately from Lemma~\ref{Lem:OmUnifEst}. The next immediate consequence of this lemma is that $\vartheta$ satisfies also
		\begin{equation}\label{UnEst1}
			\begin{split}
				&\|\frac{1}{\vartheta}\|_{L^\infty(Q_T)}+\|\vartheta\|_{L^\infty(Q_T)}\\
				&\leq h_1(\varepsilon,\delta,\|\mathfrak r\|_{C^1([0,T]\times\overline\Omega)},\|\mathfrak z\|_{C^1([0,T]\times\overline\Omega)},\|u\|_{C([0,T];X_n)}),
			\end{split}
		\end{equation}
		\begin{equation}\label{UnEst2}
			\begin{split}
				&\|\tder\vartheta\|^2_{L^2(0,T;L^2(\Omega))}+\esssup_{(0,T)}\|\vartheta\|^2_{W^{1,2}(\Omega)}+\|\mathcal K_{\delta}(\vartheta)\|^2_{L^2(0,T;W^{2,2}(\Omega))}\\&\leq h_2(\varepsilon,\delta,\|\mathfrak  r\|_{C^1([0,T]\times\overline\Omega)},\|\mathfrak z\|_{C^1([0,T]\times\overline\Omega)}, \|\vartheta_{0,\delta}\|_{W^{1,2}(\Omega)},\inf_{Q_T}(\mathfrak r+\mathfrak z)^{-1}).
			\end{split}
		\end{equation}
		Hence the fact that the mapping $u\mapsto\vartheta_u$ maps bounded sets in $C([0,T];X_n)$ into bounded sets in $Y$ follows with regard to Lemma~\ref{Lem:ContApp}. Next, considering a sequence $\{u_k\}\subset C([0,T];X_n)$, we have
		\begin{equation}\label{KBounds}
			\begin{alignedat}{2}
				\{\tder\vartheta_{u_{k}}\}&\text{ bounded in }&&L^2(0,T;L^2(\Omega)),\\
				\{\vartheta_{u_k}\}&\text{ bounded in }&&L^\infty(0,T;W^{1,2}(\Omega)),\\
				\{\mathcal K_{\delta}(\vartheta_{u_k})\}&\text{ bounded in }&&L^2(0,T;W^{2,2}(\Omega)).
			\end{alignedat}
		\end{equation}
		We note that $\{\tder\mathcal K_\delta(\vartheta_{u_k})\}$ is bounded in $L^2(0,T;L^2(\Omega))$ due to \eqref{KBounds} and the definition of $\mathcal K_\delta$ in \eqref{KappaDDef}. Hence with the help of the Aubin-Lions lemma we get the existence of $\{\vartheta_{u_{k_l}}\}\subset \{\vartheta_{u_{k}}\}$ such that
		\begin{equation}\label{KLConv}
			\begin{alignedat}{2}
				\tder \vartheta_{u_{k_l}}&\rightharpoonup\tder \vartheta &&\text{ in }L^2(0,T;L^2(\Omega)),\\
				\vartheta_{u_{k_l}}&\rightharpoonup^*\vartheta &&\text{ in }L^\infty(0,T;W^{1,2}(\Omega)),\\
				\mathcal K_\delta(\vartheta_{u_{k_l}})&\rightharpoonup\mathcal K_\delta(\vartheta)&&\text{ in }L^2(0,T;W^{2,2}(\Omega)),\\
				\vartheta_{u_{k_l}}&\to\vartheta &&\text{ in }L^p(0,T;L^p(\Omega))\text{ for any }p\in[1,\infty)\text{ and a.e. in }Q_T,\\
				\mathcal K_\delta(\vartheta_{u_{k_l}})&\to\mathcal K_\delta(\vartheta)&&\text{ in }L^2(0,T;W^{1,q}(\Omega))\text{ for any }q<2^*.
			\end{alignedat}
		\end{equation}
		Moreover, it follows that 
		\begin{equation}\label{NVarThetaConv}
			\nabla \vartheta_{u_{k_l}}\to \nabla \vartheta\text{ in }L^2(0,T;L^2(\Omega)).
		\end{equation}
		Indeed, by the definition $\nabla \vartheta_{u_{k_l}}=\frac{1}{\kappa_\delta(\vartheta_{u_{k_l}})}\nabla \mathcal K_\delta(\vartheta_{u_{k_l}})$ and $\frac{1}{\kappa_\delta(\vartheta_{u_{k_l}})}\to\frac{1}{\kappa_\delta(\vartheta)}$ in $L^s(0,T;L^s(\Omega))$ for any $s\in[1,\infty)$. We conclude \eqref{NVarThetaConv} by employing  \eqref{KLConv}$_5$.
		
		Having \eqref{KLConv} and \eqref{NVarThetaConv} we can pass to the limit $k_l\to \infty$ in equation \eqref{IntEnEq} with $u=u_{k_l}$, $\mathfrak r=\mathfrak r_{u_{k_l}}$, $\mathfrak z=\mathfrak z_{u_{k_l}}$ to conclude that the limit function $\vartheta$ is a unique solution to \eqref{IntEnEq} corresponding to $u$, i.e., $\vartheta=\vartheta_u$. We note that the uniqueness of $\vartheta$ can be shown by adopting the procedure from the proof of Lemma~\ref{Lem:SubSupIneq}. Accordingly, we have shown that an arbitrary sequence $\{u_k\}$ possesses a subsequence $\{u_{k_l}\}$ such that
		\begin{equation*}
			\vartheta_{u_{k_l}}\to\vartheta\text{ in }L^2(0,T;W^{1,2}(\Omega)),
		\end{equation*}
		i.e., the continuity of $u\mapsto\vartheta_u$ is proved.
	\end{proof}
 
    \Needspace{5\baselineskip}
	\subsection{Global existence of the approximate problem for $\varepsilon,\delta>0$}
	\subsubsection{Local solvability of Faedo--Galerkin approximations}\label{LocSolG}
	The goal of this subsection is the proof of existence a solution to the approximate problem on a possibly short time interval $(0,T_{n})$. More precisely, we seek $T_n\in(0,T]$ and $u\in C([0,T_n];X_n)$ satisfying the momentum balance in the sense of a Faedo--Galerkin approximation
	\begin{equation}\label{GalE1}
		\begin{split}
			\int_\Omega (\Sigma u)(t)\cdot\varphi-\int_\Omega \Sigma_{0,\delta}u_{0,\delta,n}\cdot\varphi=\int_0^t\int_\Omega &\left(\left(\Sigma(u\otimes u)-\mathbb S(\vartheta,\nabla u)\right)\cdot\nabla\varphi+\mathcal P_\delta(\mathfrak r,\mathfrak z,\vartheta)\dvr\varphi\right.\\&\left.-\varepsilon\nabla\Sigma\nabla u\cdot\varphi\psi\right)
		\end{split}
	\end{equation}
	for any $t\in(0,T_n)$, $\varphi\in X_n$, where $X_n=\mathrm{span}\{\Phi_i\}_{i=1}^n\subset C^{2,\nu}(\overline\Omega)$. We define 
	\begin{equation}\label{UDNDef}
		u_{0,\delta,n}=P_n\left(\frac{(\Sigma u)_{0,\delta}}{\Sigma_{0,\delta}}\right).
	\end{equation} 
	The functions $\{\Phi_i\}$ form an orthonormal basis in $L^2(\Omega)$ and satisfy either $\Phi_i\cdot n=0$ on $\partial\Omega$ in the case of the complete slip boundary condition or $\Phi_i=0$ on $\partial\Omega$ in the case of no-slip boundary condition. We note that $X_n$ is endowed with the Hilbert structure induced by the scalar product on $L^2(\Omega)$ and $P_n:L^2(\Omega)\to X_n$ denotes the orthogonal projection. Moreover, we assume that
	\begin{equation*}
		\cup_{n=1}^\infty X_n\text{ is dense in }X^{1,2},
	\end{equation*}
	where 
	\begin{equation}\label{XPDef}
		X^{1,2}=W^{1,2}_n(\Omega)\text{ or } X^{1,2}=W^{1,2}_0(\Omega)
	\end{equation}
    with $W^{1,2}_n(\Omega)$ and $W^{1,2}_0(\Omega)$  defined in \eqref{SobDef}. The functions $\xi$, $\Sigma$, $\mathfrak r$, $\mathfrak z$ satisfy equation \eqref{RReg}--\eqref{RZInit} with corresponding initial conditions $\xi_{0,\delta}, \Sigma_{0,\delta}$, $\mathfrak r_{0,\delta}$, $\mathfrak z_{0,\delta}$. 
	Moreover, we have
	\begin{equation}\label{PointEquiv}
		\underline a\mathfrak r\leq \mathfrak z\leq \overline a\mathfrak r
	\end{equation}
	as the differecens $\mathfrak z- \underline a\mathfrak r$ and $\overline a\mathfrak r-\mathfrak z$ solve the regularized continuity equations with positive initial conditions, cf. Lemma~\ref{Lem:ContApp}. In a complete analogy we get 
	\begin{equation}\label{PointEqS}
		\begin{split}
			\underline b\Sigma\leq \mathfrak r+\mathfrak z\leq\overline b\Sigma ,\\
			\underline d\mathfrak r\leq \xi\leq\overline d\mathfrak r
		\end{split}
	\end{equation}
	provided $\underline b\Sigma_{0,\delta}\leq \mathfrak r_{0,\delta}+\mathfrak z_{0,\delta}\leq \overline b\Sigma_{0,\delta}$, $\underline d\mathfrak r_{0,\delta}\leq \xi_{0,\delta}\leq\overline d\mathfrak r_{0,\delta}$ for some $0<\underline b<\overline b<\infty$ and $0<\underline d<\overline d<\infty$. $\vartheta$ solves \eqref{IntEnEq} with a given initial datum $\vartheta_{0,\delta}$ and the velocity $u$. For the purposes of this section we denote 
	\begin{equation*}
		\mathcal C=\{g\in C([0,T]\times\overline\Omega):\ \inf_{[0,T]\times\overline{\Omega}} g>0\}
	\end{equation*} 
	and define a mapping $J:\mathcal C\times X^*_n\to X_n$ as
	\begin{equation*}
		\int_\Omega g(t) J[g(t),\chi]\cdot\varphi=\langle\chi,P_n\varphi\rangle\text{ for any }\varphi\in L^2(\Omega),\ t\in [0,T].
	\end{equation*}  
	We note that the latter identity implies
	\begin{equation}\label{JEst}
		\begin{split}
			\|J[g(t),\chi]\|_{X_n}\leq&\frac{1}{\inf_{\overline\Omega}g(t)}\|\chi\|_{X_n^*}\\
			\|J[g_1(t),\chi]-J[g_2(t),\chi]\|_{X_n}\leq&\frac{1}{\inf_{\overline\Omega}g_1(t)\inf_{[0,T]\times\overline\Omega}g_2(t)}\|g_1(t)-g_2(t)\|_{L^\infty(\Omega)}\|\chi\|_{X_n^*}.
		\end{split}
	\end{equation}
	We define a mapping $\mathcal T$ on $C([0,T];X_n)$ as
	\begin{equation}\label{TMDeF}
		\mathcal T(u)(t)=J\left[\Sigma (t),\int_0^t M(\tau,\Sigma(\tau),\mathfrak r(\tau),\mathfrak z(\tau),\vartheta(\tau), u(\tau)\mathrm d{\tau} )+(\Sigma u)_0^*\right],
	\end{equation}
	where $\Sigma=\Sigma_u$, $\mathfrak r=\mathfrak r_u$, $\mathfrak z=\mathfrak z_u$ $\vartheta=\vartheta_u$ are constructed in Lemma~\ref{Lem:ContApp}, Lemma~\ref{Lem:IEE} respectively,
	\begin{equation*}
		\begin{split}
			&(\Sigma u)_{0,\delta}^*\in X_n^*,\ \langle (\Sigma u)_{0,\delta}^*;\varphi\rangle=\int_\Omega \Sigma_{0,\delta}u_{0,\delta,n}\cdot\varphi,\ M(\cdot,\Sigma,\mathfrak r,\mathfrak z,\vartheta,u)\in X_n^*,\\ 
			&\langle M(\cdot,\Sigma,\mathfrak r,\mathfrak z,\vartheta,u),\varphi\rangle=\int_\Omega \Sigma(u\otimes u)\cdot\nabla\varphi +\mathcal P_\delta (\mathfrak r,\mathfrak z,\vartheta)\dvr\varphi-\varepsilon\nabla\Sigma\nabla u\cdot\varphi-\mathbb S(\vartheta,\nabla u)\cdot \nabla\varphi\\&\text{ for any }\varphi\in X_n.
		\end{split}
	\end{equation*}
	Our task, the existence of a solution to \eqref{GalE1}, is solved once we show that $\mathcal T$ possesses a fixed point in a ball 
	\begin{equation*}
		B_{R,T_n}=\left\{u\in C([0,T_n];X_n):\ \|u\|_{C([0,T_n];X_n)}\leq R, u(0)=u_{0,\delta,n}\right\}
	\end{equation*} 
	for a suitably chosen $R$ and $T_n$. Let us estimate the expression on the right hand side of \eqref{TMDeF}. By \eqref{JEst}$_1$ it follows that
	\begin{equation}\label{JMEst}
		\begin{split}
			&\left\|J\left[\Sigma(t),\int_0^t M(\tau,\Sigma(\tau),\mathfrak r(\tau),\mathfrak z(\tau),\vartheta(\tau),u(\tau))\mathrm d\tau+(\Sigma u)_{0,\delta}^*\right]\right\|_{X_n}\\
			&\leq \frac{1}{\inf_{\overline\Omega}\Sigma(t)}\left(\sup_{\Omega}\Sigma_{0,\delta}\|u_{0,\delta,n}\|_{X_n}+\left\|\int_0^tM(\tau,\Sigma(\tau),\mathfrak r(\tau),\mathfrak z(\tau),\vartheta(\tau), u(\tau))\mathrm d\tau\right\|_{X_n^*}\right).
		\end{split}
	\end{equation}
	The straightforward calculation using Lemmas~\ref{Lem:ContApp} and \ref{Lem:IEE} yields
	\begin{equation}\label{MEst}
		\begin{split}
			&\left\|\int_0^tM(\tau,\Sigma(\tau),\mathfrak r(\tau),\mathfrak z(\tau),\vartheta(\tau), u(\tau))\mathrm d\tau\right\|_{X_n^*}\\
			&\leq c(\varepsilon,\delta)t\left( \|\vartheta\|_{L^\infty(0,T;L^\infty(\Omega))}\|u\|_{C([0,T];X_n)}+\|\Sigma\|_{C^1([0,T]\times\overline\Omega)}(1+\|u\|_{C([0,T];X_n)})\|u\|_{C([0,T];X_n)}\right.\\
			&\quad\left.+\|\mathfrak r\|^\Gamma_{C^1([0,T]\times\overline\Omega)}+\|\mathfrak z\|^\Gamma_{C^1([0,T]\times\overline\Omega)}+\|\vartheta\|^4_{L^\infty(0,T;L^4(\Omega))}+1\right).
		\end{split}
	\end{equation}
	Hence it follows for $u$ with $\|u\|_{C([0,T];X_n)}\leq R$ that
	\begin{equation*}
		\begin{split}
			&\left\|J\left[\Sigma(t),\int_0^t M(\tau,\Sigma(\tau),\mathfrak r(\tau),\mathfrak z(\tau),\vartheta(\tau),u(\tau))\mathrm d\tau+(\Sigma u)_{0,\delta}^*\right]\right\|_{X_n}\\
			&\leq \frac{c}{\inf_{\overline\Omega}\Sigma_{0,\delta}}e^{Rt}\left(\sup_{\overline\Omega}\Sigma_{0,\delta}\|u_{0,\delta,n}\|_{X_n}+th(R)\right)\text{ for all }t\in[0,T]
		\end{split}
	\end{equation*}
	using Lemma~\ref{Lem:ContApp}. We notice that $h$ is a positive function bounded on bounded sets. Choosing $R$ such that 
	\begin{equation*}
		R>2 \frac{c\sup_{\overline\Omega}\Sigma_{0,\delta}\|u_{0,\delta,n}\|_{X_n}}{\inf_{\overline\Omega}\Sigma_{0,\delta}}
	\end{equation*}
    we obtain 
    \begin{equation*}
        \|\mathcal Tu\|_{C([0,T];X_n)}\leq \frac{R}{2}e^{RT}+\frac{c}{\inf_{\overline\Omega}\Sigma_{0,\delta}}h(R)e^{RT}T.
    \end{equation*} Hence the choice of $T_n$ sufficiently small guarantees that $\mathcal T$ maps $B_{R,T_n}$ into itself.
	Next, we verify that $\mathcal T$ is continuous on $B_{R,T_n}$. To this end we consider a sequence $\{u_k\}$ such that $u_k\to u$ in $C([0,T];X_n)\cap B_{R,T_n}$ and by Lemmas~\ref{Lem:ContApp} and \ref{Lem:IEE} we have corresponding sequences $\{\Sigma_k\}$, $\{\mathfrak r_k\}$, $\{\mathfrak z_k\}$, $\{\vartheta_k\}$ such that $\Sigma_k\to\Sigma$, $\mathfrak r_k\to \mathfrak r$, $\mathfrak z_k\to\mathfrak z$ in $C^1([0,T]\times\overline \Omega)$ and $\vartheta\to\vartheta$ in $L^2(0,T;W^{1,2}(\Omega))$ with limits $\Sigma$, $\mathfrak r$, $\mathfrak z$ and $\vartheta$ corresponding to the limit velocity $u$. We want to show that 
	\begin{equation}\label{TukConv}
		\|\mathcal T(u_k)-\mathcal T(u)\|_{C([0,T_n];X_n)}\to 0.
	\end{equation}
	Obviously, we have
	\begin{equation}\label{TDiffEst}
		\begin{split}
			\|\mathcal T(u_k)-\mathcal T(u)\|_{C([0,T_n];X_n)}=&\|J[\Sigma_k,F_k]-J[\Sigma,F]\|_{C([0,T_n];X_n)}\\
			\leq& \|J[\Sigma_k,F_k]-J[\Sigma,F_k]\|_{C([0,T_n];X_n)}+\|J[\Sigma,F_k-F]\|_{C([0,T_n];X_n)}=I_k+II_k,
		\end{split}
	\end{equation}
	where we denoted 
	\begin{equation}\label{FkFDef}
		\begin{split}
			F_k(t)=&\int_0^tM(\tau,\Sigma_k(\tau),\mathfrak r_k(\tau),\mathfrak z_k(\tau),\vartheta_k(\tau),u_k(\tau))\mathrm d\tau+(\Sigma u)_0^*,\\
			F(t)=&\int_0^tM(\tau,\Sigma(\tau),\mathfrak r(\tau),\mathfrak z(\tau),\vartheta(\tau),u(\tau))\mathrm d\tau+(\Sigma u)_0^*.
		\end{split}
	\end{equation}
    Repeating the estimates leading to \eqref{MEst} we have
	\begin{equation}\label{FKEst}
		\begin{split}
			\|F_k(t)\|_{X_n^*}\leq& ct\left(\Sigma_{0,\delta}\|u_{0,\delta,n}\|_{X_n}+h(R)\right)\text{ for all }t\in[0,T_n].
		\end{split}
	\end{equation}
	Moreover, we obtain
	\begin{equation}\label{FkFDiff}
		\begin{split}
			&\|F_k(t)-F(t)\|_{X_n^*}\leq c(\varepsilon,\delta,n)\left(\|\mu\|_{C^1(\eR)}\|\vartheta_k-\vartheta\|_{L^2(0,T;W^{1,2}(\Omega))}\|u_k\|_{C([0,T];X_n)}\right.\\
			&\left.+\|\vartheta\|_{L^2(0,T;W^{1,2}(\Omega))}\|u_k-u\|_{C([0,T];X_n)}+\|\Sigma_k-\Sigma\|_{C^1([0,T]\times\overline\Omega)}(1+\|u_k\|_{C([0,T];X_n)})\|u_k\|_{C([0,T];X_n)}\right.\\&\left.+\|\Sigma\|_{C^1([0,T]\times\overline\Omega)}\left(\|u_k\|_{C([0,T];X_n)}+\|u\|_{C([0,T];X_n)}+1\right)\|u_k-u\|_{C([0,T];X_n)}\right.\\&\left.+\|D_{r,z,\vartheta}\mathcal P_\delta\|_{L^\infty(Q_T)}\left(\|\mathfrak r_k-\mathfrak r\|_{C^1([0,T]\times\overline\Omega)}+\|\mathfrak z_k-\mathfrak z\|_{C^1([0,T]\times\overline\Omega)}+\|\vartheta_k-\vartheta\|_{L^2(0,T;W^{1,2}(\Omega))}\right)\right).
		\end{split}
	\end{equation}
    The dependence of the constant $c(\varepsilon,\delta,n)$ on $n$ is due to the use of the equivalence of norms on $X_n$ in the latter estimate. We note that 
	\begin{equation*}
		\begin{split}
			\|D_{r,z,\vartheta}\mathcal P_\delta\|_{L^\infty(Q_T)}&\leq c(\delta)\left((1+\|\mathfrak r_k\|^\Gamma_{C^1([0,T]\times\overline\Omega)}+\|\mathfrak r\|^\Gamma_{C^1([0,T]\times\overline\Omega)}+\|\mathfrak z_k\|^\Gamma_{C^1([0,T]\times\overline\Omega)}+\|\mathfrak z\|^\Gamma_{C^1([0,T]\times\overline\Omega)})\right.\\
			&\left.\times(\|\vartheta_k\|_{L^\infty(Q_T)}+\|\vartheta\|_{L^\infty(Q_T)})+\|\vartheta_k\|^3_{L^\infty(Q_T)}+\|\vartheta\|^3_{L^\infty(Q_T)}\right)
		\end{split}
	\end{equation*}
	by the definition in \eqref{PDDef} and the hypothesis in \eqref{PressVarGr}. Hence by Lemmas~\ref{Lem:ContApp} and \ref{Lem:IEE} we have
	\begin{equation*}
		\|D_{r,z,\vartheta}\mathcal P_\delta\|_{L^\infty(Q_T)}\leq c,
	\end{equation*}
	where $c$ is independent of $k$. Consequently, it follows from \eqref{FkFDiff} that
	\begin{equation}\label{FkFconv}
		\sup_{t\in[0,T_n]}\|F_k(t)-F(t)\|_{X_n^*}\to 0.
	\end{equation}
	Returning back to \eqref{TDiffEst} we conclude $I_k\to 0$ by \eqref{JEst}$_2$ and \eqref{FKEst}. Moreover, we have by \eqref{JEst}$_1$
	\begin{equation*}
		II_k\leq \frac{1}{\inf_{[0,T]\times\overline\Omega}\Sigma}\|F_k-F\|_{C([0,T_n];X_n^*)}.
	\end{equation*} 
	Eventually, employing \eqref{FkFconv} we conclude \eqref{TukConv}.
	
	The next task is to verify that $\mathcal T$ is a compact operator on $B_{R,T_n}$.
	To this end we first fix arbitrary $t_1,t_2\in(0,T_n)$ and $u\in B_{R,T_n}$. Obviously, keeping the notation from \eqref{FkFDef}
	\begin{equation}\label{TTimeDiff}
		\|\mathcal T(u)(t_1)-\mathcal T(u)(t_2)\|_{X_n}\leq \|J[\Sigma(t_1),F(t_1)]-J[\Sigma(t_2),F(t_1)]\|_{X_n}+\|J[\Sigma(t_2),F(t_1)-F(t_2)]\|_{X_n}.
	\end{equation}
	By the ideas used for proving \eqref{JEst}$_2$, \eqref{JMEst} and \eqref{MEst} we conclude
	\begin{equation*}
		\|J[\Sigma(t_1),F(t_1)]-J[\Sigma(t_2),F(t_1)]\|_{X_n}\leq \frac{1}{(\inf_{[0,T]\times\overline\Omega}\Sigma)^2}\|\Sigma\|_{C^1([0,T]\times\overline\Omega)}(\sup_{\overline\Omega}\Sigma_{0,\delta}\|u_{0,\delta,n}\|_{X_n}+T_nh(R))|t_1-t_2|
	\end{equation*}
	and 
	\begin{equation*}
		\begin{split}
			\|J[\Sigma(t_2),F(t_1)-F(t_2)]\|_{X_n}\leq& \frac{1}{\inf_{[0,T]\times\overline\Omega}\Sigma}\left\|\int_{t_1}^{t_2} M(\tau,\Sigma(\tau),\mathfrak r(\tau),\mathfrak z(\tau),\vartheta(\tau),u(\tau))\mathrm d\tau\right\|_{X_n^*}\\
			\leq&\frac{1}{\inf_{[0,T]\times\overline\Omega}\Sigma} |t_1-t_2|h(R).
		\end{split}
	\end{equation*}
	As a consequence of \eqref{TTimeDiff} and the latter two inequalities, we infer that $\mathcal T(B_{R,T_n})$ consists of uniformly Lipschitz functions on $[0,T_n]$. In particular, by the Arzel\`a--Ascoli theorem $\mathcal T(B_{R,T_n})$ is a relatively compact subset of $C([0,T_n];X_n)$. Thus $\mathcal T$ is compact on $B_{R,T_n}$. Accordingly, the Leray--Schauder fixed point theorem yields the existence of a fixed point $u\in B_{R, T_n}$ for $\mathcal T$. As a consequence we get the existence of the $\Sigma$, $\mathfrak r$ $\mathfrak z$, $\alpha$, $\vartheta$, $u$ on a possibly short time interval $[0,T_n]$. Iterating this procedure as many time as necessary we reach $T_n=T$, i.e., we get the existence of the solution to \eqref{GalE1}, as long as there is a bound on $u$ independent of $n$. Finally, as we have already verified that $\mathcal T(B_{R,T_n})$ consists of uniformly in time Lipschitz functions, we know, in particular, that 
	\begin{equation}\label{TimeLVel} 
		u\in W^{1,\infty}([0,T_n];X_n).
	\end{equation}
	
	\subsubsection{Uniform estimates and global existence of Galerkin approximations}
	This subsection is devoted to the derivation of an uniform estimate of $\sup_{t\in(0,T_n)}\|u\|_{X_n}$ that is independent of $n$. Let us start with a necessary preparatory work.
 	Expanding the derivatives one has 	
    \begin{equation}\label{longiden1}
		\begin{split}
			&\tder \mathcal E_\delta(\mathfrak r,\mathfrak z,\vartheta)+\dvr\left(\mathcal E_\delta(\mathfrak r,\mathfrak z,\vartheta)u\right)\\
			&=\partial_r\mathcal E_\delta(\mathfrak r,\mathfrak z,\vartheta)\left(\tder\mathfrak r+u\cdot\nabla\mathfrak r\right)+\partial_z \mathcal E_\delta(\mathfrak r,\mathfrak z,\vartheta)\left(\tder\mathfrak z+	u\cdot\nabla \mathfrak z\right)+\partial_\vartheta\mathcal E_\delta(\mathfrak r,\mathfrak z,\vartheta)\left(\tder\vartheta+u\cdot\nabla\vartheta\right)+\mathcal E_\delta(\mathfrak r,\mathfrak z,\vartheta)\dvr u\\
			&=\mathsf e_{+,\delta}(\mathfrak r,\vartheta)(\tder\mathfrak r+\dvr(\mathfrak r u))+\mathfrak r\partial_r\mathsf e_{+,\delta}(\mathfrak r,\vartheta)(\tder\mathfrak r+u\cdot \nabla \mathfrak r)+\mathsf e_{-,\delta}(\mathfrak z,\vartheta)(\tder\mathfrak z+\dvr(\mathfrak z u))\\
            &\quad+\mathfrak z\partial_z\mathsf e_{-,\delta}(\mathfrak z,\vartheta)(\tder\mathfrak z+u\cdot \nabla \mathfrak z)+\left(4b\vartheta^3+\mathfrak r\partial_\vartheta\mathsf e_{+,\delta}(\mathfrak r,\vartheta)+\mathfrak z\mathsf e_{-,\delta}(\mathfrak z,\vartheta)\right)(\tder\vartheta+u\cdot\nabla\vartheta)+b\vartheta^4\dvr u.
            \end{split}
        \end{equation}
        Applying the Gibbs relations from \eqref{GibbsT} and the regularized continuity equation for $r=\mathfrak r$ and $r=\mathfrak z$ one obtains
        \begin{equation*}
            r\mathsf e_{\pm,\delta}(r,\vartheta)(\tder r+u\cdot\nabla r)=r\left(\vartheta\mathsf s_{\pm,\delta}(r,\vartheta)(\tder r+u\cdot\nabla r)+\frac{\mathsf P_{\pm}(r,\vartheta)}{r^2}(\varepsilon\Delta r-r\dvr u)\right).
        \end{equation*}
        Employing the latter identity in \eqref{longiden1} we arrive at 
	\begin{equation}\label{longiden}
		\begin{split}
			&\tder \mathcal E_\delta(\mathfrak r,\mathfrak z,\vartheta)+\dvr\left(\mathcal E_\delta(\mathfrak r,\mathfrak z,\vartheta)u\right)\\
			&=\varepsilon \left(\Delta \mathfrak r\left(\mathsf e_{+,\delta}(\mathfrak r,\vartheta)+\frac{\mathsf P_+\left(\mathfrak r,\vartheta\right)}{\mathfrak r}\right)+\Delta \mathfrak z\left(\mathsf e_{-,\delta}(\mathfrak z,\vartheta)+\frac{\mathsf P_-\left(\mathfrak z,\vartheta\right)}{\mathfrak z}\right)\right)-\left(\mathsf P_+(\mathfrak r,\vartheta)+\mathsf P_-(\mathfrak z,\vartheta)-b\vartheta^4\right)\dvr u\\
			&\quad+\vartheta\left(\tder\left(\frac{4b}{3}\vartheta^3\right)+u\cdot\nabla\left(\frac{4b}{3}\vartheta^3\right)+\mathfrak r\left(\tder\mathsf s_{+,\delta}(\mathfrak r,\vartheta)+u\cdot\nabla \mathsf s_{+,\delta}(\mathfrak r,\vartheta)\right)+\mathfrak z\left(\tder\mathsf s_{-,\delta}(\mathfrak z,\vartheta)+u\cdot\nabla \mathsf s_{-,\delta}(\mathfrak z,\vartheta)\right)\right)\\
			&=\vartheta\left(\tder\mathcal S_\delta(\mathfrak r,\mathfrak z,\vartheta)+\dvr\left(\mathcal S_\delta(\mathfrak r,\mathfrak z,\vartheta)u\right)\right)-\left(\frac{b}{3}\vartheta^4+\mathsf P_+(\mathfrak r,\vartheta)+\mathsf P_-(\mathfrak z,\vartheta)\right)\dvr u\\
			&\quad+\varepsilon \left(\Delta \mathfrak r\left(\mathsf e_{+,\delta}(\mathfrak r,\vartheta)-\vartheta\mathsf s_{+,\delta}(\mathfrak r,\vartheta)+\frac{\mathsf P_+\left(\mathfrak r,\vartheta\right)}{\mathfrak r}\right)+\Delta \mathfrak z\left(\mathsf e_{-,\delta}(\mathfrak z,\vartheta)-\vartheta\mathsf s_{-,\delta}(\mathfrak z,\vartheta)+\frac{\mathsf P_-\left(\mathfrak z,\vartheta\right)}{\mathfrak z}\right)\right),
		\end{split}
	\end{equation}
 
	where $\mathsf e_{\pm,\delta}=\mathsf e_\pm+\delta\vartheta$,
	\begin{equation}\label{TSDDef}
		\begin{split}
			\mathsf{s}_{\pm,\delta}(r,\vartheta)=& \mathsf{s}_{\pm}(r,\vartheta)+\delta\log\vartheta,\\
			\mathcal S_\delta(r,z,\vartheta)=&\frac{4b}{3}\vartheta^3+r\mathsf s_{+,\delta}(r,\vartheta)+z\mathsf s_{-,\delta}(z,\vartheta)
		\end{split}
	\end{equation}
	for $\mathsf s_{\pm}$ defined in \eqref{COV}.
	
	Using \eqref{longiden} in pointwise identity \eqref{IntEnEq} we get after the division of the result by $\vartheta$
	\begin{equation}\label{SDEq}
		\begin{split}
			&\tder\mathcal S_\delta(\mathfrak r,\mathfrak z,\vartheta)+\dvr\left(\mathcal S_\delta(\mathfrak r,\mathfrak z,\vartheta)u\right)-\dvr\left(\frac{\kappa_\delta(\vartheta)}{\vartheta}\nabla\vartheta\right)\\
			&=\frac{1}{\vartheta}\mathbb S(\vartheta,\nabla u)\cdot\nabla u+\frac{\kappa_\delta(\vartheta)}{\vartheta^2}|\nabla\vartheta|^2+\frac{\varepsilon}{\vartheta}\left(\partial^2_{rr}h_\delta(\mathfrak r,\mathfrak z)|\nabla\mathfrak r|^2+2\partial^2_{rz}h_\delta(\mathfrak r,\mathfrak z)\nabla\mathfrak r\cdot\nabla\mathfrak z+\partial^2_{zz}h_\delta(\mathfrak r,\mathfrak z)|\nabla\mathfrak z|^2\right)+\frac{\delta}{\vartheta^3}\\
            &\quad-\varepsilon\vartheta^{\Lambda}+\frac{\varepsilon}{\vartheta}\left(\Delta \mathfrak r\left(\vartheta\mathsf s_{+,\delta}\left(\mathfrak r,\vartheta\right)-\mathsf e_{+,\delta}\left(\mathfrak r,\vartheta\right)-\frac{\mathsf P_+\left(\mathfrak r,\vartheta\right)}{\mathfrak r}\right)+\Delta \mathfrak z\left(\vartheta\mathsf{ s}_{-,\delta}\left(\mathfrak z,\vartheta\right)-\mathsf{e}_{-,\delta}\left(\mathfrak z,\vartheta\right)-\frac{\mathsf P_-\left(\mathfrak z,\vartheta\right)}{\mathfrak z}\right)\right)
		\end{split}
	\end{equation}
	a.e. in $(0,T_n)\times\Omega$.
	Taking into consideration \eqref{TimeLVel}, we can differentiate \eqref{GalE1} with respect to time and obtain
	\begin{equation}\label{GalTDer}
		\begin{split}
			&\int_\Omega \tder \left(\Sigma u\right)\varphi+\int_\Omega \mathbb S\cdot\nabla\varphi-\int_\Omega \Sigma u\otimes u\cdot\nabla\varphi -\mathcal P_\delta\dvr \varphi\\
			&+\varepsilon \nabla \Sigma \nabla u\cdot\varphi=0\text{ a.e. in }(0,T_n)\text{ for all }\varphi\in X_n.
		\end{split}
	\end{equation}
	Setting $\varphi=u(t)$ in the latter identity for fixed $t\in(0,T_n)$ yields
	\begin{equation}\label{MomTest}
		\begin{split}
			0=&\frac{1}{2}\frac{\mathrm d}{\dt}\int_\Omega \Sigma |u|^2+\frac{1}{2}\int_\Omega \tder \Sigma|u|^2+\int_\Omega \mathbb S\cdot\nabla u+\frac{1}{2}\int_\Omega\dvr\left(\Sigma u\right) |u|^2 -\int_\Omega\mathcal P_\delta\dvr u-\frac{\varepsilon}{2} \int_\Omega\Delta \Sigma|u|^2\\
			=&\frac{1}{2}\frac{\mathrm d}{\dt}\int_\Omega \Sigma |u|^2+\int_\Omega \mathbb S\cdot\nabla u-\int_\Omega\mathcal P_\delta\dvr u.
		\end{split}
	\end{equation}
	thanks to the regularized continuity equation solved by $\Sigma$.
	
	We multiply the regularized continuity equation for $\mathfrak r$ on $\partial_r h_\delta$, see \eqref{HDDef} for the definition of $h_\delta$, the regularized continuity equation for $\mathfrak z$ on $\partial_z h_\delta$ respectively, and sum the resulting identities to obtain
	\begin{equation*}
		\tder h_\delta(\mathfrak r,\mathfrak z) + u\cdot\nabla h_\delta(\mathfrak r,\mathfrak z)+\left(\mathfrak r\partial_rh_\delta(\mathfrak r,\mathfrak z)  +\mathfrak z\partial_z h_\delta(\mathfrak r,\mathfrak z)\right)\dvr u  -\varepsilon \left(\partial_rh_\delta(\mathfrak r,\mathfrak z)\Delta\mathfrak r+\partial_zh_\delta(\mathfrak r,\mathfrak z)\Delta\mathfrak z\right)=0 \text{ in }Q_T. 
	\end{equation*}
	Integrating the latter identity over $\Omega$ for an arbitrary but fixed $t\in(0,T_n)$ yields
	\begin{equation}\label{IntHDExp}
		\begin{split}
			&\frac{\mathrm d}{\dt}\int_\Omega h_\delta(\mathfrak r,\mathfrak z) +\delta\int_\Omega \left(\mathfrak r^\Gamma+\mathfrak z^\Gamma+\mathfrak r^2+\mathfrak z^2\right)\dvr u+\varepsilon\int_\Omega\left(\partial^2_{rr}h_\delta(\mathfrak r,\mathfrak z)|\nabla\mathfrak r|^2+\partial^2_{zz}h_\delta(\mathfrak r,\mathfrak z)|\nabla\mathfrak z|^2\right)=0, 
		\end{split}
	\end{equation}
	where we also employed the integration by parts and the boundary conditions for $\mathfrak r$, $\mathfrak z$ and $u$. Summing \eqref{MomTest} and \eqref{IntHDExp}, taking into consideration the definition of $\mathcal P_\delta$ in \eqref{PDDef}, we arrive at
	\begin{equation}\label{AUId1}
		\begin{split}
			\frac{\mathrm d}{\dt}&\int_\Omega \left(\frac{1}{2}\Sigma |u|^2+h_\delta(\mathfrak r,\mathfrak z)\right)+\int_\Omega \mathbb S\cdot\nabla u-\int_\Omega\mathcal P\dvr u+\varepsilon\int_\Omega\left(\partial^2_{rr}h_\delta(\mathfrak r,\mathfrak z)|\nabla\mathfrak r|^2 z+\partial^2_{zz}h_\delta(\mathfrak r,\mathfrak z)|\nabla\mathfrak z|^2\right)=0.
		\end{split}
	\end{equation}
	We integrate \eqref{IntEnEq} over $\Omega$, apply the divergence theorem, \eqref{IENeum} and \eqref{KappaDDef}. Adding the result to \eqref{AUId1} and integrating over $(0,t)$ for an arbitrary but fixed $t\in(0,T_n)$ we get
	\begin{equation}\label{AuxIdEn}
		\begin{split}
			&\int_\Omega\left(\frac{1}{2}\Sigma|u|^2+\mathcal E_\delta(\mathfrak r,\mathfrak z,\vartheta)+h_\delta(\mathfrak r,\mathfrak z)\right)(t)\\
			&=\int_\Omega\left(\frac{1}{2}\Sigma_{0,\delta} |u_{0,\delta,n}|^2+\mathcal E_\delta(\mathfrak r_{0,\delta},\mathfrak z_{0,\delta},\vartheta_{0,\delta})+h_\delta(\mathfrak r_{0,\delta},\mathfrak z_{0,\delta})\right)+\int_0^t\int_\Omega\left( \frac{\delta}{\vartheta^2}-\varepsilon\vartheta^{\Lambda+1}\right).
		\end{split}
	\end{equation}
	
	Integrating \eqref{SDEq} over $Q_t$, multiplying this identity on an arbitrary positive constant $\Theta$ and subtracting the resulting identity from \eqref{AuxIdEn} we get
	\begin{equation}\label{EnBalG}
		\begin{split}
			&\int_\Omega\left(\frac{1}{2}\Sigma|u|^2+ H_{\delta,\Theta}\left(\mathfrak r,\mathfrak z,\vartheta\right)+h_\delta(\mathfrak r,\mathfrak z)\right)(t)\\
			&\quad+\Theta\int_0^t\int_\Omega\left(\frac{1}{\vartheta}\mathbb S\cdot\nabla u+\frac{\kappa_\delta(\theta)}{\vartheta^2}|\nabla\vartheta|^2+\frac{\delta}{\vartheta^3}+\frac{\varepsilon}{\vartheta}\left(\partial^2_{rr}h_\delta(\mathfrak r,\mathfrak z)|\nabla\mathfrak r|^2+\partial^2_{zz}h_\delta(\mathfrak r,\mathfrak z)|\nabla\mathfrak z|^2\right)\right)\\
			&\quad+\varepsilon\int_0^t\int_\Omega\vartheta^{\Lambda+1}\\
			&=\int_\Omega \left(\frac{1}{2}\Sigma u_{0,\delta}|u_{0,\delta,n}|^2+ H_{\delta,\Theta}\left(\mathfrak r_{0,\delta},\mathfrak z_{0,\delta},\vartheta_{0,\delta}\right)\right)+\int_\Omega h_\delta(\mathfrak r_{0,\delta},\mathfrak z_{0,\delta})+\int_0^t\int_\Omega \left(\frac{\delta}{\vartheta^2}+\varepsilon\Theta\vartheta^{\Lambda}\right)\\
			&\quad-\varepsilon\Theta\int_0^t\int_\Omega\frac{1}{\vartheta}\left(\Delta \mathfrak r\left(\vartheta\mathsf{ s}_{+,\delta}\left(\mathfrak r,\vartheta\right)-\mathsf{e}_{+,\delta}\left(\mathfrak r,\vartheta\right)-\frac{\mathsf P_+\left(\mathfrak r,\vartheta\right)}{\mathfrak r}\right)+\Delta \mathfrak z\left(\vartheta\mathsf{ s}_{-,\delta}\left(\mathfrak z,\vartheta\right)-\mathsf{e}_{-,\delta}\left(\mathfrak z,\vartheta\right)-\frac{\mathsf P_-\left(\mathfrak z,\vartheta\right)}{\mathfrak z}\right)\right),
		\end{split}
	\end{equation} 
	where 
	\begin{equation}\label{HelmDDef}
		\begin{split}
			H_{\delta,\Theta}(r,z,\vartheta)=&H_{\Theta}(r,z,\vartheta)+\delta (r+z) \left(\vartheta-\Theta\log\vartheta\right),\\
			H_{\Theta}(r,z,\vartheta)=&b\vartheta^4-\Theta\frac{4b}{3}\vartheta^3+r\mathsf e_{+}(r,\vartheta)-\Theta r\mathsf s_{+}(r,\vartheta)+z\mathsf e_{-}(z,\vartheta)-\Theta z\mathsf s_{-}(z,\vartheta).
		\end{split}
	\end{equation}
	Further we denote by $\mathcal I$ the integral multiplied on the factor $-\varepsilon\Theta$ in \eqref{EnBalG}. Let us focus our attention on $\mathcal I$. The relations from \eqref{GibbsT} imply
	\begin{equation*}
		\begin{split}
			&\nabla\left(\frac{1}{\vartheta}\left(\vartheta\mathsf s_{\pm,\delta}( r,\vartheta)-\mathsf e_{\pm,\delta}(r,\vartheta)-\frac{\mathsf P_\pm(r,\vartheta)}{r}\right)\right)\\
			&=\frac{1}{\vartheta^2}\left(\mathsf e_{\pm,\delta}(r,\vartheta)+ r\partial_r\mathsf e_\pm( r,\vartheta)\right)\nabla\vartheta-\frac{\partial_r\mathsf P_\pm( r,\vartheta)}{r\vartheta}\nabla r.
		\end{split}
	\end{equation*}
	Hence integrating by parts we obtain 
	\begin{equation*}
		\begin{split}
			\mathcal I=&\int_0^t\int_\Omega\frac{1}{\vartheta^2}\left(\mathsf e_{+,\delta}(\mathfrak r,\vartheta)+\mathfrak r\partial_r\mathsf e_+(\mathfrak r,\vartheta)\right)\nabla\vartheta\cdot\nabla\mathfrak r-\int_0^t\int_\Omega \frac{\partial_r\mathsf P_+(\mathfrak r,\vartheta)}{\mathfrak r\vartheta}|\nabla\mathfrak r|^2\\
			&+\int_0^t\int_\Omega\frac{1}{\vartheta^2}\left(\mathsf e_{-,\delta}(\mathfrak z,\vartheta)+\mathfrak z\partial_r\mathsf e_-(\mathfrak z,\vartheta)\right)\nabla\vartheta\cdot\nabla\mathfrak z-\int_0^t\int_\Omega \frac{\partial_r\mathsf P_-(\mathfrak z,\vartheta)}{\mathfrak z\vartheta}|\nabla\mathfrak z|^2=\sum_{i=1}^4 J_i.
		\end{split}
	\end{equation*}
	Obviously, the terms $J_2$ and $J_4$ are nonpositive. When estimating $J_1$ and $J_4$ we employ the hypotheses in \eqref{IntEnGrowth} and \eqref{EneRVar} and the Young inequality to conclude
	\begin{equation*}
		\begin{split}
			&\frac{1}{\vartheta^2}|(\mathsf e_\pm(r,\vartheta)+r\partial_r \mathsf e_\pm(r,\vartheta))\nabla\vartheta\cdot\nabla r|\leq c\frac{r^{\gamma_\pm-1}+d_\pm^e\vartheta^{\omega^e_\pm}+(r^{\underline\Gamma^e-1}+r^{\overline\Gamma^e-1})(1+\overline d^e_\pm\vartheta)^{\overline\omega^e}}{\vartheta^2}|\nabla r||\nabla\vartheta|\\
			&\leq \frac{\beta c(\Gamma)}{\vartheta}\left(\Gamma r^{\Gamma-2}+2\right)|\nabla r|^2+  c(\beta^{-1},\Gamma)\left(\vartheta^{\Gamma-2}+\frac{1}{\vartheta^3}\right)|\nabla\vartheta|^2
		\end{split}
	\end{equation*}
	with an arbitrary $\beta>0$ provided $\Gamma>\max\{2(\gamma_\pm-1),2\omega^e_\pm-1,2\overline\omega^e-1,\underline\Gamma^e+1,\overline\Gamma^e+1\}$.
	Having the estimates of the terms $J_1$ and $J_3$ at hand we choose first $\beta$ sufficiently small and subsequently $\varepsilon$ sufficiently small as well to obtain
	\begin{equation}\label{J1J3Est}
		\varepsilon\left(J_1+J_3\right)\leq \frac{1}{2}\int_0^t\int_\Omega \left(\frac{\varepsilon\delta}{\vartheta}(\Gamma \mathfrak r^{\Gamma-2}+2)|\nabla \mathfrak r|^2+\frac{\varepsilon\delta}{\vartheta}(\Gamma \mathfrak z^{\Gamma-2}+2)|\nabla \mathfrak z|^2\right)+\frac{\delta}{4}\int_0^t\int_\Omega|\nabla\vartheta|^2\left(\frac{1}{\vartheta}+\frac{1}{\vartheta^3}\right).
	\end{equation}
	Applying the Young inequality in the third integral on the right hand side of \eqref{EnBalG} we get
	\begin{equation*}
		\int_0^t\int_\Omega \left(\frac{\delta}{\vartheta^2}+\Theta\varepsilon\vartheta^\Lambda\right)\leq \frac{\delta\Theta}{2}\int_0^t\int_\Omega\frac{1}{\vartheta^3}+\frac{\varepsilon}{2}\int_0^t\int_\Omega\vartheta^{\Lambda+1}+cT|\Omega|\left(\delta\Theta^{-2}+\varepsilon\Theta^\Lambda\right)
	\end{equation*} 
	as $T_n\leq T$. Going back to \eqref{EnBalG} we immediately get for $t\in(0,T_n)$
	\begin{equation}\label{AuxEnIn}
		\begin{split}
			&\int_\Omega\left(\frac{1}{2}\Sigma|u|^2+ H_{\delta,\Theta}\left(\mathfrak r,\mathfrak z, \vartheta\right)+h_\delta(\mathfrak r,\mathfrak z)\right)(t)+\frac{\Theta}{4}\int_0^t\int_\Omega \sigma_{\varepsilon,\delta}+\frac{\varepsilon}{2}\int_0^t\int_\Omega\vartheta^{\Lambda+1}\\
			&\leq\int_\Omega \left(\frac{1}{2}\Sigma _{0,\delta}|u_{0,\delta,n}|^2+ H_{\delta,\Theta}\left(\mathfrak r_{0,\delta},\mathfrak z_{0,\delta},\vartheta_{0,\delta}\right)\right)+\int_\Omega h_\delta(\mathfrak r_{0,\delta},\mathfrak z_{0,\delta})\\
			&\quad+cT|\Omega|\left(\delta\Theta^{-2}+\varepsilon\Theta^\Lambda\right),
		\end{split}
	\end{equation} 
	where 
	\begin{equation}\label{SEDDef}
		\begin{split}
			\sigma_{\varepsilon,\delta}=&\left(\frac{1}{\vartheta}\mathbb S\cdot\nabla u+\frac{\kappa_\delta(\theta)}{\vartheta^2}|\nabla\vartheta|^2+\frac{\delta}{\vartheta^3}+\frac{\varepsilon}{\vartheta}\left(\partial^2_{rr}h_\delta(\mathfrak r,\mathfrak z)|\nabla\mathfrak r|^2+\partial^2_{zz}h_\delta(\mathfrak r,\mathfrak z)|\nabla\mathfrak z|^2\right)\right.\\
			&\left.+\frac{\partial_r\mathsf P_+(\mathfrak r,\vartheta)}{\mathfrak r\vartheta}|\nabla\mathfrak r|^2+\frac{\partial_r\mathsf P_-(\mathfrak z,\vartheta)}{\mathfrak z\vartheta}|\nabla\mathfrak z|^2\right).
		\end{split}
	\end{equation}
	Next, we focus on properties of the function $H_{\delta,\Theta}$ that help us to conclude suitable estimates on the norm of the velocity. The  differentiability of $\mathsf e_{\pm}$ and $\mathsf s_{\pm}$ allows us to compute
	\begin{equation}\label{HDer}
		\begin{split}
			\partial_r H_{\delta,\Theta}(r,z,\Theta)&=\mathsf{e}_{+,\delta}(r,\Theta)-\Theta\mathsf s_{+,\delta}(r,\Theta)+r\left(\partial_r\mathsf e_{+,\delta}(r,\Theta)-\Theta\partial_r\mathsf s_{+,\delta}(r,\Theta)\right),\\
			\partial_z H_{\delta,\Theta}(r,z,\Theta)&=\mathsf{e}_{-,\delta}(z,\Theta)-\Theta\mathsf s_{-,\delta}(z,\Theta)+z\left(\partial_z\mathsf e_{-,\delta}(z,\Theta)-\Theta\partial_z\mathsf s_{-,\delta}(z,\Theta)\right),\\
			\partial^2_{rr}H_{\delta,\Theta}(r,z,\Theta)&=\frac{1}{r}\partial_r \mathsf P_\pm(r,\Theta),\\
			\partial^2_{zz}H_{\delta,\Theta}(r,z,\Theta)&=\frac{1}{z}\partial_z \mathsf P_-(z,\Theta),\\
			\partial_\vartheta H_{\delta,\Theta}(r,z,\vartheta)&=4b\vartheta^2(\vartheta-\Theta)+\frac{r}{\vartheta}(\vartheta-\Theta)\partial_\vartheta\mathsf e_+(r,\vartheta)+\frac{z}{\vartheta}(\vartheta-\Theta)\partial_\vartheta\mathsf e_-(z,\vartheta)+\delta\frac{r}{\vartheta}\left(\vartheta-\Theta\right),
		\end{split}
	\end{equation}
	where we also used \eqref{GibbsT}. We point out that \eqref{HDer}$_{3,4}$, \eqref{ThStab} and also the obvious fact that $\partial^2_{rz}H_{\delta,\Theta}=\partial^2_{zr}H_{\delta,\Theta}=0$ imply that the function $(r,z)\mapsto H_{\delta,\Theta}(r,z,\Theta)$ is strictly convex. Consequently, we have
	\begin{equation}\label{HThBBound}
		H_{\delta,\Theta}(r,z,\Theta)-\partial_r H_{\delta,\Theta}(\overline r,\overline z,\Theta)(r-\overline r)-\partial_z H_{\delta,\Theta}(\overline r,\overline z,\Theta)(z-\overline z)-H_{\delta,\Theta}(\overline r,\overline z,\Theta)\geq 0
	\end{equation}
	for any $r,z\in (0,\infty)^2\setminus\{(\overline r,\overline z)\}$ with arbitrary $\overline r$, $\overline z$ and $\Theta$ positive.
	
	Thanks to the assumption on $\partial_\vartheta\mathsf e_\pm$ following from \eqref{ThStab} we deduce that for fixed $r$ and $z$ the function $\vartheta\mapsto H_{\delta,\Theta}$ is decreasing for $\vartheta<\Theta$, increasing for $\vartheta>\Theta$ and attains its global minimum at $\vartheta=\Theta$, in other words we have for fixed $r$, $z$ that
	\begin{equation}\label{HMin}
		H_{\delta,\Theta}(r,z,\vartheta)\geq H_{\delta,\Theta}(r,z,\Theta).
	\end{equation}
	Subtracting 
	\begin{equation*}
		\int_\Omega \left(\partial_r H_{\delta,\Theta}(\overline{\mathfrak r},\overline{\mathfrak z},\Theta)(\mathfrak r-\overline{\mathfrak r})+\partial_z H_{\delta,\Theta}(\overline{\mathfrak r},\overline{\mathfrak z},\Theta)(\mathfrak z-\overline{\mathfrak z})\right)(t) +\int_\Omega H_{\delta,\Theta}(\overline{\mathfrak r},\overline{\mathfrak z},\Theta)
	\end{equation*} 
	with $\overline {\mathfrak r}=\frac{\int_\Omega\mathfrak {r}_{0,\delta}}{|\Omega|}$, $\overline{\mathfrak z}=\frac{\int_\Omega\mathfrak {z}_{0,\delta}}{|\Omega|}$
	from the both sides of \eqref{AuxEnIn} yields
	\begin{equation}\label{EnIneqHDer}
		\begin{split}
			&\int_\Omega\Bigl(\frac{1}{2}\Sigma|u|^2+ H_{\delta,\Theta}\left(\mathfrak r,\mathfrak z,\vartheta\right)-\partial_r H_{\delta,\Theta}(\overline{\mathfrak r},\overline{\mathfrak z},\Theta)(\mathfrak r-\overline{\mathfrak r})-\partial_z H_{\delta,\Theta}(\overline{\mathfrak r},\overline{\mathfrak z},\Theta)(\mathfrak z-\overline{\mathfrak z})-H_{\delta,\Theta}(\overline{\mathfrak r},\overline{\mathfrak z},\Theta)\\
			&\quad+h_\delta(\mathfrak r,\mathfrak z)\Bigr)(t)+\frac{\Theta}{4}\int_0^t\int_\Omega \sigma_{\varepsilon,\delta}+\frac{\varepsilon}{2}\int_0^t\int_\Omega\vartheta^{\Lambda+1}\\
			&\leq\int_\Omega \left(\frac{1}{2}\Sigma_{0,\delta}|u_{0,\delta,n}|^2+ H_{\delta,\Theta}\left(\mathfrak r_{0,\delta},\mathfrak z_{0,\delta},\vartheta_{0,\delta}\right)\right)+\int_\Omega h_\delta(\mathfrak r_{0,\delta},\mathfrak z_{0,\delta})+cT|\Omega|(\delta\Theta^{-2}+\varepsilon\Theta^\Lambda)\\
			&\quad-\int_\Omega \left(H_{\delta,\Theta}(\overline{\mathfrak r},\overline{\mathfrak z},\Theta)\right),
		\end{split}
	\end{equation}
	as $\int_\Omega r(t)=\int_\Omega r_0$ for any $t\in(0,T)$ valid for $(r,r_0)=(\mathfrak r,\mathfrak r_{0,\delta})$ and $(r,r_0)=(\mathfrak z,\mathfrak z_{0,\delta})$ as well. The first term on the right hand side is bounded independently of $n$ since by the definition of $u_{0,\delta,n}$ in \eqref{UDNDef}, the continuity of $P_n$ and the definition of $(\Sigma u)_{0,\delta}$ in \eqref{SUZDDef} we have
	\begin{equation*}
		\int_\Omega \Sigma_{0,\delta}|u_{0,\delta,n}|^2\leq \sup_\Omega\Sigma_{0,\delta}\int_\Omega \frac{|(\Sigma u)_{0,\delta}|^2}{\Sigma_{0,\delta}^2}\leq\frac{\sup_\Omega\Sigma_{0,\delta}}{\inf_\Omega\Sigma_{0,\delta}}\int_{\Omega}\frac{|(\Sigma u)_{0}|^2}{\Sigma_0}.
	\end{equation*}
	The last term on the right hand side of \eqref{EnIneqHDer} is just a constant depending on $\overline{\mathfrak r}, \overline{\mathfrak z},\Theta$, $|\Omega|$ and $\delta$.  Combining \eqref{HThBBound} and \eqref{HMin} we conclude that 
	\begin{equation*}
		H_{\delta,\Theta}\left(\mathfrak r,\mathfrak z,\vartheta\right)-\partial_r H_{\delta,\Theta}(\overline{\mathfrak r},\overline{\mathfrak z},\Theta)(\mathfrak r-\overline{\mathfrak r})-\partial_z H_{\delta,\Theta}(\overline{\mathfrak r},\overline{\mathfrak z},\Theta)(\mathfrak z-\overline{\mathfrak z})\geq 0.
	\end{equation*}
	Accordingly, we deduce from \eqref{EnIneqHDer}
	\begin{equation}\label{NonnegKE}
		\sup_{t\in[0,T_n]}\int_{\Omega}\Sigma|u|^2\leq c.
	\end{equation}
	Hence as $\inf_{[0,T]\times\overline\Omega} \Sigma>0$, cf. Lemma~\ref{Lem:ContApp}, we infer 
	\begin{equation*}
		\sup_{t\in[0,T_n]}\|u\|_{X_n}\leq c
	\end{equation*}
	with the constant $c$ depending on the data, $\varepsilon$, $\delta$, $\Theta$. Therefore we can set $T_n=T$ for each $n\in\eN$.

	\subsubsection{Faedo--Galerkin limit}\label{Sec:NPass}
	
	We begin with the specification of the system of equations and its solution that is obtained after the limit passage $n\to\infty$ in \eqref{GalE1}, \eqref{IntEnEq} and \eqref{RReg}. Namely, the limit sixtet $(\xi,\Sigma,\mathfrak r,\mathfrak z,\vartheta,u)$ possesses the regularity
	\begin{equation}\label{EDSolReg}
            \begin{gathered}
			\xi,\Sigma,\mathfrak r,\mathfrak z\geq 0\text{ in }Q_T,\\ 
			\Sigma, \mathfrak r,\mathfrak z\in W^{1,1}(0,T;L^1(\Omega))\cap L^1(0,T;W^{2,1}(\Omega)),\ \Sigma\in L^2(0,T;W^{1,2}(\Omega)),\ u\in L^2(0,T;W^{1,2}(\Omega)),\\
			\vartheta\in L^\infty(0,T;L^4(\Omega)), \vartheta^{-1}\in L^3(0,T;L^3(\Omega)), \vartheta\in L^2(0,T;W^{1,2}(\Omega)),\\
			\mathcal S_\delta(\mathfrak r,\mathfrak z,\vartheta)\in L^1(\Omega).
            \end{gathered}
		\end{equation}
	$r\in\{\xi,\Sigma,\mathfrak r,\mathfrak z\}$ satisfies the aproximate continuity equation
	\begin{equation}\label{ApCE}
		\begin{split}
			\tder r+\dvr (ru)-\varepsilon\Delta r&=0\text{ a.e. in }Q_T,\\
			\nabla r\cdot n&=0\text{ on }(0,T)\times\partial\Omega,\\
			r(0)&=r_{0,\delta}.
		\end{split}
	\end{equation}
	The approximate balance of momentum equation is satisfied in the form
	\begin{equation}\label{ABM}
		\begin{split}
			&\int_\Omega \Sigma u\cdot\varphi(t)-\int_\Omega (\Sigma u)_{0,\delta}\cdot\varphi\\&=\int_0^t\int_\Omega \left(\Sigma u\cdot\tder\varphi+\Sigma(u\otimes u)\cdot\nabla\varphi-\mathbb S(\vartheta,\nabla u)\cdot\nabla\varphi+\mathcal  P_\delta(\mathfrak r,\mathfrak z,\vartheta)\dvr\varphi-\varepsilon\nabla\Sigma\nabla u\cdot\varphi\right)
		\end{split}
	\end{equation}
	for any $t\in[0,T]$ and any $\varphi\in C^1([0,T]\times\overline\Omega)$ with $\varphi\cdot n=0$ on $(0,T)\times\partial\Omega$ in the case of the complete slip boundary conditions or $\varphi=0$ on $(0,T)\times\partial\Omega$ in the case of the no--slip boundary conditions. The expressions $\mathbb S$ and $\mathcal P_\delta$ are defined in \eqref{STensDef}, \eqref{PDDef} respectively.
	
	The approximate entropy inequality is satisfied in the form
	\begin{equation}\label{AEI}
		\begin{split}
			&\int_\Omega \mathcal S_\delta(\mathfrak r,\mathfrak z,\vartheta)\phi(t)-\int_\Omega \mathcal S_\delta(\mathfrak r_0,\mathfrak z_0,\vartheta_0)\phi(0)+\int_0^t\int_\Omega\left(\frac{\kappa_\delta(\vartheta)}{\vartheta}\nabla\vartheta+\varepsilon R_\delta\right)\cdot\nabla\phi+\int_0^t\int_\Omega\sigma_{\varepsilon,\delta}\phi\\
			& \leq\int_0^t\int_\Omega\mathcal S_\delta(\mathfrak r,\mathfrak z,\vartheta)\left(\tder\phi+u\cdot\nabla\phi\right)+\varepsilon\int_0^t\int_\Omega\vartheta^\Lambda\phi
		\end{split}
	\end{equation}
	for almost all $t\in[0,T]$ and any $\phi\in C^1([0,T]\times\overline\Omega)$, $\phi\geq 0$, where $\mathcal S_\delta$ and $\kappa_\delta$ are defined in \eqref{TSDDef}$_2$, \eqref{KappaDDef} respectively, and
	\begin{equation}\label{SDEDef}
		\begin{split}
			\sigma_{\varepsilon,\delta}=&\frac{1}{\vartheta}\left(\mathbb S(\vartheta,\nabla u)\cdot\nabla u+\left(\frac{\kappa(\vartheta)}{\vartheta}+\frac{\delta}{2}\left(\vartheta^{\Gamma-1}+\frac{1}{\vartheta^2}\right)\right)|\nabla\vartheta|^2+\frac{\delta}{\vartheta^2}\right)\\
			&+\frac{\varepsilon}{\vartheta}\left(\partial^2_{rr}h_\delta(\mathfrak r,\mathfrak z)|\nabla\mathfrak r|^2+\partial^2_{zz}h_\delta(\mathfrak r,\mathfrak z)|\nabla\mathfrak z|^2\right)+\frac{\partial_r\mathsf P_+(\mathfrak r,\vartheta)}{\mathfrak r\vartheta}|\nabla \mathfrak r|^2+\frac{\partial_r\mathsf P_-(\mathfrak z,\vartheta)}{\mathfrak z\vartheta}|\nabla \mathfrak z|^2,\\
			R_\delta=&-\left(\vartheta\mathsf s_{+,\delta}(\mathfrak r,\vartheta) -\mathsf e_{+,\delta}-\frac{\mathsf P_+(\mathfrak r,\vartheta)}{\mathfrak r}\right)\frac{\nabla\mathfrak r}{\vartheta}-\left(\vartheta\mathsf s_{-,\delta}(\mathfrak z,\vartheta) -\mathsf e_{-,\delta}-\frac{\mathsf P_-(\mathfrak z,\vartheta)}{\mathfrak z}\right)\frac{\nabla\mathfrak z
			}{\vartheta}
		\end{split}
	\end{equation}
	The approximate energy balance
	\begin{equation}\label{AEB}
		\begin{split}
			&\int_\Omega\left(\frac{1}{2}\Sigma|u|^2+\mathcal E_\delta(\mathfrak r,\mathfrak z,\vartheta)+h_\delta(\mathfrak r,\mathfrak z)\right)(t)\\&=\int_{\Omega}\left(\frac{1}{2}\frac{|(\Sigma u)_{0}|^2}{\Sigma_{0,\delta}}+\mathcal E_\delta(\mathfrak r_{0,\delta},\mathfrak z_{0,\delta},\vartheta_{0,\delta})+h_\delta(\mathfrak r_{0,\delta},\mathfrak z_{0,\delta})\right)\\
			&\quad+\int_0^t\int_\Omega \left(\frac{\delta}{\vartheta^2}-\varepsilon\vartheta^\Lambda\right)\text{ holds for a. a. }t\in[0,T],
		\end{split}
	\end{equation}
	where $\mathcal E_\delta$ is defined in \eqref{MEDDef}.
	
	\subsubsection{Estimates independent of the dimension of Faedo-Galerkin approximations}\label{SubSec:NEst}
	Let $\{\xi_n,\Sigma_n,\mathfrak r_n,\mathfrak z_n,\vartheta_n, u_n\}$ be a sequence of sixtets constructed in the previous subsections. Namely, let inequality 
	\begin{equation}\label{AuxEN}
		\begin{split}
			&\int_\Omega\left(\Sigma_n|u_n|^2+\mathcal E_\delta(\mathfrak r_n,\mathfrak z_n,\vartheta_n)+\Theta\left|\mathcal S_\delta(\mathfrak r_n,\mathfrak z_n,\vartheta_n)\right|+h_\delta(\mathfrak r_n,\mathfrak z_n)\right)(t)\\
			&\quad+\frac{\varepsilon}{2}\int_0^t\int_\Omega\vartheta_n^{\Lambda+1}+\frac{\Theta}{4}\int_0^t\int_\Omega \left(\frac{1}{\vartheta_n}\left(\mathbb S(\vartheta_n,\nabla u_n)\cdot\nabla u_n+\left(\frac{\kappa(\vartheta_n)}{\vartheta_n}+\delta\left(\vartheta_n^{\Gamma-1}+\frac{1}{\vartheta_n^2}\right)\right)|\nabla\vartheta_n|^2\right)+\frac{\delta}{\vartheta_n^3}\right.\\
			&\quad\left.+\frac{\varepsilon}{\vartheta}\left(\partial^2_{rr}h_\delta(\mathfrak r_n,\mathfrak z_n)|\nabla\mathfrak r_n|^2+\partial^2_{zz}h_\delta(\mathfrak r_n,\mathfrak z_n)|\nabla\mathfrak z_n|^2\right)+\frac{\partial_r\mathsf P_+(\mathfrak r_n,\vartheta_n)}{\mathfrak r\vartheta}|\nabla\mathfrak r_n|^2+\frac{\partial_r\mathsf P_-(\mathfrak z_n,\vartheta_n)}{\mathfrak z\vartheta}|\nabla\mathfrak z_n|^2\right)\\
			&\leq c(\text{data},T,|\Omega|,\varepsilon,\delta,\Theta,\overline{\mathfrak r},\overline{\mathfrak z})
		\end{split}
	\end{equation}
	hold for any $t\in(0,T)$. The latter inequality follows from \eqref{EnIneqHDer} combined with the ensuing assertion concerning the coercivity of the function $H_{\delta,\Theta}$ defined in \eqref{HelmDDef}$_1$, cf. \cite[p. 1536]{KNAC},
	\begin{equation}\label{HCoer}
		\begin{split}
			&\frac{1}{2}\left(\mathcal E_\delta(r,z,\vartheta) +\Theta|\mathcal S_\delta(r,z,\vartheta)|\right)\\
			&\leq \frac{3}{2} H_{\delta,\Theta}(r,z,\vartheta)+|\partial_r H_{\delta,2\Theta}(\overline r,\overline z,2\Theta)(r-\overline r)+\partial_z H_{\delta,2\Theta}(\overline r,\overline z,\vartheta)(z-\overline z)+H_{\delta,2\Theta}(\overline r,\overline z,2\theta)|\\
			&\text{ holds for any } (r,z,\vartheta)\in(0,\infty)^3.
		\end{split}
	\end{equation}
	Indeed, the latter inequality is obviously satisfied if $\mathcal S_\delta(r,z,\vartheta)<0$. Otherwise, we get that $2H_{\delta,\Theta}(r,z,\vartheta)=\mathcal E_\delta(r,z,\vartheta)+H_{\delta,2\Theta}(r,z,\vartheta)$, $H_{\delta,\Theta}(r,z,\vartheta)=\Theta\mathcal S_\delta(r,z,\vartheta)+H_{\delta,2\Theta}(r,z,\vartheta)$ and 
	\begin{equation*}
		H_{\delta,2\Theta}(r,z,\vartheta)-\partial_r H_{\delta,2\Theta}(\overline r,\overline z,2\Theta)(r-\overline r)-\partial_z H_{\delta,2\Theta}(\overline r,\overline z,2\Theta)(z-\overline r)-H_{\delta,2\Theta}(\overline r,\overline z,2\Theta)\geq 0,
	\end{equation*}
	from \eqref{HThBBound} and \eqref{HMin}. Hence
	\begin{align*}
		\mathcal E_\delta(r,z,\vartheta)+\Theta\mathcal S_\delta(r,z,\vartheta)\leq &3H_{\delta,\Theta}(r,z,\vartheta)+2\left(\partial_r H_{\delta,2\Theta}(\overline r,\overline z,2\Theta)(r-\overline r)+\partial_z H_{\delta,2\Theta}(\overline r,\overline z,2\Theta)(z-\overline r)\right.\\ &\left. +H_{\delta,2\Theta}(\overline r,\overline z,2\Theta)\right)
	\end{align*}
	implying \eqref{HCoer}.
	It follows from \eqref{PointEquiv} and \eqref{PointEqS} that
	\begin{equation}\label{NPointEqS}
		\begin{split}
			\underline a\mathfrak r^n\leq \mathfrak z^n\leq \overline a\mathfrak r^n,\\
			\underline b\Sigma_n\leq \mathfrak r_n+\mathfrak z_n\leq\overline b\Sigma_n ,\\
			\underline d\mathfrak r_n\leq \xi_n\leq\overline d\mathfrak r_n.
		\end{split}
	\end{equation}
	We now make a collection of estimates that are necessary for further analysis. We deduce directly from \eqref{AuxEN}
	\begin{equation}\label{NEst}
		\begin{split}
			\|\sqrt{\Sigma_n}u_n\|_{L^\infty(0,T;L^2(\Omega))}&\leq c,\\
			\|\mathfrak r_n\|_{L^\infty(0,T;L^\Gamma(\Omega))}+\|\mathfrak z_n\|_{L^\infty(0,T;L^\Gamma(\Omega))}&\leq c,\\
			\|\vartheta_n^{-1}\mathbb S(\vartheta_n,\nabla u_n)\cdot\nabla u_n\|_{L^1(Q_T)}&\leq c,\\
			\|\vartheta_n\|_{L^\infty(0,T;L^4(\Omega))}+\|\vartheta_n\|_{L^{\Lambda+1}(Q_T)}&\leq c,\\
			\|\vartheta_n^{-1}\|_{L^3(Q_T)}&\leq c,\\
			\|\nabla \vartheta^\frac{\Gamma}{2}_n\|_{L^2(0,T;L^2(\Omega))}+\|\nabla \vartheta^{-\frac{1}{2}}_n\|&\leq c,\\
			\|\sqrt{\kappa(\vartheta_n)}\vartheta_n^{-1}\nabla\vartheta_n\|_{L^2(0,T;L^2(\Omega))}&\leq c,\\
			\|\mathcal E_\delta(\mathfrak r_n,\mathfrak z_n,\vartheta_n)\|_{L^\infty(0,T;L^1(\Omega))}+\|\mathcal S_\delta(\mathfrak r_n,\mathfrak z_n,\vartheta_n)\|_{L^\infty(0,T;L^1(\Omega))}&\leq c.
		\end{split}
	\end{equation}
	Next, we make a list of estimates following from \eqref{NEst}. By \eqref{NEst}$_{1,2}$, and \eqref{NPointEqS}$_{2}$ it follows that
	\begin{equation}\label{SNUNEst}
		\|\Sigma_nu_n\|_{L^\infty(0,T;L^\frac{2\Gamma}{\Gamma+1}(\Omega))}\leq c.
	\end{equation} 
	Estimate \eqref{NEst}$_3$, the definition of $\mathbb S$ in \eqref{STensDef} and \eqref{MuGr} yield
	\begin{equation}\label{L2GrUnEst}
		\|\nabla u_n+(\nabla u_n)^\top-\frac{2}{3}\dvr u_n\mathbb I\|_{L^2(Q_T)}\leq c.
	\end{equation} 
	Then we apply Lemma~\ref{Lem:KornPoinc} to conclude
	\begin{equation}\label{GKPN}
		\|u_n(t)\|_{W^{1,2}(\Omega)}\leq c(\underline M, K, \Gamma)\|(\nabla u_n+(\nabla u_n)^\top-\frac{2}{3}\dvr u_n\mathbb I)(t)\|_{L^2(\Omega)}+\int_\Omega \Sigma_n|u_n|(t),
	\end{equation}
	provided $\int_\Omega \Sigma_n^\Gamma(t)\leq K$ and $\int_\Omega\Sigma_n(t)\geq \underline M>0$. Let us note that the existence of the constant $K$ follows by \eqref{NPointEqS}$_1$ and \eqref{NEst}$_2$. Moreover, we can set $\underline M=\int_\Omega\Sigma_{0,\delta}>0$ as $\int_\Omega\Sigma_n(t)=\int_\Omega\Sigma_{0,\delta}$ for any $t\in[0,T]$ since $\Sigma_n$ solves the regularized continuity equation. Employing estimates \eqref{NEst}$_{1,2}$, \eqref{L2GrUnEst} in \eqref{GKPN} we deduce
	\begin{equation}\label{UNBSEst}
		\|u_n\|_{L^2(0,T;W^{1,2}(\Omega))}\leq c
	\end{equation}
	and
	\begin{equation}\label{UNBSLEst}
		\|u_n\|_{L^2(0,T;L^6(\Omega))}\leq c
	\end{equation}
	by the Sobolev embedding $W^{1,2}(\Omega)\hookrightarrow L^6(\Omega)$.
	Moreover, by \eqref{NEst}$_7$ and \eqref{KGrowth} we conclude
	\begin{equation}\label{NLThetaNEst}
		\|\nabla \log\vartheta_n\|_{L^2(Q_T)}\leq c,
	\end{equation}
	which in a combination with \eqref{NEst}$_{6}$ yields
	\begin{equation*}
		\begin{split}
			&\|\nabla\vartheta_n\|_{L^2(0,T;L^2(\Omega))}\leq \|\nabla\vartheta_n\chi_{\{\vartheta_n\leq 1\}}\|_{L^2(0,T;L^2(\Omega))}+\|\nabla\vartheta_n\chi_{\{\vartheta_n> 1\}}\|_{L^2(0,T;L^2(\Omega))}\\
			&\leq \|\nabla\log\vartheta_n\|_{L^2(0,T;L^2(\Omega))}+\|\nabla\vartheta^\frac{\Gamma}{2}_n\|_{L^2(0,T;L^2(\Omega))}\leq c.
		\end{split}
	\end{equation*}
	Hence we get in the same manner
	\begin{equation*}
		\|\nabla \vartheta_n^\nu\|_{L^2(0,T;L^2(\Omega))}\leq c, \text{ for any }\nu\in \left[1,\frac{\Gamma}{2}\right].
	\end{equation*}
	Using Lemma~\ref{Lem:Poincare} and \eqref{NEst}$_4$ we obtain
	\begin{equation}\label{TNBEst}
		\|\vartheta^\nu_n\|_{L^2(0,T;W^{1,2}(\Omega))}\leq c\text{ for any }\nu\left[1,\frac{\Gamma}{2}\right].
	\end{equation}
	Accodingly, using the Sobolev embedding $W^{1,2}(\Omega)\hookrightarrow L^6(\Omega)$ we have
	\begin{equation}\label{TNGEst}
		\|\vartheta_n\|_{L^\Gamma(0,T;L^{3\Gamma}(\Omega))}\leq c,
	\end{equation}
	which combined with \eqref{NEst}$_5$ implies
	\begin{equation}\label{LTNEst}
		\|\log\vartheta_n\|_{L^q(Q_T)}\leq c\text{ for any }q\in[1,\infty).
	\end{equation}
	
	We return back to \eqref{AUId1}, where we integrate over $(0,T)$, use the fact that $\mathbb S(\vartheta_n,\nabla u_n)\cdot\nabla u_n\geq 0$ and estimate using \eqref{PressGrowth}, \eqref{NEst}$_2$, \eqref{TNGEst}
	\begin{equation*}
		\begin{split}	&\varepsilon\delta\int_0^T\int_\Omega\left((\Gamma\mathfrak r_n^{\Gamma-2}+2)|\nabla\mathfrak r_n|^2+(\Gamma\mathfrak z_n^{\Gamma-2}+2)|\nabla\mathfrak z_n|^2\right)\leq
			\left|\int_0^T\int_\Omega\mathcal P(\mathfrak r_n,\mathfrak z_n,\vartheta_n)\dvr u_n\right|\\
            &\quad+\int_\Omega\left(\frac{1}{2}\Sigma_{0,\delta}|u_{0,\delta,n}|^2+h_\delta(\mathfrak r_{0,\delta},\mathfrak z_{0,\delta})\right)\\
			&\leq c\int_0^T\int_\Omega(\mathfrak r_n^{\gamma_+}+\mathfrak z_n^{\gamma_-}+\mathfrak r_n^{\gamma^P_+}\vartheta_n^{\omega^P_+}+\mathfrak z_n^{\gamma^P_-}\vartheta_n^{\omega^P_-}+\vartheta_n^4)|\dvr u_n|+c\\
			&\leq c\left(1+\|\mathfrak r_n\|^{\gamma_+}_{L^\Gamma(Q_T)}+\|\mathfrak z_n\|^{\gamma_-}_{L^\Gamma(Q_T)}+(\|\mathfrak r_n\|^{\gamma^P_+}_{L^\Gamma(\Omega)}+\|\mathfrak z_n\|^{\gamma^P_-}_{L^\Gamma(\Omega)})\|\vartheta_n\|_{L^\Gamma(Q_T)}\right)\|\dvr u_n\|_{L^2(Q_T)}
		\end{split}
	\end{equation*}
	provided $\Gamma>\max\{2\gamma_\pm, 2(\gamma^P_\pm+\omega^P_\pm)\}$.
	As the right hand side of the latter inequality is bounded independently of $n$ due to \eqref{NEst}$_2$ and \eqref{UNBSEst}, we conclude 
	\begin{equation}\label{DensNEst}
		\|\mathfrak r_n\|_{L^2(0,T;W^{1,2}(\Omega))}+\|\mathfrak r^\frac{\Gamma}{2}_n\|_{L^2(0,T;W^{1,2}(\Omega))}+\|\mathfrak z_n\|_{L^2(0,T;W^{1,2}(\Omega))}+\|\mathfrak z^\frac{\Gamma}{2}_n\|_{L^2(0,T;W^{1,2}(\Omega))}\leq c.
	\end{equation}
	Moreover, we have for $t\in(0,T)$ and $r_n=\xi_n$ or $r_n=\Sigma_n$ from \eqref{RReg}
	\begin{equation*}
		\frac{1}{2}\|r_n(t)\|^2+\varepsilon\|\nabla r_n\|_{L^2(Q_T)}=\frac{1}{2}\|r_{0,\delta}\|^2_{L^2(\Omega)}-\frac{1}{2}\int_0^t\int_\Omega r_n^2\dvr u_n.
	\end{equation*}
	Hence by \eqref{NPointEqS}$_2$, \eqref{NEst}$_{2}$ and \eqref{UNBSEst} we conclude 
	\begin{equation}\label{XSNEst}
		\|\xi_n\|_{L^2(0,T;W^{1,2}(\Omega))}+\|\Sigma_n\|_{L^2(0,T;W^{1,2}(\Omega))}\leq c.
	\end{equation}
	In order to obtain further estimates on the sequences $\{\mathfrak r_n\}$, $\{\mathfrak z_n\}$, $\{\Sigma_n\}$ and $\{\xi_n\}$ we exploit the $L^p$--$L^q$ theory for parabolic equations. Indeed, we can rewrite \eqref{RReg} with $(r,u)=(r_n,u_n)$, where $r_n=\mathfrak r_n$ or $r_n=\mathfrak z_n$ or $r_n=\Sigma_n$ in the form
	\begin{equation}\label{RNPE}
		\tder r_n-\varepsilon\Delta r_n=-\left(\nabla r_n\cdot u_n+r_n\dvr u_n\right)\text { in }Q_T.
	\end{equation}
	The up to now available estimates, namely \eqref{UNBSLEst} and \eqref{DensNEst} imply
	\begin{equation}\label{TTL1B}
		\|\nabla r_n\cdot u_n\|_{L^1(0,T;L^\frac{3}{2}(\Omega))}\leq c.
	\end{equation}
	Hence the right hand side of \eqref{RNPE} is bounded in $L^1(0,T;L^\frac{3}{2}(\Omega))$ and an additional bound with a better time integrability is necessary.
	To this end we derive the estimate
	\begin{equation}\label{NSqrRNEst}
		\varepsilon\int_0^T\int_\Omega\frac{|\nabla r_n|^2}{r_n}\leq c,\text{ for }r_n=\mathfrak r_n\text{ or }r_n=\mathfrak z_n\text{ or }r_n=\Sigma_n.
	\end{equation}
	
	Multiplying \eqref{RReg} for $(r,u)=(r_n,u_n)$ on $G'(r_n)$ for some differentiable function $G$, integrating over $Q_t$ for a fixed  $t\in[0,T]$ and performing the integration by parts we obtain 
	\begin{equation}\label{CERegRen}
		\int_\Omega G(r_n)(t)-\int_{\Omega} G(r_{0,\delta})+\int_0^t\varepsilon\int_\Omega G''(r_n)|\nabla r_n|^2=\int_0^t\int_\Omega\left(G(r_n)-G'(r_n)r_n\right)\dvr u_n\text{ for all }t\in[0,T].
	\end{equation}
	Hence setting $G(r_n)=r_n\log r_n$ in the latter identity we conclude \eqref{NSqrRNEst}.
	
	Estimates \eqref{NPointEqS}, \eqref{NEst}$_1$ and \eqref{NSqrRNEst} yield
	\begin{equation}\label{SL1B}
		\|\nabla r_n\cdot u_n\|_{L^2(0,T;L^1(\Omega))}\leq \left\|\frac{\nabla r_n}{\sqrt{r_n}}\right\|_{L^2(0,T;L^2(\Omega))}\|\sqrt{r_n}u_n\|_{L^\infty(0,T;L^2(\Omega))}\leq c\text{ for } r_n=\Sigma_n\text{ or }r_n=\mathfrak r_n\text{ or }r_n=\mathfrak z_n.
	\end{equation}
	Interpolating \eqref{TTL1B} and \eqref{SL1B} we conclude 
	\begin{equation}\label{NRNUNEst}
		\|\nabla r_n\cdot u_n\|_{L^{q(p)}(0,T;L^p(\Omega))}\leq c, \text{ for any }p\in\left(1,\frac{3}{2}\right)\text{ and }q(p)\in(1,2).
	\end{equation}
	For $p$ and $q(p)$ from the latter estimates we deduce for the solution $r_n$ to \eqref{RNPE}
	\begin{equation}\label{TDerSecDerEst}
		\|\tder r_n\|_{L^{q(p)}(0,T;L^p(\Omega))}+\|\nabla^2 r_n\|_{L^{q(p)}(0,T;L^p(\Omega))}\leq c,
	\end{equation}
	using the maximal $L^p$--$L^q$ regularity theory for parabolic equations, see the summaries in \cite[Section 10.14]{FeNo09} or \cite[Section 7.6.1]{NovStr04} or more details in \cite[Chapter III]{Amann95}. 
	
	In order to proceed we collect several estimates concerning terms in the approximate entropy balance equation. To this end we rewrite \eqref{SDEq} in the form
	\begin{equation}\label{AEntDiv}
		\tder\mathcal S_\delta(\mathfrak r_n,\mathfrak z_n,\vartheta_n)+\dvr(R_n^{(1)})=R_n^{(2)}+R_n^{(3)}
	\end{equation}
	with
	\begin{equation}\label{RTerms}
		\begin{split}
			R_n^{(1)}=&\mathcal S_\delta(\mathfrak r_n,\mathfrak z_n,\vartheta_n)u_n-\frac{\kappa_\delta(\vartheta_n)}{\vartheta_n}\nabla\vartheta_n\\
			&-\varepsilon\left(\left(\vartheta_n\mathsf s_{+,\delta}(\mathfrak r_n,\vartheta_n)-\mathsf e_{+,\delta}(\mathfrak r_n,\vartheta_n)-\frac{\mathsf P_+(\mathfrak r_n,\vartheta_n)}{\mathfrak r_n}\right)\frac{\nabla \mathfrak r_n}{\vartheta_n}\right.\\
			&\left.+\left(\vartheta_n\mathsf s_{-,\delta}(\mathfrak z_n,\vartheta_n)-\mathsf e_{-,\delta}(\mathfrak z_n,\vartheta_n)-\frac{\mathsf P_-(\mathfrak z_n,\vartheta_n)}{\mathfrak z_n}\right)\frac{\nabla \mathfrak z_n}{\vartheta_n}\right),\\
			R^{(2)}_n=&\frac{1}{\vartheta_n}\left(\mathbb S(\vartheta_n,\nabla u_n)\cdot\nabla u_n+\left(\frac{\kappa(\vartheta_n)}{\vartheta_n}+\delta\left(\vartheta_n^{\Gamma-1}+\frac{1}{\vartheta^2_n}\right)\right)|\nabla\vartheta_n|^2+\frac{\delta}{\vartheta^2_n}\right)\\
			&+\frac{\varepsilon\delta}{\vartheta_n}\left(\left(\Gamma\mathfrak r_n^{\Gamma-2}+2\right)|\nabla\mathfrak r_n|^2\right)+\left(\left(\Gamma\mathfrak z_n^{\Gamma-2}+2\right)|\nabla\mathfrak z_n|^2\right)\\
			&+\varepsilon\left(\frac{\partial_r\mathsf P_+(\mathfrak r_n,\vartheta_n)}{\mathfrak r_n\vartheta_n}|\nabla\mathfrak r_n|^2+\frac{\partial_z\mathsf P_-(\mathfrak z_n,\vartheta_n)}{\mathfrak z_n\vartheta_n}|\nabla\mathfrak z_n|^2\right),\\
			R_n^{(3)}=&-\varepsilon\left(\mathsf e_{+,\delta}(\mathfrak r_n,\vartheta_n)+\mathfrak r_n\partial_r\mathsf e_{+}(\mathfrak r_n,\vartheta_n
			)\right)\frac{\nabla\mathfrak r_n\cdot\nabla\vartheta_n}{\vartheta_n^2}\\
			&-\varepsilon\left(\mathsf e_{-,\delta}(\mathfrak z_n,\vartheta_n)+\mathfrak z_n\partial_r\mathsf e_{-}(\mathfrak z_n,\vartheta_n
			)\right)\frac{\nabla\mathfrak z_n\cdot\nabla\vartheta_n}{\vartheta_n^2}-\varepsilon\vartheta_n^\Lambda.
		\end{split}
	\end{equation}
	We continue with estimates of terms appearing in $R_n^{(1)}$. To this end we use \eqref{SpEntStr} and \eqref{EntGr} to obtain
	\begin{equation*}
		\begin{split}
			&|\mathcal S_\delta (\mathfrak r_n,\mathfrak z_n,\vartheta_n)|\leq c(1+\vartheta^3+(\mathfrak r_n+\mathfrak z_n)|\log\vartheta_n|+\mathfrak r_n|\log \mathfrak r_n|+\mathfrak z_n|\log\mathfrak z_n|+\mathfrak r_n^{\gamma^s_+}+\mathfrak z_n^{\gamma^s_-}+ \mathfrak r_n\vartheta_n^{\omega^s_+}+\mathfrak z_n\vartheta_n^{\omega^s_-}).
		\end{split}
	\end{equation*}
	Hence by \eqref{NEst}$_2$ and \eqref{TNBEst} we conclude
	\begin{equation}\label{RNZNEntEst}
		\|\mathcal S_\delta (\mathfrak r_n,\mathfrak z_n,\vartheta_n)\|_{L^q(Q_T)}\leq c
	\end{equation}
	provided $q\leq\min\left\{\frac{\Gamma}{3},\frac{\Gamma}{\gamma^s_\pm},\frac{\Gamma}{1+\omega^s_\pm}\right\}$.
	Combining the latter bound and \eqref{UNBSLEst} we get for some $p>1$
	\begin{equation}\label{RNZNEntUNEst}
		\|\mathcal S_\delta (\mathfrak r_n,\mathfrak z_n,\vartheta_n)u_n\|_{L^p(Q_T)}\leq c
	\end{equation}
	provided $\Gamma>\max\{6,2\gamma^s_\pm,2(1+\omega^s_\pm)\}$. Taking into consideration \eqref{KappaDDef} we get from \eqref{AuxEN}
	\begin{equation}\label{SqKDThThNTh}
		\left\|\frac{\sqrt{\kappa_{\delta}(\vartheta_n)}}{\vartheta_n}\nabla\vartheta_n\right\|_{L^2(Q_T)}\leq c.
	\end{equation}
	Next, by estimates \eqref{NEst}$_{4,5}$ and \eqref{TNGEst} and the interpolation we get
	\begin{equation}\label{KappaDEst}
		\|\kappa_\delta(\vartheta_n)\|_{L^q(Q_T}\leq c
	\end{equation}
	for certain $q\in[1,\min\{\frac{\Gamma}{\beta},\frac{\Gamma+\frac{8}{3}}{\Gamma},3\})$. Combining the latter two estimates we conclude
	\begin{equation}\label{R1NEEntEst}
		\left\|\frac{\kappa_{\delta}(\vartheta_n)}{\vartheta_n}\nabla\vartheta_n\right\|_{L^p(Q_T)}\leq\|\kappa_\delta(\vartheta_n)\|^\frac{1}{2}_{L^q(Q_T)} \left\|\frac{\sqrt{\kappa_{\delta}(\vartheta_n)}}{\vartheta_n}\nabla\vartheta_n\right\|_{L^2(Q_T)}
	\end{equation}
	for $p=p(q)\in[1,\frac{6}{5})$. 
	By \eqref{SpEntStr} and \eqref{EntGr} we have 
	\begin{equation*}
		|\mathsf{s}_{\pm,\delta}(r_n,\vartheta_n)\nabla r_n|\leq c(|\log\vartheta_n|+|\log r_n|+r_n^{\gamma^s_{\pm}-1}+\vartheta_n^{\omega^s_\pm})|\nabla r_n|.
	\end{equation*}
	Hence employing \eqref{TNBEst}, \eqref{LTNEst} and \eqref{DensNEst} 
	\begin{equation}\label{EntNDensEst}
		\|\mathsf{s}_{\pm,\delta}(r_n,\vartheta_n)\nabla r_n\|_{L^q(Q_T)}\leq c
	\end{equation}
	provided $q\leq\min\left\{\frac{2\Gamma}{\Gamma+2\left(\gamma^s_\pm-1\right)},\frac{2\Gamma}{\Gamma+2\omega^s_\pm}\right\}$. 
	By \eqref{IntEnGrowth} we get
	\begin{equation*}
		\left|\frac{\mathsf e_{\pm,\delta}(r_n,\vartheta_n)}{\vartheta_n}\nabla r_n\right|\leq c\left(\vartheta_n^{\omega^e_\pm-1}+\frac{r_n^{\gamma^e_\pm-1}}{\vartheta_n}\right)|\nabla r_n|.
	\end{equation*}
	Consequently, by estimates \eqref{NEst}$_5$, \eqref{TNGEst} and \eqref{DensNEst} we deduce
	\begin{equation}\label{R1NEEneEst}
		\left\|\frac{\mathsf e_{\pm,\delta}(r_n,\vartheta_n)}{\vartheta_n}\nabla r_n\right\|_{L^q(Q_T)}\leq c
	\end{equation}
	provided $q\leq\min\left\{\frac{6\Gamma}{5\Gamma+6(\gamma_\pm-1)},\frac{2\Gamma}{\Gamma+2\left(\omega^e_\pm-1\right)}\right\}$. According to \eqref{PressGrowth}, it follows that
	\begin{equation*}
		\left|\frac{\mathsf P_\pm(r_n,\vartheta_n)}{r_n\vartheta_n}\nabla r_n\right|\leq c\left(\frac{r_n^{\gamma_\pm-1}}{\vartheta_n}+r_n^{\gamma^P_\pm-1}\vartheta_n^{\omega^P_\pm-1}\right)|\nabla r_n|.
	\end{equation*}
	Using again estimates \eqref{NEst}$_5$, \eqref{TNGEst} and \eqref{DensNEst} we obtain
	\begin{equation}\label{R1NEPEst}
		\left\|\frac{\mathsf P_\pm(r_n,\vartheta_n)}{r_n\vartheta_n}\nabla r_n\right\|_{L^q(Q_T)}\leq c
	\end{equation}
	provided $q\leq\min\{\frac{2\Gamma}{\Gamma+2(\gamma^P_\pm-1)},\frac{2\Gamma}{\Gamma+2(\omega^P_\pm-1)}\}$.
	As a consequence of \eqref{RNZNEntUNEst}--\eqref{R1NEPEst} we have
	\begin{equation}\label{R1NLqEst}
		\|R^{(1)}_n\|_{L^q(Q_T)}\leq c\text{ for a certain }q>1.
	\end{equation}
	We will further need the estimate of quantities involving $\vartheta_n$ and derivatives of $u_n$
	\begin{equation}\label{TNUNEst}
		\begin{split}
			\left\|\sqrt{\frac{\mu(\vartheta_n)}{\vartheta_n}}\left(\nabla u_n+(\nabla u_n)^\top-\frac{2}{3}\dvr u_n\mathbb I\right)\right\|_{L^2(Q_T)}&\leq c,\\
			\left\|\sqrt{\frac{\eta(\vartheta_n)}{\vartheta_n}}\dvr u_n\right\|_{L^2(Q_T)}&\leq c.
		\end{split}
	\end{equation}
	
	At last we obtain by \eqref{IntEnGrowth}, \eqref{TNGEst} and \eqref{DensNEst}
	\begin{equation}\label{NEDEst}
		\|\mathcal E_\delta(\mathfrak r_n,\mathfrak z_n,\vartheta_n)\|_{L^q(Q_T)}\leq c,\text{ for some }q>1.
	\end{equation}
	\subsubsection{Convergences as $n\to\infty$}\label{SubSec:NPass}
	Based on estimates \eqref{NPointEqS}, \eqref{NEst}, \eqref{UNBSEst} and \eqref{TNBEst} we infer the existence of a subsequence, which we do not relabel, and limits $\mathfrak r,\mathfrak z, u, \vartheta$ such that
	\begin{equation}\label{NWeakly}
		\begin{alignedat}{2}
			(\xi_n,\Sigma_n,\mathfrak r_n,\mathfrak z_n)&\rightharpoonup^*(\xi,\Sigma,\mathfrak r,\mathfrak z)&&\text{ in }L^\infty(0,T;L^\Gamma(\Omega)),\\
			u^n&\rightharpoonup u&&\text{ in }L^2(0,T;W^{1,2}(\Omega)),\\
			\vartheta_n&\rightharpoonup\vartheta&&\text{ in }L^2(0,T;W^{1,2}(\Omega)).
		\end{alignedat}
	\end{equation}
	Obviously, by \eqref{NWeakly}$_1$ we get from \eqref{NPointEqS}
	\begin{equation}\label{EPointEqS}
		\begin{split}
			\underline a\mathfrak r\leq \mathfrak z\leq \overline a\mathfrak r,\\
			\underline b\Sigma\leq \mathfrak r+\mathfrak z\leq\overline b\Sigma,\\
			\underline d\mathfrak r\leq \xi\leq\overline d\mathfrak r.
		\end{split}
	\end{equation}
	We deduce 
	\begin{equation}\label{SNUNCWeak}
		\Sigma_nu_n \to \Sigma u \text{ in }C_w([0,T];L^\frac{2\Gamma}{\Gamma+1}(\Omega))
	\end{equation}
	using bound \eqref{SNUNEst} by means of the Arzel\`a--Ascoli theorem as 
	it follows from \eqref{GalE1} that
	the set of functions $\{t\mapsto\int_\Omega\Sigma_n u_n(t)\varphi\}$ is equicontinuous and bounded in $C([0,T])$ for any fixed $\varphi\in\cup_{n=1}^\infty X_n$ and $\cup_{n=1}^\infty X_n$ is dense in $L^\frac{2\Gamma}{\Gamma+1}(\Omega)$.
	Moreover, the compact embedding $W^{1,2}(\Omega)$ to $L^{(\frac{2\Gamma}{\Gamma+1})'}(\Omega)$ valid for $\Gamma\geq\frac{3}{2}$ yields the convergence 
	\begin{equation*}
		\Sigma_n u_n\to\Sigma u\text{ in }C([0,T];W^{-1,2}(\Omega)),
	\end{equation*}
	which in combination with \eqref{NWeakly}$_2$ implies
	\begin{equation}\label{ConvTNWeakly}
		\Sigma_n u_n\otimes u_n\rightharpoonup \Sigma u\otimes u\text{ in }L^1(Q_T)
	\end{equation}
	and 
	\begin{equation}\label{NKEWeakly}
		\Sigma_n|u_n|^2\rightharpoonup \Sigma|u|^2\text{ in }L^1(Q_T).
	\end{equation}
	Returning back to \eqref{DensNEst} and \eqref{XSNEst} we conclude for nonrelabeled subsequences
	\begin{equation}\label{XNSNRNZNLWWeakly}
		(\xi_n,\Sigma_n,\mathfrak r_n,\mathfrak z_n)\rightharpoonup(\xi,\Sigma,\mathfrak r,\mathfrak z)\text{ in }L^2(0,T;W^{1,2}(\Omega)).
	\end{equation}
	We obtain from \eqref{TDerSecDerEst} again for nonrelabeled subsequences
	\begin{equation}\label{TDerSecDerWeakly}
		\begin{alignedat}{2}
			(\tder\xi_n,\tder\Sigma_n,\tder\mathfrak r_n,\tder\mathfrak z_n)\rightharpoonup&(\tder\xi,\tder\Sigma,\tder\mathfrak r,\tder\mathfrak z)&&\text{ in }L^{q(p)}(0,T;L^p(\Omega)),\\
			(\nabla^2\xi_n,\nabla^2\Sigma_n,\nabla^2\mathfrak r_n,\nabla^2\mathfrak z_n)\rightharpoonup&(\nabla^2\xi,\nabla^2\Sigma,\nabla^2\mathfrak r,\nabla^2\mathfrak z)&&\text{ in }L^{q(p)}(0,T;L^p(\Omega)).
		\end{alignedat}
	\end{equation}
	Moreover, by standard Aubin--Lions compactness arguments one deduces from \eqref{TDerSecDerEst} for nonrelabeled subsequences that
	\begin{equation}\label{DensPointwisely}
		(\xi_n,\Sigma_n,\mathfrak r_n,\mathfrak z_n)\to(\xi,\Sigma,\mathfrak r,\mathfrak z)\text{ in }L^{q(p)}(0,T;W^{1,p}(\Omega))\text{ and a.e. in }Q_T.
	\end{equation}
	Combining \eqref{NPointEqS}, \eqref{NEst}$_2$, \eqref{NWeakly}$_2$ and \eqref{DensPointwisely}  we get
	\begin{equation}\label{DVRWeakly}
		(\dvr(\Sigma_nu_n),\dvr(\mathfrak r_nu_n),\dvr(\mathfrak z_nu_n))\rightharpoonup(\dvr(\Sigma u),\dvr(\mathfrak ru),\dvr(\mathfrak zu))\text{ in }L^1(Q_T).
	\end{equation}
	Using \eqref{TDerSecDerWeakly}, the continuity of the trace operator, the homogeneous Neumann boundary conditions for $\Sigma_n$, $\mathfrak r_n$, $\mathfrak z_n$ and the Stokes theorem yields
	\begin{equation*}
		(\nabla\xi\cdot n,\nabla\Sigma\cdot n,\nabla\mathfrak r\cdot n,\nabla \mathfrak z\cdot n)=0 \text{ on }(0,T)\times\partial\Omega.
	\end{equation*}
	By \eqref{NWeakly}$_2$, \eqref{DVRWeakly} and \eqref{TDerSecDerWeakly} we conclude that the limits $(\xi,\Sigma,\mathfrak r,\mathfrak z)$ fulfill \eqref{ApCE}. 
	Considering $G(z)=z^2$ in \eqref{CERegRen}, employing convergences \eqref{DensPointwisely}, \eqref{NWeakly}$_2$ and multiplying \eqref{CERegRen} on the corresponding limit function $\Sigma$, $\mathfrak r$ or $\mathfrak z$ and integrating the resulting identity over $Q_T$, $t\in(0,T]$ we get
	\begin{equation*}
		\lim_{n\to\infty}\|\nabla r_n\|^2_{L^2(Q_T)}=\|\nabla r\|^2_{L^2(Q_T)}
	\end{equation*}
 for $(r_n,r)\in \{(\Sigma_n,\Sigma),(\mathfrak r_n,\mathfrak r),(\mathfrak z_n,\mathfrak z)\}$.
	Hence taking into account \eqref{XNSNRNZNLWWeakly}
	we get in particular
	\begin{equation}\label{NDensStrong}
		(\nabla\xi_n,\nabla\Sigma_n,\nabla\mathfrak r_n,\nabla\mathfrak z_n)\to(\nabla\xi,\nabla\Sigma,\nabla\mathfrak r,\nabla\mathfrak z)\text{ in }L^2(Q_T).
	\end{equation}
	\subsubsection{Pointwise convergence of $\vartheta_n$ and consequences}
	We intend to apply the Div-Curl lemma reported in Lemma~\ref{Lem:DivCurl} on sequences $\{U_n\}$ and $\{V_n\}$,
	where
	\begin{equation*}
		\begin{split}
			U_n=&(\mathcal S_\delta (\mathfrak r_n,\mathfrak z_n,\vartheta_n), R_n^{(1)}),\\
			V_n=&(T_k(\vartheta_n),0,0,0).
		\end{split}
	\end{equation*}
	The function $T_k$ is a truncation of the identity defined as
	\begin{equation}\label{TKDef}
		T_k(z)=kT\left(\frac{z}{k}\right),\ z\in[0,\infty), k>1,
	\end{equation}
	for some $T\in C^1([0,\infty))$, $T$ increasing and concave on $[0,\infty)$, $T(z)=z$ on $[0,1]$, $T(z)\to 2$ as $z\to\infty$. Obviously, $\{U_n\}$ is weakly compact in $L^q(Q_T)$ by \eqref{RNZNEntEst} and \eqref{R1NLqEst}, whereas $\{T_k(\vartheta_n)\}$ is weakly compact in $L^{q'}(Q_T)$ for some $q>1$. As $\mathrm{Curl}\ T_k(\vartheta_n)$ is an antisymmetric matrix consisting of linear combinations of components of the vector $T'_k(\vartheta_n)\nabla\vartheta_n$, thus involving only the spatial derivatives of $\nabla\vartheta_n$, bound \eqref{TNBEst} implies $\|\mathrm{Curl}\ T_k(\vartheta_n)\|_{L^2(Q_T)}\leq c$.
	According to \eqref{AEntDiv} we obtain
	\begin{equation*}
		\mathrm{Div}\ U_n=R_n^{(2)}+R_n^{(3)}.
	\end{equation*}
	Consequently, $\|\mathrm{Div}\ U_n\|_{L^1(Q_T)}\leq c$ due to \eqref{J1J3Est} and \eqref{AuxEN}. Hence employing the compact embeddings $W^{1, q}(Q_T)$ to $L^\infty(Q_T)$ for $q>3$ and $W^{1,2}(Q_T)$ into $L^2(Q_T)$ we have $\{\mathrm{Div}\ U_n\}$ and $\{\mathrm{Curl}\ V_n\}$ precompact in $W^{-1,s}(Q_T)$  provided $s\in[1,\frac{4}{3})$.
	Using the notation $\overline F(y)$ for an $L^1(Q_T)$--weak limit of the sequence $\{F(y_n)\}$ we conclude by the Div-Curl lemma
	\begin{equation}\label{NLimitsIdent}
		\frac{4b}{3}\overline{\vartheta^3T_k(\vartheta)}+\overline{\mathfrak S_\delta(\mathfrak r,\mathfrak z,\vartheta)T_k(\vartheta)}=\frac{4b}{3}\overline{\vartheta^3}\ \overline{T_k(\vartheta)}+\overline{\mathfrak S_\delta(\mathfrak r,\mathfrak z,\vartheta)}\ \overline{T_k(\vartheta)},
	\end{equation}
	where $\mathfrak S_\delta(r,z,\vartheta)=r\mathsf s_{+,\delta}(r,\vartheta)+z\mathsf s_{-,\delta}(z,\vartheta)$.
	Let us show that
	\begin{equation}\label{SDelLimIneq}
		\overline{\mathfrak S_\delta(\mathfrak r,\mathfrak z,\vartheta)T_k(\vartheta)}\geq \overline{\mathfrak S_\delta(\mathfrak r,\mathfrak z,\vartheta)}\ \overline{T_k(\vartheta)}.
	\end{equation}
	Pointing out that $x\mapsto \mathfrak S_\delta(r,z,T_k^{-1}(x))$ is nondecreasing on $[0,\infty)$, which follows from the definition of $\mathsf s_{\pm,\delta}$ in \eqref{TSDDef}$_1$, thermodynamic stability condition \eqref{ThStab} and the fact that $(T_k^{-1})'>0$, we get
	\begin{equation*}
		(\mathfrak S_\delta(\mathfrak r_n,\mathfrak z_n,T_k^{-1}(T_k(\vartheta)))-\mathfrak S_\delta(\mathfrak r_n,\mathfrak z_n,T_k^{-1}(\overline{T_k(\vartheta)})))(T_k(\vartheta_n)-\overline{T_k(\vartheta)})\geq 0\text{ a.e. in }Q_T.
	\end{equation*}
	Hence passing to the weak limit on the left hand side of the latter inequality we arrive at
	\begin{equation}\label{EntMonIneq}
		\overline{\mathfrak S_\delta(\mathfrak r,\mathfrak z,\vartheta)T_k(\vartheta)}-\overline{\mathfrak S_\delta(\mathfrak r,\mathfrak z,\vartheta)}\ \overline {T_k(\vartheta)}\geq \overline{\mathfrak S_\delta(\mathfrak r,\mathfrak z,T_k^{-1}(\overline {T_k(\vartheta)})T_k(\vartheta)}-\overline{\mathfrak S_\delta(\mathfrak r,\mathfrak z,T_k^{-1}(\overline {T_k(\vartheta)})}\ \overline{T_k(\vartheta)}.
	\end{equation}
	Obviously, by \eqref{SpEntStr}, \eqref{EntGr}, \eqref{NEst}$_2$, \eqref{DensPointwisely} and the Vitali convergence theorem
	\begin{equation}\label{SDelNConv}
		\mathfrak S_\delta(\mathfrak r_n,\mathfrak z_n,T_k^{-1}(\overline{T_k(\vartheta)}))\to \mathfrak S_\delta(\mathfrak r,\mathfrak z,T_k^{-1}(\overline{T_k(\vartheta)}))\text{ in }L^q(Q_T)\text{ for some }q>1.
	\end{equation}
	Next, as 
	\begin{equation*}
		T_k(\vartheta_n)\rightharpoonup \overline{T_k(\vartheta)}\text{ in }L^{q'}(Q_T).
	\end{equation*}
	\eqref{SDelLimIneq} follows from \eqref{EntMonIneq}. Having \eqref{SDelLimIneq} at hand we conclude from \eqref{NLimitsIdent}
	\begin{equation*}
		\overline{\vartheta^3T_k(\vartheta)}\leq \overline{\vartheta^3}\ \overline{T_k(\vartheta)}.
	\end{equation*}
	By Lemma~\ref{Lem:MonWConv} $(1)$ we get the opposite inequality. Hence we arrive at
	\begin{equation}\label{ProdTKTh}
		\overline{\vartheta^3T_k(\vartheta)}= \overline{\vartheta^3}\ \overline{T_k(\vartheta)}.
	\end{equation}
	Next, by the weak lower semicontinuity of $L^1$--norm we have
	\begin{equation*}
		\lim_{k\to\infty}\|\overline{T_k(\vartheta)}-\vartheta\|_{L^1(Q_T)}\leq \lim_{k\to\infty}\liminf_{n\to\infty}\|T_k(\vartheta_n)-\vartheta_n\|_{L^1(Q_T)}\leq 2\lim_{k\to\infty}\sup_{n\in\eN}\int_{\{\vartheta_n\geq k\}}\vartheta_n=0,
	\end{equation*}
	and
	\begin{equation*}
		\lim_{k\to\infty}\|\overline{\vartheta^3T_k(\vartheta)}-\overline{\vartheta^4}\|_{L^1(Q_T)}\leq \lim_{k\to\infty}\liminf_{n\to\infty}\|\vartheta_n^3T_k(\vartheta_n)-\vartheta_n^4\|_{L^1(Q_T)}\leq 2\lim_{k\to\infty}\sup_{n\in\eN}\int_{\{\vartheta_n\geq k\}}\vartheta_n^4=0,
	\end{equation*} 
	where we also took into account the bound on $\{\vartheta_n\}$ in $L^\infty(0,T;L^4(\Omega))\cap L^2(0,T;L^6(\Omega))$ following from \eqref{NEst}$_4$ and \eqref{TNBEst}. By the latter two convergences combined with \eqref{ProdTKTh} we deduce
	\begin{equation*}
		\overline{\vartheta^4}=\overline{\vartheta^3}\vartheta
	\end{equation*}
	Applying Lemma~\ref{Lem:MonWConv} $(2)$ we conclude 
	\begin{equation*}
		\overline{\vartheta^3}=\vartheta^3
	\end{equation*}
	and the convergence 
	\begin{equation}\label{ThetaNPointwisely}
		\vartheta_n\to\vartheta\text{ a.e. in }Q_T
	\end{equation}
	follows at least for a nonrelabeled subsequence as $s\mapsto s^3$ is strictly convex on $(0,\infty)$, cf. \cite[Theorem 10.20]{FeNo09}.
	Now we deal with convergences of terms depending on $\vartheta_n$. First, using \eqref{DensPointwisely}, \eqref{ThetaNPointwisely}, \eqref{PressGrowth}, \eqref{PDDef}, bound \eqref{TNGEst} and the interpolation using bounds \eqref{NEst}$_{2}$ and \eqref{DensNEst} we get by the Vitali convergence theorem
	\begin{equation}\label{NPressConv}
		\mathcal P_\delta(\mathfrak r_n,\mathfrak z_n,\vartheta_n)\to \mathcal P_\delta(\mathfrak r,\mathfrak z,\vartheta)\text{ in }L^1(Q_T).
	\end{equation} 
	In particular, we have
	\begin{equation}\label{NHDConv}
		h_\delta(\mathfrak r_n,\mathfrak z_n)\to h_\delta(\mathfrak r,\mathfrak z)\text{ in }L^1(Q_T).
	\end{equation} 
	We observe that by \eqref{MuGr}, \eqref{EGr}, \eqref{ThetaNPointwisely} and estimate \eqref{TNGEst}
	\begin{equation}\label{VThMuNConv}
		\begin{alignedat}{2}
			\mu(\vartheta_n)&\to\mu(\vartheta)&&\text{ in }L^q(Q_T),\\
			\eta(\vartheta_n)&\to\eta(\vartheta)&&\text{ in }L^q(Q_T)\text{ for any }q<\Gamma.
		\end{alignedat}
	\end{equation}
	Combining the latter convergences with \eqref{NWeakly}$_2$ we conclude
	\begin{equation}\label{STensDelNConv}
		\mathbb S(\vartheta_n,\nabla u_n)\rightharpoonup 	\mathbb S(\vartheta,\nabla u)\text{ in }L^1(Q_T).
	\end{equation}
	Next, convergences \eqref{DensPointwisely}, \eqref{ThetaNPointwisely} combined with bound \eqref{NEDEst} imply
	\begin{equation}\label{NIntEnL1Str}
		\mathcal E_\delta(\mathfrak r_n,\mathfrak z_n,\vartheta_n) \to \mathcal E_\delta(\mathfrak r,\mathfrak z,\vartheta)\text{ in }L^1(Q_T)
	\end{equation}
	by the Vitali convergence theorem. Similarly, but using bound \eqref{RNZNEntEst}
	we deduce
	\begin{equation}\label{NEntConv}
		\mathcal S_\delta(\mathfrak r_n,\mathfrak z_n,\vartheta_n)\to \mathcal S_\delta(\mathfrak r,\mathfrak z,\vartheta)\text{ in }L^2(0,T;L^2(\Omega)),
	\end{equation}
	which combined with \eqref{NWeakly}$_2$ yields
	\begin{equation}\label{NEntCConv}
		\mathcal S_\delta(\mathfrak r_n,\mathfrak z_n,\vartheta_n)u_n\to \mathcal S_\delta(\mathfrak r,\mathfrak z,\vartheta)u\text{ in }L^1(0,T;L^1(\Omega)).
	\end{equation}
	Using \eqref{KGrowth}, \eqref{ThetaNPointwisely}, bounds \eqref{NEst}$_5$ and \eqref{TNGEst} and the Vitali convergence theorem yield
	\begin{equation*}
		\frac{\kappa(\vartheta_n)}{\vartheta_n}\to\frac{\kappa(\vartheta)}{\vartheta}\text{ in }L^2(Q_T).
	\end{equation*}
	The latter convergence and \eqref{NWeakly}$_3$ imply
	\begin{equation}\label{FirstWeakCon}
		\frac{\kappa(\vartheta_n)}{\vartheta_n}\nabla\vartheta_n\rightharpoonup\frac{\kappa(\vartheta)}{\vartheta}\nabla\vartheta\text{ in }L^1(Q_T).
	\end{equation}
	Employing \eqref{NEst}$_4$, \eqref{TNBEst}, \eqref{TNGEst}, \eqref{ThetaNPointwisely} and the interpolation between $L^\infty(0,T;L^4(\Omega))$ and $L^\Gamma(0,T;L^{3\Gamma}(\Omega))$ we have
	\begin{equation}\label{SecWeakConv}
		\vartheta_n^{\Gamma-1}\nabla\vartheta_n\rightharpoonup \vartheta^{\Gamma-1}\nabla\vartheta\text{ in }L^1(Q_T).
	\end{equation}
	Indeed, the above mentioned bounds guarantee the estimate of $\{\nabla\vartheta^\Gamma_n\}$ in $L^p(Q_T)$ for some $p>1$ and the existence of a nonrelabeled subsequence weakly converging in the latter space accordingly. In order to identify a weak limit one uses \eqref{ThetaNPointwisely}. Similarly, using \eqref{NEst}$_{5,6}$ and \eqref{ThetaNPointwisely} we get
	\begin{equation}\label{ThirdWeakCon}
		\frac{1}{\vartheta_n^2}\nabla\vartheta_n\rightharpoonup \frac{1}{\vartheta^2}\nabla\vartheta\text{ in }L^\frac{3}{2}(Q_T).
	\end{equation} 
	Hence taking into condsideration \eqref{KappaDDef} we conclude 
	\begin{equation}\label{KdVNOVNNVNweakly}
		\frac{\kappa_\delta(\vartheta_n)}{\vartheta_n}\nabla\vartheta_n\rightharpoonup 	\frac{\kappa_\delta(\vartheta)}{\vartheta}\nabla\vartheta\text{ in }L^1(Q_T)
	\end{equation}
	by \eqref{FirstWeakCon}--\eqref{ThirdWeakCon}.
	Next, using convergences \eqref{DensPointwisely}, \eqref{NDensStrong} and \eqref{ThetaNPointwisely} and estimates \eqref{EntNDensEst}--\eqref{R1NEPEst} we get
	\begin{equation}\label{NTermsNRConv}
		\begin{split}
			&\frac{1}{\vartheta_n}\left(\vartheta_n\mathsf s_{\pm,\delta}(r_n,\vartheta_n)-\mathsf e_{\pm,\delta}(r_n,\vartheta_n)-\frac{\mathsf P_\pm(r_n,\vartheta_n)}{r_n}\right)\nabla r_n\\
			&\to \frac{1}{\vartheta}\left(\vartheta\mathsf s_{\pm,\delta}(r,\vartheta)-\mathsf e_{\pm,\delta}(r,\vartheta)-\frac{\mathsf P_\pm(r,\vartheta)}{r}\right)\nabla r\text{ in }L^1(Q_T)
		\end{split}
	\end{equation}
	by the Vitali convergence theorem. Taking into account \eqref{MuGr}, \eqref{EGr} and bound \eqref{NEst}$_5$ we obtain by the Vitali convergence theorem
	\begin{equation*}
		\sqrt{\frac{\mu(\vartheta_n)}{\vartheta_n}}\to \sqrt{\frac{\mu(\vartheta)}{\vartheta}},\ \sqrt{\frac{\eta(\vartheta_n)}{\vartheta_n}}\to \sqrt{\frac{\eta(\vartheta)}{\vartheta}}\text{ in }L^2(Q_T).
	\end{equation*}
	Having \eqref{TNUNEst} at hand we combine the latter convergences with \eqref{NWeakly}$_2$ to obtain
	\begin{equation}\label{SDNWeakly}
		\begin{alignedat}{2}
			\sqrt{\frac{\mu(\vartheta_n)}{\vartheta_n}}\left(\nabla u_n+(\nabla u_n)^\top-\frac{2}{3}\dvr u_n\mathbb I\right)\rightharpoonup &\sqrt{\frac{\mu(\vartheta)}{\vartheta}}\left(\nabla u+(\nabla u)^\top-\frac{2}{3}\dvr u\mathbb I\right)&&\text{ in }L^2(Q_T),\\
			\sqrt{\frac{\eta(\vartheta_n)}{\vartheta_n}}\dvr u_n\rightharpoonup &\sqrt{\frac{\eta(\vartheta)}{\vartheta}}\dvr u&&\text{ in }L^2(Q_T).
		\end{alignedat}
	\end{equation}
	Next, we have 
	\begin{equation}\label{FrSqrtKDCon}
		\frac{\sqrt{\kappa_\delta(\vartheta_n)}}{\vartheta_n}\nabla\vartheta_n\rightharpoonup\frac{\sqrt{\kappa_\delta(\vartheta)}}{\vartheta}\nabla\vartheta\text{ in }L^2(Q_T).
	\end{equation}
	Indeed, employing \eqref{ThetaNPointwisely} combined with \eqref{KappaDEst} the Vitali convergence theorem yields 
	\begin{equation}\label{SqrtKDConv}		\sqrt{\kappa_\delta(\vartheta_n)}\to\sqrt{\kappa_\delta(\vartheta)}\text{ in }L^2(Q_T),
	\end{equation}
	whereas
	\begin{equation}\label{NLogConv}
		\frac{1}{\vartheta_n}\nabla\vartheta_n=\nabla\log\vartheta_n\rightharpoonup \nabla\log\vartheta=\frac{1}{\vartheta}\nabla\vartheta\text{ in }L^2(Q_T)
	\end{equation}
	due to \eqref{NLThetaNEst} and the convergence 
	\begin{equation*}
		\log\vartheta_n\to\log\vartheta\text{ in }L^q(Q_T)\text{ for any }q\in [1,\infty),
	\end{equation*}
	which follows by the Vitali convergence theorem taking into consideration \eqref{LTNEst} and \eqref{ThetaNPointwisely}. Combining \eqref{SqKDThThNTh}, \eqref{SqrtKDConv} and \eqref{NLogConv} we conclude \eqref{FrSqrtKDCon}.
	
	We have 
	\begin{equation*}
		\sqrt{\frac{\Gamma r_n^{\Gamma-2}+2}{\vartheta_n}}\to \sqrt{\frac{\Gamma r^{\Gamma-2}+2}{\vartheta}}\text{ in }L^2(Q_T)
	\end{equation*}
	by \eqref{DensPointwisely}, \eqref{ThetaNPointwisely}, bounds \eqref{NEst}$_{2,5}$ and  the Vitali convergence theorem. Hence combining the latter convergence with \eqref{NDensStrong} and with regard to \eqref{AuxEN} we obtain 
	\begin{equation}\label{SqNDensWeakly}
		\sqrt{\frac{\Gamma r_n^{\Gamma-2}+2}{\vartheta_n}}\nabla r_n\rightharpoonup \sqrt{\frac{\Gamma r^{\Gamma-2}+2}{\vartheta}}\nabla r\text{ in }L^2(Q_T).
	\end{equation}
	By virtue of convergences \eqref{DensPointwisely}, \eqref{ThetaNPointwisely}, inequality \eqref{PressVarGr}$_1$ and bounds \eqref{NEst}$_{2,5}$ the Vitali convergence theorem yields
	\begin{equation*}
		\sqrt{\frac{\partial_r\mathsf P_\pm(r_n,\vartheta_n)}{r_n\vartheta_n}}\to\sqrt{\frac{\partial_r\mathsf P_\pm(r,\vartheta)}{r\vartheta}}\text{ in }L^2(Q_T).
	\end{equation*}
	Combining the latter convergence with \eqref{NDensStrong} and with regard to \eqref{AuxEN} we deduce
	\begin{equation}\label{SqPressNWeakly}
		\sqrt{\frac{\partial_r\mathsf P_\pm(r_n,\vartheta_n)}{r_n\vartheta_n}}\nabla r_n\rightharpoonup\sqrt{\frac{\partial_r\mathsf P_\pm(r,\vartheta)}{r\vartheta}}\nabla r\text{ in }L^2(Q_T).
	\end{equation}
	By \eqref{TNGEst} and \eqref{ThetaNPointwisely} it follows that 
	\begin{equation} \label{NLPowerConv} 
		\vartheta_n^{\Lambda+1}\to \vartheta^{\Lambda+1}\text{ in }L^1(Q_T).
	\end{equation}
	The Fatou lemma and \eqref{ThetaNPointwisely} yield
	\begin{equation}\label{NTLIneq}
		\int_0^T\int_{\Omega}\frac{1}{\vartheta^3}\leq\liminf_{n\to\infty} \int_0^T\int_{\Omega}\frac{1}{\vartheta_n^3},
	\end{equation}
	which guarantees the positivity of $\vartheta$ a.e. in $Q_T$ as 
	the expression on the right hand side of the latter inequality is finite due to \eqref{NEst}$_5$. Moreover, taking into consideration \eqref{ThetaNPointwisely} and \eqref{NEst}$_5$ the Vitali convergence theorem yields
	\begin{equation} \label{NNPowerConv} 
		\vartheta_n^{-2}\to \vartheta^{-2}\text{ in }L^1(Q_T).
	\end{equation}
	Having the collection of necessary convergences we can show that the limit sixtet $(\xi,\mathfrak r,\mathfrak z,\Sigma,u,\vartheta)$ solves the approximated system \eqref{EDSolReg}--\eqref{AEI}. Obviously, the latter functions possess the regularity specified in \eqref{EDSolReg}. Next, the fact that the functions $\xi$, $\Sigma$, $\mathfrak r$ and $\mathfrak z$ solve \eqref{ApCE} follows using the convergence from \eqref{TDerSecDerWeakly} and \eqref{DVRWeakly} in the regularized continuity equation solved by elements of sequences $\{\xi_n\}$, $\{\Sigma_n\}$, $\{\mathfrak r_n\}$ and $\{\mathfrak z_n\}$. In order to show that \eqref{ABM} is fulfilled, we observe that it follows from \eqref{GalTDer} that
	\begin{equation*}
		\begin{split}
			&\int_\Omega (\Sigma_n u_n)(t)\varphi\psi(t)-\int_\Omega \Sigma_{0,\delta}u_{0,\delta,n}\cdot \varphi\psi(0)\\
			&=\int_0^t\int_\Omega \left(\Sigma_n u_n\cdot\tder\psi\varphi+\left(\Sigma_n(u_n\otimes u_n)-\mathbb S(\vartheta_n,\nabla u_n)\right)\cdot\nabla \varphi\psi+\mathcal  P_\delta(\mathfrak r_n,\mathfrak z_n,\vartheta_n)\dvr \varphi\psi-\varepsilon\nabla\Sigma_n\nabla u_n\cdot \varphi\psi\right)
		\end{split}
	\end{equation*}
	holds for any $t\in(0,T)$, $\psi\in C^1([0,T])$ and $\varphi\in \cup_{k=1}^lX_k$, $l\in\eN$. Passing first to the limit $n\to\infty$ in the latter identity using \eqref{NWeakly}$_2$, \eqref{SNUNCWeak}, \eqref{ConvTNWeakly}, \eqref{NDensStrong}, \eqref{NPressConv},\eqref{STensDelNConv} and
	the fact that
	\begin{equation}\label{UDNConv}
		u_{0,\delta, n}=P_n\left(\frac{(\Sigma u)_{0,\delta}}{\Sigma_{0,\delta}}\right)\to \frac{(\Sigma u)_{0,\delta}}{\Sigma_{0,\delta}}\text{ in }L^2(\Omega)    
	\end{equation}
	provided $\frac{(\Sigma u)_{0,\delta}}{\Sigma_{0,\delta}}\in L^2(\Omega)$, which directly follows from \eqref{SUZDDef}, the assumption $\frac{|(\Sigma u)_0|^2}{\Sigma_0}\in L^1(\Omega)$ and $\inf_{\Omega}\Sigma_{0,\delta}> 0$. Performing then the limit passage 
	$l\to\infty$ during which we employ the density of $C^{1}([0,T];\cup_{k=1}^\infty X_k)$ in $C^{1}([0,T];X^{1,2})$ for $X^{1,2}$ being defined in \eqref{XPDef}, we conclude \eqref{ABM}.
	
	In order to obtain approximate entropy inequality \eqref{AEI} we return back to 
	\eqref{AEntDiv} and estimate the first two terms in $R_n^{(3)}$ in the same way as in \eqref{J1J3Est}. Multiplying the resulting inequality on an arbitrary but fixed $\phi\in C^1([0,T]\times\overline\Omega)$, $\phi\geq 0$, integrating over $Q_t$ we arrive at
	\begin{equation}\label{NEntIneq}
		\begin{split}
			&\int_\Omega \mathcal S_\delta(\mathfrak r_n,\mathfrak z_n,\vartheta_n)\phi(t)-\int_\Omega \mathcal S_\delta(\mathfrak r_{0,\delta},\mathfrak z_{0,\delta},\vartheta_{0,\delta})\phi(0)+\int_0^t\int_\Omega\left(\frac{\kappa_\delta(\vartheta_n)}{\vartheta_n}\nabla\vartheta_n-\varepsilon R_n\right)\cdot\nabla\phi+\int_0^t\int_\Omega\sigma_{\varepsilon,\delta,n}\phi\\&\leq\int_0^t\int_\Omega\left(\mathcal S_\delta(\mathfrak r_n,\mathfrak z_n,\vartheta_n)(\tder\phi+u_n\cdot\nabla\phi)+\varepsilon\vartheta_n^\Lambda\phi\right)
		\end{split}
	\end{equation}
	with 
	\begin{equation*}
		\begin{split}
			\sigma_{\varepsilon,\delta,n}=&\frac{1}{\vartheta_n}\left(\mathbb S(\vartheta_n,\nabla u_n)\cdot\nabla u_n+\left(\frac{\kappa(\vartheta_n)}{\vartheta_n}+\delta\left(\vartheta_n^{\Gamma-1}+\frac{1}{\vartheta^2_n}\right)\right)|\nabla\vartheta_n|^2+\frac{\delta}{\vartheta^2_n}\right)\\
			&+\frac{\varepsilon\delta}{\vartheta_n}\left(\left(\Gamma\mathfrak r_n^{\Gamma-2}+2\right)|\nabla\mathfrak r_n|^2\right)+\left(\left(\Gamma\mathfrak z_n^{\Gamma-2}+2\right)|\nabla\mathfrak z_n|^2\right)\\
			&+\varepsilon\left(\frac{\partial_r\mathsf P_+(\mathfrak r_n,\vartheta_n)}{\mathfrak r_n\vartheta_n}|\nabla\mathfrak r_n|^2+\frac{\partial_z\mathsf P_-(\mathfrak z_n,\vartheta_n)}{\mathfrak z_n\vartheta_n}|\nabla\mathfrak z_n|^2\right),
		\end{split}
	\end{equation*}
	\begin{align*}
		R_n=&\left(\vartheta_n\mathsf s_{+,\delta}(\mathfrak r_n,\vartheta_n)-\mathsf e_{+,\delta}(\mathfrak r_n,\vartheta_n)-\frac{\mathsf P_+(\mathfrak r_n,\vartheta_n)}{\mathfrak r_n}\right)\frac{\nabla\mathfrak r_n}{\vartheta_n}\\
  &+\left(\vartheta_n\mathsf s_{-,\delta}(\mathfrak z_n,\vartheta_n)-\mathsf e_{-,\delta}(\mathfrak z_n,\vartheta_n)-\frac{\mathsf P_-(\mathfrak z_n,\vartheta_n)}{\mathfrak z_n}\right)\frac{\nabla\mathfrak z_n}{\vartheta_n}.
	\end{align*}
	In order to pass to the limit $n\to\infty$ in the first term on the left hand side of \eqref{NEntIneq} we employ \eqref{NEntConv}, whereas for the passage in the third term one applies \eqref{KdVNOVNNVNweakly} and \eqref{NTermsNRConv}. Performing the limit passage $n\to\infty$ in the fourth term on the left hand side of \eqref{NEntIneq} we obtain
	\begin{equation*}
		\int_0^t\int_\Omega \sigma_{\varepsilon,\delta}\phi\leq \liminf_{n\to\infty}\int_0^t\int_\Omega\sigma_{\varepsilon,\delta,n}\phi.
	\end{equation*}
	To this end we employ the weak lower semicontinuity of convex functionals and convergences \eqref{SDNWeakly}, \eqref{SqNDensWeakly}, \eqref{SqPressNWeakly},  and \eqref{NTLIneq}.
	Finally, in the terms on the right hand side of \eqref{NEntIneq} we use \eqref{NEntCConv} and \eqref{NLPowerConv}. Moreover, it follows from \eqref{AEI} that
	\begin{equation*}
		\begin{split}
			\int_0^T\int_\Omega \sigma_{\varepsilon,\delta}\phi&\leq \varepsilon\int_0^T\int_\Omega\vartheta^\Lambda\phi+\int_0^T\int_\Omega \mathcal S_\delta(\mathfrak r,\mathfrak z,\vartheta)\left(\tder \phi+u\cdot\nabla\phi\right)+\int_\Omega\mathcal S_\delta(\mathfrak r_{0,\delta}, \mathfrak z_{0,\delta},\vartheta_{0,\delta})\phi(0)\\&-\int_\Omega\mathcal S_\delta(\mathfrak r, \mathfrak z,\vartheta)\phi(T)-\int_0^T\int_\Omega\left(\frac{\kappa_\delta(\vartheta)}{\vartheta}\nabla\vartheta+\varepsilon R\right)\nabla\phi\text{ for any }\phi\in C^1([0,T]\times\overline\Omega)
		\end{split}
	\end{equation*}
	i.e. the expression on the right hand side of the latter inequality can be understood as a nonnegative linear form defined on $C^1([0,T]\times\overline\Omega)$, which we denote $\tilde\sigma_{\varepsilon,\delta}$. Consequently, by means of the Riesz representation theorem there exists a regular, nonegative Borel measure $\mu_{\tilde\sigma_{\varepsilon,\delta}}$ on $[0,T]\times\overline\Omega$ such that
	\begin{equation}\label{SigmEpsDom} \langle\tilde\sigma_{\varepsilon,\delta},\phi\rangle=\int_{[0,T]\times\overline\Omega}\phi(t,x)\mathrm d\mu_{\tilde\sigma_{\varepsilon,\delta}}(t,x)\geq\int_{[0,T]\times\overline\Omega}\sigma_{\varepsilon,\delta}\phi\text{ for any }\phi\in C^1([0,T]\times\overline\Omega), \phi\geq 0
	\end{equation}
	and
	\begin{equation}\label{AEBalMes}
		\begin{split}
			&\int_\Omega\mathcal S_\delta(\mathfrak r, \mathfrak z,\vartheta)\phi(t)-\int_\Omega\mathcal S_\delta(\mathfrak r_{0,\delta}, \mathfrak z_{0,\delta},\vartheta_{0,\delta})\phi(0)\\
			&=\int_0^t\int_\Omega \mathcal S_\delta(\mathfrak r,\mathfrak z,\vartheta)\left(\tder \phi+u\cdot\nabla\phi\right)+\int_0^t\int_\Omega\left(\frac{\kappa_\delta(\vartheta)}{\vartheta}\nabla\vartheta+\varepsilon R\right)\nabla\phi+\langle\tilde\sigma_{\varepsilon,\delta}(t),\phi\rangle- \varepsilon\int_0^t\int_\Omega\vartheta^\Lambda\phi
		\end{split}
	\end{equation}
	for any $\phi\in C^1([0,T]\times\overline\Omega)$ where
	\begin{equation*}
		\langle\tilde \sigma_{\varepsilon,\delta}(t),\varphi\rangle=\int_{[0,t]\times\overline\Omega}\varphi(\tau,x)\mathrm d\mu_{\tilde \sigma_{\varepsilon,\delta}}(t,x).
	\end{equation*}
	
	Eventually, we show that approximate energy balance \eqref{AEB} is satisfied. To this end we fix $t\in[0,T]$, multiply \eqref{AuxIdEn} with $\Sigma_n$, $u_n$, $\mathfrak r_n$, $\mathfrak z_n$, $\vartheta_n$ on $\psi(t)\in C([0,T])$ and integrate over $(0,T)$. For the limit passage $n\to\infty$ on the left hand side of the resulting identity we apply \eqref{NKEWeakly}, \eqref{NIntEnL1Str} and \eqref{NHDConv}, whereas for the limit passage on the right hand side we employ convergences \eqref{NLPowerConv}, \eqref{NNPowerConv} and \eqref{UDNConv}.

	\subsection{Limit passage $\varepsilon\to 0_+$}\label{Sec:EpsPass}
	Let $\delta>0$ be arbitrary but fixed and $\{(\xi_\varepsilon,\mathfrak r_\varepsilon,\mathfrak z_\varepsilon,\Sigma_\varepsilon,u_\varepsilon,\vartheta_\varepsilon)\}$ be a family of solutions to \eqref{EDSolReg}--\eqref{AEB}. The goal of this subsection is to show that there is a limit $(\xi,\mathfrak r,\mathfrak z,\Sigma,u,\vartheta)$ of the latter sequence such that: 
	
	The renormalized continuity equation is satisfied in the form
	\begin{equation}\label{DRCE}
		\int_\Omega r\phi(t)-\int_\Omega r_{0,\delta}\phi(0)=\int_0^t\int_\Omega r(\tder\phi+u\cdot\nabla\phi),
	\end{equation}
	for $r\in\{\xi, \Sigma, \mathfrak r, \mathfrak z\}$, for all $\phi\in C^1([0,T]\times\overline\Omega)$ and any $t\in[0,T]$.
	
	The approximate balance of momentum equation is satisfied in the form
	\begin{equation}\label{DMomBal}
		\int_\Omega\Sigma u\cdot\varphi(t)-\int_\Omega(\Sigma u)_{0,\delta}\cdot\varphi(0)=\int_0^t\int_\Omega \left(\Sigma u\cdot\tder\varphi+\Sigma u\otimes u\cdot\nabla \varphi+\mathcal P_\delta(\mathfrak r,\mathfrak z,\vartheta)\dvr\varphi-\mathbb S(\vartheta,\nabla u)\cdot\nabla\varphi\right)
	\end{equation}
	for any $t\in[0,T]$ and all $\varphi\in C^1([0,T]\times\overline\Omega)$ such that either
	\begin{equation*}
		\varphi\cdot n=0\text{ or }\varphi=0 \text{ on }(0,T)\times\partial\Omega
	\end{equation*}
	where $\mathbb S$, $\mathcal P_\delta$ are defined in \eqref{STensDef}, \eqref{PDDef} respectively.
	
	There is a positive linear functional $\sigma_\delta\in (C([0,T]\times\overline\Omega))^*$ such that
	\begin{equation}\label{SigmDDef}
		\langle\sigma_\delta,\phi\rangle\geq \int_0^T\int_\Omega\left(\frac{1}{\vartheta}\left(\mathbb S(\vartheta,\nabla u)\cdot\nabla u+\left(\frac{\kappa(\vartheta)}{\vartheta}+\frac{\delta}{2}\left(\vartheta^{\Gamma-1}+\frac{1}{\vartheta^2}\right)\right)|\nabla\vartheta|^2+\frac{\delta}{\vartheta^2}\right)\right)\phi
	\end{equation}
	for any $\phi\in C([0,T]\times\overline\Omega)$, $\phi\geq 0$ and any $t\in[0,T]$.
	
	The approximate entropy balance identity holds in the form 
	\begin{equation}\label{DAEBal}
		\begin{split}
			&\int_\Omega \mathcal S_\delta(\mathfrak r,\mathfrak z,\vartheta)\psi(t)-\int_\Omega \mathcal S_\delta(\mathfrak r_{0,\delta},\mathfrak z_{0,\delta},\vartheta_{0,\delta})\psi(0)\\
			&=\int_0^t\int_\Omega \mathcal S_\delta(\mathfrak r,\mathfrak z,\vartheta)\left(\tder\psi+u\cdot\nabla\psi\right)+\int_0^t\int_\Omega\frac{\kappa_\delta(\vartheta)}{\vartheta}\nabla\vartheta\cdot\nabla\psi+\langle\sigma_{\delta,t},\psi\rangle
		\end{split}
	\end{equation}
	for all $\psi\in C^{1}([0,T]\times\overline\Omega)$, where 
	\begin{equation*}
		\langle\sigma_{\delta,t},\psi\rangle=\int_{[0,t]\times\overline\Omega}\psi(\tau,x)\mathrm d\mu_{\sigma_\delta}(\tau,x)
	\end{equation*}
	and $\mu_{\sigma_\delta}$ is the unique regular Borel measure on $[0,T]\times\overline\Omega$ associated to the functional $\sigma_\delta$ by the Riesz representation theorem.
	
	The approximate energy balance holds
	\begin{equation}\label{DAEneBal}
		\begin{split}
			&\int_\Omega\left(\frac{1}{2}\Sigma|u|^2+\mathcal E_\delta(\mathfrak r,\mathfrak z,\vartheta)+h_\delta(\mathfrak r,\mathfrak z)\right)(t)\\
			&=\int_\Omega\left(\frac{1}{2}\frac{|(\Sigma u)_{0,\delta}|^2}{\Sigma_{0,\delta}}+\mathcal E_\delta\left(\mathfrak r_{0,\delta},\mathfrak z_{0,\delta}, \vartheta_{0,\delta}+h_\delta(\mathfrak r_{0,\delta},\mathfrak z_{0,\delta})\right)\right)+\int_0^t\int_\Omega\frac{\delta}{\vartheta^2}\text{ for a.a. }t\in(0,T).
		\end{split}
	\end{equation}
	\subsubsection{Estimates uniform in $\varepsilon$}\label{Sec:EpsEst}
	Subtracting $\Theta$--multiple of \eqref{AEBalMes} from approximate energy balance \eqref{AEB} for $\Sigma_\varepsilon,\mathfrak r_\varepsilon,\mathfrak z_\varepsilon,u_\varepsilon,\vartheta_\varepsilon$ we obtain
	\begin{equation}
		\begin{split}
			&\int_\Omega \left(\frac{1}{2}\Sigma_\varepsilon|u_\varepsilon|^2+H_{\delta,\Theta}(\mathfrak r_\varepsilon,\mathfrak z_\varepsilon,\vartheta_\varepsilon)+h_\delta(\mathfrak r_\varepsilon,\mathfrak z_\varepsilon)\right)(t)+\Theta\tilde\sigma_{\varepsilon,\delta}([0,t]\times\overline\Omega)+\varepsilon\int_0^t\int_\Omega \vartheta_\varepsilon^{\Lambda+1}\\
			&=\int_\Omega\left(\frac{1}{2}\frac{|(\Sigma u)_{0,\delta}|^2}{\Sigma_{0,\delta}}+H_{\delta,\Theta}(\mathfrak r_{0,\delta},\mathfrak z_{0,\delta},\vartheta_{0,\delta})+h_\delta(\mathfrak r_{0,\delta},\mathfrak z_{0,\delta})\right)+\int_0^t\int_\Omega\left(\frac{\delta}{\vartheta^2}+\varepsilon\Theta \vartheta_\varepsilon^\Lambda\right)\text{ for a.a. }t\in[0,T].
		\end{split}
	\end{equation}
	We proceed in a complete analogy first to the proof of \eqref{AuxEN} and then similarly to \eqref{NEst} conclude the following estimates
	\begin{equation}\label{EpsEst}
		\begin{split}
			\|\sqrt{\Sigma_\varepsilon}u_\varepsilon\|_{L^\infty(0,T;L^2(\Omega))}&\leq c,\\
			\|\mathfrak r_\varepsilon\|_{L^\infty(0,T;L^\Gamma(\Omega))}+\|\mathfrak z_\varepsilon\|_{L^\infty(0,T;L^\Gamma(\Omega))}&\leq c,\\
			\|\vartheta_\varepsilon^{-1}\mathbb S(\vartheta_\varepsilon,\nabla u_\varepsilon)\cdot\nabla u_\varepsilon\|_{L^1(Q_T)}&\leq c,\\
			\|\vartheta_\varepsilon\|_{L^\infty(0,T;L^4(\Omega))}+\varepsilon^\frac{1}{\Lambda+1}\|\vartheta_\varepsilon\|_{L^{\Lambda+1}(Q_T)}&\leq c,\\
			\|\vartheta_\varepsilon^{-1}\|_{L^3(0,T;L^3(\Omega))}+\|\nabla\vartheta_\varepsilon^\frac{\Gamma}{2}\|_{L^2(0,T;L^2(\Omega))}+\|\nabla\vartheta_\varepsilon^{-\frac{1}{2}}\|_{L^2(0,T;L^2(\Omega))}&\leq c,\\
			\|\sqrt{\kappa(\vartheta_\varepsilon)}\vartheta_\varepsilon^{-1}\nabla \vartheta_\varepsilon\|_{L^2(0,T;L^2(\Omega))}&\leq c,\\
			\tilde{\sigma}_{\varepsilon,\delta}([0,T]\times\overline\Omega)&\leq c,\\
			\|\mathcal S_\delta(\mathfrak r_\varepsilon,\mathfrak z_\varepsilon,\vartheta_\varepsilon)\|_{L^\infty(0,T;L^1(\Omega))}&\leq c.		
		\end{split}
	\end{equation}
	By \eqref{EPointEqS} we have
	\begin{equation}\label{EpsPointIneq}
		\begin{split}
			\underline a\mathfrak r_\varepsilon\leq \mathfrak z_\varepsilon\leq\overline a\mathfrak r_\varepsilon,\\
			\underline b\Sigma_\varepsilon\leq \mathfrak r_\varepsilon+\mathfrak z_\varepsilon\leq\overline b\Sigma_\varepsilon,\\
			\underline d\mathfrak r_\varepsilon\leq\xi_\varepsilon\leq\overline d\mathfrak r_\varepsilon.
		\end{split}
	\end{equation}
	Moreover, as in Section~\ref{SubSec:NEst} we obtain 
	\begin{equation}\label{FEpsEst}
		\begin{split}
			\|u_\varepsilon\|_{L^2(0,T;W^{1,2}(\Omega))}&\leq c,\\
			\|\vartheta_\varepsilon^\nu\|_{L^2(0,T;W^{1,2}(\Omega))}&\leq c\text{ for any }\nu\in\left[1,\frac{\Gamma}{2}\right],\\
			\|\nabla\log\vartheta_\varepsilon\|_{L^2(0,T;L^2(\Omega))}&\leq c.
		\end{split}
	\end{equation}
	It follows from \eqref{EpsEst}$_8$ that $\overline s_\pm\|\mathfrak r_\varepsilon\log\vartheta_\varepsilon\|_{L^\infty(0,T;L^1(\Omega))}\leq c$. The latter inequality, \eqref{EpsEst}$_2$ and \eqref{FEpsEst}$_3$ combined with the fact that $\int_\Omega\mathfrak r_\varepsilon=\int_\Omega\mathfrak r_{0,\delta}>0$ yields
	\begin{equation}\label{LogEpsEst}
		\|\log\vartheta_\varepsilon\|_{L^2(0,T;W^{1,2}(\Omega))}\leq c.
	\end{equation}	
	Repeating the arguments leading to \eqref{DensNEst} it follows that
	\begin{equation}\label{DensEpsEst}
		\varepsilon^\frac{1}{2}\|\mathfrak r_\varepsilon\|_{L^2(0,T;W^{1,2}(\Omega))}+\varepsilon^\frac{1}{2}\|\mathfrak r_\varepsilon^\frac{\Gamma}{2}\|_{L^2(0,T;W^{1,2}(\Omega))}+\varepsilon^\frac{1}{2}\|\mathfrak z_\varepsilon\|_{L^2(0,T;W^{1,2}(\Omega))}+\varepsilon^\frac{1}{2}\|\mathfrak z_\varepsilon^\frac{\Gamma}{2}\|_{L^2(0,T;W^{1,2}(\Omega))}\leq c.
	\end{equation}
	Using approximate continuity equation \eqref{ApCE} for $\Sigma_\varepsilon$ and $\xi_\varepsilon$, employing \eqref{EpsEst}$_2$ and \eqref{EpsPointIneq}$_{2,3}$ we get
	\begin{equation}\label{GradEpsEst}
		\begin{split}
			\varepsilon^\frac{1}{2}\|\nabla\Sigma_\varepsilon\|_{L^2(Q_T)}\leq c,\\
			\varepsilon^\frac{1}{2}\|\nabla\xi_\varepsilon\|_{L^2(Q_T)}\leq c.
		\end{split}
	\end{equation} 
	It also follows that
	\begin{equation}\label{SDelEpsEst}
		\begin{split}
			\|\mathcal S_\delta(\mathfrak r_\varepsilon,\mathfrak z_\varepsilon,\vartheta_\varepsilon)\|_{L^p(Q_T)}\leq c,\text{ for some }p>2
		\end{split}
	\end{equation}
	by \eqref{SpEntStr}, \eqref{EntGr}, \eqref{EpsEst}$_4$, \eqref{FEpsEst}$_{2}$ and \eqref{LogEpsEst}. Then employing \eqref{FEpsEst}$_1$ we get
	\begin{equation}\label{SDelUEpsEst}
		\|\mathcal S_\delta(\mathfrak r_\varepsilon,\mathfrak z_\varepsilon,\vartheta_\varepsilon)u_\varepsilon\|_{L^q(Q_T)}\leq c\text{ for some }q>1
	\end{equation}
	provided $\Gamma>6$. Next, \eqref{EpsEst}$_{4,5,6}$ imply
	\begin{equation}\label{FracKDelEst}
		\left\|\frac{\kappa_\delta(\vartheta_\varepsilon)}{\vartheta_\varepsilon}\nabla \vartheta_\varepsilon\right\|_{L^q(Q_T)}\leq c\text{ for some }q>1.
	\end{equation} 
	Setting 
	\begin{equation}\label{REpsDef}
		R_\varepsilon=\left(\vartheta_\varepsilon\mathsf s_{+,\delta}(\mathfrak r_\varepsilon,\vartheta_\varepsilon)-\mathsf e_{+,\delta}(\mathfrak r_\varepsilon,\vartheta_\varepsilon)-\frac{\mathsf P_+(\mathfrak r_\varepsilon,\vartheta_\varepsilon)}{\mathfrak r_\varepsilon}\right)\frac{\nabla\mathfrak r_\varepsilon}{\vartheta_\varepsilon}+\left(\vartheta_\varepsilon\mathsf s_{-,\delta}(\mathfrak z_\varepsilon,\vartheta_\varepsilon)-\mathsf e_{-,\delta}(\mathfrak z_\varepsilon,\vartheta_\varepsilon)-\frac{\mathsf P_-(\mathfrak z_\varepsilon,\vartheta_\varepsilon)}{\mathfrak z_\varepsilon}\right)\frac{\nabla\mathfrak z_\varepsilon}{\vartheta_\varepsilon}
	\end{equation}
	we have 
	\begin{equation}\label{REpsEst}
		\varepsilon^\frac{1}{2}\|R_\varepsilon\|_{L^p(Q_T)}\leq c\text{ for some }p>1.
	\end{equation}
	Furthermore, we get employing \eqref{MuGr}, \eqref{EGr} and bounds \eqref{EpsEst}$_3$, \eqref{FEpsEst}$_2$ that
	\begin{equation}\label{STensDelEpsEst}
		\|\mathbb S(\vartheta_\varepsilon,\nabla u_\varepsilon)\|_{L^\frac{2\Gamma}{\Gamma+2} (Q_T)}\leq c.
	\end{equation}
	
	Next, we proceed to a uniform pressure estimate. To this end we consider in \eqref{ABM} a test function of the form $\varphi=\phi\mathcal B\left(\mathfrak r_\varepsilon-\frac{1}{|\Omega|}\int_\Omega\mathfrak r_\varepsilon\right)$, where $\phi\in C^1((0,T))$ and $\mathcal B$ is the Bogovskii (or the inverse of the divergence) operator, see \cite[Section 10.5]{FeNo09} for its precise definition and properties. As $\mathfrak r_\varepsilon$ fulfills the approximate continuity equation we have
	\begin{equation*}
		\tder\varphi=-\mathcal B\left(\dvr \left(\mathfrak r_\varepsilon u_\varepsilon-\varepsilon\nabla\mathfrak r_\varepsilon\right)\right).
	\end{equation*}
	Employing properties of the operator $\mathcal B$ we conclude
	\begin{equation*}
		\begin{split}
			\|\varphi(t,\cdot)\|_{W^{1,\Gamma(\Omega)}}&\leq c\|\mathfrak r_\varepsilon(t,\cdot)\|_{L^\Gamma(\Omega)},\\
			\|\tder\varphi(t,\cdot)\|_{L^2(\Omega)}&\leq c\|(\mathfrak r_\varepsilon u_\varepsilon-\varepsilon\nabla \mathfrak r_\varepsilon)(t,\cdot)\|_{L^2(\Omega)}.
		\end{split}
	\end{equation*}
	We obtain from \eqref{ABM} using \eqref{EpsEst}, \eqref{FEpsEst}, \eqref{GradEpsEst} and \eqref{STensDelEpsEst} the bound
	\begin{equation}\label{EpsTestBog}
		\int_0^T\phi\int_\Omega\mathcal P_\delta(\mathfrak r_\varepsilon,\mathfrak z_\varepsilon,\vartheta_\varepsilon)\mathfrak r_\varepsilon\leq c.
	\end{equation}
	It follows from \eqref{PDDef} using \eqref{EpsPointIneq} and \eqref{EpsTestBog} that 
	\begin{equation}\label{EpsBetDensInteg}
		\|\mathfrak r_\varepsilon\|_{L^{\Gamma+1}(Q_T)}+\|\mathfrak z_\varepsilon\|_{L^{\Gamma+1}(Q_T)}\leq c.
	\end{equation}
	Eventually, taking into account \eqref{PressGrowth} and \eqref{PDDef} we get by  \eqref{FEpsEst}$_2$ and \eqref{EpsBetDensInteg} that
	\begin{equation}\label{EpsPressEst}
		\|\mathcal P_\delta(\mathfrak r_\varepsilon,\mathfrak z_\varepsilon,\vartheta_\varepsilon)\|_{L^q(Q_T)}\leq c\text{ for some }q>1.
	\end{equation}
	\subsubsection{Convergences as $\varepsilon\to 0_+$}
	Proceeding in the exactly same way as in Section~\ref{SubSec:NPass} we deduce from the bounds in \eqref{EpsEst} and \eqref{FEpsEst} using \eqref{EpsPointIneq} that up to a nonrelabeled subsequence
	\begin{equation}\label{EpsWeakly}
		\begin{alignedat}{2}
			(\xi_\varepsilon,\Sigma_\varepsilon,\mathfrak r_\varepsilon,\mathfrak z_\varepsilon)&\rightharpoonup^*(\xi,\Sigma,\mathfrak r,\mathfrak z) &&\text{ in }L^\infty(0,T;L^\Gamma(\Omega)),\\
			u_\varepsilon&\rightharpoonup u&&\text{ in }L^2(0,T;W^{1,2}(\Omega)),\\
			\vartheta_\varepsilon&\rightharpoonup\vartheta&&\text{ in }L^2(0,T;W^{1,2}(\Omega))
		\end{alignedat}
	\end{equation}
	and 
	\begin{equation}\label{MomentuEpsWeakly}
		\begin{alignedat}{2}
			\Sigma_\varepsilon u_\varepsilon&\to \Sigma u&&\text{ in }C_w([0,T];L^\frac{2\Gamma}{\Gamma+1}(\Omega)),\\
			\Sigma_\varepsilon u_\varepsilon\otimes u_\varepsilon&\rightharpoonup \Sigma u\otimes u&&\text{ in }L^1(Q_T),\\
			\Sigma_\varepsilon |u_\varepsilon|^2&\rightharpoonup \Sigma |u|^2&&\text{ in }L^1(Q_T).
		\end{alignedat}
	\end{equation}
	It directly follows from \eqref{EpsEst}$_7$ that
	\begin{equation}\label{MeasEpsConv}
		\tilde\sigma_{\varepsilon,\delta}\rightharpoonup^* \sigma_\delta\text{ in } \left(C([0,T]\times\overline\Omega)\right)^*.
	\end{equation}
	Moreover, as \eqref{EpsWeakly}$_1$ is available and we know that each element of the quartet $(\xi_\varepsilon,\Sigma_\varepsilon,\mathfrak r_\varepsilon,\mathfrak z_\varepsilon)$ fulfills the integral formulation of the approximate continuity equation, we can deduce 
	\begin{equation}\label{EpsCWeakly}
		(\xi_\varepsilon,\Sigma_\varepsilon,\mathfrak r_\varepsilon,\mathfrak z_\varepsilon)\to(\xi,\Sigma,\mathfrak r,\mathfrak z) \text{ in }C_w([0,T];L^\Gamma(\Omega)).		
	\end{equation}
	Furthermore, similarly to the proof of \eqref{MomentuEpsWeakly}$_1$ we get
	\begin{equation}\label{FEpsWeakly}
		\begin{split}
			(\xi_\varepsilon u_\varepsilon,\mathfrak r_\varepsilon u_\varepsilon,\mathfrak z_\varepsilon u_\varepsilon)\rightharpoonup (\xi u,\mathfrak r u,\mathfrak zu)\text{ in }L^2(0,T;L^\frac{2\Gamma}{\Gamma+2}(\Omega))
		\end{split}
	\end{equation}
	using \eqref{EpsEst}$_2$, \eqref{EpsWeakly}$_2$ and \eqref{EpsCWeakly}.
	\subsubsection{Pointwise convergence of $\vartheta_\varepsilon$ and consequences}
	We now apply the Div--Curl lemma similarly to its application in Section~\ref{SubSec:NPass} in order to conclude
	\begin{equation}\label{ThetaEpsPointwisely}
		\vartheta_\varepsilon\to\vartheta\text{ a.e. in }Q_T.
	\end{equation}
	We denote
	\begin{equation*}
		\begin{split}
			U_\varepsilon=&\left(\mathcal S_\delta(\mathfrak r_\varepsilon,\mathfrak z_\varepsilon,\vartheta_\varepsilon),\mathcal S_\delta(\mathfrak r_\varepsilon,\mathfrak z_\varepsilon,\vartheta_\varepsilon)u_\varepsilon+\frac{\kappa_\delta(\vartheta_\varepsilon)}{\vartheta_\varepsilon}\nabla\vartheta_\varepsilon+\varepsilon R_\varepsilon\right),\\
			V_\varepsilon=&\left(T_k(\vartheta_\varepsilon),0,0,0\right)
		\end{split}
	\end{equation*}
	where $R_\varepsilon$ and $T_k$ are defined in \eqref{REpsDef}, \eqref{TKDef} respectively. Then using \eqref{AEBalMes} it follows that
	\begin{equation*}
		\langle\mathrm{Div}\ U_\varepsilon,\psi\rangle=\langle\tilde\sigma_{\varepsilon,\delta},\psi\rangle-\int_0^T\int_\Omega\varepsilon\vartheta_\varepsilon^\Lambda\psi\text{ for any }\psi\in C^\infty_c(Q_T).
	\end{equation*}
	By the compact embeddings $\left(C([0,T]\times\overline\Omega)\right)^*$ to $W^{-1,s}(Q_T)$ and $L^1(Q_T)$ to $W^{-1,s}(Q_T)$ for $s\in [1,\frac{4}{3})$ and the bounds in \eqref{EpsEst}$_7$ we conclude the relative compactness of $\{\mathrm{Div}\ U_\varepsilon\}$ in $W^{-1,s}(Q_T)$.	
	The direct calculation yields
	\begin{equation*}
		\mathrm{Curl}\ V_\varepsilon=T_k'(\vartheta_\varepsilon)\begin{pmatrix}
			0&\nabla\vartheta_\varepsilon\\
			(\nabla\vartheta_\varepsilon)^\top&0
		\end{pmatrix},
	\end{equation*}
	which is obviously bounded in $L^\infty(Q_T)$ and relatively compact in $W^{-1,s}(Q_T)$ subsequently. Next, using \eqref{SDelEpsEst}--\eqref{FracKDelEst} we have that $\{U_\varepsilon\}$ is bounded in $L^p(Q_T)$ for a certain $p>1$. Hence by Lemma~\ref{Lem:DivCurl} we conclude
	\begin{equation*}
		\overline{\mathcal S_\delta(\mathfrak r,\mathfrak z,\vartheta)T_k(\vartheta)}=\overline{\mathcal S_\delta(\mathfrak,\mathfrak z,\vartheta)}\ \overline{T_k(\vartheta)}.
	\end{equation*}
	Next, we proceed as in Section~\ref{SubSec:NPass} but with one difference. Indeed, as the analogue of \eqref{SDelNConv} is not available because of the lack of pointwise convergence of $\{\mathfrak r_\varepsilon\}$ and $\{\mathfrak z_\varepsilon\}$, we can still show
	\begin{equation}\label{SDelProdLim}
		\overline{\mathfrak S_\delta(\mathfrak r,\mathfrak z,\vartheta)T_k(\vartheta)}\geq \overline{\mathfrak S_\delta(\mathfrak r,\mathfrak z,\vartheta)}\ \overline{T_k(\vartheta)},
	\end{equation}
	i.e., the analogue of \eqref{SDelLimIneq}. Knowing already the monotonicity of $x\mapsto\mathfrak S_\delta(r,z,T_k^{-1}(x))$ we have the analogue of \eqref{EntMonIneq}, i.e.,
	\begin{equation}\label{SDelDiffIneq}
		\overline{\mathfrak S_\delta(\mathfrak r,\mathfrak z,\vartheta)T_k(\vartheta)}-\overline{\mathfrak S_\delta(\mathfrak r,\mathfrak z,\vartheta)}\ \overline {T_k(\vartheta)}\geq \overline{\mathfrak S_\delta(\mathfrak r,\mathfrak z,T_k^{-1}(\overline {T_k(\vartheta)})T_k(\vartheta)}-\overline{\mathfrak S_\delta(\mathfrak r,\mathfrak z,T_k^{-1}(\overline {T_k(\vartheta)})}\ \overline{T_k(\vartheta)}.
	\end{equation}
	To conclude \eqref{SDelProdLim} from \eqref{SDelDiffIneq} we apply Lemma~\ref{Lem:LimProd} and get
	\begin{equation*}
		\mathfrak S_\delta(\mathfrak r_\varepsilon,\mathfrak z_\varepsilon,T_k^{-1}(\overline{T_k(\vartheta)}))(T_k(\vartheta_\varepsilon)-\overline{T_k(\vartheta)})\rightharpoonup 0\text{ in }L^1(Q_T).
	\end{equation*}
	Hence it remains to show that 
	\begin{equation}\label{LimPodBG}
		\overline{b(\mathfrak r,\mathfrak z)G(\vartheta)}=\overline{b(\mathfrak r,\mathfrak z)}\ \overline{G(\vartheta)}
	\end{equation}
	for any $b\in C^1(\eR^2)$ with $\nabla b\in C_c(\eR^2)$ and any Lipschitz $G\in C^1(\eR)$. To this end we fix such a $b$ and find a sequence $\{b_n\}\subset C^2(\eR^2)$ such that 
	\begin{equation}\label{BNConv}
		b_n\to b\text{ in }C^1(\eR^2).
	\end{equation} 
	We fix $n\in\eN$ and multiply the approximate continuity equation \eqref{ApCE} for $\mathfrak r_\varepsilon$ on $\partial_rb_n(\mathfrak r_\varepsilon,\mathfrak z_\varepsilon)\phi$ and the approximate continuity equation \eqref{ApCE} for $\mathfrak z_\varepsilon$ on $\partial_zb_n(\mathfrak r_\varepsilon,\mathfrak z_\varepsilon)\phi$ with $\phi\in C^\infty_c(\Omega)$, sum both identities and integrate the sum over $\Omega$ to arrive at
	\begin{equation}
		\begin{split}
			&\frac{\mathrm{d}}{\mathrm d t}\int_\Omega b_n(\mathfrak r_\varepsilon,\mathfrak z_\varepsilon)\phi-\int_\Omega b_n(\mathfrak r_\varepsilon,\mathfrak z_\varepsilon)u_\varepsilon\cdot\nabla\phi+\varepsilon\int_\Omega\left(\partial^2_{rr}b_n(\mathfrak r,\mathfrak z)|\nabla \mathfrak r_\varepsilon|^2+2\partial^2_{rz}b_n(\mathfrak r_\varepsilon,\mathfrak z_\varepsilon)\nabla \mathfrak r_\varepsilon\cdot\mathfrak z_\varepsilon\partial^2_{zz}b_n(\mathfrak r,\mathfrak z)|\nabla \mathfrak z_\varepsilon|^2\right)\phi\\
			&+\varepsilon\int_\Omega \left(\partial_r b_n(\mathfrak r_\varepsilon,\mathfrak z_\varepsilon)\nabla \mathfrak r_\varepsilon+\partial_z b_n(\mathfrak z_\varepsilon,\mathfrak z_\varepsilon)\nabla \mathfrak z_\varepsilon\right)\cdot\nabla\phi+\int_\Omega\left(\mathfrak r_\varepsilon\partial_rb_n(\mathfrak r_\varepsilon,\mathfrak z_\varepsilon)+\mathfrak z_\varepsilon\partial_zb_n(\mathfrak r_\varepsilon,\mathfrak z_\varepsilon)-b_n(\mathfrak r_\varepsilon,\mathfrak z_\varepsilon)\right)\dvr u_\varepsilon\phi=0.
		\end{split}
	\end{equation}
	As a consequence of the latter identity we get the uniform boundedness and equicontinuity of $\{t\mapsto\int_\Omega b_n(\mathfrak r_\varepsilon,\mathfrak z_\varepsilon)\phi\}$ in $C([0,T])$. Hence by the Arzel\`a--Ascoli theorem there is a nonrelabeled subsequence such that
	\begin{equation*}
		b_n(\mathfrak r_\varepsilon,\mathfrak z_\varepsilon)\to\overline{b_n(\mathfrak r,\mathfrak z)}\text{ in }C_w([0,T];L^\Gamma(\Omega)).
	\end{equation*}
	Accordingly, we get 
	\begin{equation*}
		b_n(\mathfrak r_\varepsilon,\mathfrak z_\varepsilon)\to\overline{b_n(\mathfrak r,\mathfrak z)}\text{ in }C_w([0,T]; W^{-1,2}(\Omega)).
	\end{equation*}
	The regularity of $G$ and the bound in \eqref{FEpsEst}$_2$ imply
	\begin{equation*}
		G(\vartheta_\varepsilon)\to\overline{G(\vartheta)}\text{ in }L^2([0,T]; W^{1,2}(\Omega)).
	\end{equation*}
	Combining the latter two convergences yields
	\begin{equation}\label{AuxLimIdnt}
		\overline{b_n(\mathfrak r,\mathfrak z)G(\vartheta)}=\overline{b_n(\mathfrak r,\mathfrak z)}\ \overline{G(\vartheta)}.
	\end{equation}
	Finally, as $\{b(\mathfrak r_\varepsilon,z_\varepsilon)\}$ and $\{b(\mathfrak r_\varepsilon,z_\varepsilon)G(\vartheta_\varepsilon)\}$ are obviously bounded in $L^1(Q_T)$, the weak lower semicontinuity of $L^1$--norm and \eqref{BNConv} imply
	\begin{equation*}
		\begin{split}
			\|\overline{b_n(\mathfrak r,\mathfrak z)G(\vartheta)}-\overline{b(\mathfrak r,\mathfrak z)G(\vartheta)}\|_{L^1(Q_T)}\leq\|b_n-b\|_{C^1(\eR^2)}\liminf_{\varepsilon\to 0_+}\|G(\vartheta_\varepsilon)\|_{L^1(Q_T)}\to 0,\\
			\|(\overline{b_n(\mathfrak r,\mathfrak z)}\ -\overline{b(\mathfrak r,\mathfrak z)}) \overline{G(\vartheta)}\|_{L^1(Q_T)}\leq\|b_n-b\|_{C^1(\eR^2)}\|\overline{G(\vartheta)}\|_{L^1(Q_T)}\to 0.
		\end{split}
	\end{equation*}
	The latter convergences allow for the limit passage $n\to\infty$ in \eqref{AuxLimIdnt} to conclude \eqref{LimPodBG}. Having \eqref{LimPodBG} at hand we can proceed as in Section~\ref{SubSec:NPass} to end up with \eqref{ThetaEpsPointwisely}.
	We also get the positivity of the limit temperature $\vartheta$ a.e. in $Q_T$ following from
	\begin{equation*}
		\vartheta^{-3}\in L^1(Q_T)
	\end{equation*}
	as in Section~\ref{SubSec:NPass}. 
	\subsubsection{Derivation of the effective viscous flux identity}\label{Sec:EpsEffVicFl}
	We make a list of convergences of several sequences that will be used later.
	Employing \eqref{EpsWeakly}$_2$, \eqref{ThetaEpsPointwisely}, \eqref{MuGr}, \eqref{EGr},  bounds \eqref{FEpsEst}$_2$ and \eqref{STensDelEpsEst} we conclude
	\begin{equation}\label{SDelEpsWeak}
		\mathbb S(\vartheta_\varepsilon,\nabla u_\varepsilon)\rightharpoonup\mathbb S(\vartheta,\nabla u)\text{ in }L^1(Q_T).
	\end{equation}
	We know that
	\begin{equation}\label{Theta4EpsS}
		\vartheta_\varepsilon^4\to \vartheta^4\text{ in }L^1(Q_T)
	\end{equation}
	due to \eqref{FEpsEst}$_2$ and \eqref{ThetaEpsPointwisely}. Therefore denoting
	\begin{equation}\label{SmallPDef}
		p_\delta(r,\zeta,\theta)=
		\mathsf P_+(r,\theta)+\mathsf P_-(\zeta r,\theta)+\delta\left(r^\Gamma+(\zeta r)^\Gamma+r^2+(\zeta r)^2\right)
	\end{equation}
	we have for $\zeta_\varepsilon=\frac{\mathfrak z_\varepsilon}{\mathfrak r_\varepsilon}$ and $\zeta=\frac{\mathfrak z}{\mathfrak r}$
	\begin{equation}\label{SmPDef}
		\begin{split}
			p_\delta(\mathfrak r_\varepsilon, \zeta_\varepsilon,\vartheta_\varepsilon)=& p_\delta(\mathfrak r_\varepsilon, \zeta,\vartheta)+\mathsf P_+(\mathfrak r_\varepsilon,\vartheta_\varepsilon)+\mathsf P_-(\zeta_\varepsilon\mathfrak r_\varepsilon,\vartheta_\varepsilon)-\mathsf P_+(\mathfrak r_\varepsilon,\vartheta)-\mathsf P_-(\zeta\mathfrak r_\varepsilon,\vartheta)\\
			&+\delta(\mathfrak r_\varepsilon^\Gamma(\zeta_\varepsilon^\Gamma-\zeta^\Gamma)+\mathfrak r_\varepsilon^2(\zeta_\varepsilon^2-\zeta^2))\\=&p_\delta(\mathfrak r_\varepsilon,\zeta,\vartheta)+D_\varepsilon.
		\end{split}
	\end{equation}
	By means of the first order Taylor expansion and \eqref{PressVarGr} we get
	\begin{equation*}
		\begin{split}
			|D_\varepsilon|\leq& 
			c(1+\mathfrak r_\varepsilon^{\tilde\Gamma^P})(1+\vartheta_\varepsilon^{\tilde\omega^P_+}+\vartheta^{\tilde\omega^P_+}+\vartheta_\varepsilon^{\tilde\omega^P_-}+\vartheta^{\tilde\omega^P_-})|\vartheta_\varepsilon-\vartheta|\\
			&+c(1+\vartheta_\varepsilon^{\overline\omega^P}+\vartheta^{\overline\omega^P})(\mathfrak r_\varepsilon^{\underline\Gamma^P}+\mathfrak r_\varepsilon^{\overline\Gamma^P})|\zeta_\varepsilon-\zeta|+c(\mathfrak r^\Gamma_\varepsilon+\mathfrak r_\varepsilon^2)|\zeta_\varepsilon-\zeta|.
		\end{split}
	\end{equation*}
	Hence using bounds \eqref{FEpsEst}$_2$, \eqref{EpsBetDensInteg}, convergence \eqref{ThetaEpsPointwisely} and item (ii) of Lemma~\ref{Lem:AlmComp} we get
	\begin{equation*}
		\lim_{\varepsilon\to 0_+}\|D_\varepsilon\|_{L^1(Q_T)}=0.
	\end{equation*}
	Taking into account \eqref{EpsPressEst} and \eqref{Theta4EpsS} we deduce the existence of a weak limit of $\{p_\delta(\mathfrak r_\varepsilon,\zeta_\varepsilon,\vartheta_\varepsilon)\}$ in $L^1(Q_T)$. Going back to \eqref{SmPDef} we then have
	\begin{equation*}
		\overline{p_\delta(\mathfrak r,\zeta,\vartheta)}=\overline{\overline{p_\delta(\mathfrak r,\zeta,\vartheta)}},
	\end{equation*}
	where $\overline{\overline{p_\delta(\mathfrak r,\zeta,\vartheta)}}$ denotes a weak limit of the sequence $\{p_\delta(\mathfrak r_\varepsilon,\zeta,\vartheta)\}$ in $L^1(Q_T)$.
	We are now ready for the limit passage $\varepsilon\to 0_+$ in approximate balance of momentum \eqref{ABM}. Employing convergences \eqref{MomentuEpsWeakly}$_{1,2}$, \eqref{SDelEpsWeak}, \eqref{Theta4EpsS} and bounds \eqref{FEpsEst}$_1$, \eqref{GradEpsEst}$_1$ we arrive at
	\begin{equation}\label{MomEEpsLim}
		\begin{split}
			&\int_\Omega \Sigma u\cdot\varphi(t)-\int_\Omega (\Sigma u)_{0,\delta}\cdot\varphi(0)\\&=\int_0^t\int_\Omega \left(\Sigma u\cdot\tder\varphi+\Sigma(u\otimes u)\cdot\nabla\varphi-\mathbb S(\vartheta,\nabla u)\cdot\nabla\varphi+\left(\frac{b}{3}\vartheta^4+\overline{\overline{p_\delta(\mathfrak r,\zeta,\vartheta)}}\right)\dvr\varphi\right)
		\end{split}
	\end{equation}
	for all $t\in[0,T]$, $\varphi\in C^1([0,T]\times\overline \Omega)$ such that either $\varphi=0$ or $\varphi\cdot n=0$ on $(0,T)\times\partial\Omega$.
	Following the nowadays standard procedure of the proof of strong convergence of density approximations in the mono-fluid theory we derive so called effective viscous flux identity. As there are no essential differences compared with the mono-fluid case, cf. \cite[Section 3.6.5]{FeNo09}, we focus just on the most substantial steps of the proof. 
	We take as a test function in \eqref{ABM} 
	\begin{equation*}
		\varphi(t,x)=\psi(t)\phi(x)\mathfrak U\left(1_\Omega\mathfrak r_\varepsilon\right),\text{ where }\psi\in C^1_c((0,T)), \phi\in C^1_c(\Omega)
	\end{equation*}
	and the pseudodifferential operator $\mathfrak U$ is defined in \eqref{RUOpDef}. We note that $1_\Omega\mathfrak r_\varepsilon$ fulfills the extended approximated continuity equation on $(0,T)\times\eR^3$ and the extension is possible since $\mathfrak r_\varepsilon u_\varepsilon$ and $\nabla\mathfrak r_\varepsilon$ possess zero normal trace. This fact, properties of operators $\mathfrak U$ and $\mathfrak R$ defined in \eqref{RUOpDef} and straightforward manipulations result in
	\begin{equation}\label{MomEEpsUTest}
		\begin{split}
			&\int_0^T\psi\int_\Omega\phi p_\delta(\mathfrak r_\varepsilon,\zeta_\varepsilon,\vartheta_\varepsilon)1_\Omega\mathfrak r_\varepsilon-\int_0^T\psi\int_\Omega\phi\mathbb S(\vartheta_\varepsilon,\nabla u_\varepsilon)\cdot\mathfrak R(1_\Omega\mathfrak r_\varepsilon)\\
			&=\int_0^T\psi\int_\Omega (u_\varepsilon)_k\cdot\left(1_\Omega\mathfrak r_\varepsilon\mathfrak R_{jk}\left(\phi(\Sigma_\varepsilon u_\varepsilon)_j\right)-\phi(\Sigma_\varepsilon u_\varepsilon)_j\mathfrak R_{jk}(1_\Omega\mathfrak r_\varepsilon)\right)+\mathfrak J(\psi',\phi,\mathfrak r_\varepsilon,\mathfrak z_\varepsilon, \Sigma_\varepsilon,u_\varepsilon,\vartheta_\varepsilon).
		\end{split}
	\end{equation}
	Repeating the procedure for 
	\begin{equation*}
		\varphi=\psi\phi\mathfrak U(1_\Omega\mathfrak r)
	\end{equation*}
	in \eqref{MomEEpsLim} we arrive at
	\begin{equation}\label{MomEEpsLimUTest}
		\begin{split}
			&\int_0^T\psi\int_\Omega\phi \overline{\overline{p_\delta(\mathfrak r,\zeta,\vartheta)}}1_\Omega\mathfrak r-\int_0^T\psi\int_\Omega\phi\mathbb S(\vartheta,\nabla u)\mathfrak R(1_\Omega\mathfrak r)\\
			&=\int_0^T\psi\int_\Omega u_k\cdot\left(1_\Omega\mathfrak r\mathfrak R_{jk}\left(\phi(\Sigma u)_j\right)-\phi(\Sigma u)_j\mathfrak R_{jk}(1_\Omega\mathfrak r)\right)+\mathfrak J(\psi',\phi,\mathfrak r,\mathfrak z, \Sigma,u_\varepsilon,\vartheta).
		\end{split}
	\end{equation}
	We pass to the limit $\varepsilon\to 0_+$ using classical compactness arguments including the fact that $\mathfrak U:L^p(\Omega)\to W^{1,p}(\Omega)$ is a continuous linear operator for $p\in (1,\infty)$ to get
	\begin{equation*}
		\lim_{\varepsilon\to 0_+}\mathfrak J(\psi',\phi, \mathfrak r_\varepsilon,\mathfrak z_\varepsilon, \Sigma_\varepsilon, u_\varepsilon,\vartheta_\varepsilon)=\mathfrak J(\psi',\phi, \mathfrak r,\mathfrak z, \Sigma, u,\vartheta).
	\end{equation*}
	Hence subtracting \eqref{MomEEpsLimUTest} from \eqref{MomEEpsUTest} we get after the limit passage $\varepsilon\to 0_+$
	\begin{equation}\label{EpsLimIdent}
		\begin{split}		&\int_0^T\psi\int_\Omega\phi\left(\overline{\overline{p_\delta(\mathfrak r,\zeta,\vartheta)1_\Omega\mathfrak r}}-\overline{\overline{p_\delta(\mathfrak r,\zeta,\vartheta)}}\ 1_\Omega\mathfrak r\right)=\int_0^T\psi\int_\Omega\phi\left(\overline{\mathbb S(\vartheta,\nabla u)\cdot\mathfrak R(1_\Omega\mathfrak r)}-\mathbb S(\vartheta,\nabla u)\cdot\mathfrak R(1_\Omega\mathfrak r)\right)\\
			&+\lim_{\varepsilon\to 0_+}\int_0^T\psi\int_\Omega\left((u_\varepsilon)_k \cdot\left(1_\Omega\mathfrak r_\varepsilon\mathfrak R_{jk}(\phi(\Sigma_\varepsilon u_\varepsilon)_j)-\phi(\Sigma_\varepsilon u_\varepsilon)_j\mathfrak R_{jk}(1_\Omega\mathfrak r_\varepsilon)\right)-u_k\cdot\left(1_\Omega\mathfrak r\mathfrak R_{jk}(\phi(\Sigma u)_j)\right.\right.\\&\quad\quad\quad\left.\left.-\phi(\Sigma u)_j\mathfrak R_{jk}(1_\Omega\mathfrak r)\right)\right).
		\end{split}
	\end{equation}
	Our next task is to verify that the latter limit vanishes. To this end we apply Lemma~\ref{DCRiesz} along with \eqref{MomentuEpsWeakly} and \eqref{EpsCWeakly}, the continuity of $\mathfrak R$ on $L^p(\Omega)$ for any $p\in(1,\infty)$ to obtain
	\begin{equation}\label{EpsComConv}
		1_\Omega\mathfrak r_\varepsilon \mathfrak R_{jk}(\phi(\Sigma_\varepsilon u_\varepsilon)_j)-\phi(\Sigma_\varepsilon u_\varepsilon)_j\mathfrak R_{jk}(1_\Omega\mathfrak r_\varepsilon)\rightharpoonup 1_\Omega\mathfrak r\mathfrak R_{jk}(\phi(\Sigma u)_j)-\phi(\Sigma u)_j\mathfrak R_{jk}(1_\Omega\mathfrak r)
	\end{equation}
	in $L^\frac{2\Gamma}{\Gamma+3}(\Omega)$	for arbitrary but fixed $t\in[0,T]$. Moreover, by virtue of the compact embedding $L^\frac{2\Gamma}{\Gamma+3}(\Omega)$ into $W^{-1,2}(\Omega)$ for $\Gamma>\frac{9}{2}$ it follows that convergence \eqref{EpsComConv} is strong in $W^{-1,2}(\Omega)$ for any $t\in[0,T]$ and using the Vitali convergence theorem we get that convergence \eqref{EpsComConv} is in fact strong in $L^p(0,T;W^{-1,2}(\Omega))$ for any $p\in[1,\infty)$. Combining the latter convergence and \eqref{EpsWeakly}$_2$ we deduce from \eqref{EpsLimIdent} that 
	\begin{equation}\label{AlmEffViscFlEps}		\int_0^T\psi\int_\Omega\phi\left(\overline{\overline{p_\delta(\mathfrak r,\zeta,\vartheta)1_\Omega\mathfrak r}}-\overline{\overline{p_\delta(\mathfrak r,\zeta,\vartheta)}}\ 1_\Omega\mathfrak r\right)=\int_0^T\psi\int_\Omega\phi\left(\overline{\mathbb S(\vartheta,\nabla u)\cdot\mathfrak R(1_\Omega\mathfrak r)}-\mathbb S(\vartheta,\nabla u)\cdot\mathfrak R(1_\Omega\mathfrak r)\right).
	\end{equation}
	We focus our attention on the terms on the right hand side of the latter equality. Employing the identity 
	\begin{equation}\label{RSymGrIdent}
		\sum_{i,j=1}^3\mathfrak R_{ij}(\partial_i u_j+\partial_j u_i)=2\dvr u
	\end{equation}
	and the fact that $\mathfrak R_{ij}$ commutes in $L^p$--$L^{p'}$ duality, we obtain
	\begin{equation}\label{EpsLimShortIdent}
		\int_0^T\psi\int_\Omega\phi\overline{\mathbb S(\vartheta,\nabla u)\cdot\mathfrak R(1_\Omega\mathfrak r)}=\lim_{\varepsilon\to 0_+}\int_0^T\psi\int_\Omega\phi\left(\frac{4}{3}\mu(\vartheta_\varepsilon)+\eta(\vartheta_\varepsilon)\right)\mathfrak r_\varepsilon\dvr u_\varepsilon+\lim_{\varepsilon\to 0_+}\int_0^T\psi\int_\Omega \mathfrak r_\varepsilon\omega(\vartheta_\varepsilon,u_\varepsilon)
	\end{equation}
	and
	\begin{equation}\label{IdentLim}
		\int_0^T\psi\int_\Omega\phi\mathbb S(\vartheta,\nabla u)\cdot\mathfrak R(1_\Omega\mathfrak r)=\int_0^T\psi\int_\Omega\phi\left(\frac{4}{3}\mu(\vartheta)+\eta(\vartheta)\right)\mathfrak r\dvr u+\int_0^T\psi\int_\Omega \mathfrak r\omega(\vartheta,u),
	\end{equation}
	where
	\begin{equation}\label{omDef}
		\omega(\vartheta, u)=\sum_{i,j=1}^3\left(\mathfrak R_{ij}\left(\phi\mu(\vartheta)\left(\partial_i u_j+\partial_j u_i\right)\right)-\phi\mu(\vartheta)\mathfrak R_{i,j}\left(\partial_i u_j+\partial_j u_i\right)\right).
	\end{equation}
	The task now is to show that the second terms on the right hand side of \eqref{EpsLimShortIdent} and \eqref{IdentLim} are equal. To this end we first apply Lemma~\ref{Lem:ComEst} to infer
	\begin{equation}\label{OmEpsLWBound}
		\|\omega(\vartheta_\varepsilon, u_\varepsilon)\|_{L^1(0,T;W^{k,p}(\Omega))}\leq c\text{ for some }k\in(0,1), p>1 
	\end{equation}
	by virtue of bounds \eqref{FEpsEst}$_{1,2}$ which also guarantee that
	\begin{equation}\label{OmEpsLBound}
		\|\omega(\vartheta_\varepsilon,u_\varepsilon)\|_{L^q(Q_T)}\leq c\text{ for some }q>1.
	\end{equation}
	Considering sequences $\{U_\varepsilon\}$, $\{V_\varepsilon\}$ of vector fields
	\begin{equation*}
		U_\varepsilon=(\mathfrak r_\varepsilon,\mathfrak r_\varepsilon u_\varepsilon),\ V_\varepsilon=(\omega(\vartheta_\varepsilon,u_\varepsilon),0,0,0)
	\end{equation*}
	we have that $\{V_\varepsilon\}$ is bounded in $L^q(Q_T)$ $q>1$ due to \eqref{OmEpsLBound} and $\{\mathrm{Curl}\ V_\varepsilon\}$ is compact in $W^{-1,\tilde q}(Q_T)$ for some $\tilde q>1$ thanks to \eqref{OmEpsLWBound}. Obviously, $\{U_\varepsilon\}$ is bounded in $L^s(Q_T)$ for some $s>1$ due to \eqref{EpsEst}$_2$ and \eqref{FEpsEst}$_1$. We note that the choice of $s$ and $q$ is such that $\frac{1}{s}+\frac{1}{q}<1$. Moreover, $\{\mathrm{Div}\ U_\varepsilon\}$ is compact in $W^{-1,\tilde q}(Q_T)$ as a consequence of identity \eqref{ApCE} for $\mathfrak r_\varepsilon$ and $u_\varepsilon$, convergences \eqref{EpsCWeakly} and \eqref{FEpsWeakly} and bound \eqref{DensEpsEst}. Therefore by Lemma~\ref{Lem:DivCurl} we conclude
	\begin{equation}\label{OmREpscon}
		\omega(\vartheta_\varepsilon, u_\varepsilon)\mathfrak r_\varepsilon\rightharpoonup \omega(\vartheta, u)\mathfrak r\text{ in }L^1(Q_T)
	\end{equation}
	since in accordance with convergences \eqref{EpsWeakly}$_2$, \eqref{ThetaEpsPointwisely} respectively, and estimates \eqref{MuGr}, \eqref{EpsEst}$_4$ respectively, it follows that $\overline{\omega(\vartheta, u)}=\omega(\vartheta, u)$.
	Eventually, going back to \eqref{AlmEffViscFlEps}, where we employ \eqref{EpsLimShortIdent}, \eqref{IdentLim} and \eqref{OmREpscon}, we arrive at the desired effective viscous flux identity
	\begin{equation}\label{EffViscFlEps}
		\overline{\overline{p_\delta(\mathfrak r,\zeta,\vartheta)\mathfrak r}}-\overline{\overline{p_\delta(\mathfrak r,\zeta,\vartheta)}}\mathfrak r=\left(\frac{4}{3}\mu(\vartheta)+\eta(\vartheta)\right)\left(\overline{\mathfrak r\dvr u}-\mathfrak r\dvr u\right).
	\end{equation}
	\subsubsection{Pointwise convergence of $\mathfrak r_\varepsilon$ and consequences}\label{Sec:EpsThConv}
	The goal of this section is the justification of 
	\begin{equation}\label{REpsPoint}
		\mathfrak r_\varepsilon\to\mathfrak r\text{ a.e. in }Q_T.
	\end{equation}
	We begin with multiplacation of \eqref{ApCE} for $\mathfrak r_\varepsilon$ and $u_\varepsilon$ on $G'(\mathfrak r_\varepsilon)$, where $G$ is smooth and convex on $[0,\infty)$. We integrate the resulting identity over $Q_t$ and by parts and let $\varepsilon\to 0_+$ to obtain
	\begin{equation*}
		\int_\Omega \overline{G(\mathfrak r)}(t)+\int_0^t\int_\Omega\overline{\left(G'(\mathfrak r)\mathfrak r-G(\mathfrak r)\dvr u\right)}\leq \int_\Omega G(\mathfrak r_{0,\delta})\text{ for a.a. }t\in(0,T).
	\end{equation*}
	The special choice $G(r)=r\log r$ yields
	\begin{equation}\label{EpsLimRen}
		\int_\Omega \overline{\mathfrak r\log\mathfrak r}(t)+\int_0^t\int_\Omega\overline{\mathfrak r\dvr u}\leq \int_\Omega \mathfrak r_{0,\delta}\log r_{0,\delta}\text{ for a.a. }t\in(0,T).
	\end{equation}
	On the other hand, we know that thanks to the regularity of $\mathfrak r$ and $u$ this pair fulfills the renormalized continuity equation. In particular, we have
	\begin{equation}\label{RenLimits}
		\int_\Omega \mathfrak r\log \mathfrak r(t)+\int_0^t\int_\Omega \mathfrak r\dvr u=\int_\Omega \mathfrak r_{0,\delta}\log \mathfrak r_{0,\delta}\text{ for a.a. }t\in(0,T).
	\end{equation}
	The difference of \eqref{EpsLimRen} and \eqref{RenLimits} combined with \eqref{EffViscFlEps} yields
	\begin{equation*}
		\begin{split}
			\int_\Omega \left(\overline{\mathfrak r\log\mathfrak r}-\mathfrak r\log\mathfrak r\right)(t)&\leq\int_0^t\int_\Omega\left(\mathfrak r\dvr u-\overline{\mathfrak r\dvr u}\right)\\
			&=\int_0^t\int_\Omega\frac{1}{\frac{4}{3}\mu(\vartheta)+\eta(\vartheta)}\left(\overline{\overline{p_\delta(\mathfrak r,\zeta,\vartheta)}}\mathfrak r-\overline{\overline{p_\delta(\mathfrak r,\zeta,\vartheta)\mathfrak r}}\right)\leq 0\text{ for a.a. }t\in(0,T)
		\end{split}
	\end{equation*}
	by Lemma~\ref{Lem:MonWConv} as $r\mapsto p(r,\cdot,\cdot)$ is nondecreasing by assumption \eqref{ThStab} and the monotonicity of a power function. Taking into consideration the strict convexity of $s\mapsto s\log s$ on $[0,\infty)$ we get \eqref{REpsPoint}. Using \eqref{AlmConv} we conclude
	\begin{equation}\label{ZEpsPoint}
		\mathfrak z_\varepsilon\to\mathfrak z\text{ a.e. in }Q_T.
	\end{equation}
	Using bound \eqref{EpsBetDensInteg}, convergences \eqref{REpsPoint}, \eqref{ZEpsPoint} respectively, and the Vitali convergence theorem we conclude
	\begin{equation}\label{EpsDensStrongly}
		(\mathfrak r_\varepsilon,\mathfrak z_\varepsilon)\to (\mathfrak r,\mathfrak z)\text{ in }L^p(Q_T)\text{ for }p\in[1,\Gamma+1).
	\end{equation}
	Having convergences \eqref{REpsPoint}, \eqref{ZEpsPoint} respectively, knowing the growth \eqref{PressGrowth} and bound \eqref{EpsBetDensInteg} we apply the Vitali convergence theorem to obtain
	\begin{equation}\label{EpsPressStrongly}
		\overline{\overline{p_\delta(\mathfrak r,\zeta,\vartheta)}}=p_\delta(\mathfrak r,\zeta,\vartheta)
	\end{equation}
	from \eqref{SmallPDef}. Plugging this identity in \eqref{MomEEpsLim} we conclude \eqref{DMomBal}.
	\subsubsection{Passage $\varepsilon\to 0_+$ in the energy equality}
	Using pointwise convergences \eqref{ThetaEpsPointwisely}, \eqref{REpsPoint}, \eqref{ZEpsPoint}, growth estimate \eqref{IntEnGrowth}, bounds \eqref{FEpsEst}$_2$ and \eqref{EpsBetDensInteg} we conclude
	\begin{equation}\label{EpsIEConv}
		\mathcal E_\delta(\mathfrak r_\varepsilon,\mathfrak z_\varepsilon,\vartheta_\varepsilon)\to\mathcal E_\delta(\mathfrak r,\mathfrak z,\vartheta)\text{ in }L^p(Q_T)\text{ for some }p>1.
	\end{equation}
	Moreover, we deduce $\vartheta^{-3}\in L^1(Q_T)$ by \eqref{EpsEst}$_5$, implying $\vartheta\geq 0$ a.e. in $Q_T$, and
	\begin{equation} \label{NEpsPowerConv} 
		\vartheta_\varepsilon^{-2}\to \vartheta^{-2}\text{ in }L^1(Q_T).
	\end{equation}
	Employing convergences \eqref{MomentuEpsWeakly}$_3$, \eqref{EpsIEConv}, \eqref{NEpsPowerConv}, bound \eqref{EpsEst}$_4$ and the strong convergence of densities in \eqref{EpsDensStrongly} we pass to the limit $\varepsilon\to 0_+$ in \eqref{AEB} and obtain \eqref{DAEneBal}.
	\subsubsection{Limit passage $\varepsilon\to 0_+$ in the continuity equations} Using convergences \eqref{MomentuEpsWeakly}$_1$, \eqref{EpsCWeakly} and \eqref{FEpsWeakly} and bounds \eqref{DensEpsEst}, \eqref{GradEpsEst} respectively, in \eqref{ApCE} for $r\in\{\xi_\varepsilon,\Sigma_\varepsilon,\mathfrak r_\varepsilon,\mathfrak z_\varepsilon\}$ we obtain that the limit functions $(\mathfrak r,\mathfrak z,\Sigma,\xi)$ satisfy the continuity equations in the form specified in \eqref{DRCE}.
	\subsubsection{Passage $\varepsilon\to 0_+$ in the entropy balance}
	Using the same convergences as above combined with the bound \eqref{SDelEpsEst} we get
	\begin{equation}\label{SDelEpsConv}
		\mathcal S_\delta(\mathfrak r_\varepsilon,\mathfrak z_\varepsilon,\vartheta_\varepsilon)\to \mathcal S_\delta(\mathfrak r,\mathfrak z,\vartheta)\text{ in }L^p(Q_T)\text{ for some }p>2,
	\end{equation}
	which combined with \eqref{EpsWeakly}$_2$ yields
	\begin{equation}\label{SDelUEpsConv}
		\mathcal S_\delta(\mathfrak r_\varepsilon,\mathfrak z_\varepsilon,\vartheta_\varepsilon)u_\varepsilon\rightharpoonup \mathcal S_\delta(\mathfrak r,\mathfrak z,\vartheta)u\text{ in }L^q(Q_T)\text{ for some }q>1.
	\end{equation}
	As in Section~\ref{SubSec:NPass} we use \eqref{EpsEst}$_{4,5}$, convergences \eqref{EpsWeakly}$_3$ and \eqref{ThetaEpsPointwisely} to conclude
	\begin{equation}\label{KDNThEpsConv}
		\frac{\kappa_\delta(\vartheta_\varepsilon)}{\vartheta_\varepsilon}\nabla\vartheta_\varepsilon\rightharpoonup \frac{\kappa_\delta(\vartheta)}{\vartheta}\nabla\vartheta\text{ in }L^1(Q_T).
	\end{equation}
	
	Eventually, we employ convergences \eqref{MeasEpsConv}, \eqref{SDelEpsConv}, \eqref{SDelUEpsConv}, \eqref{KDNThEpsConv}, bounds \eqref{EpsEst}$_4$ and \eqref{REpsEst} to pass to the limit $\varepsilon\to 0_+$ in \eqref{AEBalMes} to conclude \eqref{DAEBal}. We note that 
	\begin{equation*}
		(\vartheta,u)\mapsto \int_0^T\int_\Omega \frac{1}{\vartheta_\varepsilon}\mathbb S(\vartheta_\varepsilon,\nabla u_\varepsilon)\cdot\nabla u_\varepsilon,\ \vartheta\mapsto\int_0^T\int_\Omega\frac{\kappa_\delta(\vartheta)}{\vartheta}|\nabla \vartheta|^2
	\end{equation*}
	are weak lower semicontinuous as was observed in Section~\ref{SubSec:NPass} and remaining terms in $\sigma_{\varepsilon,\delta}$ are nonnegative, cf. \eqref{SDEDef}$_1$, which explains how one passes from \eqref{SigmEpsDom} to \eqref{SigmDDef}.
	\subsection{Limit passage $\delta\to 0_+$}
	The last step in the existence proof is the limit passage in equations \eqref{DRCE}--\eqref{DAEneBal}. In comparison with the limit passage $\varepsilon\to 0_+$ there appear only few differences and many arguments are adapted almost unchanged. In particular, the control of density oscillations beyond the DiPerna--Lions theory is necessary, 
	\subsubsection{Estimates independent of $\delta$}\label{SubSec:DEst}
	Let us consider a family $\{(\xi_\delta,\mathfrak r_\delta,\mathfrak z_\delta,\Sigma_\delta,u_\delta,\vartheta_\delta)\}$ of solutions to \eqref{DRCE}--\eqref{DAEneBal}.
	In a usual way we take $\psi=1$ in \eqref{DAEBal} and subtract the $\Theta$--multiple of the resulting identity from \eqref{DAEneBal} to obtain the dissipation balance
	\begin{equation}\label{DissBalDel}
		\begin{split}
			&\int_\Omega\left(\frac{1}{2}\Sigma_\delta|u_\delta|^2+H_{\delta,\Theta}(\mathfrak r_\delta,\mathfrak z_\delta,\vartheta_\delta)+h_{\delta}(\mathfrak r_\delta,\mathfrak z_\delta)\right)(t)+\Theta\sigma_\delta([0,t]\times\overline\Omega)\\
			&=\int_\Omega\left(\frac{1}{2}\frac{|(\Sigma u)_{0,\delta}|^2}{\Sigma_{0,\delta}}+H_{\delta,\Theta}(\mathfrak r_{0,\delta},\mathfrak z_{0,\delta},\vartheta_{0,\delta})+h_\delta(\mathfrak r_{0,\delta},\mathfrak z_{0,\delta})\right)+\int_0^t\int_\Omega\frac{\delta}{\vartheta_\delta^2}\text{ for a.a. }t\in(0,T)
		\end{split}
	\end{equation}
	with the Helmholtz function $H_{\delta,\Theta}$ defined in \eqref{HelmDDef}. We notice that the term containing $\frac{\delta}{\vartheta_\delta^2}$ can be absorbed in the term $\frac{\delta}{\vartheta_\delta^3}$ in \eqref{SigmDDef} using the Young inequaity. Assuming that the first integral on the right hand side of \eqref{DissBalDel} is bounded uniformly with respect to $\delta>0$ we conclude as in Sections~\ref{SubSec:NEst} and \ref{Sec:EpsEst}
	\begin{equation}\label{DelEst}
		\begin{split}
			\|\sqrt{\Sigma_\delta} u_\delta\|_{L^\infty(0,T;L^2(\Omega))}&\leq c,\\
			\|\mathfrak r_\delta\|_{L^\infty(0,T;L^\gamma(\Omega))}+\|\mathfrak r_\delta\|_{L^\infty(0,T;L^\gamma(\Omega))}&\leq c,\\
			\|\mathfrak r_\delta\|_{L^\infty(0,T;L^\Gamma(\Omega))}+\|\mathfrak z_\delta\|_{L^\infty(0,T;L^\Gamma(\Omega))}&\leq c\delta^{-\frac{1}{\Gamma}},\\
			\|\vartheta_\delta\|_{L^\infty(0,T;L^4(\Omega))}&\leq c,\\
			\sigma_\delta([0,T]\times\overline{\Omega})&\leq c,\\
			\|\mathcal S_\delta(\mathfrak r_\delta,\mathfrak z_\delta,\vartheta_\delta)\|_{L^\infty(0,T;L^1(\Omega))}&\leq c.
		\end{split}
	\end{equation}
	As a consequence of \eqref{DelEst}$_5$ we get from \eqref{SigmDDef}
	\begin{equation}\label{FDelEst}
		\begin{split}
			\|\vartheta^{-1}_\delta\mathbb S(\vartheta_\delta,\nabla u_\delta)\cdot\nabla u_\delta\|_{L^1(Q_T)}&\leq c,\\
			\|\nabla \log \vartheta_\delta\|_{L^2(Q_T)}&\leq c,\\
			\|\nabla \vartheta_\delta^\frac{\beta}{2}\|_{L^2(Q_T)}&\leq c,\\
			\|\vartheta_\delta^{-1}\|_{L^3(Q_T)}&\leq c\delta^{-\frac{1}{3}},\\
			\|\nabla\vartheta_\delta^\frac{\Gamma}{2}\|_{L^2(Q_T)}+	\|\nabla\vartheta_\delta^{-\frac{1}{2}}\|_{L^2(Q_T)}&\leq c\delta^{-\frac{1}{2}}.
		\end{split}
	\end{equation}
	Moreover, we get as in Section~\ref{Sec:EpsPass} by the generalized Korn--Poincar\'e inequality
	\begin{equation}\label{UDelEst}
		\|u_\delta\|_{L^2(0,T;W^{1,2}(\Omega))}\leq c.
	\end{equation}	
	By Lemma~\ref{Lem:Poincare} we have from \eqref{DelEst}$_{2,4,6}$ and \eqref{FDelEst}$_{2}$
	\begin{equation}\label{LogThDelEst}
		\|\log\vartheta_\delta\|_{L^2(0,T;W^{1,2}(\Omega))}\leq c.
	\end{equation}
	Furthermore, by the Poincar\'e inequality we have due to \eqref{DelEst}$_4$ and \eqref{FDelEst}$_{3,5}$
	\begin{equation}\label{VDSob}
		\begin{split}\|\vartheta_\delta^\frac{\beta}{2}\|_{L^2(0,T;W^{1,2}(\Omega))}&\leq c,\\
			\|\vartheta_\delta^\frac{\Gamma}{2}\|_{L^2(0,T;W^{1,2}(\Omega))}&\leq c\delta^{-\frac{1}{2}}.
		\end{split}
	\end{equation}
	Hence using first the Sobolev embedding $W^{1,2}(\Omega)\hookrightarrow L^6(\Omega)$ and then the interpolation and \eqref{DelEst}$_4$ we obtain
	\begin{equation}\label{VThDeltaEst}
		\|\vartheta_\delta\|_{L^\beta(0,T;L^{3\beta}(\Omega))}+\|\vartheta_\delta\|_{L^{p_\beta}(Q_T)}\leq c, \text{ with }p_\beta=\frac{8}{3}+\beta.
	\end{equation}	
	By \eqref{EPointEqS} we get
	\begin{equation}\label{DPointEqS}
		\begin{split}
			\underline a\mathfrak r_\delta\leq \mathfrak z_\delta\leq\overline a\mathfrak r,\\
			\underline b\Sigma_\delta\leq \mathfrak r_\delta+\mathfrak z_\delta\leq\overline b\Sigma_\delta,\\
			\underline d\mathfrak r_\delta\leq \xi_\delta\leq \overline d\mathfrak r_\delta.
		\end{split}
	\end{equation}
    Taking into account \eqref{STensDef} we write
    \begin{equation*}
    \mathbb S(\vartheta_\delta,\nabla u_\delta)=\sqrt{\mu(\vartheta_\delta)\vartheta_\delta}\sqrt{\frac{\mu(\vartheta_\delta)}{\vartheta_\delta}}\left(\nabla u+(\nabla u)^\top -\frac{2}{3}\dvr u\mathbb I\right)+\sqrt{\eta(\vartheta_\delta)\vartheta_\delta}\sqrt{\frac{\eta(\vartheta_\delta)}{\vartheta_\delta}}\dvr u_\delta\mathbb I.
    \end{equation*}
	Using \eqref{FDelEst}$_1$ and \eqref{VDSob}$_1$ we get
	by \eqref{MuGr} and \eqref{EGr}
	\begin{equation*}
		\|\mathbb S(\vartheta_\delta,\nabla u_\delta)\|_{L^q(Q_T)}\leq c
	\end{equation*}
    with $q=\frac{2p_\beta}{2+p_\beta}>\frac{4}{3}$ due to \eqref{PBLBound}.
	Next, we use a test function of the form $\varphi=\phi\mathcal B\left(\mathfrak r_\delta^{\mathfrak c}-\frac{1}{|\Omega|}\int_\Omega \mathfrak r_\delta^{\mathfrak c} \right)$, where $\phi\in C^1_c((0,T))$, $\mathcal B$ is the Bogovskii operator and $\mathfrak c\leq\gamma_{BOG}$ with $\gamma_{BOG}$ given by \eqref{OVGDef} to obtain 
	\begin{equation*}
		\int_0^T\int_\Omega \mathcal P_\delta(\mathfrak r_\delta,\mathfrak z_\delta,\vartheta_\delta)\mathfrak r_\delta^{\mathfrak c}\leq c.
	\end{equation*}
	The latter bound, \eqref{PDDef} and \eqref{DPointEqS} imply
	\begin{equation}\label{DensDelBetIntegr}
		\|\mathfrak r_\delta\|_{L^{\overline{\gamma}}(Q_T)}+\|\mathfrak z_\delta\|_{L^{\overline{\gamma}}(Q_T)}+\delta^\frac{1}{\Gamma}\left(\|\mathfrak r_\delta\|_{L^{\overline{\Gamma}}(Q_T)}+\|\mathfrak z_\delta\|_{L^{\overline{\Gamma}}(Q_T)}\right)\leq c,\ \overline\gamma=\gamma+\gamma_{BOG},\ \overline\Gamma=\Gamma+\gamma_{BOG}
	\end{equation}
	due to \eqref{PDDef} and \eqref{PressGrowth}. By virtue of \eqref{VThDeltaEst} and \eqref{DensDelBetIntegr} we infer
	\begin{equation}\label{PressDelLqBound}
		\|\mathcal P_\delta(\mathfrak r_\delta,\mathfrak z_\delta,\vartheta_\delta)\|_{L^q(Q_T)}\leq c,\text{ for some }q>1.
	\end{equation}
	Employing \eqref{IntEnGrowth} we have 
	\begin{equation}\label{IntEnDelIntegr}
		\|\mathcal E(\mathfrak r_\delta,\mathfrak z_\delta,\vartheta_\delta)\|_{L^q(Q_T)}\leq c\text{ for some }q>1
	\end{equation}
	due to \eqref{VThDeltaEst} and \eqref{DensDelBetIntegr}.
	By \eqref{SpEntStr} and \eqref{EntGr} we have
	\begin{equation*}
		|\mathcal S(\mathfrak r_\delta,\mathfrak z_\delta,\vartheta_\delta)|\leq c(1+\vartheta_\delta^3+\mathfrak r_\delta|\log \mathfrak r_\delta|+\mathfrak z_\delta|\log\mathfrak z_\delta|+(\mathfrak r_\delta+\mathfrak z_\delta)|\log\vartheta_\delta|+\mathfrak r^{\gamma^s_+}_\delta+\mathfrak z^{\gamma^s_-}_\delta).
	\end{equation*}
	Hence using \eqref{DelEst}$_{2,3,4}$ and \eqref{LogThDelEst}
	\begin{equation}\label{SDelEst}
		\|\mathcal S(\mathfrak r_\delta,\mathfrak z_\delta,\vartheta_\delta)\|_{L^2(0,T;L^q(\Omega))}\leq c\text{ for some } q>\frac{6}{5}.
	\end{equation}
	We also have
	\begin{equation}\label{DelDLThEst}
		\|(\mathfrak r_\delta+\mathfrak z_\delta)\log\vartheta_\delta\|_{L^p(Q_T)}\leq c\delta^{-\frac{1}{\Gamma}}\text{ for }p\in(\frac{6}{5},2)
	\end{equation}
	by \eqref{DelEst}$_3$ and \eqref{LogThDelEst}.
	Moreover, 
	\begin{equation*}
		\begin{split}
			|\mathcal S(\mathfrak r_\delta,\mathfrak z_\delta,\vartheta_\delta) u_\delta|\leq c(|u_\delta|+|\vartheta_\delta|^3|u_\delta|+(\mathfrak r_\delta|\log \mathfrak r_\delta|+\mathfrak z_\delta |\log\mathfrak z_\delta|)|u_\delta|).
		\end{split}
	\end{equation*}
	Using \eqref{DelEst}$_{2,4}$, \eqref{UDelEst}, the Sobolev embedding $W^{1,2}(\Omega)\hookrightarrow L^6(\Omega)$ we get
	\begin{equation*}
		\begin{split}
			\|\mathcal S(\mathfrak r_\delta,\mathfrak z_\delta,\vartheta_\delta) u_\delta\|_{L^q(Q_T)}\leq c\text{ for some }q>1.
		\end{split}
	\end{equation*}
	Moreover, applying \eqref{DelEst}$_3$ we have
	\begin{equation}\label{DelEstDLTU}
		\begin{split}
			&\int_{\Omega}(\mathfrak r_\delta+\mathfrak z_\delta)|\log\vartheta_\delta|(1+|u_\delta|)\\
			&\leq (\|\mathfrak r_\delta\|_{L^\infty(0,T;L^\Gamma(\Omega))}+\|\mathfrak z_\delta\|_{L^\infty(0,T;L^\Gamma(\Omega))}) \|\log\vartheta_\delta\|_{L^2(0,T;L^2(\Omega))}\left(1+\|u_\delta\|_{L^2(0,T;L^6(\Omega))}\right)\leq c\delta^{-\frac{1}{\Gamma}}.
		\end{split}
	\end{equation}
	By \eqref{KGrowth} it follows that
	\begin{equation*}
		\frac{\kappa(\vartheta_\delta)}{\vartheta_\delta}|\nabla \vartheta_\delta|\leq c(|\nabla\log\vartheta_\delta|+\vartheta^\frac{\beta}{2}|\nabla\vartheta_\delta^\frac{\beta}{2}|).
	\end{equation*}
	Thanks to \eqref{LogThDelEst}, \eqref{VDSob}$_1$ and \eqref{VThDeltaEst} we have the bound
	\begin{equation*}
		\left\|\frac{\kappa(\vartheta_\delta)}{\vartheta_\delta}\nabla \vartheta_\delta\right\|_{L^q(Q_T)}\leq c\text{ with }q=\frac{2p_\beta}{\beta+p_\beta}.
	\end{equation*}
	We observe that
	\begin{equation*}
		\vartheta_\delta^{\Gamma -1}\nabla\vartheta_\delta=\frac{2}{\Gamma}\vartheta_\delta^\frac{1}{4}\vartheta_\delta^{\frac{\Gamma}{2}-\frac{1}{4}}\nabla \vartheta_\delta^\frac{\Gamma}{2}.
	\end{equation*}
	Hence we estimate using \eqref{DelEst}$_4$, \eqref{VDSob}$_2$ and the Sobolev embedding $W^{1,2}(\Omega)\hookrightarrow L^6(\Omega)$
	\begin{equation}\label{FDelTEst}
		\delta\|\vartheta_\delta^{\Gamma-1}\nabla\vartheta^\frac{\Gamma}{2}\|_{L^1(Q_T)}\leq\frac{2\delta}{\Gamma}\|\vartheta_\delta\|_{L^\infty(0,T;L^4(\Omega))}^\frac{1}{4}\|\vartheta_\delta^\frac{\Gamma}{2}\|^{\frac{\frac{\Gamma}{2}-\frac{1}{4}}{\frac{\Gamma}{2}}}_{L^2(0,T;L^{6}(\Omega))}\|\nabla\vartheta_\delta^\frac{\Gamma}{2}\|_{L^2(Q_T)}\leq c\delta^\frac{1}{4\Gamma}.
	\end{equation}
	Similarly, we get 
	\begin{equation}\label{SDelTEst}
		\delta\|\vartheta_\delta^{-2}\nabla\vartheta_\delta\|_{L^1(Q_T)}\leq c\delta\|\vartheta_\delta^{-1}\|_{L^3(Q_T)}\|\nabla\vartheta_\delta^{-\frac{1}{2}}\|_{L^2(Q_T)}\leq c\delta^\frac{1}{6}
	\end{equation}
	by \eqref{FDelEst}$_{4,5}$.
	
	\subsubsection{Convergences as $\delta\to 0_+$}
	As in previos subsections we get from \eqref{DelEst}$_{2,4,5}$ and \eqref{UDelEst}
	\begin{equation}\label{DelWConv}
		\begin{alignedat}{2}
			(\xi_\delta,\Sigma_\delta,\mathfrak r_\delta,\mathfrak z_\delta)&\rightharpoonup^*(\xi,\Sigma,\mathfrak r,\mathfrak z)&&\text{ in }L^\infty(0,T;L^\gamma(\Omega)),\\
			u_\delta&\rightharpoonup u&&\text{ in }L^2(0,T;W^{1,2}(\Omega)),\\
			\vartheta_\delta&\rightharpoonup^*\vartheta&&\text{ in }L^\infty(0,T;L^4(\Omega)),\\
			\sigma_\delta&\rightharpoonup^*\sigma&&\text{ in }(C([0,T]\times\overline\Omega))^*.
		\end{alignedat}
	\end{equation}
	We also obtain
	\begin{equation}\label{FDelWConv}
		\begin{alignedat}{2}
			(\xi_\delta,\Sigma_\delta,\mathfrak r_\delta,\mathfrak z_\delta)&\to(\xi,\Sigma,\mathfrak r,\mathfrak z)&&\text{ in }C_w([0,T];L^\gamma(\Omega)),\\
			\Sigma_\delta u_\delta&\to \Sigma u&&\text{ in }C_w([0,T];L^\frac{2\gamma}{\gamma+1}(\Omega)),\\
			\Sigma_\delta u_\delta\otimes u_\delta&\rightharpoonup \Sigma u\otimes u&&\text{ in }L^1(Q_T),\\
			\Sigma_\delta |u_\delta|^2&\rightharpoonup \Sigma |u|^2&&\text{ in }L^1(Q_T),\\
			(\mathfrak \xi_\delta u_\delta,\mathfrak r_\delta u_\delta,\mathfrak z_\delta u_\delta)&\rightharpoonup (\mathfrak \xi u,\mathfrak r u,\mathfrak z u)&&\text{ in }L^2(0,T;L^\frac{2\gamma}{\gamma+2}(\Omega)).
		\end{alignedat}
	\end{equation}
	\subsubsection{Pointwise convergence of $\vartheta_\delta$ and consequences}
	Proceeding as in Section~\ref{Sec:EpsThConv} we show that 
	\begin{equation}\label{ThetaDelPoint}
		\vartheta_\delta\to\vartheta\text{ a.e. in }Q_T.
	\end{equation}
	Indeed, by the Div--Curl lemma for 
	\begin{equation*}
		U_\delta=\left(\mathcal S_\delta(\mathfrak r_\delta,\mathfrak z_\delta,\vartheta_\delta),\mathcal S_\delta(\mathfrak r_\delta,\mathfrak z_\delta,\vartheta_\delta)u_\delta+\frac{\kappa_\delta(\vartheta_\delta)}{\vartheta_\delta}\nabla\vartheta_\delta\right), V_\delta=\left(T_k(\vartheta_\delta),0,0,0\right),
	\end{equation*}
	where 
	\begin{equation}
		\mathcal S_\delta(\mathfrak r_\delta,\mathfrak z_\delta,\vartheta_\delta)=\mathcal S(\mathfrak r_\delta,\mathfrak z_\delta,\vartheta_\delta)+\delta(\mathfrak r_\delta+\mathfrak z_\delta)\log\vartheta_\delta,
	\end{equation}
	it follows that
	\begin{equation}\label{LimDelProdEntr}
		\overline{\mathcal S(\mathfrak r,\mathfrak z,\vartheta)T_k(\vartheta)}=\overline{\mathcal S(\mathfrak r,\mathfrak z,\vartheta)}\ \overline{T_k(\vartheta)}
	\end{equation}
	as $\delta\left(\mathfrak r_\delta+\mathfrak z_\delta\right)\log\vartheta_\delta T_k(\vartheta_\delta)\to 0$ in $L^1(Q_T)$ implied by \eqref{DelDLThEst} yields 
	\begin{equation*}
		\mathcal S_\delta(\mathfrak r_\delta,\mathfrak z_\delta,\vartheta_\delta)T_k(\vartheta_\delta)\rightharpoonup \overline{\mathcal S(\mathfrak r,\mathfrak z,\vartheta)T_k(\vartheta)}\text{ in }L^1(Q_T).
	\end{equation*} 
	\begin{equation}\label{LimDeltaPr}
		\overline{b(\mathfrak r,\mathfrak z)G(\vartheta)}=\overline{b(\mathfrak r,\mathfrak z)}\ \overline{G(\vartheta)}
	\end{equation}
	for any $b\in C^1(\eR^2)$, $\nabla b\in C_c(\eR^2;\eR^2)$ and Lipschtz $G\in C^1(\eR)$. Fixing such an arbitrary $b$ and $G$, defining $Z_\delta=(\mathfrak r_\delta,\mathfrak z_\delta)$ and
	\begin{equation*}
		b_\delta=b(Z_\delta)
	\end{equation*}
	we obtain $b_\delta\in C([0,T];L^1(\Omega))$ as $(Z_\delta, u_\delta)$ satisfy the renormalized continuity equation with the renormalizing function $b$. Hence the sequence $\{t\mapsto\int_\Omega b_\delta\phi\}$ is equi--continuous and bounded in $C([0,T])$ for any $\phi\in C^1_c(\Omega)$. Using the Arzel\`a--Ascoli theorem and the density of $C^1_c(\Omega)$ in $L^{p'}(\Omega)$ $p'\in[1,\infty)$, we get at least for a nonrelabeled subsequence 
	\begin{equation*}
		b_\delta\to\overline{b(\mathfrak r,\mathfrak z)}\text{ in }C_w([0,T];L^p(\Omega)),\ p\in(1,\overline\gamma)
	\end{equation*}
	performing a diagonalization procedure. Accordingly, it follows that
	\begin{equation*}
		b_\delta\to\overline{b(\mathfrak r,\mathfrak z)}\text{ in }L^2(0,T;W^{-1,2}(\Omega))
	\end{equation*} 
	in view of the compact embedding $L^p(\Omega)$ into $W^{-1,2}(\Omega)$ valid for $p>\frac{6}{5}$. Obviously, we get
	\begin{equation*}
		G(\vartheta_\delta)\rightharpoonup \overline{G(\vartheta)}\text{ in }L^2(0,T;W^{1,2}(\Omega)).
	\end{equation*}
	Combining the latter two convergences we conclude \eqref{LimDeltaPr}. Then we can proceed as in Section~\ref{Sec:EpsThConv} to deduce 
	\begin{equation*}
		\overline{\mathfrak S(\mathfrak r,\mathfrak z,\vartheta)T_k(\vartheta)}\geq\overline{\mathfrak S(\mathfrak r,\mathfrak z,\vartheta)}\ \overline{T_k(\vartheta)} 
	\end{equation*}
	for $\mathfrak G(r,z,\vartheta)=r\mathsf s_+(r,\vartheta)+z\mathsf s_-(z,\vartheta)$. Moreover, we have
	\begin{equation*}
		\overline{\vartheta^3 T_k(\vartheta)}\geq \overline{\vartheta^3}\ \overline{T_k(\vartheta)}
	\end{equation*}
	by Lemma~\ref{Lem:MonWConv}. Combining the last two inequalities and \eqref{LimDelProdEntr} we get $\overline{\vartheta^3T_k(\vartheta)}=\overline{\vartheta^3}\ \overline{T_k(\vartheta)}$. We conclude 
	\begin{equation}\label{ThetaDeltaPoint}
		\vartheta_\delta\to\vartheta\text{ a.e. in }Q_T
	\end{equation}
	using the arguments leading to \eqref{ThetaEpsPointwisely}. Accordingly, bound \eqref{VThDeltaEst} implies
	\begin{equation}\label{ThetaDelStongly}
		\vartheta_\delta\to\vartheta\text{ in }L^p(Q_T),\ p\in[1,p_\beta).
	\end{equation}
	\subsubsection{Derivation of the effective viscous flux identity}
	As in Section~\ref{Sec:EpsEffVicFl} we obtain 
	\begin{equation}\label{SThNUDelConv}
		\mathbb S(\vartheta_\delta,\nabla u_\delta)\rightharpoonup \mathbb S(\vartheta,\nabla u)\text{ in }L^1(Q_T).
	\end{equation}
	We denote
	\begin{equation*}
		p(r,\zeta,\vartheta)=\mathsf P_+(r,\vartheta)+\mathsf P_-(\zeta r,\vartheta)
	\end{equation*}
    and defining $\zeta_\delta= \mathfrak z_\delta/_{\mathfrak a} \mathfrak r_\delta$ we realize that
	\begin{equation}\label{SmpId}
		p(\mathfrak r_\delta,\zeta_\delta,\vartheta_\delta)=p(\mathfrak r_\delta,\zeta,\vartheta)+D_\delta,
	\end{equation}
	where
	\begin{equation*}
		D_\delta=\mathsf P_+(\mathfrak r_\delta,\vartheta_\delta)+\mathsf P_-(\zeta_\delta \mathfrak r_\delta,\vartheta_\delta)-\mathsf P_+(\mathfrak r_\delta,\vartheta)-\mathsf P_-(\zeta_\delta \mathfrak r_\delta,\vartheta).
	\end{equation*}
	Employing the first order Taylor expansion, the assumptions on growth from \eqref{PressVarGr}, \eqref{AlmConv} for $\zeta_\delta$, $\zeta=\mathfrak z/_{\mathfrak a} \mathfrak r$ and convergence \eqref{ThetaDelStongly} we conclude
	\begin{equation*}
		\lim_{\delta\to 0_+}\|D_\delta\|_{L^1(Q_T)}=0.
	\end{equation*}
	Employing \eqref{PressDelLqBound}, the latter limit, convergence \eqref{ThetaDelStongly} in \eqref{SmpId} we deduce
	\begin{equation}\label{PDWeakLim}
		\mathcal P_\delta(\mathfrak r_\delta,\mathfrak z_\delta,\vartheta_\delta)\rightharpoonup \frac{b}{3}\vartheta^4+ \overline{\overline{p(\mathfrak r,\zeta,\vartheta)}}\text{ in }L^1(Q_T)
	\end{equation}
	as $\delta(\mathfrak r_\delta^\Gamma+\mathfrak z_\delta^\Gamma+\mathfrak r_\delta^2+\mathfrak z_\delta^2)\to 0$ in $L^1(Q_T)$ due to \eqref{DensDelBetIntegr}.
	
	As convergences \eqref{FDelWConv}$_{1,5}$ and \eqref{InDConv} are available, we can pass to the limit $\delta\to 0_+$ in \eqref{DRCE} with $u=u_\delta$ and $r$ standing for $\mathfrak r_\delta$ or $\mathfrak z_\delta$ or $\Sigma_\delta$ or $\xi_\delta$ to recover \eqref{ConEq}.
	
	Using in \eqref{DMomBal} convergences \eqref{FDelWConv}$_{2,3}$, \eqref{SThNUDelConv}, \eqref{PDWeakLim} and $(\Sigma u)_{0,\delta}\to (\Sigma u)_0$ in $L^1(\Omega)$ following from \eqref{SUZDDef} we arrive at
	\begin{equation}\label{MomEqDelLim}
		\int_\Omega \Sigma u\cdot\varphi (t)-\int_\Omega (\Sigma u)_0\cdot\varphi(0)=\int_0^t\int_\Omega \left(\Sigma u\cdot\tder\varphi+\left((\Sigma u\otimes u-\mathbb S(\vartheta,\nabla u)\right)\cdot \nabla\varphi+\left(\frac{b}{3}\vartheta^4+\overline{\overline{p(\mathfrak r,\mathfrak \zeta,\vartheta)}}\right)\dvr\varphi\right)
	\end{equation}
	for all $t\in[0,T]$ and any $\phi\in C^1([0,T]\times\overline\Omega)$.
	Similarly to Section~\ref{Sec:EpsEffVicFl} we set 
	\begin{equation*}
		\varphi=\psi\phi\mathfrak U(1_\Omega T_k(\mathfrak r_\delta)),\ \psi\in C^1_c((0,T)), \phi\in C^\infty_c(\Omega)
	\end{equation*}
	in \eqref{DMomBal} and 
	\begin{equation*}
		\varphi=\psi\phi\mathfrak U(1_\Omega T_k(\mathfrak r)),\ \psi\in C^1_c((0,T)), \phi\in C^\infty_c(\Omega)
	\end{equation*}
	in \eqref{MomEqDelLim} and perform the passage $\delta\to 0_+$ to obtain 
	\begin{equation}\label{DelPresIdent}
		\begin{split}
			&\int_0^T\psi\int_\Omega\phi\left(\overline{\overline {p(\mathfrak r,\zeta,\vartheta)T_k(\mathfrak r)}}-\overline{\overline {p(\mathfrak r,\zeta,\vartheta)}}\ \overline{T_k(\mathfrak r)}\right)=\\&\int_0^T\psi\int_\Omega\phi\left(\overline{\mathbb S(\vartheta,\nabla u)\cdot\mathfrak R(1_\Omega T_k(\mathfrak r))}-\mathbb S(\vartheta,\nabla u)\cdot\mathfrak R(\overline{1_\Omega T_k(\mathfrak r)})\right)\\
			&+\lim_{\delta\to 0_+}\int_0^T\psi\int_\Omega\left((u_\delta)_j\cdot(1_\Omega T_k(\mathfrak r_\delta)\mathfrak R_{ij}(\phi(\Sigma_\delta u_\delta)_i))-\phi(\Sigma_\delta u_\delta)_i\mathfrak R_{ij}(1_\Omega T_k(\mathfrak r_\delta))-u_j\cdot(1_\Omega\overline{T_k(\mathfrak r)}\mathfrak R_{ij}(\phi\Sigma u))\right.\\&\quad\quad\left.-\phi(\Sigma u)_i\mathfrak R_{ij}(1_\Omega \overline{T_k(\mathfrak r)})\right).
		\end{split}
	\end{equation}
	Then as $\mathfrak r_\delta$ fulfills the renormalized continuity equation with the renormalizing function $b(\mathfrak r_\delta)=T_k(\mathfrak r_\delta)$ we immediately get that $T_k(\mathfrak r_\delta)\to\overline{T_k(\mathfrak r)}$ in $C([0,T];L^q(\Omega))$ for any $q\in[1,\infty)$ and we conclude as in Section~\ref{Sec:EpsEffVicFl} by Lemma~\ref{DCRiesz} that
	\begin{equation*}
		\left(1_\Omega T_k(\mathfrak r_\delta)\mathfrak R_{ij}(\phi(\Sigma_\delta u_\delta)_i)-\phi(\Sigma_\delta u_\delta)_i\mathfrak R_{ij}(1_\Omega T_k(\mathfrak r_\delta))\right)(t)\rightharpoonup \left(1_\Omega\overline{T_k(\mathfrak r)}\mathfrak R_{ij}(\phi\Sigma u)-\phi(\Sigma u)_i\mathfrak R_{ij}(1_\Omega \overline{T_k(\mathfrak r)})\right)(t)
	\end{equation*}
	first in $L^\frac{2\gamma}{\gamma+1}(\Omega)$ for all $t\in[0,T]$. Using the compact embedding $L^\frac{2\gamma}{\gamma+1}(\Omega)$ in $W^{-1,2}(\Omega)$ for $\gamma>\frac{6}{5}$ we get that the latter convergence is strong in $W^{-1,2}(\Omega)$ for all $t\in[0,T]$. Consequently, by the Vitali convergence theorem we get that the convergence is strong in $L^p(0,T;W^{-1,2}(\Omega))$ for any $p\in[1,\infty)$ which in combination with \eqref{DelWConv}$_2$ justifies that the limit on the right hand side of \eqref{DelPresIdent} vanishes. Hence we arrive at
	\begin{equation}\label{AlmEffViscFlDel}\begin{split}
    &\int_0^T\psi\int_\Omega\phi\left(\overline{\overline{p(\mathfrak r,\zeta,\vartheta)T_k(\mathfrak r)}}-\overline{\overline{p(\mathfrak r,\zeta,\vartheta)}}\ T_k(\mathfrak r)\right)\\
    &=\int_0^T\psi\int_\Omega\phi\left(\overline{\mathbb S(\vartheta,\nabla u)\cdot\mathfrak R(1_\Omega T_k(\mathfrak r))}-\mathbb S(\vartheta,\nabla u)\cdot\mathfrak R(1_\Omega T_k(\mathfrak r))\right).
 \end{split}
	\end{equation}
	We focus our attention on the terms on the right hand side of the latter equality. As in Section~\ref{Sec:EpsEffVicFl} we have
	\begin{equation}\label{DelLimIdent}
    \begin{split}
		&\int_0^T\psi\int_\Omega\phi\overline{\mathbb S(\vartheta,\nabla u)\cdot\mathfrak R(1_\Omega T_k(\mathfrak r))}=\lim_{\delta\to 0_+}\int_0^T\psi\int_\Omega\phi\left(\frac{4}{3}\mu(\vartheta_\delta)+\eta(\vartheta_\delta)\right)T_k(\mathfrak r_\delta)\dvr u_\delta\\
  &+\lim_{\delta\to 0_+}\int_0^T\psi\int_\Omega T_k(\mathfrak r_\delta)\omega(\vartheta_\delta,u_\delta)
    \end{split}
	\end{equation}
	and
	\begin{equation}\label{IdentLimDel}
		\int_0^T\psi\int_\Omega\phi\mathbb S(\vartheta,\nabla u)\cdot\mathfrak R(1_\Omega \overline{T_k(\mathfrak r)})=\int_0^T\psi\int_\Omega\phi\left(\frac{4}{3}\mu(\vartheta)+\eta(\vartheta)\right)\overline{T_k(\mathfrak r)}\dvr u+\int_0^T\psi\int_\Omega \overline{T_k(\mathfrak r)}\omega(\vartheta,u),
	\end{equation}
	where $\omega(\theta, u)$ is defined in \eqref{omDef}.
	
	Next, according to Lemma~\ref{Lem:ComEst} we infer
	\begin{equation}\label{OmDelLWBound}
		\|\omega(\vartheta_\delta, u_\delta)\|_{L^1(0,T;W^{k,p}(\Omega))}\leq c\text{ for some }k\in(0,1), p>1 
	\end{equation}
	by virtue of bounds \eqref{UDelEst} and \eqref{VDSob}$_1$ which also guarantee that
	\begin{equation}\label{OmDelLBound}
		\|\omega(\vartheta_\delta,u_\delta)\|_{L^q(Q_T)}\leq c\text{ for some }q>1.
	\end{equation}
	Following the reasoning leading to \eqref{OmREpscon} we conclude
considering sequences $\{U_\delta\}$ and $\{V_\delta\}$ of the four dimensional vector fields
	\begin{equation*}
		\begin{split}
			U_\delta=&\left(T_k(\mathfrak r_\delta), T_k(\mathfrak r_\delta)u_\delta\right),\ V_\delta=\left(\omega(\vartheta_\delta,u_\delta),0,0,0\right)
		\end{split}
	\end{equation*}
	that
	\begin{equation}\label{OmRDelcon}
		\omega(\vartheta_\delta, u_\delta)T_k(\mathfrak r_\delta)\rightharpoonup \omega(\vartheta, u)\overline{T_k(\mathfrak r)}\text{ in }L^1(Q_T)
	\end{equation}
	since in accordance with convergences \eqref{DelWConv}$_2$, \eqref{ThetaDelStongly} respectively, and estimate \eqref{MuGr} it follows that $\overline{\omega(\vartheta, u)}=\omega(\vartheta, u)$.
	Eventually, going back to \eqref{AlmEffViscFlDel}, where we employ \eqref{DelLimIdent}, \eqref{IdentLimDel} and \eqref{OmRDelcon}, we arrive at the desired effective viscous flux identity
	\begin{equation}\label{EffViscFlDel}
		\overline{\overline{p(\mathfrak r,\zeta,\vartheta)T_k(\mathfrak r)}}-\overline{\overline{p(\mathfrak r,\zeta,\vartheta)}}\ \overline{T_k(\mathfrak r)}=\left(\frac{4}{3}\mu(\vartheta)+\eta(\vartheta)\right)\left(\overline{T_k(\mathfrak r)\dvr u}-\overline{T_k(\mathfrak r)}\dvr u\right).
	\end{equation}
	\subsubsection{Pointwise convergence of $\mathfrak r_\delta$ and consequences}
	Firstly, we notice that by \eqref{DelEst}$_2$ and \eqref{DensDelBetIntegr} we have
	\begin{equation}\label{TKLKBounds}
		\begin{split}
			\|T_k(\mathfrak r_\delta)\|_{L^q(Q_T)}+\|T_k(\mathfrak r)\|_{L^q(Q_T)}\leq c&\text{ for }q\in[1,\overline\gamma],\\
			\|L_k(\mathfrak r_\delta)\|_{L^\infty(0,T;L^q(\Omega))}\leq c&\text{ for }q\in[1,\gamma)
		\end{split}
	\end{equation}
	for a constant $c$ independent of $k,\delta$. We note that $T_k$ is defined in \eqref{TKDef} and
    \begin{equation*}
        L_k(r)=\int_1^r\frac{T_k(s)}{s^2}\mathrm ds
    \end{equation*}
 Next, we know that $(\mathfrak r_\delta, u_\delta)$ fulfills the renormalized continuity equation with renormalizing function $b(r)=L_k(r)$ and $(\mathfrak r, u)$ as well. Considering the difference of these equations with the test function being $1$ we get
	\begin{equation*}
		\int_\Omega \left(L_k(\mathfrak r_\delta)-L_k(\mathfrak r)\right)(t)=\int_0^t\int_\Omega\left(T_k(\mathfrak r)\dvr u-\overline{T_k(\mathfrak r)}\dvr u_\delta\right)+\int_0^t\int_\Omega\left(\overline{T_k(\mathfrak r)}-T_k(\mathfrak r_\delta)\right)\dvr u_\delta
	\end{equation*}
	for all $t\in[0,T]$. We note that $\overline{T_k(\mathfrak r)}\in L^q(Q_T)$ due to \eqref{TKLKBounds}$_1$. Performing the passage $\delta\to 0_+$ and employing effective viscous flux identity \eqref{EffViscFlDel} we have
	\begin{equation}\label{LKLimIdent}
		\begin{split}
			\int_\Omega \left(\overline{L_k(\mathfrak r)}-L_k(\mathfrak r)\right)(t)=&\int_0^t\int_\Omega\left(T_k(\mathfrak r)-\overline{T_k(\mathfrak r)}\right)\dvr u\\&+\int_0^t\int_\Omega\frac{1}{\frac{4}{3}\mu(\vartheta)+\eta(\vartheta)}\left(\overline{\overline{p(\mathfrak r,\zeta,\vartheta)}}\ \overline{T_k(\mathfrak r)}-\overline{\overline{p(\mathfrak r,\zeta,\vartheta)T_k(\mathfrak r)}}\right)
		\end{split}
	\end{equation}
	for all $t\in[0,T]$. Here $\overline{L_k(\mathfrak r)}$ is the weak limit of $\{L_k(\mathfrak r_\delta)\}$ in $C_w([0,T];L^q(\Omega))$, the function space is determined by \eqref{TKLKBounds}$_2$ and the fact that each $\mathfrak r_\delta$ satisfies the continuity equation in the renormalized form with $b=L_k$. Employing Lemma~\ref{Lem:MonWConv} and \eqref{ThStab} we deduce that the last integral on the right hand side of \eqref{LKLimIdent} is nonpositive implying 
	\begin{equation*}
		\int_\Omega \left(\overline{L_k(\mathfrak r)}-L_k(\mathfrak r)\right)(t)\leq\int_0^t\int_\Omega\left(T_k(\mathfrak r)-\overline{T_k(\mathfrak r)}\right)\dvr u
	\end{equation*}
	for all $t\in[0,T]$. Performing the passage $k\to\infty$ in the latter inequality we conclude
	\begin{equation*}
		\overline{\mathfrak r\log \mathfrak r}=\mathfrak r\log\mathfrak r\text{ a.e. in }Q_T
	\end{equation*}  
	provided we show that 
	\begin{equation}\label{CrucLim}
		\lim_{k\to\infty}\int_0^t\int_\Omega\left(T_k(\mathfrak r)-\overline{T_k(\mathfrak r)}\right)\dvr u=0.
	\end{equation}
	For $\gamma<\overline\gamma_\beta$ we get by virtue of \eqref{DensDelBetIntegr}, \eqref{TKLKBounds} and the interpolation that
	\begin{equation}\label{IntTkREst}
		\begin{split}
			\left|\int_0^t\int_\Omega\left(T_k(\mathfrak r)-\overline{T_k(\mathfrak r)}\right)\dvr u\right|&\leq \|T_k(\mathfrak r)-\overline{T_k(\mathfrak r)}\|_{L^2(Q_T)}\|\dvr u\|_{L^2(Q_T)}\\
			&\leq c\|\overline{T_k(\mathfrak r)}-T_k(\mathfrak r)\|^\lambda_{L^{\overline\gamma}(Q_T)}\|\overline{T_k(\mathfrak r)}-T_k(\mathfrak r)\|^{1-\lambda}_{L^1(Q_T)}\\
			&\leq c\|\overline{T_k(\mathfrak r)}-T_k(\mathfrak r)\|^{1-\lambda}_{L^1(Q_T)},
		\end{split}
	\end{equation}
	where $\lambda=\frac{\overline\gamma}{2(\overline\gamma-1)}$. Furthermore, we have using the weak lower semicontinuity of $L^1$--norm
	\begin{equation*}
		\begin{split}
			\|\overline{T_k(\mathfrak r)}-T_k(\mathfrak r)\|_{L^1(Q_T)}&\leq \|\overline{T_k(\mathfrak r)}-\mathfrak r\|_{L^1(Q_T)}+\|\mathfrak r-T_k(\mathfrak r)\|_{L^1(Q_T)}\\
			&\leq\liminf_{\delta\to 0_+}\|T_k(\mathfrak r_\delta)-\mathfrak r_\delta\|_{L^1(Q_T)}+\|\mathfrak r-T_k(\mathfrak r)\|_{L^1(Q_T)}\leq ck^{-q}
		\end{split}
	\end{equation*}
	for some $q>0$.	Performing the passage $k\to\infty$ we get $\|\overline{T_k(\mathfrak r)}-T_k(\mathfrak r)\|_{L^1(Q_T)}\to 0$. Hence going back to \eqref{IntTkREst} we conclude \eqref{CrucLim}. If $\gamma=\overline\gamma_\beta$ then $\overline{\gamma}=2$ and the interpolation above cannot be applied. Instead, we show that so called oscillation defect measure is bounded, i.e.,
	\begin{equation}\label{ODMBound}
		\sup_{k\geq 1}\limsup_{\delta\to 0_+}\|T_k(\mathfrak r_\delta)-T_k(\mathfrak r)\|_{L^{\overline{\gamma}_\beta+1}(Q_T)}<\infty.
	\end{equation}
	Then as $\overline\gamma_\beta+1>2$ we use $\|\overline{T_k(\mathfrak r)}-T_k(\mathfrak r)\|_{L^{\overline{\gamma}_\beta+1}(Q_T)}$ for the interpolation in \eqref{IntTkREst} and employ the fact that the latter norm is estimated by the oscillation defect measure, which as we already know is bounded. Let us show \eqref{ODMBound}. Employing the decomposition from \eqref{PressDecomp} in \eqref{EffViscFlDel} we obtain
	\begin{equation*}
		\begin{split}
			&\underline\pi\int_0^T\int_\Omega \left(\overline{\mathfrak r^{\overline\gamma_\beta} T_k(\mathfrak r)}-\overline{\mathfrak r^{\overline\gamma_\beta}}\ \overline{T_k(\mathfrak r)}\right)+\int_0^T\int_\Omega\left(\overline{\overline{\mathscr p(\mathfrak r,\zeta,\vartheta)T_k(\mathfrak r)}}-\overline{\overline{\mathscr p(\mathfrak r,\zeta,\vartheta)}}\ \overline{T_k(\mathfrak r)}\right)\\
			&\leq \int_0^T\int_\Omega\left(\frac{4}{3}\mu(\vartheta)+\eta(\vartheta)\right)\left(\overline{T_k(\mathfrak r)\dvr u}-T_k(\mathfrak r)\dvr u+\left(T_k(\mathfrak r)-\overline{T_k(\mathfrak r)}\right)\dvr u\right).
		\end{split}
	\end{equation*}
	As $r\mapsto\mathscr p(r,\zeta,\vartheta)$ is nondecreasing we get using Lemma~\ref{Lem:MonWConv} that the second integral on the left hand side of the latter inequality is nonnegative. Moreover, due to the convexity of $r\mapsto r^\gamma$, we conclude employing \eqref{MuGr} and \eqref{EGr} that
	\begin{equation*}
		\begin{split}
			&\left(\overline{\mathfrak r^{\overline\gamma_\beta}}-\mathfrak r^{\overline\gamma_\beta}\right)\left(T_k(\mathfrak r)-\overline{T_k(\mathfrak r)}\right)+\limsup_{\delta\to 0_+}\int_0^T\int_\Omega\left(\mathfrak r_\delta^{\overline\gamma_\beta}-\mathfrak r^{\overline\gamma_\beta}\right)\left(T_k(\mathfrak r_\delta)-T_k(\mathfrak r)\right)\\
            &=\int_0^T\int_\Omega \left(\overline{\mathfrak r^{\overline\gamma_\beta} T_k(\mathfrak r)}-\overline{\mathfrak r^{\overline\gamma_\beta}}\ \overline{T_k(\mathfrak r)}\right)\\
			&\leq
			c\limsup_{\delta\to 0_+}\int_0^T\int_\Omega(1+\vartheta)^{-1}\left(\frac{4}{3}\mu(\vartheta)+\eta(\vartheta)\right)\left(\left(T_k(\mathfrak r_\delta)-T_k(\mathfrak r)\right)\dvr u_\delta+\left(T_k(\mathfrak r)-T_k(\mathfrak r_\delta)\right)\dvr u\right)(1+\vartheta)\\
			&\leq c\limsup_{\delta\to 0_+}\|T_k(\mathfrak r_\delta)-T_k(\mathfrak r)\|_{L^{\overline{\gamma}_\beta+1}(Q_T)}\left(\|\dvr u_\delta\|_{L^2(Q_T)}+\|\dvr u\|_{L^2(Q_T)}\right)\|1+\vartheta\|_{L^{p_\beta}(Q_T)}.
		\end{split}
	\end{equation*}
	The first term on the first line of the latter inequality is nonnegative since $s\mapsto s^\gamma$ is convex and $s\mapsto T_k(s)$ is concave. Moreover, as $|T_k(s_1)-T_k(s_2)|\leq |s_1-s_2|$, $|s_1-s_2|^\gamma\leq |s_1^\gamma-s_2^\gamma|$, $\gamma>1$ we have
	
	\begin{equation*}
		\limsup_{\delta\to 0_+}\|T_k(\mathfrak r_\delta)-T_k(\mathfrak r)\|_{L^{\overline{\gamma}_\beta+1}(Q_T)}\leq c,
	\end{equation*}
	where $c$ is independent of $k$, due to \eqref{UDelEst} and \eqref{VThDeltaEst}. Hence \eqref{ODMBound} immediately follows.
	Again, using the strict convexity of $s\mapsto s\log s$ and \eqref{AlmConv} we conclude
	\begin{equation}\label{DensDelPointw}
		\mathfrak r_\delta\to \mathfrak r,\ \mathfrak z_\delta\to\mathfrak z\text{ a.e. in }Q_T
	\end{equation}
	implying 
	\begin{equation}\label{DensDelStrong}
		(\mathfrak r_\delta,\mathfrak z_\delta)\to (\mathfrak r,\mathfrak z)\text{ in }L^p(Q_T)\text{ for any }p\in[1,\overline{\gamma}).
	\end{equation}
	Obviously, convergences \eqref{ThetaDelPoint}, \eqref{DensDelPointw} yield $\overline{\overline{p(\mathfrak r,\zeta,\vartheta)}}=p(\mathfrak r,\zeta,\vartheta)$ in \eqref{MomEqDelLim}, i.e., we have shown that \eqref{MomEqSig} is satisfied.
	\subsubsection{Limit passage $\delta\to 0_+$ in the energy equality}
	Having convergences \eqref{ThetaDelPoint}, \eqref{DensDelPointw} and bound \eqref{IntEnDelIntegr} at hand we get
	\begin{equation*}
		\mathcal E(\mathfrak r_\delta,\mathfrak z_\delta,\vartheta_\delta)\to \mathcal E(\mathfrak r,\mathfrak z,\vartheta)\text{ in }L^1(Q_T).
	\end{equation*}
	Hence employing \eqref{FDelWConv}$_4$, the latter convergence, \eqref{InDatConv}$_{1,3,6}$, bounds \eqref{DelEst}$_3$ and \eqref{FDelEst}$_4$ in \eqref{DAEneBal} we conclude \eqref{EnerEq}.
	\subsubsection{Limit passage $\delta\to 0_+$ in the entropy balance}
	By virtue of \eqref{ThetaDelPoint} and \eqref{DensDelPointw} we deduce
	\begin{equation}\label{SDelStrongly}
		\mathcal S(\mathfrak r_\delta,\mathfrak z_\delta,\vartheta_\delta)\to \mathcal S(\mathfrak r,\mathfrak z,\vartheta)\text{ in }L^2(0,T;L^q(\Omega))\text{ for some }q>\frac{6}{5}
	\end{equation}
	using the Vitali convergence theorem due to \eqref{SDelEst}. Recalling that $\mathcal S_\delta(\mathfrak r_\delta,\mathfrak z_\delta,\vartheta_\delta)=\mathcal S(\mathfrak r_\delta,\mathfrak z_\delta,\vartheta_\delta)+\delta(\mathfrak r_\delta+\mathfrak z_\delta)\log\vartheta_\delta$ we have by virtue of \eqref{DelDLThEst} and \eqref{SDelStrongly}
	\begin{equation}\label{SDelDelStrongly}
		\mathcal S_\delta(\mathfrak r_\delta,\mathfrak z_\delta,\vartheta_\delta)\to \mathcal S(\mathfrak r,\mathfrak z,\vartheta)\text{ in }L^2(0,T;L^q(\Omega))\text{ for some }q>\frac{6}{5}.
	\end{equation}
	Moreover, taking into consideration the Sobolev embedding $W^{1,2}(\Omega)$ to $L^6(\Omega)$ and \eqref{DelWConv}$_2$ we infer
	\begin{equation}\label{SDelUDelWeakly}
		\mathcal S_\delta(\mathfrak r_\delta,\mathfrak z_\delta,\vartheta_\delta) u_\delta\rightharpoonup\mathcal S(\mathfrak r,\mathfrak z,\vartheta) u\text{ in }L^1(Q_T).
	\end{equation}
	Next, using convergence \eqref{ThetaDelPoint} and bound \eqref{VThDeltaEst} it follows that
	\begin{equation}\label{BHStrong}
		\vartheta_\delta^\frac{\beta}{2}\to\vartheta^\frac{\beta}{2}\text{ in }L^p(Q_T)\text{ for some }p>2.
	\end{equation}
	Hence we have due to \eqref{VDSob}$_1$
	\begin{equation*}
		\nabla \vartheta^\frac{\beta}{2}_\delta\rightharpoonup \nabla \vartheta^\frac{\beta}{2}\text{ in }L^2(Q_T)
	\end{equation*}
	and by virtue of \eqref{KGrowth}
	\begin{equation*}
		\kappa(\vartheta_\delta)\vartheta^{-\frac{\beta}{2}}_\delta\chi_{\{\vartheta_\delta\geq 1\}}\to \kappa(\vartheta)\vartheta^{-\frac{\beta}{2}}\chi_{\{\vartheta\geq 1\}}\text{ in }L^2(Q_T).
	\end{equation*}
	Combining the latter two convergences we get that 
	\begin{equation}\label{KThOThNThweakly}
		\frac{\kappa(\vartheta_\delta)}{\vartheta_\delta}\chi_{\{\vartheta_\delta\geq 1\}}\nabla\vartheta_\delta=\frac{2}{\beta}\kappa(\vartheta_\delta)\vartheta_\delta^{-\frac{\beta}{2}}\chi_{\{\vartheta_\delta\geq 1\}}\nabla(\vartheta^\frac{\beta}{2}_\delta)\rightharpoonup\frac{2}{\beta} \kappa(\vartheta)\vartheta^{-\frac{\beta}{2}}\chi_{\{\vartheta\geq 1\}}\nabla(\vartheta^\frac{\beta}{2})=\frac{\kappa(\vartheta)}{\vartheta}\chi_{\{\vartheta\geq 1\}}\nabla\vartheta\text{ in }L^1(Q_T).
	\end{equation}
	Moreover, we have
	\begin{equation}\label{KDelStrongly}
		\kappa(\vartheta_\delta)\chi_{\{\vartheta_\delta\leq 1\}}\to \kappa(\vartheta)\chi_{\{\vartheta\leq 1\}}\text{ in }L^p(Q_T)\text{ for any }p\in[1,\infty)
	\end{equation}
	by the Lebesgue dominated convergence theorem and \eqref{ThetaDelPoint}. Next, as a consequence of \eqref{LogThDelEst}, \eqref{ThetaDelPoint} and the Vitali convergence theorem we obtain
	\begin{equation*}
		\log\vartheta_\delta\to\log \vartheta\text{ in }L^1(Q_T)
	\end{equation*}
	and accordingly
	\begin{equation*}
		\nabla\log\vartheta_\delta\rightharpoonup \nabla\log\vartheta\text{ in }L^2(Q_T).
	\end{equation*}
	Combining the latter convergence and \eqref{KDelStrongly} we get
	\begin{equation}\label{KThOThSecWeakly}
		\frac{\kappa(\vartheta_\delta)}{\vartheta_\delta}\chi_{\{\vartheta_\delta\leq 1\}}\nabla\vartheta_\delta=\kappa(\vartheta_\delta)\chi_{\{\vartheta_\delta\leq 1\}}\nabla\log\vartheta_\delta\rightharpoonup \kappa(\vartheta)\chi_{\{\vartheta\leq 1\}}\nabla\log\vartheta=\frac{\kappa(\vartheta)}{\vartheta}\chi_{\{\vartheta\leq 1\}}\nabla\vartheta\text{ in }L^1(Q_T).
	\end{equation}
	The next task is the recovery of \eqref{EntBal}. To this end we employ convergences \eqref{InDatConv}$_{4,5}$, \eqref{DelWConv}$_4$, \eqref{SDelDelStrongly}, \eqref{SDelUDelWeakly}, \eqref{KThOThNThweakly}, \eqref{KThOThSecWeakly} and bounds \eqref{FDelEst}$_{4,5}$, \eqref{SDelTEst} respectively  to pass to the limit $\delta\to 0_+$ in \eqref{DAEBal}. We note that inequality \eqref{SigDom} holds. Indeed, going back to \eqref{SigmDDef} we observe that the $\delta$--dependent quantities are nonnegative and we already know that for the limit passage in the remaining terms the weak lower semicontinuity of convex functionals is sufficient, cf. Section~\ref{Sec:NPass}.
	\section{Proof of Theorem~\ref{Thm:BNSEx}}
	We set 
	\begin{equation*}
		F(\alpha)=\frac{1}{f_+(\alpha)},\ G(\alpha)=\frac{1}{f_-(1-\alpha)}
	\end{equation*} 
	and obtain that $F$ and $G$ are strictly monotone non-vanishing functions on $[0,1]$ by assumptions on $f_{\pm}$. We denote
	\begin{equation*}
		\begin{split}
			\underline F&=\min\{F(\underline\alpha), F(\overline\alpha)\}, \overline F=\max\{F(\underline\alpha),F(\overline\alpha)\},\\
			\underline G&=\min\{G(\underline\alpha), G(\overline\alpha)\}, \overline G=\max\{G(\underline\alpha),G(\overline\alpha)\}.
		\end{split}
	\end{equation*} 
	Obviously, $F:[\underline\alpha,\overline\alpha]\to[\underline F,\overline F]$  and $G:[\underline\alpha,\overline\alpha]\to[\underline G,\overline G]$ are $C^1$--diffeomorphisms. We intend to employ Theorem~\ref{Thm:ABSEx} with initial conditions 
	\begin{equation*}
		\begin{split}
			\xi_0=\rho_0,\ \mathfrak r_0=f_+(\alpha_0)\rho_{0},\ \mathfrak z_0=f_-(1-\alpha_0)z_{0},\ \Sigma_0 =\rho_0+z_0,\ u_0,\\
			\mathcal S(\mathfrak r,\mathfrak z,\vartheta)(0,\cdot)=\frac{4b}{3}\vartheta_0^3+\rho_{0}s_+\left(\frac{\rho_{0}}{\alpha_0},\vartheta_0\right)+z_{0}s_-\left(\frac{z_{0}}{1-\alpha_0},\vartheta_0\right).
		\end{split}
	\end{equation*}
	If $(\alpha_0,\rho_{+,0},\rho_{-,0})$ fulfill conditions in 
	\eqref{CondCompO}$_1$ then $(\xi_0,\mathfrak r_0.\mathfrak z_0,\Sigma_0)$ fulfill condition 
	\begin{equation}\label{CondCompTr}
		\underline a\mathfrak r_0\leq\mathfrak z_0\leq\overline a\mathfrak r_0,\ \min\{\underline F,\underline G\}\Sigma_0\leq \mathfrak r_0+\mathfrak z_0\leq \max\{\overline F,\overline G\}\Sigma_0, \  \underline F\mathfrak r_0\leq\xi\leq\overline F\mathfrak r_0.
	\end{equation}
	Hence Theorem~\ref{Thm:ABSEx} guarantees the global in time existence of a bounded energy weak solution $(\xi,\mathfrak r,\mathfrak z,\Sigma, u,\vartheta)$ to \eqref{ABS} in the sense of Definition~\ref{DefWSAbs}. 
	
	We define in accordance with convention \eqref{DivConv} $\alpha=F^{-1}\left(\xi/_a\mathfrak r \right)$, $\rho=F(\alpha)\mathfrak r$ and $z=G(\alpha)\mathfrak z$. Obviously, by virtue of Lemma~\ref{lem:ConToTr} $(1)$ the quantities $\xi/_a\mathfrak r$ and $\alpha$ fulfill transport equation \eqref{TrEqO} with initial conditions $\xi/_a\mathfrak r_0=F(\alpha_0)$, $\alpha(0)=\alpha_0$ respectively and $\alpha$ possesses the regularity specified in Definition~\ref{DefWSBNS}. Applying Lemma~\ref{lem:ConToTr} $(2)$ we infer that the quantities $\rho$ and $z$ possess the regularity specified in Definition~\ref{DefWSBNS}. The pairs $(\rho,u)$ and $(z,u)$ satisfy continuity equation \eqref{ConEqO} with initial condition $\rho(0)=F(\alpha_0)\mathfrak r_0$, $z(0)=G(\alpha_0)\mathfrak z_0$ respectively. We now show that
	\begin{equation}\label{SigmIdent}
		\Sigma(t,x)=(\rho+z)(t,x)\text{ for all }t\in[0,T]\text{ and a.a. }x\in\Omega.
	\end{equation}
	Firstly, we observe that $z/_a\rho$ satisfies the transport equation with the initial condition $\frac{z_{0}}{\rho_{0}}$. The pair $(T,u)$, where $T=\Sigma/_a\mathfrak r$ fulfills a transport equation with the initial datum
	\begin{equation}\label{TInDat}
		T(0)=F(\alpha_0)+F(\alpha_0)\frac{z_{0}}{\rho_{0}}
	\end{equation}
	according to Lemma~\ref{lem:ConToTr} $(1)$. We have that $(S,u)$, where $S=\rho/_a\mathfrak r+z/_a\mathfrak r$, satisfies the transport equation with the initial condition equal to the expression on the right hand side of \eqref{TInDat}. Hence it follows that $\mathfrak r T(t,x)=\mathfrak rS(t,x)$ for all $t\in[0,T]$ and a.a. $x\in\Omega$ by Lemma~\ref{Lem:AlmUniq}, which concludes \eqref{SigmIdent}. Accordingly, we observe that momentum equation \eqref{MomEqO} is fulfilled. Taking into account the definition of the energies $\mathfrak e_{\pm}$ and entropies $\mathfrak s_{\pm}$ in \eqref{TEneEnt} we conclude from \eqref{EnerEq} and \eqref{EntBal} that energy equality \eqref{EnerEqO} and entropy balance \eqref{EntBalO} are satisfied.

{\Large\textbf{Funding}}

This work has been supported by the Czech Science Foundation (GA\v CR) under grant GA22-01591S (for \v S.N. and M.K.). Moreover, \v S. N. and M.K. have been supported by  Praemium Academiæ of \v S. Ne\v casov\' a. Finally, the Institute of Mathematics of the Czech Academy of Sciences is supported by RVO:67985840.

{\Large\textbf{Acknowledgement}}

 The authors thank to Florian Oschmann for careful reading of the manuscript leading to the improvement of its readability and understandability as well.


\begin{thebibliography}{50}
		\bibitem{Amann95}
		Amann, H.: 
		\newblock Linear and quasilinear parabolic problems. {V}ol. {I}. Monographs in Mathematics 89, Birkh\"{a}user Boston, 1995.
		\bibitem{BaNu}
		Baer, M.R. and Nunziato, J.W.:
		\newblock A two--phase mixture theory for the deflagration--to--detonation transition in reactive granular materials,
		\newblock Int. J. Multiphase Flow 12, 861--889, 1986.
       \bibitem{BrZat}
    Bresch, D. and Mucha, P.B. and Zatorska, E.:
    \newblock Finite-energy solutions for compressible two-fluid Stokes system.
    \newblock Arch. Ration. Mech. Anal. 232(2), 987–1029, 2019.
		\bibitem{CoMe75}
		Coifman, R. R. and Meyer, Y.:
		\newblock On commutators of singular integrals and bilinear singular integrals, Trans. Amer. Math. Soc. 212, 315--331, 1975.
		
   \bibitem{EvKar}
   Evje, S. and Karlsen, K.~H.:
   \newblock Global existence of weak solutions for a viscous two-fluid model.
   \newblock J. Differ. Equ. 245(9), 2660–2703, 2008. 
  
     
     \bibitem{Evkar2}
     Evje, S. and Karlsen, K.~H.:
     \newblock Global weak solutions for a viscous liquid-gas model with singular pressure law.
     \newblock Commun. Pure Appl. Anal. 8, no. 6, 1867–1894, 2009.
   
   \bibitem{EvWeZhu}
   Evje, S. and Wen, H. and Zhu, C.:
   \newblock On global solutions to the viscous liquid-gas model with unconstrained transition to single-phase flow.
   \newblock  Math. Models Methods Appl. Sci. 27(2), 323–346, 2017.

  
  \bibitem{Feireisl04}
		Feireisl, E.:
		\newblock Dynamics of viscous compressible fluids.
		Oxford Lecture Series in Mathematics and its Applications, 26. Oxford University Press, Oxford, 2004.
		\bibitem{FeNo09}
		Feireisl, E. and Novotn\'y A.:
		\newblock Singular limits in thermodynamics of viscous fluids. Advances in Mathematical Fluid Mechanics.
		\newblock Birkhäuser Verlag, Basel, 2009.

  \bibitem{BNN}
 Jin, B. and Kwon, Y.-S. \v and Ne\v casov\' a, \v{S}. and Novotn\' y, A.: 
 \newblock Existence and stability of dissipative turbulent solutions to a simple bi-fluid model of compressible fluids. 
 \newblock J. Elliptic Parabol. Equ.
 (7), no. 2, 537–570, 2021.
    \bibitem{KKNN}
     Kra\v cmar, S. and  Kwon, Y.-S. and Ne\v casov\' a, \v S. and Novotn\' y, A.:
  \newblock Weak solutions for a bifluid model for a mixture of two compressible noninteracting fluids with general boundary data. 
  \newblock SIAM J. Math. Anal. 54 (1), 818–871, 2022.
		
		\bibitem{KNAC}
		Kwon, Y.-S. and Novotn\'y, A. and Arthur Cheng, C.H.:
		\newblock On weak solutions to a dissipative Baer--Nunziato--type system for a mixture of two compressible heat conducting gases,
		\newblock  Math. Models Methods Appl. Sci. 30 no. 8, 1517--1553, 2020. 
		
		\bibitem{LaSoUr68}
		Ladyzhenskaya, O.A. and Solonnikov V.A. and Uraltseva N.N.:
		\newblock Linear and quasi-linear equations of parabolic type. Translated from the Russian by S. Smith.,
		\newblock Trans. Math. Monographs. 23., Amer. Math. Soc., Providence, 1968. 

   \bibitem{LiZat}
   Li, Y. and Sun, Y. and Zatorska, E.:
   \newblock Large time behavior for a compressible two-fluid model with algebraic pressure closure and large initial data.
   \newblock Nonlinearity 33, no. 8, 4075–4094, 2020.

   \bibitem{Maltese}
   Maltese, D. and  Mich\'{a}lek, M. and Muha, P.B. and Novotn\'{y}, A. and Pokorn\'{y}, M. and Zatorska, E.:
   \newblock Existence of weak solutions for compressible Navier–Stokes equations with entropy transport.
   \newblock  J. Differ. Equ. Volume 261, Issue 8, 4448--4485, 2016.
   
		
		\bibitem{Nov20}
		Novotn\'{y}, A.:
		\newblock Weak solutions for a bi-fluid model for a mixture of two
		compressible non interacting fluids,
		\newblock Sci. China Math. 63 no. 12, 2399--2414, 2020.
		
		\bibitem{NovPok20}
		Novotn\'{y}, A. and Pokorn\'{y}, M.:
		\newblock Weak solutions for some compressible multicomponent fluid
		models,
		\newblock Arch. Ration. Mech. Anal. 235, 355--403, 2020.
		
		\bibitem{NovStr04}
		Novotn\'{y}, A. and Stra\v{s}kraba, I.:
		\newblock Introduction to the mathematical theory of compressible flow,
		Oxford Lecture Series in Mathematics and its Applications,
		\newblock Oxford University Press, Oxford, 2004.
		
		\bibitem{Ped97}
		Pedregal, P.:
		\newblock Parametrized Measures and Variational Principles,
		\newblock Progress in Nonlinear Differential Equations and their Applications, Vol. 30,
		Birkh\"auser Verlag, Basel, 1997.
		\bibitem{VaWeYu19}
		Vasseur, A. and Wen, H. and Yu, C.:
		\newblock Global weak solution to the viscous two--fluid model with finite energy,
		\newblock J. Math. Pures Appl. (9) 125, 247--282, 2019.
        
        \bibitem{Wen}
        Wen, H.:
        \newblock On global solutions to a viscous compressible two-fluid model with unconstrained transition to single-phase flow in three dimensions,
        \newblock Calc. Var. Partial Differential Equations 60, no. 4, Paper No. 158, 2021.
    
        \bibitem{yaoZhu}
        Yao, L. and Zhu, C.:
        \newblock Free boundary value problem for a viscous two-phase model with mass-dependent viscosity,
        \newblock  J. Differ. Equ. 247, no. 10, 2705–2739, 2009.
   
        \bibitem{yaoZhu2}
        Yao, L. and Zhu, C.:
        \newblock Existence and uniqueness of global weak solution to a two-phase flow model with vacuum,
        \newblock  Math. Ann. 349, no. 4, 903–928, 2011. 	
	\end{thebibliography}
\end{document}